\newtheorem{theorem}{Theorem}[section]
\newtheorem{corollary}[theorem]{Corollary}
\newtheorem{proposition}[theorem]{Proposition}
\newtheorem{lemma}[theorem]{Lemma}
\theoremstyle{definition}
\newtheorem{definition}[theorem]{Definition}
\theoremstyle{remark}
\newtheorem{remark}[theorem]{Remark}
\newtheorem{conjecture}[theorem]{Conjecture}
\newtheorem{question}[theorem]{Question}
\newtheorem{theoremark}[theorem]{Theorem}
\newcommand{\N}{\mathbb{N}}
\newcommand{\Z}{\mathbb{Z}}
\newcommand{\Q}{\mathbb{Q}}
\newcommand{\R}{\mathbb{R}}
\newcommand{\rmC}{\mathrm{C}}
\newcommand{\rmH}{\mathrm{H}}
\newcommand{\rmT}{\mathrm{T}}
\newcommand{\rmV}{\mathrm{V}}
\newcommand{\bbC}{\mathbb{C}}
\newcommand{\bbH}{\mathbb{H}}
\newcommand{\bbM}{\mathbb{M}}
\newcommand{\bbV}{\mathbb{V}}
\DeclareRobustCommand{\bbSigma}{\mathbin{\text{\includegraphics[height=\heightof{$\mathbf{\Sigma}$}]{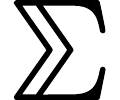}}}}
\newcommand{\bfzero}{\mathbf{0}}
\newcommand{\bfa}{\boldsymbol{a}}
\newcommand{\bfb}{\boldsymbol{b}}
\newcommand{\bfc}{\boldsymbol{c}}
\newcommand{\bfe}{\boldsymbol{e}}
\newcommand{\bfk}{\boldsymbol{k}}
\newcommand{\bfell}{\boldsymbol{\ell}}
\newcommand{\bfm}{\boldsymbol{m}}
\newcommand{\bfn}{\boldsymbol{n}}
\newcommand{\bfv}{\boldsymbol{v}}
\newcommand{\bfalpha}{\boldsymbol{\alpha}}
\newcommand{\bfbeta}{\boldsymbol{\beta}}
\newcommand{\bfGamma}{\boldsymbol{\Gamma}}
\newcommand{\bfDelta}{\boldsymbol{\Delta}}
\newcommand{\calB}{\mathcal{B}}
\newcommand{\calE}{\mathcal{E}}
\newcommand{\calF}{\mathcal{F}}
\newcommand{\calH}{\mathcal{H}}
\newcommand{\calI}{\mathcal{I}}
\newcommand{\calK}{\mathcal{K}}
\newcommand{\calL}{\mathcal{L}}
\newcommand{\calM}{\mathcal{M}}
\newcommand{\calX}{\mathcal{X}}
\newcommand{\frakg}{\mathfrak{g}}
\newcommand{\fraki}{\mathfrak{i}}
\newcommand{\fraku}{\mathfrak{u}}
\newcommand{\frakS}{\mathfrak{S}}
\renewcommand{\epsilon}{\varepsilon}
\renewcommand{\theta}{\vartheta}
\renewcommand{\phi}{\varphi}
\renewcommand{\Gamma}{\varGamma}
\renewcommand{\Sigma}{\varSigma}
\newcommand{\ad}{\mathrm{ad}}
\newcommand{\coad}{\mathrm{coad}}
\newcommand{\id}{\mathrm{id}}
\newcommand{\sgn}{\operatorname{sgn}}
\DeclareMathOperator{\Exists}{\exists}
\DeclareMathOperator{\Forall}{\forall}
\DeclareMathOperator{\bcs}{\natural}
\newcommand{\leqs}{\leqslant}
\newcommand{\geqs}{\geqslant}
\newcommand{\mods}[1]{\operatorname{\mathnormal{#1}-mod}}
\newcommand{\fsl}{\mathfrak{sl}}
\newcommand{\Mod}{\mathrm{Mod}}
\newcommand{\Diff}{\mathrm{Diff}}
\newcommand{\BM}{\mathrm{BM}}
\newcommand{\SL}{\mathrm{SL}}
\newcommand{\GL}{\mathrm{GL}}
\newcommand{\PGL}{\mathrm{PGL}}
\newcommand{\SU}{\mathrm{SU}}
\newcommand{\Aut}{\mathrm{Aut}}
\newcommand{\Hom}{\mathrm{Hom}}
\newcommand{\End}{\mathrm{End}}
\newcommand{\Vect}{\mathrm{Vect}}
\newcommand{\op}{\mathrm{op}}
\DeclareRobustCommand{\one}{\mathbin{\text{\includegraphics[height=\heightof{$\mathbf{1}$}]{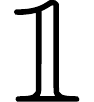}}}}
\newcommand{\Conf}{\mathrm{Conf}}
\newcommand{\sig}{n}
\newcommand{\RCob}{3\mathrm{Cob}^\natural}
\newcommand{\adCob}{3\check{\mathrm{C}}\mathrm{ob}^\sqcup}
\newcommand{\sqbinom}[2]{\left[ \begin{matrix} #1 \\ #2 \end{matrix} \right]}
\newcommand{\subalign}[1]{
  \vcenter{
    \Let@ \restore@math@cr \default@tag
    \baselineskip\fontdimen10 \scriptfont\tw@
    \advance\baselineskip\fontdimen12 \scriptfont\tw@
    \lineskip\thr@@\fontdimen8 \scriptfont\thr@@
    \lineskiplimit\lineskip
    \ialign{\hfil$\m@th\scriptstyle##$&$\m@th\scriptstyle{}##$\crcr
      #1\crcr
    }
  }
}
\def\clap#1{\hbox to 0pt{\hss#1\hss}}
\def\mathclap{\mathpalette\mathclapinternal}
\def\mathclapinternal#1#2{%
\clap{$\mathsurround=0pt#1{#2}$}}
\newcommand{\Heisenberg}{\bbH}
\newcommand{\dHeis}{\bbH}
\newcommand{\srs}{q}
\newcommand{\homol}{\calH}
\newcommand{\simp}{\Gamma}
\newcommand{\bfsimp}{\bfGamma}
\newcommand{\pic}[2][0]{\raisebox{-0.5\height + 2.5pt + #1pt}{\includegraphics{#2.pdf}}}
\newcommand\arxiv[2]{\href{https://arXiv.org/abs/#1}{\texttt{arXiv:\allowbreak #1} #2}}
\newcommand\doi[2]{\href{https://doi.org/#1}{#2}}
\DeclareRobustCommand{\myuline}[1]{
 \ifmmode \text{\uline{$\phantom{#1}$}\llap{\contour{white}{$#1$}}}
 \else \uline{\phantom{#1}}\llap{\contour{white}{#1}} \fi
}
\def\namedlabel#1#2{\begingroup
    #2%
    \def\@currentlabel{#2}%
    \phantomsection\label{#1}\endgroup
}
\newcommand{\braid}[2]{\myuline{#1_{#2}}}
\newcommand{\twist}[2]{\tau_{#1_{#2}}}
\newcommand{\slt}{{\mathfrak{sl}_2}}
\newcommand{\homoldH}{\homol^\dHeis}
\newcommand{\thread}{\tilde{\boldsymbol{x}}}
\newcommand{\thrcmp}{\tilde{x}}
\newcommand{\perm}{\vartheta}
\newcommand{\chain}{\sigma}
\newcommand{\bloop}{\gamma}
\newcommand{\formv}{\mathbb{X}}
\newcommand{\cohom}{\omega}
\newcommand{\fld}{\bbC}
\newcommand{\basis}{\tilde{\bfsimp}}
\newcommand{\add}{\operatorname{add}}
\newcommand{\del}{\operatorname{del}}
\newcommand{\Ceq}{\stackrel{\mathclap{\eqref{E:cutting}}}{=}}
\newcommand{\Feq}{\stackrel{\mathclap{\eqref{E:fusion}}}{=}}
\newcommand{\Peq}{\stackrel{\mathclap{\eqref{E:permutation}}}{=}}
\newcommand{\Beq}{\stackrel{\mathclap{\eqref{E:braid}}}{=}}
\newcommand{\tCeq}{\stackrel{\mathclap{\eqref{E:tilde_cutting}}}{=}}
\begin{document}

\raggedbottom

\title{Homological Construction of Quantum Representations of Mapping Class Groups}

\author[M. De Renzi]{Marco De Renzi} 
\address{Institute of Mathematics, University of Zurich, Winterthurerstrasse 190, CH-8057 Zurich, Switzerland} 
\email{marco.derenzi@math.uzh.ch}

\author[J. Martel]{Jules Martel} 
\address{Institute of Mathematics, University of Zurich, Winterthurerstrasse 190, CH-8057 Zurich, Switzerland} 
\email{jules.martel-tordjman@math.uzh.ch}

\begin{abstract}
 We provide a homological model for a family of quantum representations of mapping class groups arising from non-semisimple TQFTs (Topological Quantum Field Theories). Our approach gives a new geometric point of view on these representations, and it gathers into one theory two of the most promising constructions for investigating linearity of mapping class groups. More precisely, if $\varSigma_{g,1}$ is a surface of genus $g$ with $1$ boundary component, we consider a (crossed) action of its mapping class group $\Mod(\varSigma_{g,1})$ on the homology of its configuration space $\Conf_n(\varSigma_{g,1})$ with twisted coefficients in the Heisenberg quotient $\dHeis_g$ of its surface braid group $\pi_1(\Conf_n(\varSigma_{g,1}))$. We show that this action intertwines an action of the quantum group of $\fsl_2$, that we define by purely homological means. For a finite-dimensional linear representation of $\dHeis_g$ (depending on a root of unity $\zeta$), we tweak the construction to obtain a projective representation of $\Mod(\varSigma_{g,1})$. Finally, we identify, by an explicit isomorphism, a subrepresentation of $\Mod(\varSigma_{g,1})$ that is equivalent to the quantum representation arising from the non-semi\-simple TQFT associated with quantum $\fsl_2$ at $\zeta$. In the process, we provide concrete bases and explicit formulas for the actions of all the standard generators of $\Mod(\varSigma_{g,1})$ and of quantum $\fsl_2$ on both sides of the equivalence, and answer a question by Crivelli, Felder, and Wieczerkowski. We also make sure that the restriction of these representations to the Torelli group $\calI(\varSigma_{g,1})$ are integral, in the sense that the actions have coefficients in the ring of cyclotomic integers $\Z[\zeta]$, when expressed in these bases.
\end{abstract}

\maketitle
\setcounter{tocdepth}{3}

\section{Introduction}\label{S:intro}

Let $\varSigma$ be a non-ex\-cep\-tion\-al surface\footnote{A surface is exceptional if it admits self-dif\-fe\-o\-mor\-phisms that are homotopic but not isotopic, see for instance \cite[Section~1.4]{FM12}}. Its \textit{mapping class group} $\Mod(\varSigma)$ is the group of self-dif\-fe\-o\-mor\-phisms of $\varSigma$, considered up to isotopy. When $\varSigma$ has boundary, we require self-diffeomorphisms and isotopies to restrict to the identity on it. A long-standing open problem concerning mapping class groups is the following.

\begin{question}\label{Q:The_Question}
 Is $\Mod(\varSigma)$ linear?
\end{question}

This question already appears in the appendix to Birman's book \cite[Problem~30]{B74}, as well as in Margalit's recent paper \cite[Question~1.1]{Ma18}, which collects problems, questions, and conjectures about mapping class groups. In order to establish linearity, one would need to look for a faithful finite-dimensional representation, that is, for an injective homomorphism from $\Mod(\varSigma)$ to a matrix group $\GL_\Bbbk(V)$, where $V$ is a finite-dimensional vector space over a field $\Bbbk$.

Question~\ref{Q:The_Question} can be justified by the observation that mapping class groups enjoy several properties that are typical byproducts of linearity (for instance, they are residually finite, see \cite[Theorem~6.11]{FM12}). However, expecting a positive answer is legitimate only if we have good candidates, namely if we are able to construct linear representations that have reasonable chances of being faithful. This is the general motivation for the present work, which investigates two (families of) representations of a very different kind (or at least so it would seem): twisted homological representations based on configuration spaces of surfaces (referred to as \textit{homological representations} here), and quantum representations arising from non-semisimple TQFTs\footnote{Short for Topological Quantum Field Theories.} (referred to as \textit{quantum representations} in the following). These provide, to the best of our knowledge, the two most important classes of candidates, in the current state-of-the-art.

\subsection{Homological representations}

A natural approach to the construction of mapping class group representations consists in exploiting the fact that $\Mod(\varSigma)$ acts on the homology of $\varSigma$. However, the kernel of this action is the \textit{Torelli group} of $\varSigma$, which is known to be a big part of $\Mod(\varSigma)$, when $\varSigma$ is not a torus. It contains, for instance, all separating Dehn twists, as well as all bounding pair maps.

The first example of a homological representation other than the standard one is probably the \textit{Burau representation} of the braid group $\calB_m$ on $m$ strands, which can be defined as the mapping class group of the disc with $m$ punctures $D^2_m$. This is a special case of the \textit{Magnus representation} of $\Mod(\varSigma)$, for a surface $\varSigma$ with one boundary component \cite{M39}. The idea consists in understanding $\Mod(\varSigma)$ as the group of automorphisms of the fundamental group of $\varSigma$ (thanks to the Dehn--Nielsen--Baer Theorem, see \cite[Theorem~8.1]{FM12}), and to use Fox's differential calculus for automorphisms of $\pi_1(\varSigma)$, which is a free group in this case. It turns out that the resulting mapping class group action is equivalent to a lifted action on the homology of the maximal abelian cover of $\varSigma$ \cite{S05a}, so it provides a first step towards twisted homological representations. The Burau representation of $\calB_m$ is known to be faithful for $m \in \{ 2,3 \}$, but not for $m \geqs 5$ \cite{B99}, with the case $m=4$ remaining very mysterious. For a positive genus surface $\varSigma$ with one boundary component, the Magnus representation is not faithful. Its kernel is not negligible, and has been well characterized \cite{S02,S05b}. Roughly speaking, it is parametrized by pairs of simple closed curves in $\varSigma$ with symplectic pairing $0$. Magnus representations enjoy \textit{symplectic transvections formulas} (see for instance \cite[Theorem~4.2]{S05b}), inherited from their homological definition, that can be generalized to the representations considered in this paper, as explained in \cite{DM22}. These formulas imply that the action of separating Dehn twists on twisted homologies has infinite order, which is not a common property. The same will apply also to the representations appearing in this paper.

Another idea is to consider the homology of the \textit{configuration space of $n$ points in $\varSigma$}, denoted $\Conf_n(\varSigma)$, rather than the homology of $\varSigma$ itself. This results in a family of representations parametrized by the integer $n \geqs 1$. For general surfaces $\varSigma$ (with one boundary component), this idea first appeared in the work of Moriyama \cite{Mo07}, in the case of ordered configurations. The kernel of the $n$th representation in this family is the $n$th term in the \textit{Johnson filtration} of the mapping class group, which is never trivial. Still, since the total intersection of all the terms in the Johnson filtration is trivial, \textit{Moriyama representations} are said to be \textit{asymptotically faithful}. Indeed, all representations based on the homology of configuration spaces of surfaces inherit this property, as do the ones of the present work.

In the case of punctured discs $D^2_m$ and braid groups $\calB_m$, the idea of considering homology of configuration spaces had already appeared before Moriyama's work. Indeed, in \cite{L90}, Lawrence introduced an action of $\calB_m$ on a twisted version (in the same spirit of Magnus representations) of the homology of $\Conf_n(D^2_m)$, that is, using maximal abelian covers. The result is again a family of representations parametrized by the integer $n \geqs 1$ that recovers, for $n=1$, the Burau representation (which is not globally faithful). Nevertheless, \textit{Lawrence representations} rose to fame ten years after their construction, thanks to the following spectacular result, independently proven by Bigelow and Krammer.

\begin{theoremark}[{Bigelow \cite[Theorem~1.1]{B00}, Krammer \cite[Theorem~B]{K02}}]
 For $n=2$, and for all $m \geqs 0$, the Lawrence representation of $\calB_m$ is faithful. In particular, braid groups are linear.
\end{theoremark}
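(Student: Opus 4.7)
The plan is to follow Bigelow's topological approach, which fits naturally with the homological theme of this paper. The representation at $n=2$ will be realised as an action on the twisted Borel-Moore homology of $\Conf_2(D^2_m)$, and non-triviality of a braid will be detected through an intersection pairing with values in $\Z[q^{\pm 1}, t^{\pm 1}]$. Concretely, the abelianisation of $\pi_1(\Conf_2(D^2_m))$ is free of rank two, generated by the class $q$ of a strand looping once around a puncture and the class $t$ of the two strands braiding once around each other; the Lawrence module $L_m$ is the middle-degree Borel-Moore homology of the corresponding regular $\Z^2$-cover, regarded as a module over $\Z[q^{\pm 1}, t^{\pm 1}]$, and $\calB_m = \Mod(D^2_m)$ acts on it by functoriality.

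The next step introduces the main geometric tools. A noodle is a properly embedded arc in $D^2_m$ joining two boundary points and avoiding the punctures; the locus of unordered pairs of distinct points on it lifts to a Borel-Moore cycle $\widetilde N$ in the cover. A fork is a tree properly embedded in $D^2_m$ with one tine joining two punctures through the interior and a handle ending on the boundary; it determines a compactly supported surface in $\Conf_2(D^2_m)$ whose lift $\widetilde F$ defines a class in compactly supported homology. A transversality argument produces a well-defined algebraic intersection pairing $\langle N, F \rangle \in \Z[q^{\pm 1}, t^{\pm 1}]$, refining the Poincar\'e-Lefschetz pairing, and this is the essential diagnostic tool.

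For the faithfulness argument itself, suppose $\beta \in \calB_m$ acts trivially on $L_m$. Then $\langle N, \beta F \rangle = \langle N, F \rangle$ for every noodle $N$ and every fork $F$. Bigelow's key lemma asserts that equality of these Laurent-polynomial pairings, as $N$ varies, already forces $\beta F$ to be isotopic to $F$ relative to its endpoints; since a self-diffeomorphism of $D^2_m$ that preserves the isotopy class of a system of forks joining every pair of punctures must be isotopic to the identity, one concludes $\beta = 1$. The main obstacle, and the true technical core of the proof, is precisely this key lemma: upgrading the coincidence of intersection polynomials to an honest ambient isotopy. Bigelow handles it through an intricate induction on the geometric intersection number, exploiting the $\Z \oplus \Z$ grading by $q$ and $t$ to detect and geometrically cancel unwanted intersection points in pairs. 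Krammer's independent proof circumvents this geometry by working directly with an explicit basis and a normal form argument on positive braid words; either route establishes the statement, and linearity of $\calB_m$ follows immediately.
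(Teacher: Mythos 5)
This statement is quoted in the paper as an external theorem of Bigelow and Krammer, with citations in place of a proof; there is therefore no argument of the paper's to compare yours against. What you have written is a broadly faithful outline of Bigelow's published strategy: realise the $n=2$ Lawrence representation on the Borel--Moore homology of the $\langle q,t\rangle$-cover of $\Conf_2(D^2_m)$, introduce noodles and forks, and detect non-trivial braids through the Laurent-polynomial intersection pairing. Two small imprecisions: for $m \geqs 2$ the abelianisation of $\pi_1(\Conf_2(D^2_m))$ has rank $m+1$ (one meridian per puncture plus the braiding class), and the Lawrence cover corresponds to the further quotient onto $\Z^2$ identifying all puncture meridians with $q$; and Bigelow's key lemma is more precisely that $\langle N,F\rangle = 0$ if and only if the tine of $F$ can be isotoped off $N$, from which triviality of $\beta$ is deduced via its action on arcs between punctures (with a separate check to rule out the centre of $\calB_m$).

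As a proof, however, the proposal has a genuine gap exactly where you locate "the true technical core": the key lemma is named and motivated but not proved. The induction on geometric intersection number, the use of the $(q,t)$-bigrading to show that the highest-degree term of the pairing survives and cannot cancel, and the verification that the pairing is well defined and $\calB_m$-equivariant on Borel--Moore versus compactly supported classes constitute essentially all of Bigelow's paper. Asserting that "either route establishes the statement" is a citation, not an argument; so the proposal should be read as a correct roadmap to the literature rather than a self-contained proof, which is also exactly the role the statement plays in the paper under review.
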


Thus, braid groups form the first infinite family of positive answers to Question~\ref{Q:The_Question}. This substantiates the claim that representations based on the twisted homology of configuration spaces of surfaces with one boundary component provide very good candidates. Indeed, Bigelow's proof uses extensively the twisted homological nature of these representations, by showing that a natural twisted homological pairing (in the maximal abelian cover of $\Conf_2(D^2_m)$) detects geometrical intersection between arcs in $D^2_m$. Using the same setup, it was later shown that mapping class groups of punctured spheres and of the closed genus $2$ surface are also linear \cite{BB01}. Bigelow even computed matrices for braid generators that were used by Krammer for his proof, together with a Garside structure that is specific to braid groups. In this direction, actions of mapping class group generators on our representations are also computed in this paper.

One can imagine extending Bigelow's result to positive genus surfaces by studying representations of $\Mod(\varSigma)$ on the twisted homology of $\Conf_n(\varSigma)$. This provides the framework for the homological representations studied in this paper. However, for $\varSigma$ of positive genus, twisting to the level of the maximal abelian cover is no longer sufficient. Indeed, Lawrence representations are never faithful, see for instance the footnote to \cite[Question~1.2]{Ma18}, and some of Suzuki's kernel elements might also persist. It should be noted that Lawrence representations are not even defined on the whole mapping class group of $\varSigma$, since twisted homology is not functorial with respect to arbitrary diffeomorphisms. Here, we will consider projective representations of $\Mod(\varSigma)$ on the homology of $\Conf_n(\varSigma)$ twisted by non-abelian representations of the surface braid group $\pi_1(\Conf_n(\varSigma))$ that factor through its \textit{Heisenberg} quotient. The study of mapping class group actions at the level of the Heisenberg cover of $\Conf_n(\varSigma)$ was first explored in \cite{BPS21}. This is a setup where Bigelow's strategy has a chance to work, as will be investigated in \cite{DM22}.

At a higher categorical level, faithful representations of mapping class groups have been constructed in \cite{LOT10}. These representations are a categorification of the ones on the standard homology of the surface, and their definition is given in terms of bordered Floer homology. Therefore, the approach is geometric, and is based on arcs inside surfaces, in the same spirit of the homological calculus developed in this paper. The faithfulness of these categorical actions can be seen as a promising sign for the question of linearity of twisted homological representations.

\subsection{Quantum representations}

A completely different strategy for obtaining mapping class group representations comes from quantum topology, where these are typically one of the byproducts of a deeper construction. Indeed, TQFTs can be thought of as linear representations of categories of cobordisms, and they naturally contain quantum invariants of knots, links, and $3$-manifolds, as well as quantum representations of mapping class groups of surfaces. Generally speaking, TQFTs provide a highly organized categorical framework for understanding these different invariants as coherent layers of a unified theory, and constitute tools for performing functorial computations based on cut-and-paste methods.

TQFTs are typically constructed from algebraic ingredients that come with their own fully developed theory, and in particular with plenty of concrete examples. Such algebraic tools are usually provided by quantum groups and Hopf algebras, by their representation theory, and more generally by abstract notions modeled on the properties of these algebraic structures, such as modular categories. Therefore, another relevant feature of quantum topology is that its constructions are typically very general, and can be applied in a wide range of settings.

The first family of quantum representations of mapping class groups to appear in the literature is due to Reshetikhin and Turaev \cite[Section~4.6]{RT91}, and it made big waves. Although the construction can be performed starting from any modular fusion category, as explained in \cite[Chapter~IV, Section~4]{T94}, the best studied case is the one corresponding to the semisimple quotient of the category of representations of the \textit{small} quantum group of $\fsl_2$ at a root of unity of order $r \geqs 3$. It can be easily seen that these mapping class group representations send every Dehn twist to a matrix of order at most $r$, so in particular they are never faithful. However, they are \textit{asymptotically faithful}, in the sense that, for a fixed surface $\varSigma$, the intersection of the kernels of these representations for all $r \geqs 3$ is trivial (modulo the center, which is non-trivial only for the genus $2$ surface, in which case its only non-trivial element is the hyperelliptic involution). This was first proved by Andersen \cite[Theorem~1]{A02}, using gauge theoretic methods, and shortly afterwards by Freedman, Walker, and Wang \cite[Theorem~1.1]{FWW02}, using the skein theoretic approach of Lickorish \cite{L93} and Blanchet, Habegger, Masbaum, and Vogel \cite{BHMV95}. In particular, the main ingredient of the second proof is the fact that the mapping class group $\Mod(\varSigma)$ acts faithfully on the curve complex of $\varSigma$. This kind of argument seems to be one of the best suited for studying faithfulness of quantum representations. As we will see, our diagrammatic approach to computations naturally features the action of $\Mod(\varSigma)$ on the curve complex of $\varSigma$.

Although small quantum groups at roots of unity provide modular fusion categories that can be used to obtain quantum representations of mapping class groups,  following Reshetikhin and Turaev, their categories of representations are never semisimple. Therefore, the first step in the original construction is always a quotient operation that sacrifices a great deal of algebraic information. Lyubashenko and Majid were the first to come up with an alternative construction that did not require semisimplicity, see \cite[Theorem~1.1]{LM94} for the torus, and \cite[Section~4]{L94} for arbitrary genus surfaces. In the case of $\fsl_2$, their quantum representations were studied by Kerler, who already showed faithfulness for the torus at all values of $r$, see \cite[Conjecture~1]{K94} and \cite[Theorem~15]{K96}. What is clear from this result is that the shift to the non-semisimple setting pays off, since no single quantum representation had any hope of being faithful, in the semisimple case of Reshetikhin and Turaev.

This indication was then reinforced by \cite[Proposition~5.1]{DGGPR20}, where it was shown that, again in the case of small quantum $\fsl_2$, and for all values of $r$, Lyubashenko's representations send every essential Dehn twist (both separating and non-separating ones) to a matrix of infinite order. This kind of phenomenon was actually not new to non-semisimple quantum topology. Indeed, it was first observed by Blanchet, Costantino, Geer, and Patureau, see \cite[Theorems~1.3 \& 1.4]{BCGP14}, in the context of a more sophisticated family of quantum representations arising from the TQFTs constructed out of the \textit{unrolled} quantum group of $\fsl_2$. The hope that these non-semisimple TQFTs would yield faithful mapping class group representations had indeed even been made explicit in \cite[Questions~1.5 \& 1.7]{BCGP14}. An advantage of this construction is that the corresponding quantum invariants depend holomorphically on a family of complex variables parametrizing the choice of a cohomology class with coefficients in $\bbC/2\Z$, see \cite[Section~3.5]{CGP12}. This property has been used effectively to show faithfulness of the resulting quantum representations of the mapping class group of the sphere with four punctures \cite[Theorem~1]{M20b}, and to establish irreducibility of the corresponding quantum representations of Kauffman bracket skein algebras of closed surfaces \cite[Theorem~1.3]{KK22}. As we will explain, our construction can be deformed using a set variables that can be interpreted as evaluations of a cohomology class, and the resulting representations depend polynomially on these parameters. This means our framework is also well-suited for applying these kinds of arguments.

Despite these promising features of non-semisimple TQFTs, the original semisimple construction of Reshetikhin and Turaev remains the most studied to date. Indeed, the interest in semisimple quantum representations extends far beyond linearity questions. For instance, the Andersen--Masbaum--Ueno Conjecture \cite[Question~1.1]{AMU05} predicts that the collection of all the semisimple quantum representations of $\Mod(\varSigma)$ arising from quantum $\fsl_2$ detects the dynamical nature of diffeomorphisms of $\varSigma$ with respect to the Nielsen--Thurston classification, and even measures the stretching factor of pseudo-Anosov maps. A homological approach like ours could be useful for recovering such dynamical information from non-semisimple TQFTs, as suggested by the results of \cite{BB06}.

Furthermore, a major strength of semisimple quantum representations and TQFTs is provided by their \textit{integrality} properties. Indeed, the approach of Reshetikhin and Turaev can be refined to obtain TQFTs with coefficients in the ring of cyclotomic integers associated with the root of unity underlying the construction, see \cite{G01,GM04,BCL10}. For quantum representations of mapping class groups, this has important consequences: in \cite{KS15}, the property is used to answer a question by Looijenga on generators of homologies of finite covers of surfaces, while in \cite{GM16}, it is used to study in detail some representations of symplectic groups over finite fields. In this paper, we recover non-semisimple quantum representations from homology, and in the process we establish their integrality for Torelli subgroups. Indeed, by their own very nature, homological bases pinpoint the precise spot where actions are integral, since only integral coefficients are allowed at any given stage of the homological construction. These integrality properties were not known to hold in the non-semisimple framework, at least to the best of our knowledge.

\subsection{Isomorphism in low genus}

Even though Question~\ref{Q:The_Question} is generally stated in the case of closed surfaces, this paper will focus on surfaces with one boundary component. Notice that there are explicit relations between the two families of mapping class groups (see for instance the capping homomorphism of \cite[Proposition~3.19]{FM12} and the Birman exact sequence of \cite[Theorem~4.6]{FM12}). Although we will not study them here, these relations suggest that tackling the question of linearity for surfaces with boundary might also be important. Indeed, this was even crucial in the case of genus $0$ and $2$.

The study of surfaces with a single boundary component starts with punctured discs and braid groups. As it was mentioned, by interpreting $\calB_m$ as the mapping class group $\Mod(D^2_m)$, Lawrence defined twisted homological representations that were later shown to be faithful by Bigelow and Krammer. Another (maybe more common) definition of braid groups identifies $\calB_m$ with the Artin group with generators $\{ \sigma_i \mid 1 \leqs i \leqs m-1 \}$ and relations 
\begin{align*}
 \sigma_i \sigma_j &= \sigma_j \sigma_i \quad \Forall |i-j| \geqs 2, &
 \sigma_i \sigma_j \sigma_i &= \sigma_j \sigma_i \sigma_j \quad \Forall |i-j| = 1.
\end{align*}
This algebraic definition underlies the construction of a very large family of representations of $\calB_m$ based on R-matrices and representations of quantum groups, see for instance \cite[Section~X.6]{K95}. These quantum representations of braid groups constitute one of the main building blocks for quantum representations of mapping class groups, and for TQFTs in general.

Recently, the second author has shown that Lawrence representations recover a family of quantum representations of braid groups associated with quantum $\fsl_2$ in a wide generality, see \cite[Theorem~1.5]{M20} (an isomorphism in a smaller subcase was previously established by Jackson and Kerler \cite{JK09}). The equivalence includes an action of quantum $\slt$ on the twisted homology of $\Conf_n(D^m)$ that is built by purely homological means, and that commutes with the homological action of $\calB_m$. This paper will generalize this result, thus extending the explicit relation between homological and quantum representations of mapping class groups to all surfaces. The equivalence will still feature a commuting action of quantum $\fsl_2$ defined by the same homological operators.

Hints for the first step in this generalization can be found in the work of Crivelli, Felder, and Wieczerkowski. Indeed, in \cite{FW91}, an action of the mapping class group $\Mod(T_1)$ of the torus $T_1$ with $1$ boundary component is introduced on a ``vector space of homology cycles with twisted coefficients on configuration spaces of $T_1$'', which is generated by families of non-intesecting loops in $T_1$ subject to certain relations. In \cite[Conjecture~6.1]{CFW93a}, it is conjectured that these vector spaces should in fact be understood as twisted homologies of configuration spaces of $T_1$, as their name suggests. In \cite{CFW93b}, the action of $\Mod(T_1)$ is transported to the restricted quantum group of $\fsl_2$ by means of an explicit isomorphism, and similarities with the representation defined by Lyubashenko and Majid in \cite{LM94} are remarked. Notice that Kerler and Lyubashenko also seemed to expect an interesting relation between the two approaches, see \cite[Section~3.5]{K96} and \cite{L94}. Therefore, our work (see Theorem~\ref{T:main_result}) will also give a homological status to the construction of Crivelli, Felder, and Wieczerkowski by answering their conjecture, confirming the relation with quantum representations, and generalizing the result to higher genus surfaces.

\subsection{Main constructions and results}

In this paper, we establish an explicit isomorphism between two specific families of homological and quantum representations of mapping class groups. More precisely, we fix a compact, connected, oriented surface $\varSigma_{g,1}$ of genus $g$, with $1$ boundary component. On the one hand, on the homological side, we consider a projective action of $\Mod(\varSigma_{g,1})$ on a family of finite-dimensional specializations, denoted\footnote{It should be pointed out that, hidden from the notation, there is a secret dependence of $\calH_{n,g}^V$ on an odd integer parameter $r \geqs 3$.} $\calH_{n,g}^V$ (see Definition~\ref{D:linear_homology}), of twisted homology groups of configuration spaces $\Conf_n(\varSigma_{g,1})$, denoted $\calH_{n,g}^\dHeis$ (see Definition~\ref{D:Heisenberg_homology}). On the other hand, on the quantum side, we focus on the family of projective representations of $\Mod(\varSigma_{g,1})$ obtained from the non-semisimple TQFT associated with the small quantum group $\fraku_\zeta \fsl_2$ at the root of unity $\zeta = e^{\frac{2 \pi \fraki}{r}}$ for $r \geqs 3$ odd (see Section~\ref{S:quantum_sl2_algebra}). Our main contributions are the following:
\begin{enumerate}
 \item We construct homologically an action of an integral version of the quantum group $U_q \fsl_2$ on
  \[
   \calH_g^\bbH = \bigoplus_{n \geqs 0} \calH_{n,g}^\bbH,
  \]
  see Theorem~\ref{T:Uq_homological_representation}. These twisted homology groups have coefficients in the Heisenberg group ring $\Z[\dHeis_g]$ of Section~\ref{S:Heisenberg_groups}, which, in particular, features $q$ among its generators.
 \item We define a homomorphism from $\Mod(\varSigma_{g,1})$ to $\PGL_{r^g}(\bbC)$ that induces a projective action of $\Mod(\varSigma_{g,1})$ on the twisted homology group $\calH_{n,g}^V$ obtained from $\calH_{n,g}^\dHeis$ by specializing $q$ to $\zeta$, and by correspondingly representing $\dHeis_g$ into $\GL_{r^g}(\mathbb{Z}[\zeta])$, see Theorem~\ref{T:mcg_homological_projective_rep}. We also remark that the direct sum of these actions of $\Mod(\varSigma_{g,1})$ naturally commutes with the action of $U_\zeta \fsl_2$ on
  \[
   \calH_g^V = \bigoplus_{n \geqs 0} \calH_{n,g}^V,
  \]
  which is obtained from the one of $U_q \fsl_2$ on $\calH_g^\bbH$ by specialization.
 \item We explicitly compute the two actions with respect to convenient bases of these twisted homologies, and we exhibit an explicit $\mathbb{Z}[\zeta]$-linear isomorphism between a finite-dimensional subspace of $\calH_g^V$, denoted $\calH_g^{V(r)}$, and $\fraku_\zeta \fsl_2^{\otimes g}$. Finally, we show in Theorem~\ref{T:The_Theorem} that this linear isomorphism identifies these homological actions with the $g$th tensor power of the adjoint representation of $\fraku_\zeta \fsl_2$, denoted $\ad^{\otimes g}$, and with the corresponding projective representation of $\Mod(\varSigma_{g,1})$ of Lyubashenko, respectively. In the process, we provide explicit formulas also for the quantum versions of the actions of these two groups, that might be of independent interest.
 \item We obtain, as a direct consequence, that the non-semisimple quantum representation associated with $\fraku_\zeta \fsl_2$ is integral when restricted to the Torelli group $\calI(\varSigma_{g,1})$, meaning that the coefficients of the corresponding action actually belong to $\Z[\zeta]$, when these are computed in the convenient bases considered above, see Corollary~\ref{C:non-semisimple_TQFTs_are_integral}. This property, which was already known in the semisimple case and led to important results, appears to be new on the non-semisimple side.
\end{enumerate}
We point out that, up to minor differences, the homological representations considered here should coincide with those constructed in \cite{BPS21}, where the twisted homology groups $\calH_{n,g}^\bbH$ were introduced, together with their \textit{finite-dimensional Schrödinger} specializations corresponding to $\calH_{n,g}^V$. An implicit procedure for uncrossing the mapping class group action on $\calH_{n,g}^V$ is given in \cite[Theorem~C]{BPS21}, although the result is only discussed for even roots of unity. In this paper, we focus on odd roots of unity, and we use an explicit uncrossing morphism, inspired from similar formulas provided by \cite{CFW93b} in the case of the torus, see Equation~\eqref{E:psi}. On the quantum side, the quantum representations considered here were first defined in \cite{L94}, and later shown to be part of a family of non-semisimple TQFTs in \cite{DGGPR20}. The results listed above are summarized by the following statement.

\begin{theorem}\label{T:main_result}
 The $\fraku_\zeta$-module isomorphism
 \begin{align*}
  \Phi_g^V : \calH_g^{V(r)} &\to \ad^{\otimes g}
 \end{align*}
 defined in Equation~\eqref{E:isomorphism} induces a commutative diagram
 \begin{center}
  \begin{tikzpicture}[descr/.style={fill=white}]
   \node (P1) at (0,0) {$\Mod(\varSigma_{g,1})$};
   \node (P2) at (3,0.75) {$\PGL_{\fraku_\zeta}(\homol_g^{V(r)})$};
   \node (P3) at (3,-0.75) {$\PGL_{\fraku_\zeta}(\ad^{\otimes g})$};
   \draw
   (P1) edge[->] node[above] {\scriptsize $\bar{\rho}_g^{V(r)}$} (P2)
   (P2) edge[->] node[right] {\scriptsize $\Phi_g^V \circ \_ \circ (\Phi_g^V)^{-1}$}(P3)
   (P1) edge[->] node[below] {\scriptsize $\bar{\rho}_g^{\fraku_\zeta}$} (P3);
  \end{tikzpicture}
 \end{center}
 providing an isomorphism between the homological representation $\bar{\rho}_g^{V(r)}$ of Theorem~\ref{T:uzeta_homological_representation} and the quantum representation $\bar{\rho}_g^{\fraku_\zeta}$ of Proposition~\ref{P:quantum_action_of_mcg_generators}.
\end{theorem}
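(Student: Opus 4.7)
The plan is a direct generator-by-generator verification that the displayed diagram commutes. Since $\Phi_g^V$ is, by its very construction in Equation~\eqref{E:isomorphism}, a $\fraku_\zeta \fsl_2$-module isomorphism, and since both projective representations $\bar{\rho}_g^{V(r)}$ and $\bar{\rho}_g^{\fraku_\zeta}$ commute with the respective $\fraku_\zeta \fsl_2$-actions on $\homol_g^{V(r)}$ and $\ad^{\otimes g}$, the intertwining identity
\[
\Phi_g^V \circ \bar{\rho}_g^{V(r)}(\varphi) = \bar{\rho}_g^{\fraku_\zeta}(\varphi) \circ \Phi_g^V
\]
in $\PGL_{\fraku_\zeta}(\ad^{\otimes g})$ only needs to be tested on a set of generators of $\Mod(\varSigma_{g,1})$, and, for each such generator, on a single representative of each $\fraku_\zeta \fsl_2$-isotypic component of $\homol_g^{V(r)}$.

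Concretely, I would first fix a system of Humphries-type generators of $\Mod(\varSigma_{g,1})$ adapted to the pants decomposition underlying the chosen bases on both sides, namely the Dehn twists along the meridians, the longitudes, and the separating curves between consecutive handles. For each such generator, I would then compare the homological matrix provided by Theorem~\ref{T:mcg_homological_projective_rep} together with the explicit uncrossing morphism of Equation~\eqref{E:psi} (inspired by the torus formulas of \cite{CFW93b}) with the quantum matrix provided by Proposition~\ref{P:quantum_action_of_mcg_generators}, which is explicit in terms of the $R$-matrix, the ribbon element, and the (co)evaluations of $\fraku_\zeta \fsl_2$. Since both sets of matrices are already written in compatible bases, and $\Phi_g^V$ is given by explicit scalars relating the two bases, each comparison reduces to a finite matrix identity, which can be further shortened by exploiting the $\fraku_\zeta \fsl_2$-equivariance already at our disposal.

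The main obstacle will be the Dehn twists along separating curves that encircle several handles: their homological action simultaneously assembles intersection data coming from many tensor factors, and matching it to the iterated coproduct on the quantum side requires fusion-type identities in $\fraku_\zeta \fsl_2$. A natural way forward is to first reestablish the genus-one case inside the present framework, which is substantially the content of \cite{CFW93b} and of the homological computation of \cite{M20}, and then to argue inductively on the genus, using the commuting $\fraku_\zeta \fsl_2$-action as the gluing device that combines the contributions of individual handles into those of twists and handle-swaps that mix them. In this way, the verification reduces to a finite number of genus-one and ``two-handle'' checks, each of which is a concrete identity in $\fraku_\zeta \fsl_2$ that can be verified by direct computation.
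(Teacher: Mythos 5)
Your plan is essentially the paper's proof of Theorem~\ref{T:The_Theorem}: one checks the intertwining generator by generator on the Lickorish twists $\tau_{\alpha_j}$, $\tau_{\beta_j}$, $\tau_{\gamma_k}$, and the locality of both actions reduces everything to explicit matrix comparisons in the fixed bases in genus one (Lemmas~\ref{L:homological_alpha} and \ref{L:homological_beta} versus Lemmas~\ref{L:quantum_alpha} and \ref{L:quantum_beta}) and genus two (Lemma~\ref{L:homological_gamma} versus Lemma~\ref{L:quantum_gamma}), together with the verification, via Lemmas~\ref{L:homological_adjoint} and \ref{L:quantum_adjoint}, that $\Phi_g^V$ is $\fraku_\zeta$-linear. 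Two cautions: the curves $\gamma_k$ are non-separating chain curves meeting exactly two consecutive handles, so the ``many-handle'' obstacle you anticipate does not arise for this generating set; and since $\ad^{\otimes g}$ is not semisimple, the proposed reduction to one vector per isotypic component is not available as stated (one would need $\fraku_\zeta$-module generators instead), which is why the paper simply verifies the identities on the full bases.
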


As a consequence of Theorem~\ref{T:main_result}, both families of tools (quantum and homological ones), and all arguments for tackling faithfulness, are now gathered into one coherent theory, where they can be used and developed jointly.

In the process of establishing this result, Proposition~\ref{P:homology_structure} provides a basis for the twisted homology of the configuration space $\Conf_n(\varSigma_{g,1})$, which is presented by labeled diagrams in the surface $\varSigma_{g,1}$. It should be noted that this basis persists under all twisting, meaning that, for instance, it remains a basis under all specializations of the variable $q$ to any root of unity. Furthermore, Proposition~\ref{P:computation_rules} provides computation rules for these diagrams that translate identities coming from homological calculus. It is by means of these diagrammatic rules that we obtain explicit formulas for the homological action of Dehn twists, in Section~\ref{S:homological_mcg_computation}. The isomorphism between mapping class group representations is then established by comparing homological equations with those expressing the quantum action of Dehn twists in (tensor powers of) integral bases of $\ad^{\otimes g}$, which are derived in Section~\ref{S:quantum_mcg_computation}.

We also stress the fact that our result provides a homological model for all tensor powers of the adjoint representation of $\fraku_\zeta \fsl_2$, which we identify with fi\-nite-di\-men\-sion\-al subrepresentations of some in\-fi\-nite-di\-men\-sion\-al representations obtained by specializing $U_q \fsl_2$-modules with coefficients in Heisenberg group rings. These representations have the remarkable property of intertwining actions of (subgroups of) mapping class groups, and thus seem to be a fragment of a non-semisimple TQFT defined by homological means, whose state spaces are infinite-dimensional. Understanding these representations, and unveiling such a TQFT, will be the subject of future work.

Finally, we highlight Section~\ref{S:recovering_BCGP}, where we construct homological representations of Torelli groups using a slightly different choice of twisted coefficients, obtained by perturbing our original representation of the Heisenberg group $\dHeis_g$ using a set of $2g$ formal variables. These parameters can be specialized, and interpreted as evaluations of a cohomology class on the surface $\varSigma_{g,1}$. The resulting twisted homology groups still support an action of the Torelli subgroup of $\Mod(\varSigma_{g,1})$ that intertwines an action of the quantum group $U_\zeta \fsl_2$. This allows us to conjecture that the resulting homological representations are equivalent to quantum representations arising from another non-semisimple TQFT due to Blanchet, Costantino, Geer, and Patureau \cite{BCGP14}.

\subsection{Structure of the paper}

The first part of this paper is devoted to the action of $\Mod(\varSigma_{g,1})$ on the twisted homology of the configuration space $\Conf_n(\varSigma_{g,1})$. Defining twisted homology requires choosing a representation of the surface braid group, which is the fundamental group of $\Conf_n(\varSigma_{g,1})$. This is done in Section~\ref{S:heisenberg_rep_of_braids}. The structure of the resulting twisted homology is discussed in Section~\ref{S:twisted_homology_modules}, with an explicit basis defined in terms of diagrams representing twisted homology classes given in Section~\ref{S:structure_and_bases}, and diagrammatic rules for computations presented in Section~\ref{S:diagrammatic_rules}. Since the natural action of $\Mod(\varSigma_{g,1})$ on $\Conf_n(\varSigma_{g,1})$ simultaneously affects twisted coefficients, we have to \textit{uncross} it appropriately. This is achieved (projectively) in Theorem~\ref{T:mcg_homological_projective_rep}, in Section~\ref{S:linear_homological_representations}. Before doing this, we also construct a homological action of the quantum group $U_q \fsl_2$ on the direct sum, over all $n \geqs 0$, of these twisted homologies. This action commutes with the projective one of the mapping class group, and is the subject of Theorem~\ref{T:Uq_homological_representation}, in Section~\ref{S:homological_action_of_quantum_sl2}. Its construction involves a purely homological definition of the operators corresponding to the generators of $U_q \fsl_2$, which give a surprising new point of view on the representation theory of quantum groups.

In Section~\ref{S:homological_computations}, we compute these homological actions explicitly with respect to the bases given by Proposition~\ref{P:homology_structure} and Remark~\ref{R:infinite_linear_basis}. This is done both for the generators of the quantum group $U_q \fsl_2$, on the one hand, and for those of the mapping class group $\Mod(\varSigma_{g,1})$, on the other. These computations are based on our graphical calculus at all stages: bases are composed of diagrams, and diagrammatic rules are extensively used.

In Section~\ref{S:quantum_representations}, we recall how to construct projective representations of $\Mod(\varSigma_{g,1})$ out of a factorizable ribbon Hopf algebra $H$. These are the byproducts of a non-semisimple TQFT that can be built out of the transmutation of $H$, which is a braided Hopf algebra in the category of left $H$-modules, whose underlying object is the adjoint representation of $H$. The setup is recalled in Section~\ref{S:Hopf_algebras}, while the TQFT construction is explained in Section~\ref{S:quantum_representations_of_mcg}. In particular, the category $\RCob$ of framed connected cobordisms between connected surfaces, due to Crane, Yetter, and Kerler, is recalled in Section~\ref{S:cobordisms}. Then, Habiro's diagrammatic notation for morphisms of $\RCob$, based on top tangles in handlebodies, is recalled in Section~\ref{S:top_tangles}. In Section~\ref{S:quantum_diagrammatic_calculus}, an algorithm for computing the TQFT associated with the transmutation of $H$ is explained in terms of a diagrammatic calculus whose flavor is quite different from the homological one (being much more algebraic, and much less topological). This leads to Proposition~\ref{P:quantum_action_of_mcg_generators}, which provides formulas for the action of generators of the mapping class group in Hopf algebraic terms. These formulas will be then computed for the small quantum group $\fraku_\zeta \fsl_2$ at an odd root of unity $\zeta$, whose definition is recalled in Section~\ref{S:small_quantum_sl2}.

In Section~\ref{S:quantum_computations}, we compute quantum actions with respect to bases in the explicit case of $\fraku_\zeta \fsl_2$. Namely, we fix a (carefully) chosen basis of state space $\ad^{\otimes g}$ of the surface $\varSigma_{g,1}$, and we compute explicitly the action of generators of the small quantum group $\fraku_\zeta \fsl_2$, on the one hand, and of generators of the mapping class group $\Mod(\varSigma_{g,1})$, on the other. These computations are much more algebraic in nature than the corresponding homological ones.

Our main result is Theorem~\ref{T:The_Theorem}, which is proved in Section~\ref{S:isomorphism}. It identifies homological representations with quantum ones. Namely, we give a simple explicit isomorphism between twisted homologies and state spaces of surfaces, which is a diagonal correspondence between the chosen bases on both sides. This isomorphism intertwines the two mapping class group actions: the quantum one arising from non-semisimple TQFTs, on one side, and the twisted homological one, on the other. It also intertwines the two actions of the small quantum group of $\fsl_2$: the tensor power of the adjoint representation, on the quantum side, and the newly-defined twisted homological one, on the homological side. Our result allows us to deduce integrality properties for the non-semisimple quantum representations of Torelli groups, as explained in Corollary~\ref{C:non-semisimple_TQFTs_are_integral}. We also conjecture that a generalization of our construction should recover the representations of Blanchet, Costantino, Geer, and Patureau, see Conjecture~\ref{C:BCGP_conjecture}.

Sections~\ref{S:homological_representations} and~\ref{S:homological_computations} only deal with twisted homologies of configuration spaces of surfaces. On the other hand, Sections~\ref{S:quantum_representations} and~\ref{S:quantum_computations} are only concerned with quantum representations of mapping class group arising from non-semisimple TQFTs. Therefore, \textbf{each pair of sections can be read independently}.

\subsection*{Acknowledgments}

J.M. is very grateful to R.~Detcherry for many invaluable discussions on homological representations in general, and to Q.~Faes for fruitful remarks on their computations and on mapping class groups. J.M. thanks A.~Beliakova for inviting him to Zürich, which greatly contributed to the progress on this project. Part of this work was achieved while J.M. was supported by the project ``AlMaRe'' (ANR-19-CE40-0001-01). M.D. is grateful to G.~Massuyeau for inviting him to Dijon, thus allowing this project to start. M.D. was supported by Grant n.~200020\_207374 of the Swiss National Science Foundation (SNSF). Both authors were supported by the National Center of Competence in Research (NCCR) SwissMAP. Both authors are thankful to A.~Beliakova and C.~Blanchet for organizing a workshop on this topic, and to CIRM for hosting the event. No spleens were used in the making of this project.

\section{Homological representations of mapping class groups}\label{S:homological_representations}

Let $\varSigma_{g,1}$ denote the compact connected surface of genus $g$ with $1$ boundary component, represented as follows:
\[
 \pic{braids_surface}
\]
In this picture, $2g$ embedded discs are identified in pairs through reflections along $g$ dotted vertical axes. When $g=1$, we will drop the numbering, and simply represent $\varSigma_{1,1}$ as shown:
\[
 \pic{braids_torus}
\]
We recall that, for every integer $n \geqs 1$, the \textit{configuration space of $n$ undordered points in $\varSigma_{g,1}$}, also called the \textit{$n$th configuration space of $\varSigma_{g,1}$}, is defined as
\begin{equation}\label{E:X_def}
 X_{n,g} := \Conf_n(\varSigma_{g,1}) = \{ (x_1,\ldots,x_n) \in \varSigma_{g,1}^{\times n} \mid \Forall 1 \leqs i < j \leqs n \quad x_i \neq x_j \} / \frakS_n,
\end{equation}
where the symmetric group $\frakS_n$ acts by permutation of coordinates. Elements of $X_{n,g}$ are denoted $\underline{x} = \{ x_1,\ldots,x_n \}$. For $n = 0$, we set $X_{0,g} := \varnothing$. Notice that $X_{n,g}$ is a non-compact connected $2n$-dimensional manifold.

In Section~\ref{S:twisted_homology_modules}, we will construct twisted homology groups of $X_{n,g}$. Then, we will equip them with an action of the quantum group of $\fsl_2$, in Section~\ref{S:homological_action_of_quantum_sl2}, and with
a commuting projective action of the mapping class group of $\varSigma_{g,1}$, in Section~\ref{S:homological_representations_of_mcg}. Since twisted homology requires a representation of the (group ring of the) fundamental group of $X_{n,g}$, we will first need to introduce Heinsenberg group representations, in Section~\ref{S:heisenberg_rep_of_braids}.

\subsection{Heisenberg representations of surface braid groups}\label{S:heisenberg_rep_of_braids}

In this section, we will construct and study representations of surface braid groups onto so-called Heisenberg groups.

\subsubsection{Surface braid groups}

Let us fix a basepoint $\braid{\xi}{} = \{ \xi_1,\ldots,\xi_n \} \in \partial X_{n,g}$ in the boundary of $X_{n,g}$ of the following form:
\[
 \pic{braids_basepoint}
\]
The \textit{$n$th braid group of $\varSigma_{g,1}$} is the fundamental group
\[
 \pi_{n,g} := \pi_1(X_{n,g},\braid{\xi}{}).
\]
An explicit presentation of $\pi_{n,g}$ can be found in \cite[Theorem~2.1]{BG05}, based on \cite[Theorem~1.1]{B01}. Let us recall it here for convenience. For all $n \geqs 1$, the group $\pi_{n,g}$ is generated by
\[
 \{ \braid{\sigma}{i}, \braid{\alpha}{j}, \braid{\beta}{j} \mid 1 \leqs i \leqs n-1, 1 \leqs j \leqs g \},
\]
where:
\begin{enumerate}
 \item $\braid{\sigma}{i}$ keeps all coordinates fixed except for the $i$th and $(i+1)$th ones, which move along the following paths:
  \[
   \pic{braids_sigma}
  \]
 \item $\braid{\alpha}{j}$ keeps all coordinates fixed except for the last one, which moves along the following path:
  \[
   \pic{braids_alpha}
  \]
 \item $\braid{\beta}{j}$ keeps all coordinates fixed except for the last one, which moves along the following path:
  \[
   \pic{braids_beta}
  \]
\end{enumerate}
These generators satisfy the following complete list of relations\footnote{Our convention for path concatenation is fixed in Appendix~\ref{A:twisted}.}
\begin{description}
 \item[\namedlabel{R:BR1}{(BR1)}] $[\braid{\sigma}{i},\braid{\sigma}{j}] = 1$ for all $1 \leqs i,j \leqs n-1$ such that $\lvert i-j \rvert \geqs 2$;
 \item[\namedlabel{R:BR2}{(BR2)}] $\braid{\sigma}{i} \ast \braid{\sigma}{i+1} \ast \braid{\sigma}{i} = \braid{\sigma}{i+1} \ast \braid{\sigma}{i} \ast \braid{\sigma}{i+1}$ for every $1 \leqs i \leqs n-2$;
 \item[\namedlabel{R:CR1}{(CR1)}] $[\braid{\sigma}{i},\braid{\alpha}{j}] = [\braid{\sigma}{i},\braid{\beta}{j}] = 1$ for all $1 \leqs i \leqs n-2$ and $1 \leqs j \leqs g$;
 \item[\namedlabel{R:CR2}{(CR2)}] $[\braid{\alpha}{j},\braid{\sigma}{n-1} \ast \braid{\alpha}{j} \ast \braid{\sigma}{n-1}] = [\braid{\beta}{j},\braid{\sigma^{-1}}{n-1} \ast \braid{\beta}{j} \ast \braid{\sigma^{-1}}{n-1}] = 1$ for every $1 \leqs j \leqs g$;
 \item[\namedlabel{R:CR3}{(CR3)}] $[\braid{\alpha}{j},\braid{\sigma}{n-1} \ast \braid{\alpha}{k} \ast \braid{\sigma^{-1}}{n-1}] = [\braid{\alpha}{j},\braid{\sigma}{n-1} \ast \braid{\beta}{k} \ast \braid{\sigma^{-1}}{n-1}] = [\braid{\beta}{j},\braid{\sigma}{n-1} \ast \braid{\alpha}{k} \ast \braid{\sigma^{-1}}{n-1}] = [\braid{\beta}{j},\braid{\sigma}{n-1} \ast \braid{\beta}{k} \ast \braid{\sigma^{-1}}{n-1}] = 1$ for all $1 \leqs j < k \leqs g$;
 \item[\namedlabel{R:SCR}{(SCR)}] $\braid{\sigma}{n-1} \ast \braid{\alpha}{j} \ast \braid{\sigma}{n-1} \ast \braid{\beta}{j} = \braid{\beta}{j} \ast \braid{\sigma^{-1}}{n-1} \ast \braid{\alpha}{j} \ast \braid{\sigma}{n-1}$ for every $1 \leqs j \leqs g$.
\end{description}
This presentation is obtained from \cite[Theorem~2.1]{BG05} by setting
\begin{align*}
 \braid{\sigma}{i} &= \sigma_{n-i+1}, &
 \braid{\alpha}{j} &= \delta_{2g-2j+1}, &
 \braid{\beta}{j} &= \delta_{2g-2j+2}^{-1}
\end{align*}
for all integers $1 \leqs i \leqs n-1$ and $1 \leqs j \leqs g$. Notice that relation~\ref{R:CR3} is equivalent to the one appearing in \cite[Theorem~2.1]{BG05} thanks to relation~\ref{R:CR2}. When $n=1$, there is no standard braid generator $\braid{\sigma}{i}$, and $\pi_{1,g}$ reduces to the fundamental group $\pi_1(\varSigma_{g,1},\xi)$, which is the free group with generators $\braid{\alpha}{j}$, $\braid{\beta}{j}$ for $1 \leqs j \leqs g$. In general, it is useful to fix names for a few other elements of $\pi_{n,g}$. For all integers $1 \leqs j \leqs g$ and $1 \leqs k \leqs g-1$, we set
\begin{align}
 \braid{\tilde{\alpha}}{j} &:=\braid{\beta^{-1}}{j} \ast \braid{\alpha}{j} \ast \braid{\beta}{j}, &
 \braid{\gamma}{k} &:= \braid{\alpha}{k+1} \ast \braid{\tilde{\alpha}^{-1}}{k}, &
 \braid{\delta}{j} &:= 
 \begin{cases}
  \braid{\alpha}{1} & \mbox{ if } j = 1, \\
  \braid{\gamma}{j-1} \ast \ldots \ast \braid{\gamma}{1} \ast \braid{\alpha}{1} & \mbox{ if } 2 \leqs j \leqs g.
 \end{cases} \label{E:other_braids}
\end{align}
Graphically, $\braid{\tilde{\alpha}}{j}$, $\braid{\gamma}{k}$, and $\braid{\delta}{j}$ are given by
\begin{gather*}
 \pic{braids_alpha_tilde} \\*
 \pic{braids_gamma} \\*
 \pic{braids_delta}
\end{gather*}
respectively.

\begin{remark}\label{R:alternative_generators_pi}
 Notice that
 \[
  \{ \braid{\sigma}{i}, \braid{\beta}{j}, \braid{\delta}{j} \mid 1 \leqs i \leqs n-1, 1 \leqs j \leqs g \}
 \]
 is also a system of generators of $\pi_{n,g}$. Indeed, for every $1 \leqs k \leqs g-1$, we have  
 \[
  \braid{\gamma}{k} = \braid{\delta}{k+1} \ast \braid{\delta^{-1}}{k},
 \]
 and, for every $1 \leqs j \leqs g$, we can recursively obtain
 \[
  \begin{cases}
   \braid{\alpha}{1} = \braid{\delta}{1}, & \\
   \braid{\tilde{\alpha}}{1} = \braid{\beta^{-1}}{1} \ast \braid{\delta}{1} \ast \braid{\beta}{1}, & \\
   \braid{\alpha}{j} = \braid{\gamma}{j-1} \ast \braid{\tilde{\alpha}}{j-1} & \mbox{ if } 2 \leqs j \leqs g, \\
   \braid{\tilde{\alpha}}{j} = \braid{\beta^{-1}}{j} \ast \braid{\alpha}{j} \ast \braid{\beta}{j} & \mbox{ if } 2 \leqs j \leqs g.
  \end{cases}
 \]
\end{remark}

\subsubsection{Heisenberg groups}\label{S:Heisenberg_groups}

Our next goal is to represent (by a surjective homomorphism) the surface braid group $\pi_{n,g}$ onto some simpler group (although we will actually work with group rings). Our choice for the latter is the Heisenberg group associated with $\varSigma_{g,1}$, which is richer than the abelianization of $\pi_{n,g}$, and will allow us to recover the action of a quantum group. For every integer $g \geqs 1$ and every commutative ring $R$, the \textit{Heisenberg group of genus $g$ with coefficients in $R$} is the group $\Heisenberg_g(R)$ of matrices of the form
\[
 \left\{ 
 \begin{pmatrix}
  1 & 2\bfa^\rmT & c \\ 0 & I_g & 2\bfb \\ 0 & 0 & 1
 \end{pmatrix} \in \GL_{g+2}(R) \Biggm\vert
 \bfa = 
 \begin{pmatrix}
  a_1 \\ \vdots \\ a_g
 \end{pmatrix},
 \bfb = 
 \begin{pmatrix}
  b_1 \\ \vdots \\ b_g
 \end{pmatrix} \in R^g,
  c \in R
 \right\}.
\]
Notice that $\Heisenberg_g(R)$ is not a commutative group, since
\[
 \begin{pmatrix}
  1 & 2\bfa^\rmT & c \\ 0 & I_g & 2\bfb \\ 0 & 0 & 1 
 \end{pmatrix} 
 \begin{pmatrix}
  1 & 2\bfa'^\rmT & c' \\ 0 & I_g & 2\bfb' \\ 0 & 0 & 1 
 \end{pmatrix} =
 \begin{pmatrix}
  1 & 2(\bfa + \bfa')^\rmT & c + c' + 4\bfa \cdot \bfb' \\ 0 & I_g & 2(\bfb + \bfb') \\ 0 & 0 & 1 
 \end{pmatrix}.
\]
The group $\dHeis_g := \Heisenberg_g(\Z)$ is called the \textit{discrete} Heisenberg group. Let us fix the elements
\begin{align}
 \srs &:=
 \begin{pmatrix}
  1 & \bfzero^\rmT & 1 \\ 0 & I_g & \bfzero \\ 0 & 0 & 1
 \end{pmatrix}, &
 \alpha_j &:=
 \begin{pmatrix}
  1 & 2\bfe_j^\rmT & 0 \\ 0 & I_g & \bfzero \\ 0 & 0 & 1
 \end{pmatrix}, &
 \beta_j &:= 
 \begin{pmatrix}
  1 & \bfzero^\rmT & 0 \\ 0 & I_g & 2\bfe_j \\ 0 & 0 & 1
 \end{pmatrix}, \label{E:Heisenberg_generators}
\end{align}
where $\bfe_j$ denotes the column vector of size $g$ whose $k$th entry is $\delta_{j,k}$.

\begin{lemma}\label{L:discrete_Heisenberg_presentation}
 The discrete Heisenberg group $\dHeis_g$ is isomorphic to the group with generators
 \[
  \{ \srs, \alpha_j, \beta_j \mid 1 \leqs j \leqs g \}
 \]
 and the following complete list of relations:
 \begin{description}
  \item[\namedlabel{R:DC1}{\normalfont (DC1)}] $[\srs,\alpha_j] = [\srs,\beta_j] = 1$ for every $1 \leqs j \leqs g$;
  \item[\namedlabel{R:DC2}{\normalfont (DC2)}] $[\alpha_j,\alpha_k] = [\alpha_j,\beta_k] = [\beta_j,\alpha_k] = [\beta_j,\beta_k] = 1$ for all $1 \leqs j < k \leqs g$;
  \item[\namedlabel{R:DSC}{\normalfont (DSC)}] $[\alpha_j,\beta_j] = \srs^4$ for every $1 \leqs j \leqs g$.
 \end{description}
\end{lemma}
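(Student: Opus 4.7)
My plan is to define the natural map $\varphi \colon G \to \dHeis_g$ from the abstract group $G$ given by the stated presentation to the matrix group $\dHeis_g$, sending the abstract generators $\srs, \alpha_j, \beta_j$ to the matrices of the same name defined in Equation~\eqref{E:Heisenberg_generators}. I will then prove that $\varphi$ is an isomorphism in three steps: (i) check that all relations are satisfied in $\dHeis_g$, so that $\varphi$ is well-defined; (ii) show surjectivity of $\varphi$ by exhibiting an explicit word recipe; and (iii) prove a normal form theorem in $G$ whose image under $\varphi$ is in bijection with $\dHeis_g$.

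For step (i), I will use the matrix product formula displayed just above the definition of $\Heisenberg_g(R)$. Relation \ref{R:DC1} holds because $\srs$ has $\bfa = \bfb = \bfzero$, so the cross-correction term $4\bfa\cdot\bfb'$ vanishes in either order. Relation \ref{R:DC2} follows from $\bfe_j\cdot\bfe_k = 0$ for $j \neq k$, which again makes all cross terms vanish. Relation \ref{R:DSC} reduces to a direct four-factor matrix computation showing that the commutator $\alpha_j\beta_j\alpha_j^{-1}\beta_j^{-1}$ has trivial linear parts and central entry exactly $4$, which is $\srs^4$.

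For step (ii), the product formula gives, by induction on $g$,
\[
\alpha_1^{a_1} \cdots \alpha_g^{a_g}\, \beta_1^{b_1} \cdots \beta_g^{b_g}\, \srs^{d} =
\begin{pmatrix} 1 & 2\bfa^{\rmT} & 4\bfa \cdot \bfb + d \\ 0 & I_g & 2\bfb \\ 0 & 0 & 1 \end{pmatrix},
\]
with $\bfa = (a_1,\ldots,a_g)^\rmT$ and $\bfb = (b_1,\ldots,b_g)^\rmT$. Setting $d := c - 4\bfa\cdot\bfb$ therefore realizes any prescribed matrix with parameters $(\bfa,\bfb,c)$, so $\varphi$ is surjective.

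Step (iii) is the main technical ingredient, and it also yields injectivity. The claim is that every word on $\{\srs^{\pm 1}, \alpha_j^{\pm 1}, \beta_j^{\pm 1}\}$ equals, modulo the defining relations, a unique word in the normal form $\alpha_1^{a_1} \cdots \alpha_g^{a_g}\, \beta_1^{b_1} \cdots \beta_g^{b_g}\, \srs^{d}$ with $(\bfa,\bfb,d) \in \Z^{2g+1}$. Existence follows from a terminating rewriting procedure: \ref{R:DC1} is used to migrate all occurrences of $\srs^{\pm 1}$ to the rightmost end of the word; \ref{R:DC2} is used to sort commuting generators with distinct indices, grouping all $\alpha$-letters to the left of all $\beta$-letters whenever possible; and the same-index exchange $\beta_j\alpha_j = \alpha_j\beta_j\,\srs^{-4}$ (together with its inverse-power variants derived from \ref{R:DSC} and \ref{R:DC1}) handles the remaining bad pairs, at the cost of producing central letters that are immediately swept into the $\srs$-tail via \ref{R:DC1}. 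Termination follows from the strict decrease, under each rewriting step, of the number of $\beta$-before-$\alpha$ inversions in the non-central part of the word. Uniqueness of the triple $(\bfa,\bfb,d)$ then follows from the computation of step (ii), which shows that distinct triples are mapped by $\varphi$ to distinct matrices. Therefore $\varphi$ is a bijection, which is the desired isomorphism.
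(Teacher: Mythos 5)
Your proposal is correct, and it diverges from the paper's proof in the way it establishes injectivity. The paper takes the shorter algebraic route: after checking that \ref{R:DC1}--\ref{R:DSC} hold among the matrices, it writes down the candidate inverse map $M(\bfa,\bfb,c)\mapsto\srs^{c-4\bfa\cdot\bfb}\bfalpha^{\bfa}\bfbeta^{\bfb}$ and verifies in one computation (using the displayed product formula) that this is a group homomorphism, which immediately yields the isomorphism. You instead prove a normal form theorem in the abstractly presented group $G$ by a rewriting argument and then separate points of $G$ by their images in $\dHeis_g$; your step (ii) computation is exactly the same identity the paper uses, just read in the other direction. The trade-off is that the paper's argument avoids any combinatorial bookkeeping at the cost of having to guess the inverse formula, while yours is more self-contained but requires the termination analysis. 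On that last point, one small imprecision: the count of $\beta$-before-$\alpha$ inversions is constant under the steps that sort $\alpha$-letters (or $\beta$-letters) among themselves by index, so it does not strictly decrease under \emph{each} rewriting step as stated; this is harmless — either use a lexicographically ordered tuple of inversion counts, or first collapse to the shape $(\text{$\alpha$-block})(\text{$\beta$-block})\srs^{d}$ using your measure and then sort within each block, where termination is trivial since the blocks generate free abelian subgroups modulo the relations. With that routine adjustment your argument is complete.
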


\begin{proof}
 It can be easily verified that relations~\ref{R:DC1}--\ref{R:DSC} are satisfied in $\dHeis_g$. Then, it is sufficient to check that
 \[
  \begin{pmatrix}
   1 & 2\bfa^\rmT & c \\ 0 & I_g & 2\bfb \\ 0 & 0 & 1 
  \end{pmatrix} \mapsto
  \srs^{c-4\bfa \cdot \bfb} \bfalpha^{\bfa} \bfbeta^{\bfb}
 \]
 defines the inverse homomorphism, where
 \begin{align*}
  \bfalpha^{\bfa} &:= \alpha_1^{a_1} \cdots \alpha_g^{a_g}, &
  \bfbeta^{\bfb} &:= \beta_1^{b_1} \cdots \beta_g^{b_g}.
 \end{align*}
 Notice that a product
 \[
  \begin{pmatrix}
   1 & 2\bfa^\rmT & c \\ 0 & I_g & 2\bfb \\ 0 & 0 & 1 
  \end{pmatrix}
  \begin{pmatrix}
   1 & 2\bfa'^\rmT & c' \\ 0 & I_g & 2\bfb' \\ 0 & 0 & 1 
  \end{pmatrix}
 \]
 is sent to
 \begin{align*}
  \srs^{c-4\bfa \cdot \bfb} \bfalpha^{\bfa} \bfbeta^{\bfb} \srs^{c'-4\bfa' \cdot \bfb'} \bfalpha^{\bfa'} \bfbeta^{\bfb'}
  &= \srs^{c+c'-4(\bfa \cdot \bfb + \bfa' \cdot \bfb + \bfa' \cdot \bfb')} \bfalpha^{\bfa+\bfa'} \bfbeta^{\bfb+\bfb'} 
  = \srs^{c+c'+4\bfa \cdot \bfb'-4(\bfa+\bfa') \cdot (\bfb+\bfb')} \bfalpha^{\bfa+\bfa'} \bfbeta^{\bfb+\bfb'},
 \end{align*}
 which is the image of 
 \[
  \begin{pmatrix}
   1 & 2(\bfa + \bfa')^\rmT & c + c' + 4\bfa \cdot \bfb' \\ 0 & I_g & 2(\bfb + \bfb') \\ 0 & 0 & 1 
  \end{pmatrix}. \qedhere
 \]
\end{proof}

The following statement was already proved, with slightly different conventions, in \cite[Lemma~4.4]{BGG11}, and it was used in \cite[Proposition~7 \& Corollary~8]{BPS21} in order to define twisted homology groups.

\begin{lemma}\label{L:phi}
 There exists a unique ring homomorphism $\varphi_{n,g}^\dHeis : \Z[\pi_{n,g}] \to \Z[\dHeis_g]$ satisfying
 \begin{align*}
  \braid{\sigma}{i} &\mapsto \sigma := -\srs^{-2}, &
  \braid{\alpha}{j} &\mapsto \alpha_j, &
  \braid{\beta}{j} &\mapsto \beta_j.
 \end{align*}
\end{lemma}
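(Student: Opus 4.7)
The plan is to construct $\varphi_{n,g}^\dHeis$ by first producing a group homomorphism $\pi_{n,g} \to \Z[\dHeis_g]^\times$ with the prescribed values on generators, and then extending $\Z$-linearly. Uniqueness is immediate, since the generators $\braid{\sigma}{i}, \braid{\alpha}{j}, \braid{\beta}{j}$ of $\pi_{n,g}$ generate $\Z[\pi_{n,g}]$ as a $\Z$-algebra, so any ring map out of it is determined by its action on them. For existence, I will invoke the universal property of the group ring: a group homomorphism into $\Z[\dHeis_g]^\times$ extends uniquely to a ring homomorphism from $\Z[\pi_{n,g}]$.

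By the presentation of $\pi_{n,g}$ recalled just before the statement, to build the group homomorphism it suffices to verify that the prescribed images $\sigma = -\srs^{-2}$, $\alpha_j$, $\beta_j$ satisfy relations \ref{R:BR1}--\ref{R:SCR} inside $\Z[\dHeis_g]^\times$. The crucial observation is that $\srs$ is central in $\dHeis_g$ by relation~\ref{R:DC1} of Lemma~\ref{L:discrete_Heisenberg_presentation}, so $\sigma = -\srs^{-2}$ is central in the subgroup $\{\pm 1\} \cdot \dHeis_g \subset \Z[\dHeis_g]^\times$. This centrality trivializes most of the verifications.

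Concretely, \ref{R:BR1} becomes $[\sigma, \sigma] = 1$; \ref{R:BR2} becomes $\sigma^3 = \sigma^3$; \ref{R:CR1} and \ref{R:CR2} follow from centrality of $\sigma$ together with $[\alpha_j, \alpha_j] = [\beta_j, \beta_j] = 1$; and \ref{R:CR3} reduces, again by centrality of $\sigma$, to the four commutation relations $[\alpha_j, \alpha_k] = [\alpha_j, \beta_k] = [\beta_j, \alpha_k] = [\beta_j, \beta_k] = 1$ for $j < k$, all of which hold by \ref{R:DC2}.

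The only relation that genuinely requires the nontrivial part of the Heisenberg structure is the surface commutator relation \ref{R:SCR}. After applying $\varphi_{n,g}^\dHeis$, the left-hand side of \ref{R:SCR} becomes $\sigma^2 \alpha_j \beta_j$ and the right-hand side becomes $\sigma^{-2}\beta_j \alpha_j \sigma^2 = \beta_j \alpha_j$ (using centrality of $\sigma$). Equality is therefore equivalent to $\sigma^2 = \beta_j \alpha_j \beta_j^{-1} \alpha_j^{-1} = [\alpha_j, \beta_j]^{-1}$. Since $\sigma^2 = (-\srs^{-2})^2 = \srs^{-4}$ and $[\alpha_j,\beta_j] = \srs^4$ by \ref{R:DSC}, this is exactly what is needed. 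The minus sign in the definition of $\sigma$ is harmless here because it appears squared; it is built in precisely to match the sign conventions of the twisted homology coefficients used later. This check completes the construction; the main (and only real) obstacle is bookkeeping with the centrality of $\srs$ and the sign in $\sigma = -\srs^{-2}$, which ensures \ref{R:SCR} goes through.
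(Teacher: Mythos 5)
Your proof is correct and follows essentially the same route as the paper's: the paper likewise reduces existence to checking that relations (BR1)--(SCR) are satisfied by $-\srs^{-2}$, $\alpha_j$, $\beta_j$, but leaves the verifications to the reader, whereas you carry out the only substantive one, (SCR), explicitly and correctly ($\sigma^2 = \srs^{-4} = [\alpha_j,\beta_j]^{-1}$). The only detail the paper adds is the remark that for $n=1$ the group $\pi_{1,g}$ is free, so there is nothing to check; your universal-property argument covers that case vacuously.
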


\begin{proof}
 In order to check that the assignment gives well-defined homomorphisms, we simply need to verify that relations~\ref{R:BR1}--\ref{R:SCR} are satisfied by $-\srs^{-2}$, $\alpha_j$, $\beta_j$ for every integer $1 \leqs j \leqs g$. These are easy computations, which are left to the reader. Notice that, when $n = 1$, the surface braid group $\pi_{n,g}$ reduces to the fundamental group of $\varSigma_{g,1}$, which is a free group, so there is nothing to check in this case.
\end{proof}

The following observation is not crucial for the present work. It is useful, however, for interpreting the twisted homology groups defined in Section~\ref{S:twisted_homology_modules} in terms of standard homology groups of the corresponding regular covers, see for instance Remark~\ref{R:interpretation_of_homology_intermsof_covers}.

\begin{remark}\label{R:quotient_group}
 When $n \geqs 2$, we have a commutative diagram
 \begin{center}
  \begin{tikzpicture}[descr/.style={fill=white}]
   \node (P1) at (-1.5,1) {$\Z[\pi_{n,g}]$};
   \node (P2) at (1.5,1) {$\Z[\dHeis_g]$};
   \node (P3) at (0,0) {$\Z[\pi_{n,g}/K_{n,g}]$};
   \draw
   (P1) edge[->] node[above] {\scriptsize $\varphi_{n,g}^\dHeis$} (P2)
   (P1) edge[->>] node[left,xshift=-7.5pt] {\scriptsize $\varphi_{n,g}^{\pi/K}$} (P3)
   (P3) edge[right hook->] (P2);
  \end{tikzpicture}
 \end{center}
 where $\varphi_{n,g}^{\pi/K}$ is induced by the projection to the quotient of $\pi_{n,g}$ with respect to the normal subgroup $K_{n,g} \triangleleft \pi_{n,g}$  with generators
 \[
  \{ [\braid{\sigma}{n-1},\braid{\sigma}{i}], 
  [\braid{\sigma}{n-1},\braid{\alpha}{j}], 
  [\braid{\sigma}{n-1},\braid{\beta}{j}] \mid
  1 \leqs i \leqs n-2, 1 \leqs j \leqs g \}.
 \]
 Indeed, $\varphi_{n,g}^\dHeis$ clearly vanishes on $\Z[K_{n,g}]$, and it is easy to see that $\pi_{n,g}/K_{n,g}$ is isomorphic to the group with generators
 \[
  \{ \sigma, \alpha_j, \beta_j \mid 1 \leqs j \leqs g \}
 \]
 and the following complete list of relations:
 \begin{description}
  \item[\namedlabel{R:QC1}{\normalfont (QC1)}] $[\sigma,\alpha_j] = [\sigma,\beta_j] = 1$ for every $1 \leqs j \leqs g$;
  \item[\namedlabel{R:QC2}{\normalfont (QC2)}] $[\alpha_j,\alpha_k] = [\alpha_j,\beta_k] = [\beta_j,\alpha_k] = [\beta_j,\beta_k] = 1$ for all $1 \leqs j < k \leqs g$;
  \item[\namedlabel{R:QSC}{\normalfont (QSC)}] $[\alpha_j,\beta_j] = \sigma^{-2}$ for every $1 \leqs j \leqs g$.
 \end{description}
 The analogous statement holds for $n=1$, provided we define $K_{1,g} \triangleleft \pi_{1,g}$ as the normal subgroup generated by the following list of elements:
 \begin{enumerate}
  \item $[\braid{\alpha}{j},\braid{\beta}{j}][\braid{\alpha}{k},\braid{\beta}{k}]^{-1}$ for all integers $1 \leqs j,k \leqs g$;
  \item $[\braid{\alpha}{j},\braid{\alpha}{k}]$, $[\braid{\alpha}{j},\braid{\beta}{k}]$, $[\braid{\beta}{j},\braid{\alpha}{k}]$, $[\braid{\beta}{j},\braid{\beta}{k}]$ for all integers $1 \leqs j < k \leqs g$;
  \item $[[\braid{\alpha}{j},\braid{\beta}{j}],\braid{\alpha}{k}]$, $[[\braid{\alpha}{j},\braid{\beta}{j}],\braid{\beta}{k}]$ for all integers $1 \leqs j,k \leqs g$.
 \end{enumerate}
\end{remark}

\subsection{Discrete Heisenberg homology}\label{S:twisted_homology_modules}

We are ready to define homology groups of $X_{n,g}$ twisted by the homomorphism $\varphi_{n,g}^\dHeis$, but first, we need to specify a relative part in the boundary of $X_{n,g}$. To this end, let us denote by $\partial_- \varSigma_{g,1}$ the subset of $\partial \varSigma_{g,1}$ represented by the magenta arc
\[
 \pic{homology_interval_-}
\]
and let us consider the submanifold
\begin{equation}\label{E:Y_def}
 Y_{n,g} := \{ \underline{x} = \{ x_1,\ldots,x_n \} \in X_{n,g} \mid \Exists 1 \leqs i \leqs n \quad x_i \in \partial_- \varSigma_{g,1} \}
\end{equation}
of $\partial X_{n,g}$.

\begin{definition}\label{D:Heisenberg_homology}
 For every integer $n \geqs 1$, the \textit{$n$th discrete Heisenberg homology group} of $\varSigma_{g,1}$ is the $\Z[\dHeis_g]$-module
 \[
  \homol_{n,g}^{\dHeis} := H^\BM_n(X_{n,g},Y_{n,g};\varphi_{n,g}^{\dHeis}),
 \]
 where $X_{n,g}$ and $Y_{n,g}$ are defined in Equations~\eqref{E:X_def} and \eqref{E:Y_def} respectively, and $\varphi_{n,g}^{\dHeis}$ is defined in Lemma~\ref{L:phi}. For $n = 0$, we set 
 \[
  \homol_{0,g}^{\dHeis} := {\dHeis_g}.
 \]
 The \textit{total discrete Heisenberg homology group} of $\varSigma_{g,1}$ is the direct sum
 \[
  \homol_g^{\dHeis} := \bigoplus_{n \geqs 0} \homol_{n,g}^{\dHeis}.
 \]
\end{definition}

We highlight the fact that, instead of standard singular homology, we are using \textit{Borel--Moore homology} (see Appendix~\ref{A:Borel--Moore}) with \textit{twisted coefficients} (see Appendix~\ref{A:twisted}).

\subsubsection{Diagrammatic notation}\label{S:diagrammatic_notation}

We represent twisted homology classes in $\homoldH_{n,g}$ using diagrams composed of embedded disjoint curves in the surface $\varSigma_{g,1}$ labeled by integers, and equipped with paths connecting them to the base point $\braid{\xi}{} \in \partial X_{n,g}$. Such diagrams are a generalization to positive genus surfaces of the those introduced for punctured discs in \cite[Section~3.1]{M20}. Let us explain how to represent elements of $\homol_{n,g}^\dHeis$ this way. First of all, let us denote by
\[
 \Delta^k := \{ (t_1,\ldots,t_k) \in \R^k \mid 0 < t_1 < \ldots < t_k < 1 \}
\]
the standard open $k$-dimensional simplex in $\R^k$, which we can think of as the configuration space of $k$ (ordered) points in the open interval $]0,1[$.

\begin{definition}\label{D:embedded_twisted_cycle}
 An \textit{embedded twisted cycle} of dimension $n \geqs 0$ in $\varSigma_{g,1}$ is a $4$-tuple $(m,\bfsimp,\bfk,\thread)$ where:
 \begin{enumerate}
  \item $m \geqs 0$ is an integer; 
  \item $\bfsimp$ is an \textit{$m$-multisimplex}, which is given by an ordered family $(\simp_1,\ldots,\simp_m)$ of disjoint proper embeddings $\simp_1,\ldots,\simp_m : [0,1] \to \varSigma_{g,1}$ such that $\simp_\ell$ embeds $\{ 0,1 \}$ into $\partial_- \varSigma_{g,1}$ for $1 \leqs \ell \leqs m$. 
  \item $\bfk$ is an \textit{$m$-partition} of $n$, which is given by an ordered family $\bfk = (k_1,\ldots,k_m) \in \N^m$ of integers satisfying $k_1 + \ldots + k_m = n$, and which provides a \textit{labeling} of the components of the $m$-multisimplex $\bfsimp$.
  \item $\thread$ is a \textit{thread} of the $\bfk$-labeled $m$-multisimplex $\bfsimp$, which is given by an ordered family $(\thrcmp_1,\ldots,\thrcmp_n)$ of disjoint embeddings of $\thrcmp_1,\ldots,\thrcmp_m : [0,1] \to \varSigma_{g,1}$ satisfying $\thrcmp_i(0) = \xi_i$ and $\thrcmp_i(1) \in \bfsimp^{\bfk}(\bfDelta^{\bfk}(\frac 12,\ldots,\frac 12))$ for all integers $1 \leqs i \leqs n$, where 
   \[
    \bfDelta^{\bfk} : {]0,1[}^{\times n} \to \Delta^{k_1} \times \ldots \times \Delta^{k_m}
   \]
   is the homeomorphism defined in Equation~\eqref{E:homeomorphism_hypercube_simplices}, and 
   \[
    \bfsimp^{\bfk} : \Delta^{k_1} \times \ldots \times \Delta^{k_m} \to X_{n,g}
   \]
   is the embedding defined in Equation~\eqref{E:embedding_simplices}.
 \end{enumerate}
\end{definition}

We can represent an embedded twisted cycle $(m,\bfsimp,\bfk,\thread)$ diagrammatically by drawing dashed-dotted curves that follow the images of the proper embeddings $\simp_1, \ldots, \simp_m$, oriented from $0$ to $1$, and carrying the labels $k_1, \ldots, k_m$ respectively, and by drawing solid curves that follow the thread $\thread$. We will often use multiple colors for a multisimplex, in order to help the reader distinguish different components, but we will always reserve the color red for the thread. Here is an example of a diagram of an embedded twisted cycle for $n=5$ and $g=1$:
\begin{equation}\label{fig:example_homology_class}
 \pic{homology_example_2}
\end{equation}

The rest of this section is devoted to explaining how to interpret these diagrams as relative homology classes in $H^\BM_n(X_{n,g},Y_{n,g};\varphi_{n,g}^\pi)$, where $\varphi^\pi : \Z[\pi_{n,g}] \to \Z[\pi_{n,g}]$ denotes the identity (or equivalently in $H^\BM_n(X_{n,g},Y_{n,g};\varphi^{\pi/K})$, where $\varphi_{n,g}^{\pi/K} : \Z[\pi_{n,g}] \to \Z[\pi_{n,g}/K_{n,g}]$ denotes the ring homomorphism introduced in Remark~\ref{R:interpretation_of_homology_intermsof_covers}). If we forget threads, then an embedded twisted cycle still determines a relative homology class, but one in $H^\BM_n(X_{n,g},Y_{n,g})$ instead of $H^\BM_n(X_{n,g},Y_{n,g};\varphi_{n,g}^\pi)$. Indeed, every $m$-multisimplex $\bfsimp = (\simp_1,\ldots,\simp_m)$, together with an $m$-partition $\bfk = (k_1,\ldots,k_m)$ of $n$, induces an embedding of the form
\begin{center}
 \begin{tikzpicture}
  \node (P1) at (0,0) {${]0,1[}^{\times n}$};
  \node (P2) at (3,0) {$\Delta^{k_1} \times \ldots \times \Delta^{k_m}$};
  \node (P3) at (6,0) {$X_{n,g}$,};
  \draw
  (P1) edge[->] node[above] {\scriptsize $\bfDelta^{\bfk}$} (P2)
  (P2) edge[->] node[above] {\scriptsize $\bfsimp^{\bfk}$} (P3);
 \end{tikzpicture}
\end{center}
where $\bfDelta^{\bfk}$ identifies the open hypercube ${]0,1[}^{\times n}$ with the product of simplices $\Delta^{k_1} \times \ldots \times \Delta^{k_m}$, and $\bfsimp^{\bfk}$ embeds the latter into $X_{n,g}$. More precisely, for every integer $k \geqs 0$, we have a homeomorphism
\begin{align*}
 \bfDelta^k : {]0,1[}^{\times k} &\to \Delta^k \\*
 (t_1,\ldots,t_k) &\mapsto (\Delta^k_1(t_1),\ldots,\Delta^k_k(t_k)),
\end{align*}
where $\Delta^k_1(t_1) := t_1$ and, recursively, $\Delta^k_i(t_i) := t_i(1-\Delta^k_{i-1}(t_{i-1})) + \Delta^k_{i-1}(t_{i-1})$ for all integers $1 < i \leqs k$. Therefore, we can define the homeomorphism
\begin{align}
 \bfDelta^{\bfk} : {]0,1[}^{\times n} &\to \Delta^{k_1} \times \ldots \times \Delta^{k_m} \label{E:homeomorphism_hypercube_simplices} \\*
 (t_1,\ldots,t_n) &\mapsto (\Delta^{\bfk}_1(t_1),\ldots,\Delta^{\bfk}_n(t_n)), \nonumber
\end{align}
where, if $i = k_1 + \ldots + k_{\ell-1} + i_\ell$ for some integers $1 \leqs \ell \leqs m$ and $1 \leqs i_\ell \leqs k_\ell$, then $\Delta^{\bfk}_i := \Delta^{k_\ell}_{i_\ell}$, and similarly, we can define the embedding
\begin{align}
 \bfsimp^{\bfk} : \Delta^{k_1} \times \ldots \times \Delta^{k_m} &\to X_{n,g}, \label{E:embedding_simplices} \\*
 (t_1,\ldots,t_n) &\mapsto \{ \simp^{\bfk}_1(t_1),\ldots,\simp^{\bfk}_n(t_n) \} \nonumber
\end{align}
where, if $i = k_1 + \ldots + k_{\ell-1} + i_\ell$ for some integers $1 \leqs \ell \leqs m$ and $1 \leqs i_\ell \leqs k_\ell$, then $\simp^{\bfk}_i := \simp_\ell$. Since faces of simplices are sent either to infinity (that is, to one of the diagonals of $\varSigma_{g,1}^{\times n}$ corresponding to the collision of two coordinates, or particles) or to $Y_{n,g}$, the embedding $\bfsimp^{\bfk} \circ \bfDelta^{\bfk}$ determines a Borel--Moore cycle in $X_{n,g}$ relative to $Y_{n,g}$. Indeed, in Borel--Moore homology, open submanifolds whose boundary goes to infinity are always cycles. Then, we will denote the homology class of $\bfsimp^{\bfk} \circ \bfDelta^{\bfk}$ by 
\[
 \bfsimp(\bfk) \in H^\BM_n(X_{n,g},Y_{n,g}).
\]

Next, let us explain how to use threads to lift homology classes from $H^\BM_n(X_{n,g},Y_{n,g})$ to $H^\BM_n(X_{n,g},Y_{n,g};\varphi_{n,g}^\pi)$. Recall that, by definition, a thread of a $\bfk$-labeled $m$-multisimplex $\bfsimp$ determines a path $\thread : [0,1] \to X_{n,g}$ from $\myuline{\xi}$ to $\bfsimp^{\bfk}(\bfDelta^{\bfk}((\frac 12,\ldots,\frac 12))$. In other words, a thread is a point $\thread \in \tilde{X}_{n,g}$ in the fiber $\tilde{p}^{-1}(\bfsimp^{\bfk}(\bfDelta^{\bfk}(\frac 12,\ldots,\frac 12)))$. Notice that a thread also naturally determines a permutation $\perm_{\thread} \in \frakS_n$, together with an associated map 
\begin{align*}
 \perm_{\thread} : {]0,1[}^{\times n} &\to {]0,1[}^{\times n} \\*
 (t_1,\ldots,t_n) &\mapsto (t_{\perm_{\thread}(1)},\ldots,t_{\perm_{\thread}(n)}),
\end{align*}
where $\perm_{\thread}(i)$ is the unique integer satisfying $\thrcmp_i(1) = \simp^{\bfk}_{\perm_{\thread}(i)}(\Delta^{\bfk}_{\perm_{\thread}(i)}(\frac 12))$. Therefore, it determines uniquely a lift 
\[
 \tilde{\bfsimp}^{\bfk}_{\thread} : \Delta^{k_1} \times \ldots \times \Delta^{k_m} \to \tilde{X}_{n,g}
\]
of $\bfsimp^{\bfk} : \Delta^{k_1} \times \ldots \times \Delta^{k_m} \to X_{n,g}$ satisfying $\tilde{\bfsimp}^{\bfk}_{\thread}(\bfDelta^{\bfk}(\frac 12,\ldots,\frac 12)) = \thread$. Clearly, the embedding $\tilde{\bfsimp}^{\bfk}_{\thread} \circ \bfDelta^{\bfk} \circ \perm_{\thread}^{-1}$ still determines a Borel--Moore cycle in $\tilde{X}_{n,g}$ relative to $\tilde{p}^{-1}(Y_{n,g})$. Then, we will denote the corresponding homology class by 
\[
 \tilde{\bfsimp}_{\thread}(\bfk) \in H^\BM_n(X_{n,g},Y_{n,g};\varphi_{n,g}^\pi),
\]
although most of the time we will drop the thread $\thread$ from the notation.

\begin{remark}\label{R:thread_and_orientation}
 Notice that the thread $\thread$ serves a double role: not only it specifies a lift $\tilde{\bfsimp}^{\bfk}_{\thread}$ of the embedding $\bfsimp^{\bfk}$, but it also dictates, via the permutation $\perm_{\thread}$, how the components of the hypercube ${]0,1[}^{\times n}$ are distributed onto it. Thus, in $H^\BM_n(X_{n,g},Y_{n,g};\varphi_{n,g}^\pi)$, we have
 \[
  \tilde{\bfsimp}_{\thread'} (\bfk) = \sgn(\perm_{\thread} \circ \perm_{\thread'}^{-1}) (\thread' \ast \thread^{-1}) \cdot \tilde{\bfsimp}_{\thread} (\bfk),
 \]
 where $\sgn : \frakS_n \to \{ +1,-1 \}$ denotes the sign, or parity, of permutations.
\end{remark}

In order to keep our graphical notation as light as possible, let us adopt the following convention: whenever we label a component of a red thread by $k$, the picture should be understood as representing $k$ disjoint parallel red components in the surface, starting at $k$ consecutive points of $\braid{\xi}{}$, and ending at $k$ distinct neighboring points along a single component of a multisimplex. Such a labeled thread will be called a \textit{multithread}. Furthermore, from now on, whenever we draw a single unlabeled component of a red multithread ending at a point of a $k$-labeled component of a multisimplex (which represents therefore an embedded $k$-dimensional simplex), the red component should be understood as carrying the label $k$. For example, we have
\[
 \pic{homology_example_3} = \pic{homology_example_4}
\]

\begin{remark}
 In the notation of Remark~\ref{R:interpretation_of_homology_intermsof_covers}, an embedded twisted cycle also defines a relative homology class in $H^\BM_n(X_{n,g},Y_{n,g};\varphi_{n,g}^{\pi/K})$ given by $\hat{\bfsimp}^{\bfk}_{\thread} \circ \bfDelta^{\bfk} \circ \perm_{\thread}$, where
 \[
  \hat{\bfsimp}^{\bfk}_{\thread} : {]0,1[}^{\times n} \to \hat{X}_{n,g}
 \]
 denotes the unique lift of $\bfsimp^{\bfk} : {]0,1[}^{\times n} \to X_{n,g}$ sending $(\frac 12,\ldots,\frac 12)$ to the projection of $\thread$ to $\hat{X}_{n,g} = \tilde{X}_{n,g}/K_{n,g}$. This lift is clearly the projection of $\tilde{\bfsimp}^{\bfk}_{\thread}$ to $\hat{X}_{n,g}$.
\end{remark}

\subsubsection{Structure and bases}\label{S:structure_and_bases}

For all $\bfa=(a_1,\ldots,a_g),\bfb=(b_1,\ldots,b_g) \in \N^{\times g}$, let us set
\[
 \left| \bfa + \bfb \right| = \sum_{j=1}^g (a_j+b_j).
\]
If $\left| \bfa + \bfb \right| = n$, meaning that $(\bfa,\bfb)$ is a $2g$-partition of $n$, then let us consider the relative homology class 
\[
 \basis(\bfa,\bfb) \in H^\BM_n(X_{n,g},Y_{n,g};\varphi_{n,g}^\pi)
\]
corresponding to the embedded twisted cycle represented by
\begin{equation}\label{E:basis}
 \basis(\bfa,\bfb) := \pic{homology_basis}
\end{equation}
where green and blue colors are only used to clearly distinguish arcs. Notice that we will not explicitly refer to the multithread in the notation, in order to keep it as light as possible. The next statement is a direct generalization of \cite[Proposition~3.6]{M20}, which is based on \cite[Lemma~3.1]{B00}. The result was already proved in \cite[Theorem~A.(a)]{BPS21}, but we repeat here the sketch of a proof, for convenience of the reader.

\begin{proposition}\label{P:homology_structure}
 The $\Z[\pi_{n,g}]$-module $H^\BM_n(X_{n,g},Y_{n,g};\varphi_{n,g}^\pi)$ is free with basis
 \[
  \left\{ \basis(\bfa,\bfb) \Bigm|
  \bfa, \bfb \in \N^{\times g}, \left| \bfa + \bfb \right| = n \right\},
 \]
 while $H^\BM_k(X_{n,g},Y_{n,g};\varphi_{n,g}^\pi) = \{ 0 \}$ for all $k \neq n$. 
\end{proposition}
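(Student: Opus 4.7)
My plan is to follow the approach of Bigelow in~\cite{B00}, Lemma~3.1 (where the result is established for punctured discs), adapted to positive genus surfaces along the lines of~\cite{BPS21}, Theorem~A.(a). The strategy is to build a cell/Morse decomposition of $X_{n,g}$ relative to $Y_{n,g}$ whose top-dimensional critical cells correspond exactly to the proposed basis $\{\basis(\bfa,\bfb) \mid \left|\bfa+\bfb\right|=n\}$, and then use standard arguments to convert this into the claimed homological statement.

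First, I would set up the geometric input. Note that $\varSigma_{g,1}$ deformation retracts onto a wedge $\simp$ of $2g$ proper embedded arcs with endpoints in $\partial_-\varSigma_{g,1}$, chosen to represent the loops $\braid{\alpha}{j}$ and $\braid{\beta}{j}$ appearing in~\eqref{E:basis}. This geometric retraction guides a proper Morse function (or, equivalently, a handle decomposition) on $X_{n,g}$ whose critical submanifolds of top dimension are precisely the embedded products of simplices $\bfsimp^{\bfk}\circ\bfDelta^{\bfk}$, one per $2g$-partition $(\bfa,\bfb)$ of $n$. For each such partition, the corresponding cell places $a_j$ coordinates on the $\alpha$-arc labeled by $j$ and $b_j$ coordinates on the $\beta$-arc labeled by $j$; its boundary faces either collide two coordinates (hence escape to infinity, which is allowed in Borel--Moore homology) or land in $Y_{n,g}$. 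This verifies that each $\basis(\bfa,\bfb)$ is a well-defined relative Borel--Moore cycle, and that it represents a top cellular chain in this decomposition once the thread data of Definition~\ref{D:embedded_twisted_cycle} is fixed.

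Lifting the decomposition to the universal cover $\tilde{X}_{n,g}$, the top-dimensional cellular chain module becomes free over $\Z[\pi_{n,g}]$ on lifts of the $\basis(\bfa,\bfb)$. The remaining task is to show that all strictly lower-dimensional cells of the decomposition are acyclic. This I would handle via a discrete Morse matching: pair each non-top cell (one in which one or more coordinates have collapsed to the wedge vertex of $\simp$) with a cell one dimension higher, obtained by pushing the offending coordinate slightly along a canonically chosen adjacent arc. All non-top cells cancel in such pairs, simultaneously yielding $H^{\BM}_k(X_{n,g},Y_{n,g};\varphi_{n,g}^\pi)=0$ for $k\neq n$ and freeness in degree $n$ on the proposed basis, with $\pi_{n,g}$-equivariance of the matching guaranteeing that the generators are the lifts of $\basis(\bfa,\bfb)$.

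The main obstacle is organizing this Morse matching coherently in the positive genus case. In the punctured disc setting of~\cite{B00}, the argument proceeds by an essentially canonical push towards the boundary, but for $g\geqs 1$ one must make a consistent choice of which arc to push off towards, and then verify that the matching is equivariant under the deck transformations of $\tilde{X}_{n,g}$ and compatible with the thread bookkeeping in Section~\ref{S:diagrammatic_notation}. Once this combinatorial matching is in place, freeness and concentration in degree $n$ follow formally from the cellular chain complex.
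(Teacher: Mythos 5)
Your proposal follows essentially the same route as the paper's (very brief) sketch: retract $\varSigma_{g,1}$ onto the wedge of $2g$ arcs underlying the $\basis(\bfa,\bfb)$ together with $\partial_-\varSigma_{g,1}$, extend to configuration spaces, and invoke Bigelow's trick \cite[Lemma~3.1]{B00}. Your discrete Morse matching is simply a more explicit way of carrying out the excision/cancellation step that the paper leaves to the cited reference, so the two arguments coincide in substance.
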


\begin{proof}[Sketch of proof] 
 The surface $\varSigma_{g,1}$ retracts onto a wedge of $2g$ circles, which can be embedded into $\varSigma_{g,1}$ as (the solid version of) the blue and green curves appearing in the definition of $\basis(\bfa,\bfb)$ above, together with the magenta interval $\partial_- \varSigma_{g,1}$. This gives a basis of $H^\BM_n(X_{n,g},Y_{n,g};\varphi_{n,g}^\pi)$ following Bigelow's trick \cite[Lemma~3.1]{B00}, by extending the retraction to configuration spaces, and applying excision.
\end{proof}

\begin{remark}
 Since $H^\BM_k(X_{n,g},Y_{n,g};\varphi_{n,g}^\pi)$ is a free $\Z[\pi_{n,g}]$-module for all $k \geqs 0$, then we have
 \[
  H^\BM_k(X_{n,g},Y_{n,g};\varphi^M_{n,g}) \cong H^\BM_k(X_{n,g},Y_{n,g};\varphi_{n,g}^\pi) \otimes_{\Z[\pi_{n,g}]} M_{n,g}
 \]
 for every left $\Z[\pi_{n,g}]$-module $M_{n,g}$ determined by $\varphi^M_{n,g} : \Z[\pi_{n,g}] \to \End_\Z(M_{n,g})$, see Remark~\ref{R:twisted_universal_coeff_free}. In other words, the relative twisted homology of $(X_{n,g},Y_{n,g})$ with coefficients in any left $\Z[\pi_{n,g}]$-module $M_{n,g}$ is also concentrated in degree $n$. Furthermore, if $M_{n,g}$ is a free $R$-module with basis $\{ m_1, \ldots, m_k \}$ over some ring $R$, and $\varphi^M_{n,g}$ takes values in $\End_R(M_{n,g})$ (meaning that $M_{n,g}$ is a $(Z[\pi_{n,g}],R)$-bimodule), then a basis of $H^\BM_n(X_{n,g},Y_{n,g};\varphi^M_{n,g})$ as a free $R$-module is given by 
 \[
  \left\{ \basis(\bfa,\bfb) \otimes m_i \Bigm| 
  \bfa, \bfb \in \N^{\times g}, \left| \bfa + \bfb \right| = n, 1 \leqs i \leqs k
  \right\}.
 \]
\end{remark}

\begin{remark}\label{R:interpretation_of_homology_intermsof_covers}
 If $\hat{p} : \hat{X}_{n,g} \to X_{n,g}$ denotes the regular cover corresponding to the normal subgroup $K_{n,g} \triangleleft \pi_{n,g}$ of Remark~\ref{R:quotient_group}, then the proof of Proposition~\ref{P:homology_structure} can be easily adapted to show that $H^\BM_k(X_{n,g},Y_{n,g};\varphi_{n,g}^{\pi/K})$ is a free $\Z[\dHeis_g]$-module for all $k \geqs 0$. This yields a natural isomorphism 
 \begin{align*}
  \homol_{n,g}^\dHeis &\cong H^\BM_n(X_{n,g},Y_{n,g};\varphi_{n,g}^{\pi/K}) \otimes_{\Z[\pi_{n,g}/K_{n,g}]} \Z[\dHeis_g].
 \end{align*}
 Roughly speaking, the twisted homology group $\homol_{n,g}^\dHeis$ is nothing else than the standard homology group of a regular cover of $X_{n,g}$ with extended coefficients (namely, we can multiply by a square root of $-\sigma$, called $q^{-1}$).
\end{remark}

\begin{corollary}\label{C:Heisenberg_basis}
 The $\Z[\dHeis_g]$-module $\homoldH_{n,g}$ is free with basis
 \[
  \left\{ \basis(\bfa,\bfb) \Bigm| 
  \bfa, \bfb \in \N^{\times g}, \left| \bfa + \bfb \right| = n \right\}.
 \]
\end{corollary}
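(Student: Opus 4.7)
The plan is to deduce this corollary directly from Proposition~\ref{P:homology_structure} via a change-of-coefficients argument, using the fact that the Heisenberg twisting $\varphi_{n,g}^\dHeis$ factors through the universal $\varphi_{n,g}^\pi$. Specifically, $\Z[\dHeis_g]$ becomes a $(\Z[\pi_{n,g}],\Z[\dHeis_g])$-bimodule via $\varphi_{n,g}^\dHeis$ on the left and by multiplication on the right, so the universal-coefficient statement for twisted Borel--Moore homology (the free-module case already alluded to in Remark~\ref{R:twisted_universal_coeff_free} in Appendix~\ref{A:twisted}) applies.

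First I would invoke Proposition~\ref{P:homology_structure}: $H^\BM_n(X_{n,g},Y_{n,g};\varphi_{n,g}^\pi)$ is a free $\Z[\pi_{n,g}]$-module on the classes $\basis(\bfa,\bfb)$ with $|\bfa+\bfb|=n$, and all other degrees vanish. Freeness (and the vanishing of the adjacent degrees) makes all higher $\Tor$ groups trivial, so the universal-coefficient spectral sequence collapses and yields a natural isomorphism
\[
 \homoldH_{n,g} \;\cong\; H^\BM_n(X_{n,g},Y_{n,g};\varphi_{n,g}^\pi) \otimes_{\Z[\pi_{n,g}]} \Z[\dHeis_g].
\]
Since tensoring a free module along a ring homomorphism preserves freeness and transports the basis, the right-hand side is a free $\Z[\dHeis_g]$-module with basis $\{\basis(\bfa,\bfb)\otimes 1\}$, which we identify with $\{\basis(\bfa,\bfb)\}$ thanks to the diagrammatic definition: the multisimplex and thread used to represent $\basis(\bfa,\bfb)$ in Equation~\eqref{E:basis} make sense in any twisted setting, and the image of the class in $\homoldH_{n,g}$ under this identification is manifestly the homology class of the same diagram.

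I do not expect any real obstacle here, since the work has already been done at the level of Proposition~\ref{P:homology_structure}. The only point requiring mild care is to confirm that the diagrammatic class $\basis(\bfa,\bfb)\in\homoldH_{n,g}$ built from an embedded twisted cycle with its thread is exactly the image of the corresponding class in $H^\BM_n(X_{n,g},Y_{n,g};\varphi_{n,g}^\pi)$ under the above isomorphism. This compatibility is built into the construction in Section~\ref{S:diagrammatic_notation}, where threads specify lifts to the maximal $\pi$-cover, and the projection to the $\dHeis$-cover simply post-composes with $\varphi_{n,g}^\dHeis$. Hence the basis of Proposition~\ref{P:homology_structure} transports verbatim, proving the corollary.
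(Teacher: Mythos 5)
Your argument is exactly the one the paper intends: Corollary~\ref{C:Heisenberg_basis} is deduced from Proposition~\ref{P:homology_structure} together with the remark immediately following it (itself an instance of Remark~\ref{R:twisted_universal_coeff_free}), by taking $M_{n,g}=\Z[\dHeis_g]$ viewed as a $(\Z[\pi_{n,g}],\Z[\dHeis_g])$-bimodule via $\varphi_{n,g}^\dHeis$ and free of rank one over itself. Your additional check that the diagrammatic classes $\basis(\bfa,\bfb)$ correspond under the base-change isomorphism is the right point of mild care, and it holds for the reason you give.
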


\subsubsection{Diagrammatic calculus}\label{S:diagrammatic_rules}

The diagrammatic notation that was introduced in Section~\ref{S:diagrammatic_notation} comes with a set of rules. These are a diagrammatic translation of the rules of homological calculus, meaning that they allow us to manipulate diagrams without changing the corresponding twisted homology class. These rules are a direct generalization, to positive genus surfaces, of the ones derived in \cite[Section~4.2]{M20}. They provide a complete picture for the homological model developed in this paper, in the sense that they are sufficient to perform all the computations we will carry out in the following.

These diagrammatic rules involve \textit{quantum integers}, \textit{factorials}, and \textit{binomials}. If $\Z[q,q^{-1}]$ denotes the ring of integral Laurent polynomials in the free variable $q$, then, for every integer $n \in \Z$, and for all positive integers $k \geqs \ell \geqs 0$, we recall the notation 
\begin{align*}
 \{ n \}_q &:= q^n - q^{-n}, &
 [n]_q &:= \frac{\{ n \}_q}{\{ 1 \}_q}, &
 [k]_q! &:= \prod_{j=1}^k [j]_q, &
 \sqbinom{k}{\ell}_q &:= \frac{[k]_q!}{[\ell]_q![k-\ell]_q!}.
\end{align*}

\begin{proposition}\label{P:computation_rules}
 Diagrams representing embedded twisted cycles corresponding to elements of $\homol_{n,g}^\dHeis$ satisfy the following list of relations:
 \begin{description}
  \item[\normalfont Cutting rule.] For every integer $0 \leqs k \leqs n$ we have
   \[
    \pic{computation_rules_cutting_1} 
    = \sum_{\ell = 0}^k \pic{computation_rules_cutting_2} \tag{C}\label{E:cutting}
   \]
   where the pair of dotted red arcs on the right-hand side of the equality runs parallel in surface;
  \item[\normalfont Fusion rule.] For all integers $0 \leqs k,\ell,k+\ell \leqs n$ we have
   \[
    \pic{computation_rules_fusion_1} 
    = \pic{computation_rules_fusion_2} \otimes \sqbinom{k+\ell}{k}_q q^{k\ell} \tag{F}\label{E:fusion}
   \]
   where the pairs of dotted red arcs and of dotted blue arcs on the left-hand side of the equality run parallel in surface;
  \item[\normalfont Orientation rule.] For every integer $0 \leqs k \leqs n$ we have
   \[
    \pic{computation_rules_orientation_1} 
    = (-1)^k \pic{computation_rules_orientation_2} \tag{O}\label{E:orientation}
   \]
  \item[\normalfont Permutation rule.] For all integers $0 \leqs k,\ell \leqs n$ we have
   \[
    \pic{computation_rules_permutation_1} 
    = \pic{computation_rules_permutation_2} \otimes q^{2k\ell} \tag{P}\label{E:permutation}
   \]
   where the left-most pairs of dotted red arcs on both sides of the equality run parallel in surface;
  \item[\normalfont Braid rule.] For every integer $0 \leqs k \leqs n$ we have
   \[
    \pic{computation_rules_braid_1} 
    = \pic{computation_rules_braid_2} \otimes \varphi^\dHeis_{n,g}(\braid{(\beta \ast \alpha^{-1})}{(i)})^k \tag{B}\label{E:braid}
   \]
 \end{description}
 where the dotted red arcs $\alpha$ and $\beta$ are disjoint from the rest of the multithread, intersect each other only twice, transversely and with opposite sign, at $\alpha(0) = \beta(0) = \xi_i$ for some $1 \leqs i \leqs n$ and at $\alpha(1) = \beta(1)$, and where $\braid{(\beta \ast \alpha^{-1})}{(i)} \in \pi_{n,g}$ keeps all coordinates of $\braid{\xi}{}$ fixed except for the $i$th one, which moves along $\beta \ast \alpha^{-1}$.
\end{proposition}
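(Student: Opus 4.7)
My plan is to derive all five rules from a single strategy: unpack Definition~\ref{D:embedded_twisted_cycle} so that each embedded twisted cycle $(m,\bfsimp,\bfk,\thread)$ becomes the explicit Borel--Moore chain $\tilde{\bfsimp}^{\bfk}_\thread\circ\bfDelta^{\bfk}\circ\perm_\thread^{-1}$ in the universal cover of $X_{n,g}$, and then exhibit each rule as an identity between two such chains (possibly up to a Borel--Moore boundary). The two tools I will rely on throughout are Remark~\ref{R:thread_and_orientation}, which spells out exactly how the chain transforms when the thread is changed, and Lemma~\ref{L:phi}, which supplies the explicit values $\sigma=-q^{-2}$, $\braid{\alpha}{j}\mapsto\alpha_j$, $\braid{\beta}{j}\mapsto\beta_j$ under $\varphi_{n,g}^\dHeis$.

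I would start with the cutting rule~\eqref{E:cutting}, which is the geometric backbone. Its proof is a subdivision argument: the open simplex $\Delta^k=\{0<t_1<\ldots<t_k<1\}$ decomposes, up to a measure-zero set, as the disjoint union over $\ell=0,\ldots,k$ of the loci $\{t_\ell<\tfrac12<t_{\ell+1}\}$, each canonically homeomorphic to $\Delta^\ell\times\Delta^{k-\ell}$. Restricting $\bfsimp^{\bfk}\circ\bfDelta^{\bfk}$ to each piece, and splitting the corresponding multithread into an $\ell$-bundle and a $(k-\ell)$-bundle landing on the two sub-arcs, produces exactly the $\ell$-th summand on the right-hand side of~\eqref{E:cutting}; the internal faces across which adjacent pieces meet lie in the interior of $X_{n,g}$ and cancel in pairs, so the identity holds already at the chain level. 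The orientation rule~\eqref{E:orientation} follows in the same spirit: reversing a single arc induces the diffeomorphism $(t_1,\ldots,t_k)\mapsto(1-t_k,\ldots,1-t_1)$ on the associated simplex, which combines an orientation-reversal on each coordinate with the order-reversing permutation of $k$ elements; once Remark~\ref{R:thread_and_orientation} is used to match the accompanying rearrangement of thread endpoints, the two factors of $(-1)^{k(k-1)/2}$ cancel and the surviving sign is $(-1)^k$.

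The permutation and braid rules are the most direct applications of Remark~\ref{R:thread_and_orientation}. For the permutation rule~\eqref{E:permutation}, transposing two adjacent thread bundles of sizes $k$ and $\ell$ changes $\thread$ by a braid word containing $k\ell$ elementary crossings (the rest of the multithread is a spectator and commutes away by relation~\ref{R:CR1}), which evaluates under $\varphi_{n,g}^\dHeis$ to $\sigma^{k\ell}=(-q^{-2})^{k\ell}$ and combines with the permutation sign $(-1)^{k\ell}$ to produce $q^{\pm 2k\ell}$, the sign of the exponent being determined by the chosen orientation convention in the picture. For the braid rule~\eqref{E:braid}, rerouting one component of the multithread from $\alpha$ to $\beta$ multiplies the chain by the deck-transformation action of $\braid{(\beta\ast\alpha^{-1})}{(i)}$ by Remark~\ref{R:thread_and_orientation}, and doing this simultaneously for the $k$ parallel components of a $k$-labeled multithread gives the $k$-th power, since the $k$ reroutings commute pairwise after applying $\varphi_{n,g}^\dHeis$. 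Finally, the fusion rule~\eqref{E:fusion} follows by viewing the two parallel arcs labeled $k$ and $\ell$ as a single merged arc labeled $k+\ell$: the sorting map $\Delta^k\times\Delta^\ell\to\Delta^{k+\ell}$ is generically $\binom{k+\ell}{k}$-to-$1$, and applying the permutation rule to bring each of the $\binom{k+\ell}{k}$ interleavings back to the canonical ordering produces a weighted sum whose total is the Gaussian binomial $\sqbinom{k+\ell}{k}_q$, while the overall factor $q^{k\ell}$ records the thread-monodromy offset between the canonical interleaving and the basepoint configuration at $\braid{\xi}{}$.

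The main obstacle, in my view, is the sign and monodromy bookkeeping needed to upgrade the raw combinatorial count of $\binom{k+\ell}{k}$ interleavings to the $q$-analog $\sqbinom{k+\ell}{k}_q$ in the fusion rule. The delicate point is that the asymmetric convention $\sigma=-q^{-2}$ has to conspire just right with the parity of $\perm_{\thread'}\circ\perm_\thread^{-1}$ in Remark~\ref{R:thread_and_orientation} so that each elementary transposition contributes a clean factor of $q^{\pm 2}$ instead of $\pm q^{\pm 2}$, and pinning down the additional $q^{k\ell}$ requires fixing a precise convention for where the canonical multithread lands on the merged arc relative to the basepoint. Everything else is a fairly mechanical verification.
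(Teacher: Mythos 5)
Your overall strategy — unpack the chains, subdivide the simplex for the cutting rule, reparametrize for the orientation rule, and invoke Remark~\ref{R:thread_and_orientation} for the permutation and braid rules — is the same as the paper's, and those four arguments go through essentially as you describe. One bookkeeping point in the permutation rule: the combination you write, $(-1)^{k\ell}\cdot\sigma^{k\ell}=(-1)^{k\ell}(-q^{-2})^{k\ell}=q^{-2k\ell}$, has the wrong sign in the exponent, and the fix is not a choice of "orientation convention" but the inversion of Remark~\ref{R:why_we_conjugate}: transporting the deck-transformation coefficient to the right-hand tensor factor replaces the braid by its inverse, and $(-1)^{k\ell}\,\varphi^\dHeis_{n,g}(\braid{\sigma}{(k,\ell)}^{-1})=q^{2k\ell}$ on the nose.

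The genuine gap is in the fusion rule. The left-hand side of \eqref{E:fusion} is a product of simplices supported on \emph{two disjoint parallel arcs}, while the right-hand side lives on a single arc, so the observation that the sorting map $\Delta^k\times\Delta^\ell\to\Delta^{k+\ell}$ is generically $\binom{k+\ell}{k}$-to-$1$ is not yet an identity between the two Borel--Moore classes: the chain on the left is not supported on the chain on the right, and pushing one arc onto the other is not an ambient isotopy of $X_{n,g}$, since configurations with one point on each arc collide in the limit. The paper's proof resolves exactly this by retracting the band between the two arcs while exhausting $X_{n,g}$ by the compacta $K_\epsilon$ of $\epsilon$-separated configurations, excising the configurations with vertically aligned coordinates (which land in the relative part), and only then reading off the sum over interleavings, each weighted by a power of $q^2$ via the permutation rule. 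Your sketch gets the combinatorial output right — and indeed the whole of $q^{k\ell}\sqbinom{k+\ell}{k}_q$ arises as $\sum q^{2\,\mathrm{inv}}$ over interleavings, so the factor $q^{k\ell}$ is part of that sum rather than a separate monodromy offset — but without the contraction/exhaustion/excision step the equality of homology classes is asserted rather than proved. This, not the sign bookkeeping, is the real obstacle in the fusion rule.
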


These rules can be established in the exact same way as in \cite[Section~4.2]{M20}, where analogue relations are proved in the case of discs. Indeed, all modifications actually happen inside a disc embdedded into $\varSigma_{g,1}$ (only the last rule requires a remark on surface braids). Still, we give elements of proofs in Appendix~\ref{A:graphs_and_rules}. We will make extensive use of these rules while performing computations, and we will always add the initials of the rules that are being applied on top of every equality involving them.

\subsection{Homological representations of quantum \texorpdfstring{$\fsl_2$}{sl(2)}}\label{S:homological_action_of_quantum_sl2}

For a fixed genus $g$, we will now equip the direct sum of the Heisenberg homology groups $\homol_{n,g}^\dHeis$ (taken over all possible numbers of configuration points) with an action of the quantum group of $\slt$. This representation of quantum $\fsl_2$ will be defined in terms of homological operators, and computed by means of homological calculus. Later, we will show that a $\Z[\zeta]$-linear version of this representation contains a finite-dimensional submodule that will be identified with the adjoint representation of the small version of quantum $\fsl_2$.

\subsubsection{Quantum \texorpdfstring{$\fsl_2$}{sl(2)}}\label{S:quantum_sl2_algebra}

Let us consider the ring $\Z[q,q^{-1}]$ of integral Laurent polynomials in the free variable $q$. Let $U_q = U_q \fsl_2$ be the $\Z[q,q^{-1}]$-algebra with generators $\{ E,F^{(k)},K,K^{-1} \mid k \in \N \}$ and relations
\begin{gather*}
 F^{(k)}F^{(\ell)} = \sqbinom{k+\ell}{k}_q F^{(k+\ell)}, \qquad K K^{-1} = K^{-1} K = 1, \\*
 K E K^{-1} = q^2 E, \qquad K F^{(k)} K^{-1} = q^{-2k} F^{(k)}, \qquad
 [E,F^{(k+1)}] = F^{(k)}(q^{-k}K - q^kK^{-1}).
\end{gather*}
Notice that
\begin{equation}\label{E:divided_power}
 (F^{(1)})^k = [ k ]_q! F^{(k)}.
\end{equation}

This is an integral version of quantum $\fsl_2$, in the sense that it has coefficients in a ring of integral Laurent polynomials, rather than in a field. It lies somewhat between Lusztig's integral version \cite{Lu88} and De Concini--Procesi's one \cite{DP93}, since it involves divided powers of the standard generator $F$, which are denoted $F^{(k)}$, but not those of $E$. It was introduced by Habiro in \cite{Ha06} (up to a minor difference), and was already related to homology (in the precise form reported here) in \cite{JK09,M20}.

Let us also consider the primitive $r$th root of unity $\zeta = e^{\frac{2 \pi \fraki}{r}}$, where $r \geqs 3$ is an odd integer, and the $\Z[\zeta]$-algebra $U_\zeta = U_q \otimes \Z[\zeta]$ obtained by specializing $q = \zeta$. Let $\bar{U}_\zeta = \bar{U}_\zeta \fsl_2$ be the $\Z[\zeta]$-subalgebra of $U_\zeta$ generated by $\{ E,F^{(1)},K \}$. Notice that, in $\bar{U}_\zeta$, Equation~\eqref{E:divided_power} implies $(F^{(1)})^r = 0$, since $[r]_\zeta! = 0$.

\subsubsection{Homological action of quantum \texorpdfstring{$\fsl_2$}{sl(2)} generators}\label{S:homological_quantum_sl2_operators}

Let us construct linear operators $\calE$, $\calF^{(1)}$, and $\calK$ on the total discrete Heisenberg homology $\homol_g^\dHeis$ following the pattern
\begin{center}
 \begin{tikzpicture}[descr/.style={fill=white}]
  \node (P1) at (0,0) {$\cdots$};
  \node (P2) at (2.5,0) {$\homol_{n-1,g}^\dHeis$};
  \node (P3) at (5,0) {$\homol_{n,g}^\dHeis$};
  \node (P4) at (7.5,0) {$\homol_{n+1,g}^\dHeis$};
  \node (P5) at (10,0) {$\cdots$};
  \draw
  (P2) edge[->,bend right] node[above] {\scriptsize $\calE$} (P1)
  (P3) edge[->,bend right] node[above] {\scriptsize $\calE$} (P2)
  (P4) edge[->,bend right] node[above] {\scriptsize $\calE$} (P3)
  (P2) edge[->,bend right] node[below] {\scriptsize $\calF^{(1)}$} (P3)
  (P3) edge[->,bend right] node[below] {\scriptsize $\calF^{(1)}$} (P4)
  (P4) edge[->,bend right] node[below] {\scriptsize $\calF^{(1)}$} (P5)
  (P2) edge[->,loop above] node[above] {\scriptsize $\calK$} (P2)
  (P3) edge[->,loop above] node[above] {\scriptsize $\calK$} (P3)
  (P4) edge[->,loop above] node[above] {\scriptsize $\calK$} (P4);
 \end{tikzpicture}
\end{center}

For every $n \geqs 0$, let us consider the triple $X_{n,g} \supset Y_{n,g} \supset Z_{n,g}$, where
\[
 Z_{n,g} := \{ \underline{x} = \{ x_1,\ldots,x_n \} \in Y_{n,g} \mid \Exists 1 \leqs i < j \leqs n \quad x_i, x_j \in \partial_- \varSigma_{g,1} \}.
\]
Now let $x_0$ be the leftmost point of $\partial_-\varSigma_{g,1}$, and let us consider the map
\begin{align*}
 \add_{x_0} : X_{n-1,g} / Y_{n-1,g} &\to Y_{n,g}/ Z_{n,g} \\*
 [ \braid{x}{} ] &\mapsto 
 \begin{cases}
  [ \{ x_0 \} \cup \braid{x}{} ] & \mbox{ if } \braid{x}{} \in X_{n-1,g} \smallsetminus Y_{n-1,g}, \\
  [ Z_{n,g} ] & \mbox{ if } \braid{x}{} \in Y_{n-1,g}.
 \end{cases} 
\end{align*}
Notice that $\add_{x_0}$ is a homeomorphism. We want to lift it to (the corresponding quotient of) the universal cover $\tilde{p}: \tilde{X}_{n,g} \to X_{n,g}$, whose points $\braid{\tilde{x}}{}$ are isotopy classes (relative to endpoints) of configurations of paths starting at $\myuline{\xi} \in \partial X_{n,g}$. Therefore, since we need to choose a point in the fiber of $x_0$, we let $\tilde{x}_0$ denote the simple path in $\partial \varSigma_{g,1}$ starting at $\xi_n$, ending at $x_0$, and running counterclockwise (up to parametrization it is unique). Then, let us consider the map
\begin{align*}
 \add_{\tilde{x}_0} : \tilde{X}_{n-1,g} / \tilde{p}^{-1}(Y_{n-1,g}) &\to \tilde{p}^{-1}(Y_{n,g}) / \tilde{p}^{-1}(Z_{n,g}) \\*
 [ \braid{\tilde{x}}{} ] &\mapsto 
 \begin{cases}
  [ \{ \tilde{x}_0 \} \cup \braid{\tilde{x}}{} ] & \mbox{ if } \braid{\tilde{x}}{} \in \tilde{X}_{n-1,g} \smallsetminus \tilde{p}^{-1}(Y_{n-1,g}), \\
  [ \tilde{p}^{-1}(Z_{n,g}) ] & \mbox{ if } \braid{\tilde{x}}{} \in \tilde{p}^{-1}(Y_{n-1,g}).
 \end{cases} 
\end{align*} 
Notice that we can avoid parametrization issues by requiring, without loss of generality, that every configuration of paths does not to intersect the boundary of $X_{n,g}$ outside of its ends. The map $\add_{\tilde{x}_0}$ is then a lift of $\add_{x_0}$ to universal covers, and is again a homeomorphism.

The long exact sequence of the triple considered above is
\begin{center}
 \begin{tikzpicture}[descr/.style={fill=white}]
  \node[anchor=west] (P1) at (0,0) {$\cdots$};
  \node (P2) at (3,0) {$H^\BM_n(X_{n,g},Z_{n,g};\varphi_{n,g}^{\dHeis})$};
  \node (P3) at (6,0) {$\homol_{n,g}^{\dHeis}$};
  \node (P4) at (9,0) {$H^\BM_{n-1}(Y_{n,g},Z_{n,g};\varphi_{n,g}^{\dHeis})$};
  \node[anchor=east] (P5) at (12,0) {$\cdots$};
  \draw
  (P1) edge[->] (P2)
  (P2) edge[->] (P3)
  (P3) edge[->] node[above] {\scriptsize $\partial_*$} (P4)
  (P4) edge[->] (P5);
 \end{tikzpicture}
\end{center}
where $\partial_*$ denotes the connection homomophism, and $H^\BM_\ast(Y_{n,g},Z_{n,g};\varphi_{n,g}^{\dHeis})$ denotes the homology of the complex
\[
 C^\BM_\ast(Y_{n,g},Z_{n,g};\varphi_{n,g}^{\dHeis}) := \varprojlim_{K \in \calK(Y_{n,g})} C_\ast(\tilde{p}^{-1}(Y_{n,g}),\tilde{p}^{-1}(Z_{n,g} \cup (Y_{n,g} \smallsetminus K))) \otimes_{\Z[\pi_{n,g}]} \Z[\dHeis_g],
\]
in the notation of Appendix~\ref{A:Borel--Moore}.

\begin{definition}\label{D:homological_E}
 For every integer $n \geqs 1$, the operator
 \[
  \calE : \homol_{n,g}^\dHeis \to \homol_{n-1,g}^\dHeis
 \]
 is the composition
 \begin{center}
  \begin{tikzpicture}[descr/.style={fill=white}]
   \node (P1) at (0,0) {$\homol_{n,g}^\dHeis$};
   \node (P2) at (4,0) {$H^\BM_{n-1}(Y_{n,g},Z_{n,g};\varphi_{n,g}^{\dHeis})$};
   \node (P3) at (8,0) {$\homol_{n-1,g}^{\dHeis},$};
   \draw
   (P1) edge[->] node[above] {\scriptsize $(-1)^{n-1} \partial_*$} (P2)
   (P2) edge[->] node[above] {\scriptsize $\del_{\tilde{x}_0}$} (P3);
  \end{tikzpicture}
 \end{center}
 where $\del_{\tilde{x}_0}$ is the unique map fitting in the commutative diagram of isomorphisms
 \begin{center}
  \begin{tikzpicture}[descr/.style={fill=white}]
   \node (P1) at (0,0) {$H^\BM_{n-1}(Y_{n,g},Z_{n,g};\varphi_{n,g}^{\dHeis})$};
   \node (P2) at (6,0) {$H^\BM_{n-1}(Y_{n,g}/Z_{n,g},Z_{n,g}/Z_{n,g};\varphi_{n,g}^{\dHeis})$};
   \node (P3) at (0,-1.5) {$\homol_{n-1,g}^{\dHeis}$};
   \node (P4) at (6,-1.5) {$H^\BM_{n-1}(X_{n-1,g}/Y_{n-1,g},Y_{n-1,g}/Y_{n-1,g};\varphi_{n-1,g}^{\dHeis})$};
   \draw
   (P1) edge[->] node[above] {$\sim$} (P2)
   (P1) edge[->] node[left] {\scriptsize $\del_{\tilde{x}_0}$} (P3)
   (P3) edge[->] node[above] {$\sim$} (P4)
   (P2) edge[->] node[right] {\scriptsize $(\add_{\tilde{x}_0}^{-1})_*$} (P4);
  \end{tikzpicture}
 \end{center}
 with horizontal arrows given by identifications of relative homologies of good pairs.
\end{definition}

Roughly speaking, the operator $\calE$ computes the boundary map $\partial_*$, followed by some standard homological identifications.

\begin{definition}\label{D:homological_F^(k)}
 For all integers $n,k \geqs 0$, the operator
 \[
  \calF^{(k)} : \homol_{n,g}^\dHeis \to \homol_{n+k,g}^\dHeis
 \]
 is the $\Z[\dHeis_g]$-linear map sending the basis vector $\tilde{\bfsimp}(\bfa,\bfb)$ of $\homol_{n,g}^\dHeis$ to the vector 
 \[
  \pic{homology_F} \otimes q^{\frac{k(k-1)}{2}+2kg}
 \]
 of $\homol_{n+k,g}^\dHeis$.
\end{definition}

Notice that the definition of the operator $\calF^{(k)}$ is actually independent of the choice of the basis of $\homol_{n,g}^\dHeis$. 
Indeed, if $\partial_+ \varSigma_{g,1}$ denotes the subset of $\partial \varSigma_{g,1}$ represented by the orange arc
\[
 \pic{homology_interval_+}
\]
then, up to isotopy, we can make sure that every relative homology class in $\homol_{n,g}^\dHeis$ avoids a fixed collar of the submanifold
\[
 \partial X_{n,g} \smallsetminus Y_{n,g} = \{ \underline{x} = \{ x_1,\ldots,x_n \} \in X_{n,g} \mid \Exists 1 \leqs i \leqs n \quad x_i \in \partial_+ \varSigma_{g,1} \}
\]
in $X_{n,g}$, and we can insert the curve defining the operator $\calF^{(k)}$ inside such fixed collar.

\begin{definition}\label{D:homological_K}
 For every integer $n \geqs 0$, the operator 
 \[
  \calK : \homol_{n,g}^\dHeis \to \homol_{n,g}^\dHeis
 \]
 is the scalar multiple of the identity sending $\tilde{\chain} \otimes h$ to $\tilde{\chain} \otimes q^{-2(n+g)} h$ for all homology classes $\tilde{\chain} \in H^\BM_n(X_{n,g},Y_{n,g};\varphi_{n,g}^\pi)$ and coefficients $h \in \Z[\dHeis_g]$.
\end{definition}

\subsubsection{Proof of quantum \texorpdfstring{$\fsl_2$}{sl(2)} relations}

Now that homological operators $\calE, \calF^{(k)}$ and $\calK$ have been defined, we can show that they satisfy relations for generators of $U_q$.

\begin{proposition}\label{P:divided_power_relation}
 For all integers $k,\ell \geqs 0$ we have
 \[
  \calF^{(k)} \circ \calF^{(\ell)} = \sqbinom{k+\ell}{k}_q \calF^{(k+\ell)}.
 \]
\end{proposition}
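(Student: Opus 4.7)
The plan is to verify the relation directly on a basis, using the diagrammatic fusion rule \eqref{E:fusion} to combine the two added curves into a single one.

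First, I would evaluate both sides on an arbitrary basis element $\tilde{\bfsimp}(\bfa,\bfb)$ of $\homoldH_{n,g}$ with $|\bfa+\bfb| = n$. Applying $\calF^{(\ell)}$ first produces a diagram with an $\ell$-labeled arc placed along a fixed collar of $\partial X_{n,g} \smallsetminus Y_{n,g}$, weighted by $q^{\frac{\ell(\ell-1)}{2}+2\ell g}$. Then $\calF^{(k)}$ inserts a second $k$-labeled arc parallel to the first (since, as noted right after Definition~\ref{D:homological_F^(k)}, every class can be isotoped off such a collar, so both arcs can be drawn parallel inside it), contributing an additional factor $q^{\frac{k(k-1)}{2}+2kg}$.

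The key step is the application of the fusion rule \eqref{E:fusion} to the pair of parallel arcs carrying labels $k$ and $\ell$: this replaces them by a single $(k+\ell)$-labeled arc, at the cost of a factor $\sqbinom{k+\ell}{k}_q q^{k\ell}$. The resulting diagram is precisely the one representing $\calF^{(k+\ell)}(\tilde{\bfsimp}(\bfa,\bfb))$ (before the scalar normalization built into the definition). Collecting the three scalar factors, the total coefficient is
\[
 \sqbinom{k+\ell}{k}_q \cdot q^{\frac{k(k-1)}{2} + \frac{\ell(\ell-1)}{2} + k\ell + 2(k+\ell)g}
 = \sqbinom{k+\ell}{k}_q \cdot q^{\frac{(k+\ell)(k+\ell-1)}{2} + 2(k+\ell)g},
\]
where the exponent rearrangement uses the identity $k(k-1) + \ell(\ell-1) + 2k\ell = (k+\ell)(k+\ell-1)$. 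This exactly equals $\sqbinom{k+\ell}{k}_q$ times the normalization factor appearing in Definition~\ref{D:homological_F^(k)} for $\calF^{(k+\ell)}$, yielding the desired equality on the basis, hence everywhere by $\Z[\dHeis_g]$-linearity.

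There is no real obstacle here: the proof is a one-line application of the fusion rule together with a scalar bookkeeping. The only minor point to address is the justification that the two added arcs can indeed be placed in parallel position so that \eqref{E:fusion} applies; this is precisely the observation (made just after Definition~\ref{D:homological_F^(k)}) that one can isotope any representative cycle out of a prescribed collar of $\partial X_{n,g} \smallsetminus Y_{n,g}$ and insert both $\calF$-arcs into that collar. The $+2kg$ summand in the exponent of the $\calF^{(k)}$ normalization — which plays no role in this particular computation beyond adding correctly — is tuned exactly so that the telescoping $k(k-1)/2 + \ell(\ell-1)/2 + k\ell = (k+\ell)(k+\ell-1)/2$ extends to the full prefactor.
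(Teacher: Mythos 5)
Your proof is correct and is essentially identical to the paper's: both apply the fusion rule to the two parallel arcs produced by composing $\calF^{(k)}$ with $\calF^{(\ell)}$, and the scalar bookkeeping $q^{\frac{k(k-1)}{2}+\frac{\ell(\ell-1)}{2}+k\ell+2(k+\ell)g}=q^{\frac{(k+\ell)(k+\ell-1)}{2}+2(k+\ell)g}$ matches the paper's computation exactly. Nothing further is needed.
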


\begin{proof}
 This is an immediate consequence of the fusion rule in Proposition~\ref{P:computation_rules}. Indeed, we have
 \begin{align*}
  &\pic{fusion_relation_1} \otimes q^{\frac{k(k-1)}{2}+2kg+\frac{\ell(\ell-1)}{2}+2 \ell g} \\
  &\hspace*{\parindent} \Feq \pic{fusion_relation_2} \otimes \sqbinom{k+\ell}{k}_q q^{\frac{(k+\ell)(k+\ell-1)}{2}+2(k+\ell)g}. \qedhere 
 \end{align*}
\end{proof}

\begin{remark}\label{R:easy_relations}
 Notice that relations
 \begin{align*}
  \calK \circ \calK^{-1} = \calK^{-1} \circ \calK &= \id_{\homol_{n,g}^\dHeis}, \\
  \calK \circ \calE \circ \calK^{-1} &= q^2 \calE, \\
  \calK \circ \calF^{(k)} \circ \calK^{-1} &= q^{-2k} \calF^{(k)}
 \end{align*}
are clearly satisfied for every integer $k \geqs 0$.
\end{remark}

\begin{proposition}\label{P:fundamental_relation}
 For every integer $k \geqs 0$ we have
 \[
  \calE \circ \calF^{(k+1)} - \calF^{(k+1)} \circ \calE = \calF^{(k)} \circ \left( q^{-k} \calK - q^k \calK^{-1} \right).
 \]
\end{proposition}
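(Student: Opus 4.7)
The plan is to evaluate both sides on an arbitrary basis element $\basis(\bfa,\bfb)$ of $\homol_{n,g}^\dHeis$ with $|\bfa+\bfb|=n$ and verify equality. Using Definitions~\ref{D:homological_F^(k)} and~\ref{D:homological_K}, the right-hand side equals
\[
 \calF^{(k)} \circ (q^{-k}\calK - q^k \calK^{-1})(\basis(\bfa,\bfb)) = (q^{-k-2(n+g)} - q^{k+2(n+g)}) \cdot \calF^{(k)}(\basis(\bfa,\bfb)),
\]
so it suffices to identify this scalar multiple of $\calF^{(k)}(\basis(\bfa,\bfb))$ with the difference $\calE \circ \calF^{(k+1)}(\basis(\bfa,\bfb)) - \calF^{(k+1)} \circ \calE(\basis(\bfa,\bfb))$.

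To compute the first composition, I would first apply $\calF^{(k+1)}$, which inserts a $(k+1)$-labeled arc in a collar of $\partial_+\varSigma_{g,1}$ together with the scalar $q^{k(k+1)/2 + 2(k+1)g}$. The key diagrammatic step is then to apply the fusion rule (\ref{E:fusion}) to split this $(k+1)$-arc into a $1$-labeled arc sitting closest to $x_0$ parallel to a $k$-labeled arc, paying a scalar factor $\sqbinom{k+1}{1}_q q^k = [k+1]_q q^k$. Next, apply $\calE = \del_{\tilde{x}_0} \circ (-1)^{n-1} \partial_*$: geometrically, $\calE$ slides an endpoint of an arc on $\partial_-\varSigma_{g,1}$ onto $x_0$ and removes it via $\add_{\tilde{x}_0}^{-1}$, and this distributes over the arcs of the diagram. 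The contributions split into two types. Type~(A): the newly added $1$-arc is itself the one selected by $\calE$ and is removed, leaving the $k$-arc untouched on top of $\basis(\bfa,\bfb)$; this contribution yields a scalar multiple of $\calF^{(k)}(\basis(\bfa,\bfb))$. Type~(B): an endpoint of one of the original arcs of $\basis(\bfa,\bfb)$ is selected, and its sliding strand must cross the $k$ newly inserted parallel strands on its way to $x_0$; by the permutation rule (\ref{E:permutation}) and braid rule (\ref{E:braid}), this produces $\calF^{(k+1)} \circ \calE(\basis(\bfa,\bfb))$ up to a $q$-factor coming from the crossings.

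The remaining work is the scalar bookkeeping. The aggregate scalar attached to the type~(B) contribution must come out to $1$, so that it exactly cancels the subtracted term $\calF^{(k+1)} \circ \calE(\basis(\bfa,\bfb))$ on the left-hand side; this follows from comparing the $q^{k(k+1)/2 + 2(k+1)g}$ factor of $\calF^{(k+1)}$, the $q^{2k}$ crossing factor, and the $q^{-2(n+g)}$ eigenvalue gap between $\homol_{n,g}^\dHeis$ and $\homol_{n+k+1,g}^\dHeis$ under $\calK$. The type~(A) contribution, after collecting the scalar $q^{k(k+1)/2 + 2(k+1)g}$ from $\calF^{(k+1)}$, the scalar $q^{k(k-1)/2 + 2kg}$ built into $\calF^{(k)}$, the fusion factor $[k+1]_q q^k$, the sign $(-1)^{n-1}$, and the sign contributions from the orientation rule (\ref{E:orientation}) and from $\sigma = -q^{-2}$ in Lemma~\ref{L:phi}, must equal $(q^{-k-2(n+g)} - q^{k+2(n+g)}) \calF^{(k)}(\basis(\bfa,\bfb))$; note that the binomial $[k+1]_q$ expands as $(q^{k+1} - q^{-(k+1)})/(q - q^{-1})$, which is precisely what produces the two-term structure $q^{-k-2(n+g)} - q^{k+2(n+g)}$ after multiplication by the remaining $q$-powers. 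The main obstacle is this $q$-power bookkeeping and sign tracking, which follows the blueprint of the disc-case proof in \cite{M20} but must be carefully adapted to account for the genus-dependent $q^{2g}$-shifts in the definitions of $\calF^{(k)}$ and $\calK$.
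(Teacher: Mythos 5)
Your overall strategy --- evaluate on a basis vector, isolate a contribution that cancels $\calF^{(k+1)} \circ \calE$, and show that what survives is $\calF^{(k)}$ of a two-term scalar --- is the same as the paper's, and your ``Type~(B)'' cancellation is exactly what the paper obtains cleanly from the Leibniz rule $\partial_*(\tilde{\Phi}(k+1) \times \tilde{\bfsimp}(\bfa,\bfb)) = \partial_*(\tilde{\Phi}(k+1)) \times \tilde{\bfsimp}(\bfa,\bfb) + (-1)^{k+1}\tilde{\Phi}(k+1) \times \partial_*(\tilde{\bfsimp}(\bfa,\bfb))$. But the mechanism you propose for producing the right-hand side is wrong, in two ways.

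First, the fusion rule~\eqref{E:fusion} runs in the wrong direction for your splitting step: it says that a $1$-arc parallel to a $k$-arc \emph{equals} the single $(k+1)$-arc times $[k+1]_q q^k$, so rewriting the $(k+1)$-arc as a split diagram would require dividing by $[k+1]_q q^k$, which is not available in $\Z[\dHeis_g]$ and is in any case unnecessary. Second, and more seriously, the two-term structure $q^{-k}\calK - q^k\calK^{-1}$ does \emph{not} come from expanding $[k+1]_q$: that quantum integer is $q^k + q^{k-2} + \cdots + q^{-k}$, a sum of $k+1$ monomials, and there is no spare factor of $\{1\}_q$ anywhere in the computation to telescope it into the binomial $\{\,\cdot\,\}_q$ form. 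The actual source of the two terms is that the connecting homomorphism $\partial_*$ applied to the inserted $(k+1)$-simplex $\tilde{\Phi}(k+1)$ has exactly two faces landing in $Y_{n+k+1,g}$ (the other faces go to infinity), namely the two endpoints of the $\calF^{(k+1)}$ arc on $\partial_-\varSigma_{g,1}$. After each face, the remaining $k$ points on that arc reconstitute the $\calF^{(k)}$ arc. One endpoint must be transported past all $g$ handles and all $n+k$ other points to reach $x_0$, contributing $q^{-2(n+k+2g)}$ via the braid and permutation rules; the other is adjacent to $x_0$ and contributes $q^{2n}$; the relative minus sign is the simplicial boundary orientation. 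Multiplying by the ratio $q^{k+2g}$ of the normalizations of $\calF^{(k+1)}$ and $\calF^{(k)}$ then gives $q^{-k-2(n+g)} - q^{k+2(n+g)}$, i.e.\ $q^{-k}\calK - q^k\calK^{-1}$. As written, your ``Type~(A)'' term (removing the auxiliary $1$-arc) accounts for only one boundary face and cannot yield the required difference of two $q$-powers.
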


\begin{proof}
 If we denote by
 \[
  \tilde{\Phi}(k+1) \times \tilde{\bfsimp}(\bfa,\bfb) : {]0,1[}^{\times (n+k+1)} \to \tilde{X}_{n,g}
 \]
 the relative homology class satisfying
 \[
  \calF^{(k+1)}(\tilde{\bfsimp}(\bfa,\bfb)) = \left( \tilde{\Phi}(k+1) \times \tilde{\bfsimp}(\bfa,\bfb) \right) \otimes q^{\frac{(k+1)k}{2}+2(k+1)g},
 \]
 then we have
 \begin{align*}
  &\partial_* \left( \tilde{\Phi}(k+1) \times \tilde{\bfsimp}(\bfa,\bfb) \right) 
  = \partial_*(\tilde{\Phi}(k+1)) \times \tilde{\bfsimp}(\bfa,\bfb) 
  + (-1)^{k+1} \tilde{\Phi}(k+1) \times \partial_*(\tilde{\bfsimp}(\bfa,\bfb)). 
 \end{align*}
 This means that 
 \begin{align*}
  &\left( \calE \circ \calF^{(k+1)} - \calF^{(k+1)} \circ \calE \right)(\tilde{\bfsimp}(\bfa,\bfb)) \\
  &\hspace*{\parindent}= (-1)^{n+k} \del_{\tilde{x}_0} \left( \partial_* \left( \tilde{\Phi}(k+1) \times \tilde{\bfsimp}(\bfa,\bfb) \right) \right) \otimes q^{\frac{(k+1)k}{2}+2(k+1)g} \\*
  &\hspace*{2\parindent}- \tilde{\Phi}(k+1) \times (-1)^{n-1} \del_{\tilde{x}_0} (\partial_*(\tilde{\bfsimp}(\bfa,\bfb))) \otimes q^{\frac{(k+1)k}{2}+2(k+1)g} \\
  &\hspace*{\parindent}= (-1)^{n+k} \del_{\tilde{x}_0} \left( \partial_* \left( \tilde{\Phi}(k+1) \times \tilde{\bfsimp}(\bfa,\bfb) \right) - (-1)^{k+1} \tilde{\Phi}(k+1) \times \partial_*(\tilde{\bfsimp}(\bfa,\bfb)) \right) \otimes q^{\frac{(k+1)k}{2}+2(k+1)g} \\
  &\hspace*{\parindent}= (-1)^{n+k} \del_{\tilde{x}_0} \left( \partial_*(\tilde{\Phi}(k+1)) \times \tilde{\bfsimp}(\bfa,\bfb) \right) \otimes q^{\frac{(k+1)k}{2}+2(k+1)g}. 
 \end{align*}
 It remains to compute $\partial_*(\tilde{\Phi}(k+1))\times \tilde{\bfsimp}(\bfa,\bfb)$, and to apply $\del_{\tilde{x}_0}$ to it.

 On one hand, we have
 \begin{align*}
  &\pic{fundamental_relation_+_1} \\
  &\hspace*{\parindent} = \pic{fundamental_relation_+_2} \\
  &\hspace*{\parindent} \Beq \pic{fundamental_relation_+_3} \otimes \prod_{j=1}^g [\beta_j^{-1},\alpha_j^{-1}] \\
  &\hspace*{\parindent} \Peq \pic{fundamental_relation_+_4} \otimes q^{-2(n+k+2g)},
 \end{align*}
 where the second and third equalities follow from an application of the braid and permutation rules in Proposition~\ref{P:computation_rules}, respectively.

 On the other hand, we have
 \begin{align*}
  &\pic{fundamental_relation_-_1} \\
  &\hspace*{\parindent} \Peq \pic{fundamental_relation_-_2} \otimes q^{2n},
 \end{align*}
 by applying the permutation rule in Proposition~\ref{P:computation_rules}.

 In the end, we have
 \begin{align*}
  &(\calE \circ \calF^{(k+1)} - \calF^{(k+1)} \circ \calE)(\tilde{\bfsimp}(\bfa,\bfb))  
  = \calF^{(k)} \left( \tilde{\bfsimp}(\bfa,\bfb) \otimes q^{k+2g} (q^{-2(n+k+2g)}-q^{2n}) \right) \\
  &\hspace*{\parindent} = \calF^{(k)} \left( \tilde{\bfsimp}(\bfa,\bfb) \otimes (q^{-k-2(n+g)} - q^{k+2(n+g)}) \right) . \qedhere
 \end{align*}
\end{proof}

\begin{theorem}\label{T:Uq_homological_representation}
 $\homol_g^\dHeis$ is a $U_q$-module, with action of $E$, $F^{(k)}$, and $K$ given by the operators $\calE$, $\calF^{(k)}$, and $\calK$ respectively.
\end{theorem}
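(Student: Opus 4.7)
The plan is to observe that Theorem~\ref{T:Uq_homological_representation} is essentially a consolidation of the results just established. Since $U_q$ is defined by generators $E$, $F^{(k)}$, $K$, $K^{-1}$ and the explicit list of relations recalled in Section~\ref{S:quantum_sl2_algebra}, it suffices to check that the corresponding homological operators $\calE$, $\calF^{(k)}$, $\calK$, $\calK^{-1}$ satisfy each of these relations as endomorphisms of $\homol_g^\dHeis$.

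First, I would dispatch the easy relations. The operator $\calK$ acts as a scalar on each summand $\homol_{n,g}^\dHeis$, so $\calK \circ \calK^{-1} = \calK^{-1} \circ \calK = \id$ is immediate from Definition~\ref{D:homological_K}. For $\calK \circ \calE \circ \calK^{-1} = q^2 \calE$ and $\calK \circ \calF^{(k)} \circ \calK^{-1} = q^{-2k} \calF^{(k)}$, it suffices to note that $\calE$ decreases the configuration degree by $1$ and $\calF^{(k)}$ increases it by $k$, so the scalars read off from Definition~\ref{D:homological_K} combine to produce exactly the factors $q^2$ and $q^{-2k}$. These are recorded in Remark~\ref{R:easy_relations}.

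Next, the divided-power relation $\calF^{(k)} \circ \calF^{(\ell)} = \sqbinom{k+\ell}{k}_q \calF^{(k+\ell)}$ is exactly the content of Proposition~\ref{P:divided_power_relation}, which follows from a direct application of the fusion rule~\eqref{E:fusion} together with the bookkeeping of the $q^{\frac{k(k-1)}{2}+2kg}$ factors built into Definition~\ref{D:homological_F^(k)}. Similarly, the fundamental commutation relation
\[
 \calE \circ \calF^{(k+1)} - \calF^{(k+1)} \circ \calE = \calF^{(k)} \circ (q^{-k} \calK - q^k \calK^{-1})
\]
is Proposition~\ref{P:fundamental_relation}, proved via a Leibniz-type decomposition of the boundary $\partial_*$ applied to the product chain, together with an application of the braid and permutation rules \eqref{E:braid} and \eqref{E:permutation}.

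Since every defining relation of $U_q$ is thus satisfied by the tuple $(\calE, \calF^{(k)}, \calK, \calK^{-1})$, the universal property of the presentation of $U_q$ yields a unique $\Z[q,q^{-1}]$-algebra homomorphism from $U_q$ to $\End_{\Z[q,q^{-1}]}(\homol_g^\dHeis)$ (viewing $\homol_g^\dHeis$ as a $\Z[q,q^{-1}]$-module via the evident map $\Z[q,q^{-1}] \to \Z[\dHeis_g]$ sending $q$ to a square root of $-\sigma$, as in Remark~\ref{R:interpretation_of_homology_intermsof_covers}) sending $E, F^{(k)}, K, K^{-1}$ to the prescribed operators. This is the desired $U_q$-module structure. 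No step is a genuine obstacle here, as all the substantive verifications have already been carried out; the only thing to watch is that the presentation used for $U_q$ is complete, so that checking the listed relations is indeed sufficient.
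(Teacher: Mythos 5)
Your proposal is correct and matches the paper's proof, which likewise reduces the statement to verifying the defining relations of $U_q$ and cites Proposition~\ref{P:divided_power_relation}, Proposition~\ref{P:fundamental_relation}, and Remark~\ref{R:easy_relations} for exactly the verifications you list. The extra remark about the coefficient ring and the universal property of the presentation is a harmless elaboration of what the paper leaves implicit.
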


\begin{proof}
 Comparing with the definition of $U_q$ given in Section~\ref{S:quantum_sl2_algebra}, the first claim is a direct consequence of Propositions~\ref{P:divided_power_relation} and \ref{P:fundamental_relation}, together with Remark~\ref{R:easy_relations}.
\end{proof}

Notice that, since the operators $\calE$, $\calF^{(k)}$, and $\calK$ are $\Z[\dHeis_g]$-linear, $\homol_g^\dHeis$ is in fact a $(\Z[\dHeis_g],U_q)$-bimodule.

\subsection{Homological representations of mapping class groups}\label{S:homological_representations_of_mcg}

Let us consider the mapping class group of $\varSigma_{g,1}$, denoted $\Mod(\varSigma_{g,1})$. By definition, it is the group of positive self-diffeomorphisms of $\varSigma_{g,1}$ fixing the boundary pointwise, modulo isotopies fixing the boundary pointwise (see Appendix~\ref{A:twisted_homological_action_of_mcg}). As proved in \cite{L64}, it is generated by the positive Dehn twists
\[
 \{ \twist{\alpha}{j}, \twist{\beta}{j}, \twist{\gamma}{k} \mid 1 \leqs j \leqs g, 1 \leqs k \leqs g-1 \}
\]
along the simple closed curves
\begin{gather*}
 \pic{twists_a_b_c}
\end{gather*}
In this section, we will linearize the Heisenberg homology group $\homol_{n,g}^\dHeis$, and equip it with a projective action of the mapping class group $\Mod(\varSigma_{g,1})$.

\subsubsection{Mapping class subgroup representations with Heisenberg coefficients}\label{S:Heisenberg_homological_representations}

First of all, notice that $\Mod(\varSigma_{g,1})$ embeds into $\Mod(\tilde{X}_{n,g})$ via
\[
 \begin{array}{ccccc}
  \Mod(\varSigma_{g,1}) & \hookrightarrow & \Mod(X_{n,g}) & \hookrightarrow & \Mod(\tilde{X}_{n,g}), \\
  f & \mapsto & f^{\times n} & \mapsto & \tilde{f}^{\times n}
 \end{array}
\]
where, by abuse of notation, $f^{\times n}\{ x_1,\ldots,x_n \} := \{ f(x_1),\ldots,f(x_n) \}$ for all $\{ x_1,\ldots,x_n \} \in X_{n,g}$, and $\tilde{f}^{\times n}$ denotes the unique lift of $f^{\times n} \circ \tilde{p}$ fixing the constant path in the fiber $\tilde{p}^{-1}(\braid{\xi}{})$. This induces an action of $\Mod(\varSigma_{g,1})$ onto both $\Z[\pi_{n,g}]$ and $H^\BM_n(X_{n,g},Y_{n,g};\varphi_{n,g}^\pi)$. For every $f \in \Mod(\varSigma_{g,1})$, let 
\begin{align*}
 (f^{\times n})_* &\in \Aut(\Z[\pi_{n,g}]), & 
 (\tilde{f}^{\times n})_* &\in \Aut(H^\BM_n(X_{n,g},Y_{n,g};\varphi_{n,g}^\pi))
\end{align*} 
denote the associated automorphisms of $\Z[\pi_{n,g}]$ and of $H^\BM_n(X_{n,g},Y_{n,g};\varphi_{n,g}^\pi)$ respectively. For all integers $n \geqs 1$, let us also define
\begin{align*}
 \calM_{0,g}^\dHeis 
 &:= \Mod(\varSigma_{g,1}), &
 \calM_{n,g}^\dHeis 
 &:= \left\{ f \in \Mod(\varSigma_{g,1}) \mid \phi_{n,g}^\dHeis \circ (f^{\times n})_* = \phi_{n,g}^\dHeis \right\}, &
 \calM_g^\dHeis 
 &:= \bigcap_{n \geqs 0} \calM_{n,g}^\dHeis.
\end{align*}

\begin{proposition}\label{P:mcg_action_on_dHeis}
 There exists a $\Z[\dHeis_g]$-linear action of $\calM_{n,g}^\dHeis$ on the $n$th discrete Heisenberg homology $\homol_{n,g}^\dHeis$ of Definition~\ref{D:Heisenberg_homology} given by the homomorphism
 \[
  \rho_{n,g}^{\dHeis} : \calM_{n,g}^\dHeis \to \GL_{\Z[\dHeis_g]}(\homol_{n,g}^\dHeis)
 \]
 defined by 
 \[
  \rho_{n,g}^{\dHeis}(f)(\tilde{\chain} \otimes h) = (\tilde{f}^{\times n})_*(\tilde{\chain}) \otimes h
 \]
 for all diffeomorphisms $f \in \calM_{n,g}^\dHeis$, homology classes $\tilde{\chain} \in H^\BM_n(X_{n,g},Y_{n,g};\varphi_{n,g}^\pi)$, and coefficients $h \in \Z[\dHeis_g]$. Furthermore, the direct sum, for $n \geqs 0$, of these representations defines a $\Z[\dHeis_g]$-linear action
 \[
  \rho_g^\dHeis : \calM_g^\dHeis \to \GL_{U_q}(\homol_g^\dHeis)
 \]
 of $\calM_g^\dHeis$ on the total discrete Heisenberg homology $\homol_g^\dHeis$ by invertible $U_q$-module endomorphisms.
\end{proposition}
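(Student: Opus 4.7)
The approach is to pass to the identification
\[
 \homol_{n,g}^\dHeis \cong H^\BM_n(X_{n,g},Y_{n,g};\varphi_{n,g}^\pi) \otimes_{\Z[\pi_{n,g}]} \Z[\dHeis_g]
\]
granted by Proposition~\ref{P:homology_structure} (through the free-module version of twisted universal coefficients, Remark~\ref{R:twisted_universal_coeff_free}). Under this identification, the candidate map is $\tilde\chain \otimes h \mapsto (\tilde f^{\times n})_*(\tilde\chain) \otimes h$. Because $\tilde f^{\times n}$ is the lift of $f^{\times n} \circ \tilde p$ fixing the constant path over $\braid{\xi}{}$, a standard uniqueness-of-lifts argument yields the semi-equivariance
\[
 (\tilde f^{\times n})_*(\gamma \cdot \tilde\chain) = (f^{\times n})_*(\gamma) \cdot (\tilde f^{\times n})_*(\tilde\chain)
\]
for every $\gamma \in \pi_{n,g}$ and $\tilde\chain \in H^\BM_n(X_{n,g},Y_{n,g};\varphi_{n,g}^\pi)$. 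Consequently, the formula descends from the unbalanced to the balanced tensor product exactly when $\varphi_{n,g}^\dHeis \circ (f^{\times n})_* = \varphi_{n,g}^\dHeis$, which is precisely the defining condition of $\calM_{n,g}^\dHeis$. The resulting operator is $\Z[\dHeis_g]$-linear by construction.

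The same semi-equivariance, combined with uniqueness of lifts, gives that the lift of $(f \circ f')^{\times n}$ equals $\tilde f^{\times n} \circ \tilde{f'}^{\times n}$, so that $\rho_{n,g}^\dHeis$ is a group homomorphism. Closure of $\calM_{n,g}^\dHeis$ under inversion is immediate from the cocycle property of pullbacks, so $\rho_{n,g}^\dHeis(f^{-1})$ provides the two-sided inverse of $\rho_{n,g}^\dHeis(f)$.

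For the second assertion, we must verify that $\rho_g^\dHeis(f) = \bigoplus_{n \geqs 0} \rho_{n,g}^\dHeis(f)$ commutes with $\calE$, $\calF^{(k)}$ and $\calK$ for every $f \in \calM_g^\dHeis$. Commutation with $\calK$ is immediate since $\calK$ is a $\Z[\dHeis_g]$-scalar on each summand and $\rho_g^\dHeis(f)$ is $\Z[\dHeis_g]$-linear. For $\calF^{(k)}$, every $f \in \Mod(\varSigma_{g,1})$ can be isotoped (rel $\partial$) to restrict to the identity on a fixed collar of $\partial_+ \varSigma_{g,1}$, and by the remark following Definition~\ref{D:homological_F^(k)} the curve defining $\calF^{(k)}$ may be placed inside such a collar, so the two operations commute tautologically. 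For $\calE$, the key observation is that $f$ fixes $\partial_- \varSigma_{g,1}$ pointwise, hence fixes $x_0$ and the boundary arc $\tilde x_0$ (the latter because $\tilde x_0$ lies in $\partial \varSigma_{g,1}$, where $f$ acts trivially); this implies that the topological maps $\add_{x_0}$ and $\add_{\tilde x_0}$, together with their induced homological identifications, are natural with respect to $\tilde f^{\times n}$, so that $\del_{\tilde x_0}$ commutes with $\rho_g^\dHeis(f)$, while the connecting homomorphism $\partial_*$ of the triple $(X_{n,g},Y_{n,g},Z_{n,g})$ commutes with it by naturality of long exact sequences (which uses that $f^{\times n}$ preserves the filtration $Z_{n,g} \subset Y_{n,g} \subset X_{n,g}$). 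The main point to be careful about is precisely the compatibility of the lift $\tilde f^{\times n}$ with the chosen basepoint $\tilde x_0$ of the fiber of $x_0$; once this is pinned down using that $\tilde x_0$ is a boundary path, the rest is pure functoriality of Borel--Moore homology with twisted coefficients.
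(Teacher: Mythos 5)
Your argument is correct and follows essentially the same route as the paper: the first claim is exactly the content of Proposition~\ref{P:specializing}.$(i)$ (which you inline rather than cite), using the identification of $\homol_{n,g}^{\dHeis}$ with the balanced tensor product over $\Z[\pi_{n,g}]$ and the semi-equivariance of the lift $\tilde f^{\times n}$, with the defining condition of $\calM_{n,g}^{\dHeis}$ ensuring descent. Your treatment of the commutation with $\calE$, $\calF^{(k)}$, and $\calK$ likewise matches the paper's argument (naturality of the connecting homomorphism of the preserved triple and of $\del_{\tilde x_0}$, the collar placement of the multisimplex defining $\calF^{(k)}$, and $\calK$ acting by a scalar), only spelled out in more detail.
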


\begin{proof}
 The first claim follows directly from Proposition~\ref{P:specializing}.$(i)$, by letting the embedding $\Mod(\varSigma_{g,1}) \hookrightarrow \Mod(X_{n,g})$ play the role of the homomorphism $\chi : F \to \Mod(X)$, and the group ring $\Z[\dHeis_g]$ play both the role of the ring $R$ and of the $R$-module $M$.
 
 The fact that the action of $\calM_g^\dHeis$ commutes with the action of $U_q$ follows immediately from the homological definition of the operators $\calE$, $\calF^{(k)}$, and $\calK$, in Section~\ref{S:homological_quantum_sl2_operators}. Indeed, the first one is essentially the connection homomorphism in the long exact sequence of a triple of spaces which is preserved by every diffeomorphism in $\calM_g^\dHeis$. Similarly, the second one essentially amounts to a cross product with the homology class of a multisimplex contained in a collar of the boundary, which is also fixed by every diffeomorphism in $\calM_g^\dHeis$. Finally, the operator $\calK$ acts by a scalar on every subrepresentation $\homol_{n,g}^{\dHeis}$.
\end{proof}

\begin{remark}\label{R:extensions_of_Chillingworth_action}
 Let us provide a few comments about this action.
 \begin{enumerate}
  \item The group $\calM^\dHeis_g$ is identified with the \textit{Chillingworth} subgroup of $\Mod(\varSigma_{g,1})$ in \cite[Proposition~27]{BPS21}. The same definition given above yields an action of the whole mapping class group that is not $\Z[\dHeis_g]$-linear, but rather a \textit{crossed} representation, as explained in \cite[Theorem~A.(b)]{BPS21}.
  \item By enlarging coefficients to $\Z[\dHeis_g \rtimes \Aut(\dHeis_g)]$, it is possible to extend this action to a $\Z[\dHeis_g \rtimes \Aut(\dHeis_g)]$-linear representation of $\Mod(\varSigma_{g,1})$. The procedure is explained in \cite{DM22}, where a tautological $\bbC$-linearization, induced by a $\bbC$-linear representation of $\dHeis_g \rtimes \Aut(\dHeis_g)$, is also considered. This naturally yields a $\bbC$-linear representation of $\Mod(\varSigma_{g,1})$.
  \item Even though the $\bbC$-linear representation of $\dHeis_g \rtimes \Aut(\dHeis_g)$ of the previous point is faithful, it is also shown in \cite{DM22} that a faithful $\bbC$-linear representation of $\Z[\dHeis_g]$ cannot exist. This implies that the process of $\bbC$-linearization might increase the kernel of the mapping class group representation. 
 \end{enumerate}
\end{remark}

\subsubsection{Mapping class group representations with complex coefficients}\label{S:linear_homological_representations}

Let us now consider the primitive $r$th root of unity $\zeta = e^{\frac{2 \pi \fraki}{r}}$, where $r \geqs 3$ is an odd integer. Our next goal is to obtain a $\fld$-vector space from $\homol_{n,g}^{\dHeis}$ by representing $\Z[\dHeis_g]$, and to equip it with a (projective) $\fld$-linear action of the whole mapping class group $\Mod(\varSigma_{g,1})$, rather than just a subgroup. We will first look for a representation of $\dHeis_g$, which will induce a representation of the group ring $\Z[\pi_{n,g}]$ into $\End_{\fld}(V_g)$ for some finite-dimensional $\fld$-vector space $V_g$, and then we will look for a homomorphism from $\Mod(\varSigma_{g,1})$ to $\PGL_{\fld}(V_g)$. This will extend the $\Z[\dHeis_g]$-linear representation of $\calM^\dHeis_{n,g}$ defined in Proposition~\ref{P:mcg_action_on_dHeis} to a $\fld$-linear projective one of $\Mod(\varSigma_{g,1})$.

When $n=1$, let us consider the matrices $A_1^{(1)},B_1^{(1)} \in M_{r \times r}(\Z[\zeta])$ defined as
\begin{align}
 A_1^{(1)} &:= \left(
 \begin{matrix}
  1 & 0 & \cdots & 0 \\
  0 & \zeta^4 & \ddots & \vdots \\
  \vdots & \ddots & \ddots & 0 \\
  0 & \cdots & 0 & \zeta^{4(r-1)}
 \end{matrix} \right), &
 B_1^{(1)} &:= \left(
 \begin{matrix}
  0 & 0 & \cdots & 0 & 1 \\
  1 & 0 & \ddots & & 0 \\
  0 & \ddots & \ddots & \ddots & \vdots \\
  \vdots & \ddots & \ddots & 0 & 0 \\
  0 & \cdots & 0 & 1 & 0
 \end{matrix} \right). \label{E:A_1_1-B_1_1}
\end{align}
When $n>1$, notice that
\[
 M_{r^n \times r^n}(\Z[\zeta]) \cong M_{r \times r}(M_{r^{n-1} \times r^{n-1}}(\Z[\zeta])) \cong M_{r^{n-1} \times r^{n-1}}(\Z[\zeta]) \otimes M_{r \times r}(\Z[\zeta]).
\]
Then, let us consider the matrices $A_1^{(n)}, B_1^{(n)}, \ldots, A_n^{(n)}, B_n^{(n)} \in M_{r^n \times r^n}(\Z[\zeta])$ defined, for all $1 \leqs j \leqs n-1$, as
\begin{align}
 A_j^{(n)} &:= \left(
 \begin{matrix}
  A_j^{(n-1)} & 0 & \cdots & 0 \\
  0 & A_j^{(n-1)} & \ddots & \vdots \\
  \vdots & \ddots & \ddots & 0 \\
  0 & \cdots & 0 & A_j^{(n-1)}
 \end{matrix} \right), &
 B_j^{(n)} &:= \left(
 \begin{matrix}
  B_j^{(n-1)} & 0 & \cdots & 0 \\
  0 & B_j^{(n-1)} & \ddots & \vdots \\
  \vdots & \ddots & \ddots & 0 \\
  0 & \cdots & 0 & B_j^{(n-1)}
 \end{matrix} \right), \label{E:A_j_n-B_j_n} \\
 A_n^{(n)} &:= \left(
 \begin{matrix}
  I_{r^{n-1}} & 0 & \cdots & 0 \\
  0 & \zeta^4 I_{r^{n-1}} & \ddots & \vdots \\
  \vdots & \ddots & \ddots & 0 \\
  0 & \cdots & 0 & \zeta^{4(r-1)} I_{r^{n-1}}
 \end{matrix} \right), &
 B_n^{(n)} &:= \left(
 \begin{matrix}
  0 & 0 & \cdots & 0 & I_{r^{n-1}} \\
  I_{r^{n-1}} & 0 & \ddots & & 0 \\
  0 & \ddots & \ddots & \ddots & \vdots \\
  \vdots & \ddots & \ddots & 0 & 0 \\
  0 & \cdots & 0 & I_{r^{n-1}} & 0
 \end{matrix} \right). \label{E:A_n_n-B_n_n}
\end{align}
In other words, we are setting
\begin{align*}
 A_j^{(n)} &:= A_j^{(n-1)} \otimes I_r, &
 B_j^{(n)} &:= B_j^{(n-1)} \otimes I_r, &
 A_n^{(n)} &:= I_{r^{n-1}} \otimes A_1^{(1)}, &
 B_n^{(n)} &:= I_{r^{n-1}} \otimes B_1^{(1)}.
\end{align*}
Let us abuse notation from now on, and denote $A_j^{(n)}$ and $B_j^{(n)}$ simply by $A_j$ and $B_j$ respectively, for every $n \geqs 1$.

\begin{lemma}\label{L:local_system}
 The family of matrices $A_1, B_1, \ldots, A_n, B_n \in M_{r^n \times r^n}(\Z[\zeta])$ defined by Equations~\eqref{E:A_1_1-B_1_1}--\eqref{E:A_n_n-B_n_n} satisfies the following list of properties:
 \begin{enumerate}
 \item They either commute or $\zeta^4$-commute, more precisely:
  \begin{align}
   A_j A_k &= A_k A_j, &
   A_j B_k  &= \zeta^{4 \delta_{j,k}} B_k A_j, &
   B_j B_k  &= B_k B_j; \label{E:AB_zeta-commute} 
  \end{align}
 \item They have order $r$:
  \begin{align}
   A_1^r = B_1^r = \ldots = A_n^r = B_n^r = I_{r^n} &\in \GL_{r^n}(\Z[\zeta]); \label{E:AB_order_r} 
  \end{align}
 \item Their centralizer in $\GL_{r^n}(\Z[\zeta])$ is the subgroup of invertible scalar matrices:
  \begin{align}
   C_{\GL_{r^n}(\Z[\zeta])} \left( A_1,B_1,\ldots,A_n,B_n \right) &= (\Z[\zeta])^\times. \label{E:AB_centralizer}
  \end{align}
\end{enumerate}
\end{lemma}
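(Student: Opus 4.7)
The plan is to prove the three properties essentially in order, leveraging the tensor product structure of the construction and using the single-factor Weyl pair $(A_1^{(1)}, B_1^{(1)})$ as the building block. For property~(i), I would begin with the base case $n = 1$: acting on the standard basis $\{ e_k \}_{k=0}^{r-1}$ of $\Z[\zeta]^r$, the matrix $A_1^{(1)}$ is the clock operator $e_k \mapsto \zeta^{4k} e_k$, while $B_1^{(1)}$ is the shift $e_k \mapsto e_{k+1 \bmod r}$, so a one-line computation yields $A_1^{(1)} B_1^{(1)} = \zeta^4 B_1^{(1)} A_1^{(1)}$. The inductive definitions $A_j^{(n)} = A_j^{(n-1)} \otimes I_r$ etc.\ show that $A_j$ and $B_k$ act on different tensor factors whenever $j \neq k$, so they commute; for $j = k$, the $\zeta^4$-commutation is pulled back from the single-factor calculation. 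The relations $A_j A_k = A_k A_j$ and $B_j B_k = B_k B_j$ follow the same pattern, noting that shifts on disjoint tensor factors commute, and a single shift commutes with itself.

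For property~(ii), the key observation is that $\gcd(4, r) = 1$ because $r$ is odd, so $\zeta^4$ is again a primitive $r$-th root of unity. Hence $A_1^{(1)}$, being diagonal with entries $1, \zeta^4, \ldots, \zeta^{4(r-1)}$, satisfies $(A_1^{(1)})^r = I_r$ and no smaller power suffices; and $B_1^{(1)}$ is a cyclic permutation of $r$ letters, so has order exactly $r$. Tensoring with the identity preserves the order of a matrix, so $(A_j^{(n)})^r = (B_j^{(n)})^r = I_{r^n}$ for every $j$.

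Property~(iii) is the main content and the place where one must think. I would argue as follows. The common eigenvectors of $A_1, \ldots, A_n$ are the tensor-product basis vectors $e_{\bfk} = e_{k_1} \otimes \cdots \otimes e_{k_n}$, indexed by $\bfk = (k_1, \ldots, k_n) \in (\Z/r)^n$, with joint eigenvalue tuple $(\zeta^{4k_1}, \ldots, \zeta^{4k_n})$. Since $\zeta^4$ has multiplicative order $r$, these tuples are pairwise distinct, so each common eigenspace is one-dimensional. A matrix $M \in \GL_{r^n}(\Z[\zeta])$ commuting with every $A_j$ must therefore be diagonal in the $e_{\bfk}$-basis, say $M e_{\bfk} = \lambda_{\bfk}\, e_{\bfk}$. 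Commuting further with $B_j$, which acts on basis vectors by $e_{\bfk} \mapsto e_{\bfk + \bfe_j}$ (with indices reduced mod $r$), forces $\lambda_{\bfk} = \lambda_{\bfk + \bfe_j}$ for every $j$ and every $\bfk$. Since the $B_j$'s generate the action of $(\Z/r)^n$ on the index set by translation, and this action is transitive, all the $\lambda_{\bfk}$ coincide; hence $M$ is a scalar matrix, necessarily with scalar in $\Z[\zeta]^\times$ since $M$ is assumed invertible over $\Z[\zeta]$.

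Of the three parts, the hardest is~(iii), though even this reduces to the transparent Schur-type argument sketched above once one exploits that the simultaneous spectrum of $(A_1, \ldots, A_n)$ is simple and that the $B_j$'s generate a transitive translation action. Everything else is a direct tensor-product bookkeeping, and nothing beyond the stated recursive definitions and the primitivity of $\zeta^4$ is required.
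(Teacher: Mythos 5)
Your proposal is correct and follows essentially the same route as the paper: the clock/shift computation for $n=1$, tensor-product bookkeeping for commutation and order, and the observation that a matrix commuting with the $A_j$'s must be diagonal while commutation with the $B_j$'s forces a single eigenvalue. The only (harmless) difference is that you run the centralizer argument directly for general $n$ via the simplicity of the joint spectrum, whereas the paper handles $n>1$ by induction; your version is if anything more explicit.
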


\begin{proof}
 The claim is easily established by induction on $n \geqs 1$.

 When $n=1$, Equation~\eqref{E:AB_zeta-commute} follows from an easy computation. Equation~\eqref{E:AB_order_r} is clear. To check Equation~\eqref{E:AB_centralizer}, remark that any matrix commuting with $A_1$ has to be diagonal, and that any diagonal matrix commuting with $B_1$ has to have a single eigenvalue.

 When $n>1$, Equations~\eqref{E:AB_zeta-commute}--\eqref{E:AB_centralizer} are an immediate consequence of the induction hypothesis.
\end{proof}

Now, thanks to Lemma~\ref{L:local_system}, we can endow the $\fld$-vector space $V_g := \fld^{r^g}$ with a left $\Z[\pi_{n,g}]$-module structure determined by the $\fld$-linear representation
\begin{align}
 \varphi_{n,g}^V : \Z[\pi_{n,g}] & \to \End_{\fld}(V_g) \label{E:varphi} \\*
 \braid{\sigma}{i} & \mapsto -\zeta^{-2} I_{r^g} \nonumber \\*
 \braid{\alpha}{j} & \mapsto A_j \nonumber \\*
 \braid{\beta}{j} & \mapsto B_j \nonumber 
\end{align}
This should be compared with \cite[Theorem~2.6]{GH14}. Notice that, in order to infer that $\varphi_{n,g}^V$ is well-defined, we only need Equations~\eqref{E:AB_zeta-commute} and \eqref{E:AB_order_r}. Equation~\eqref{E:AB_centralizer}, which states that $V_g$ is a simple $\Z[\pi_{n,g}]$-module, will only be crucial later, in order to define a projective representation of $\Mod(\varSigma_{g,1})$ on the Borel--Moore homology of $X_{n,g}$ with local coefficients in $V_g$, using Proposition~\ref{P:lifting_actions}.

\begin{lemma}\label{L:rho_alpha_prime}
 For all integers $1 \leqs j \leqs g$ and $1 \leqs k \leqs g-1$ we have
 \begin{align}
  \varphi_{n,g}^V(\braid{\tilde{\alpha}}{j}) &= \zeta^4 A_j, &
  \varphi_{n,g}^V(\braid{\gamma}{k}) &= \zeta^{-4} A_k^{-1} A_{k+1}, &
  \varphi_{n,g}^V(\braid{\delta}{j}) &= \zeta^{-4(j-1)} A_j, \label{E:rho_other_braids}
 \end{align}
 where $\braid{\tilde{\alpha}}{j}$, $\braid{\gamma}{k}$, and $\braid{\delta}{j}$ are defined in Equation~\eqref{E:other_braids}.
\end{lemma}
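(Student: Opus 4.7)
The proof is a direct computation using the definitions of $\braid{\tilde{\alpha}}{j}$, $\braid{\gamma}{k}$, $\braid{\delta}{j}$ from Equation~\eqref{E:other_braids} together with the commutation relations~\eqref{E:AB_zeta-commute} for the matrices $A_j, B_j$, so the main work is purely algebraic and will not require any topological or homological input. My plan is to evaluate $\varphi_{n,g}^V$ on the three words in the generators and simplify.

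First, since $\braid{\tilde{\alpha}}{j} = \braid{\beta^{-1}}{j} \ast \braid{\alpha}{j} \ast \braid{\beta}{j}$, I would compute $\varphi_{n,g}^V(\braid{\tilde{\alpha}}{j}) = B_j^{-1} A_j B_j$. Rewriting $A_j B_j = \zeta^4 B_j A_j$ from Equation~\eqref{E:AB_zeta-commute} and multiplying both sides on the left by $B_j^{-1}$ yields $B_j^{-1} A_j B_j = \zeta^4 A_j$, giving the first formula. Next, since $\braid{\gamma}{k} = \braid{\alpha}{k+1} \ast \braid{\tilde{\alpha}^{-1}}{k}$, I would apply $\varphi_{n,g}^V$, substitute the formula just proved, and use the fact that $A_k$ and $A_{k+1}$ commute (again by~\eqref{E:AB_zeta-commute}) to obtain $A_{k+1} (\zeta^4 A_k)^{-1} = \zeta^{-4} A_k^{-1} A_{k+1}$.

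For the third formula, I proceed by induction on $j \geqs 1$. The base case $j=1$ is immediate from $\braid{\delta}{1} = \braid{\alpha}{1}$. For $j \geqs 2$, the recursive definition gives $\braid{\delta}{j} = \braid{\gamma}{j-1} \ast \braid{\delta}{j-1}$, so applying $\varphi_{n,g}^V$ and the induction hypothesis yields
\[
\varphi_{n,g}^V(\braid{\delta}{j}) = \zeta^{-4} A_{j-1}^{-1} A_j \cdot \zeta^{-4(j-2)} A_{j-1} = \zeta^{-4(j-1)} A_j,
\]
where the last equality uses that all $A_k$ commute. This telescoping is the only place where it matters that the $A_k$'s commute with each other, and no subtlety beyond that is expected. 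I anticipate no real obstacle: the only check to be careful with is the sign conventions and the direction of concatenation in $\pi_{n,g}$, to make sure that $\varphi_{n,g}^V$ is applied consistently as a homomorphism, but these are bookkeeping issues rather than substantive ones.
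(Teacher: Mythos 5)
Your computation is correct and is exactly the ``direct computation based on Equations~\eqref{E:other_braids} and \eqref{E:AB_zeta-commute}'' that the paper leaves to the reader: $B_j^{-1}A_jB_j=\zeta^4A_j$ from the $\zeta^4$-commutation, then $\varphi_{n,g}^V(\braid{\gamma}{k})=A_{k+1}(\zeta^4A_k)^{-1}$, and the telescoping induction for $\braid{\delta}{j}$. You also correctly handle the one genuine pitfall, namely that $\varphi_{n,g}^V$ preserves the order of concatenation (so one gets $B_j^{-1}A_jB_j$ and not $B_jA_jB_j^{-1}$), which is consistent with how the paper uses these formulas in the proof of Proposition~\ref{P:MCG_action}.
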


The proof is a direct computation based on Equations~\eqref{E:other_braids} and \eqref{E:AB_zeta-commute}, which is left to the reader.

Let us now consider the free group 
\begin{equation}
 F_g := \langle \twist{\alpha}{j}, \twist{\beta}{j}, \twist{\gamma}{k} \mid 1 \leqs j \leqs g, 1 \leqs k \leqs g-1 \rangle, \label{E:free_group}
\end{equation}
and the homomorphism
\begin{align}
 \psi_g^V : F_g &\to \GL_{\fld}(V_g) \label{E:psi} \\*
 \twist{\alpha}{j} &\mapsto \sum_{\ell=0}^{r-1} \zeta^{-2\ell(\ell-1)} A_j^\ell, \nonumber \\*
 \twist{\beta}{j} &\mapsto \sum_{\ell=0}^{r-1} \zeta^{-2(\ell+1)\ell} B_j^\ell, \nonumber \\*
 \twist{\gamma}{k} &\mapsto \sum_{\ell=0}^{r-1} \zeta^{-2(\ell+1)\ell} A_k^{-\ell} A_{k+1}^\ell, \nonumber 
\end{align}
where $A_1,B_1,\ldots,A_g,B_g \in \GL_{r^g}(\Z[\zeta])$ are defined in Equations~\eqref{E:A_1_1-B_1_1}--\eqref{E:A_n_n-B_n_n}. Notice that we are not claiming that
\[
 \{ \twist{\alpha}{j}, \twist{\beta}{j}, \twist{\gamma}{k} \mid 1 \leqs j \leqs g, 1 \leqs k \leqs g-1 \}
\]
generates a free subgroup of $\Mod(\varSigma_{g,1})$, because it does not. We are merely considering the free group of formal words in these Dehn twists, and instead of introducing new symbols for the corresponding free generators, we abuse notation a little.

\begin{remark}\label{R:integral_homological}
 We point out that
\begin{align}
 \psi_g^V(\twist{\alpha}{j})^{-1} &= \frac{1}{r} \sum_{\ell=0}^{r-1} \zeta^{2(\ell+1)\ell} A_j^\ell, &
 \psi_g^V(\twist{\beta}{j})^{-1} &= \frac{1}{r} \sum_{\ell=0}^{r-1} \zeta^{2\ell(\ell-1)} B_j^\ell, &
 \psi_g^V(\twist{\gamma}{k})^{-1} &= \frac{1}{r} \sum_{\ell=0}^{r-1} \zeta^{2\ell(\ell-1)} A_k^{-\ell} A_{k+1}^\ell, \label{E:psi_inverses}
\end{align}
so that in fact the image of $\psi_g^V$ is actually contained in $\GL_{r^g}(\Z[\zeta,\frac{1}{r}])$.
\end{remark}

\begin{proposition}\label{P:MCG_action}
 The homomorphisms $\varphi_{n,g}^V : \Z[\pi_{n,g}] \to \End_{\fld}(V_g)$ and $\psi_g^V : F_g \to \GL_{\fld}(V_g)$, defined in Equations~\eqref{E:varphi} and \eqref{E:psi} respectively, satisfy, for all $f \in F_g$ and $\bloop \in \pi_{n,g}$, the identity
 \begin{equation}\label{E:fundamental_identity}
  \psi_g^V(f) \circ \varphi_{n,g}^V(\bloop) = 
  \varphi_{n,g}^V((f^{\times n})_*(\bloop)) \circ \psi_g^V(f)
 \end{equation}
\end{proposition}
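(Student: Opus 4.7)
The plan is to reduce identity~\eqref{E:fundamental_identity} to a finite list of elementary checks on generators, and then dispatch each one by a Gaussian-sum computation in the Heisenberg group algebra.

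First I would observe that the identity is closed under the group operations on both sides. More precisely, if~\eqref{E:fundamental_identity} holds for $(f_1,\bloop)$ and $(f_2,\bloop)$, then it holds for $(f_1 f_2,\bloop)$ and for $(f_i^{-1},\bloop)$, by direct manipulation using that $\psi_g^V$ is a homomorphism and that $(\,\cdot\,^{\times n})_*$ is a group action. Similarly, since $\varphi_{n,g}^V$ is a ring homomorphism and $(f^{\times n})_*$ is a group automorphism, the identity is multiplicative in $\bloop$. Consequently, it suffices to verify~\eqref{E:fundamental_identity} when $f$ ranges over the free generators $\{\twist{\alpha}{j},\twist{\beta}{j},\twist{\gamma}{k}\}$ of $F_g$ and $\bloop$ ranges over one of the generating sets of $\pi_{n,g}$ from Section~\ref{S:heisenberg_rep_of_braids} or Remark~\ref{R:alternative_generators_pi}.

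Next I would recall the explicit action of each Dehn twist on these surface-braid generators. The braid crossings $\braid{\sigma}{i}$ are fixed by every Dehn twist under consideration, since each such twist admits representatives supported away from a collar of the basepoint containing the standard braid loops. On the handle generators, the twists $\twist{\alpha}{j}$ and $\twist{\beta}{j}$ act by the classical Dehn-twist formulas on $\pi_1(\varSigma_{g,1})$, fixing all generators except a single partner loop, which acquires a factor of $\braid{\alpha}{j}^{\pm 1}$ or $\braid{\beta}{j}^{\pm 1}$. The bounding-pair twist $\twist{\gamma}{k}$ acts non-trivially on the generators attached to handles on either side of the separating curve $\gamma_k$, and its action becomes most transparent in the alternative generating set $\{\braid{\sigma}{i},\braid{\beta}{j},\braid{\delta}{j}\}$ of Remark~\ref{R:alternative_generators_pi}, via the translations provided by Lemma~\ref{L:rho_alpha_prime}.

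Identity~\eqref{E:fundamental_identity} now splits into two types of elementary checks. For pairs $(f,\bloop)$ in which $f$ fixes $\bloop$, it reduces to the commutativity $[\psi_g^V(f),\varphi_{n,g}^V(\bloop)]=0$. Since $\psi_g^V(\twist{\alpha}{j})$, $\psi_g^V(\twist{\beta}{j})$, and $\psi_g^V(\twist{\gamma}{k})$ are polynomials in $A_j$, $B_j$, and $A_k^{-1}A_{k+1}$ respectively, these commutativities follow directly from the $\zeta^4$-commutation relations~\eqref{E:AB_zeta-commute}. For each non-trivial pair, after reducing via the same relations, the identity collapses to an equation of the form
\[
 \Bigl(\sum_{\ell=0}^{r-1}\zeta^{-2\ell(\ell-1)}M^{\ell}\Bigr)X \;=\; MX\Bigl(\sum_{\ell=0}^{r-1}\zeta^{-2\ell(\ell-1)}M^{\ell}\Bigr),
\]
with $M \in \{A_j,B_j,A_k^{-1}A_{k+1}\}$ and $X=\varphi_{n,g}^V(\bloop)$ satisfying a relation of the form $MX=\zeta^{4}XM$. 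Passing the $X$ across each power $M^{\ell}$ on the left-hand side, and reindexing the right-hand side via $m=\ell+1$ (permissible because $M^{r}=I$ by~\eqref{E:AB_order_r}), turns both sides into $\sum_{m=0}^{r-1}\zeta^{-2m^{2}+6m}XM^{m}$, via the elementary identity $-2\ell(\ell-1)+4\ell = -2(m-1)(m-2)+4$ with $m=\ell+1$. The sign-flipped variants, corresponding to $MX=\zeta^{-4}XM$, are handled identically using the inverse formulas~\eqref{E:psi_inverses}.

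The main obstacle I foresee is not the algebra but the bookkeeping for $\twist{\gamma}{k}$, whose action entangles loops attached to the $k$-th and $(k{+}1)$-st handles. Working with the alternative generators and exploiting that $\varphi_{n,g}^V(\braid{\gamma}{k})=\zeta^{-4}A_k^{-1}A_{k+1}$ behaves as a single Heisenberg generator on the level of commutation with $B_k,B_{k+1}$ should reduce this case to another instance of the Gaussian-sum identity above. Once each elementary pair is dispatched, identity~\eqref{E:fundamental_identity} extends to all $f \in F_g$ and $\bloop \in \pi_{n,g}$ by the reductions of the first paragraph.
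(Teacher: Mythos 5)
Your plan follows the paper's proof essentially verbatim: reduce to the generators $\{\tau_{\alpha_j},\tau_{\beta_j},\tau_{\gamma_k}\}$ of $F_g$ and the alternative generators $\{\braid{\sigma}{i},\braid{\beta}{j},\braid{\delta}{j}\}$ of $\pi_{n,g}$, read off the Dehn-twist action on these loops geometrically, and verify each non-trivial case by reindexing a Gaussian sum using the $\zeta^{\pm 4}$-commutation relations~\eqref{E:AB_zeta-commute}. The only (harmless) imprecision is in your last step: the $\tau_{\beta_j}$ and $\tau_{\gamma_k}$ cases are handled not via the inverse formulas~\eqref{E:psi_inverses} but simply because those twists are assigned the Gaussian sum with exponent $-2(\ell+1)\ell$ rather than $-2\ell(\ell-1)$ in~\eqref{E:psi}, after which the same shift $m=\ell\pm 1$ closes each case exactly as in your template.
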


\begin{proof}
 It is sufficient to establish Equation~\eqref{E:fundamental_identity} for a fixed pair of sets of generators of $F_g$ and of $\pi_{n,g}$. Then, thanks to Remark~\ref{R:alternative_generators_pi}, it is sufficient to check it for all 
 \[
  f \in \{ \twist{\alpha}{j}, \twist{\beta}{j}, \twist{\gamma}{k} \mid 1 \leqs j \leqs g, 1 \leqs k \leqs g-1 \}
 \]
 and all
 \[
  \bloop \in \{ \braid{\sigma}{i},\braid{\beta}{j},\braid{\delta}{j} \mid 1 \leqs i \leqs n, 1 \leqs j \leqs g \}.
 \]
 First of all, $(\twist{\alpha}{j}^{\times n})_*(\braid{\beta}{j}) = \braid{\alpha}{j} \ast \braid{\beta}{j}$, since it is given by
 \[
  \pic{twists_a_beta}
 \]
 Next, $(\twist{\beta}{j}^{\times n})_*(\braid{\delta}{j}) = \braid{\beta^{-1}}{j} \ast \braid{\delta}{j}$, since it is given by
 \[
  \pic{twists_b_delta}
 \]
 Furthermore, $(\twist{\gamma}{k}^{\times n})_*(\braid{\beta}{k}) = \braid{\beta}{k} \ast \braid{\gamma^{-1}}{k}$ and $(\twist{\gamma}{k}^{\times n})_*(\braid{\beta}{k+1}) = \braid{\gamma}{k} \ast \braid{\beta}{k+1}$, since they are given by
 \begin{gather*}
  \pic{twists_c_beta_1} \\*
  \pic{twists_c_beta_2} 
 \end{gather*}
 respectively. Clearly, in all other cases, $(f^{\times n})_*(\bloop)$ is simply given by $\bloop$.

 Therefore, the claim is a direct consequence of Equations~\eqref{E:AB_zeta-commute}. Indeed, we have
 \begin{align*}
  &\varphi_{n,g}^V((\twist{\alpha}{j}^{\times n})_*(\braid{\beta}{j})) \psi_g^V(\twist{\alpha}{j}) 
  = \varphi_{n,g}^V(\braid{\alpha}{j}) \varphi_{n,g}^V(\braid{\beta}{j}) \psi_g^V(\twist{\alpha}{j}) 
  = A_j B_j \left( \sum_{\ell=0}^{r-1} \zeta^{-2\ell(\ell-1)} A_j^\ell \right) \\*
  &\hspace*{\parindent} = \left( \sum_{\ell=0}^{r-1} \zeta^{-2\ell(\ell-1)-4\ell} A_j^{\ell+1} \right) B_j 
  = \left( \sum_{\ell=0}^{r-1} \zeta^{-2(\ell+1)\ell} A_j^{\ell+1} \right) B_j \\*
  &\hspace*{\parindent} = \left( \sum_{m=0}^{r-1} \zeta^{-2m(m-1)} A_j^m \right) B_j 
  = \psi_g^V(\twist{\alpha}{j}) \varphi_{n,g}^V(\braid{\beta}{j}).
 \end{align*}
 Next, we have
 \begin{align*}
  &\varphi_{n,g}^V((\twist{\beta}{j}^{\times n})_*(\braid{\delta}{j})) \psi_g^V(\twist{\beta}{j}) 
  = \varphi_{n,g}^V(\braid{\beta}{j})^{-1} \varphi_{n,g}^V(\braid{\delta}{j}) \psi_g^V(\twist{\beta}{j}) 
  = \zeta^{-4(j-1)} B_j^{-1} A_j \left( \sum_{\ell=0}^{r-1} \zeta^{-2(\ell+1)\ell} B_j^\ell \right) \\*
  &\hspace*{\parindent} = \zeta^{-4(j-1)} \left( \sum_{\ell=0}^{r-1} \zeta^{-2(\ell+1)\ell+4\ell} B_j^{\ell-1} \right) A_j 
  = \zeta^{-4(j-1)} \left( \sum_{\ell=0}^{r-1} \zeta^{-2\ell(\ell-1)} B_j^{\ell-1} \right) A_j \\*
  &\hspace*{\parindent} = \zeta^{-4(j-1)} \left( \sum_{m=0}^{r-1} \zeta^{-2(m+1)m} B_j^m \right) A_j 
  = \psi_g^V(\twist{\beta}{j}) \varphi_{n,g}^V(\braid{\delta}{j}).
 \end{align*}
 Similarly, we have
 \begin{align*}
  &\varphi_{n,g}^V((\twist{\gamma}{k}^{\times n})_*(\braid{\beta}{k})) \psi_g^V(\twist{\gamma}{k}) 
  = \varphi_{n,g}^V(\braid{\beta}{k}) \varphi_{n,g}^V(\braid{\gamma}{k})^{-1} \psi_g^V(\twist{\gamma}{k}) 
  = \zeta^4 B_k A_k A_{k+1}^{-1} \left( \sum_{\ell=0}^{r-1} \zeta^{-2(\ell+1)\ell} A_k^{-\ell} A_{k+1}^\ell \right) \\*
  &\hspace*{\parindent} = \left( \sum_{\ell=0}^{r-1} \zeta^{-2(\ell+1)\ell+4\ell} A_k^{-\ell+1} A_{k+1}^{\ell-1} \right) B_k 
  = \left( \sum_{\ell=0}^{r-1} \zeta^{-2\ell(\ell-1)} A_k^{-\ell+1} A_{k+1}^{\ell-1} \right) B_k \\*
  &\hspace*{\parindent} = \left( \sum_{m=0}^{r-1} \zeta^{-2(m+1)m} A_k^{-m} A_{k+1}^m \right) B_k 
  = \psi_g^V(\twist{\gamma}{k}) \varphi_{n,g}^V(\braid{\beta}{k}).
 \end{align*}
 Finally, we have
 \begin{align*}
  &\varphi_{n,g}^V((\twist{\gamma}{k}^{\times n})_*(\braid{\beta}{k+1})) \psi_g^V(\twist{\gamma}{k})  
  = \varphi_{n,g}^V(\braid{\gamma}{k}) \varphi_{n,g}^V(\braid{\beta}{k+1}) \psi_g^V(\twist{\gamma}{k}) 
  = \zeta^{-4} A_k^{-1} A_{k+1} B_{k+1} \left( \sum_{\ell=0}^{r-1} \zeta^{-4(\ell+1)\ell} A_k^{-\ell} A_{k+1}^\ell \right) \\*
  &\hspace*{\parindent} = \left( \sum_{\ell=0}^{r-1} \zeta^{-2(\ell+1)\ell-4(\ell+1)} A_k^{-\ell-1} A_{k+1}^{\ell+1} \right) B_{k+1} 
  = \left( \sum_{\ell=0}^{r-1} \zeta^{-2(\ell+2)(\ell+1)} A_k^{-\ell-1} A_{k+1}^{\ell+1} \right) B_{k+1} \\*
  &\hspace*{\parindent} = \left( \sum_{m=0}^{r-1} \zeta^{-2(m+1)m} A_k^{-m} A_{k+1}^m \right) B_{k+1} 
  = \psi_g^V(\twist{\gamma}{k}) \varphi_{n,g}^V(\braid{\beta}{k+1}). \qedhere
 \end{align*}
\end{proof}

\begin{corollary}\label{C:bar_psi}
 There exists a homomorphism $\bar{\psi}_g^V : \Mod(\varSigma_{g,1}) \to \PGL_{\fld}(V_g)$ that fits into the commutative diagram 
 \begin{center}
  \begin{tikzpicture}[descr/.style={fill=white}] \everymath{\displaystyle}
   \node (P0) at (0,0) {$F_g$};
   \node (P1) at (3,0) {$\GL_{\fld}(V_g)$};
   \node (P2) at (0,-1) {$\Mod(\varSigma_{g,1})$};
   \node (P3) at (3,-1) {$\PGL_{\fld}(V_g)$};
   \draw
   (P0) edge[->] node[above] {\scriptsize $\psi_g^V$}(P1)
   (P0) edge[->>] node[left] {} (P2)
   (P1) edge[->>] node[right] {} (P3)
   (P2) edge[->] node[below] {\scriptsize $\bar{\psi}_g^V$} (P3);
  \end{tikzpicture}
 \end{center}
\end{corollary}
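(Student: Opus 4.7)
The plan is to show that $\psi_g^V$ descends, after projection to $\PGL_\fld(V_g)$, through the quotient $F_g \twoheadrightarrow \Mod(\varSigma_{g,1})$. Since Lickorish's theorem \cite{L64} ensures that this quotient map is surjective (its generators $\twist{\alpha}{j}$, $\twist{\beta}{j}$, $\twist{\gamma}{k}$ are precisely the Dehn twists that generate $\Mod(\varSigma_{g,1})$), the commutativity of the diagram is automatic once $\bar{\psi}_g^V$ is shown to be well defined. Therefore the only real content is to prove that if $f \in F_g$ represents the identity class in $\Mod(\varSigma_{g,1})$, then $\psi_g^V(f)$ lies in the kernel of the projection $\GL_\fld(V_g) \twoheadrightarrow \PGL_\fld(V_g)$, that is, in the subgroup $\fld^\times \cdot I_{r^g}$ of scalar matrices.

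First I would fix $n=1$ and argue that such an $f$ induces the trivial automorphism of $\pi_{1,g} = \pi_1(\varSigma_{g,1},\xi)$. Indeed, since $f$ is isotopic to the identity of $\varSigma_{g,1}$ through self-dif\-fe\-o\-mor\-phisms fixing $\partial \varSigma_{g,1}$ pointwise, the basepoint $\xi \in \partial_- \varSigma_{g,1}$ stays fixed throughout the isotopy, so $f^{\times 1} = f$ lifts (uniquely, fixing the constant path) to an isotopy of the universal cover that carries every loop class to itself. Hence $(f^{\times 1})_* = \id$ on $\pi_{1,g}$, and Equation~\eqref{E:fundamental_identity} of Proposition~\ref{P:MCG_action} specializes to
\[
 \psi_g^V(f) \circ \varphi_{1,g}^V(\bloop) = \varphi_{1,g}^V(\bloop) \circ \psi_g^V(f)
\]
for every $\bloop \in \pi_{1,g}$. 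In particular, $\psi_g^V(f)$ commutes with all of the matrices $A_j = \varphi_{1,g}^V(\braid{\alpha}{j})$ and $B_j = \varphi_{1,g}^V(\braid{\beta}{j})$ for $1 \leqs j \leqs g$.

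The conclusion then follows from the centralizer computation Equation~\eqref{E:AB_centralizer} of Lemma~\ref{L:local_system}, once extended from $\GL_{r^g}(\Z[\zeta])$ to $\GL_\fld(V_g) = \GL_{r^g}(\fld)$. This extension is straightforward: the inductive argument carries over verbatim because the diagonal entries $\zeta^{4k}$ of $A_j^{(g)}$ consist of $r$ pairwise distinct elements of $\fld$ (as $r$ is odd and $\zeta$ is a primitive $r$th root of unity), so any matrix commuting with each $A_j$ must preserve the joint eigenspace decomposition, and subsequently commuting with each cyclic shift $B_j$ forces it to act as a single scalar. Consequently $\psi_g^V(f) \in \fld^\times \cdot I_{r^g}$, which is exactly the kernel of $\GL_\fld(V_g) \twoheadrightarrow \PGL_\fld(V_g)$. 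The factorization $\bar{\psi}_g^V : \Mod(\varSigma_{g,1}) \to \PGL_\fld(V_g)$ is therefore well defined, and the claimed diagram commutes by construction.

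The principal (and only mildly delicate) point is the reduction to $(f^{\times 1})_* = \id$; one could equivalently invoke the injectivity of $\Mod(\varSigma_{g,1}) \hookrightarrow \Aut(\pi_1(\varSigma_{g,1}))$ provided by the Dehn–Nielsen–Baer theorem, but the direct isotopy argument above is simpler and avoids having to deal with the full surface braid group $\pi_{n,g}$. Everything else — surjectivity of $F_g \to \Mod(\varSigma_{g,1})$ and the centralizer property forcing scalars — is already in hand from the preceding results.
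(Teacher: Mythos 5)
Your proposal is correct and follows essentially the same route as the paper: for $f$ representing the identity mapping class, Equation~\eqref{E:fundamental_identity} forces $\psi_g^V(f)$ to commute with every $\varphi_{n,g}^V(\bloop)$, and the centralizer computation of Lemma~\ref{L:local_system} then forces it to be a scalar. Your two added refinements — the explicit isotopy argument showing $(f^{\times n})_*=\id$, and the observation that the centralizer statement must be read in $\GL_{\fld}(V_g)$ rather than $\GL_{r^g}(\Z[\zeta])$ since $\psi_g^V(f)$ need not have $\Z[\zeta]$-coefficients — are legitimate details that the paper leaves implicit, not a different proof.
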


\begin{proof}
 If $f \in F_g$ satisfies $[f] = [\id] \in \Mod(\varSigma_{g,1})$, then Equation~\eqref{E:fundamental_identity} implies
 \[
  \psi_g^V(f) \circ \varphi_{n,g}^V(\bloop)
  = \varphi_{n,g}^V((f^{\times n})_*(\bloop)) \circ \psi_g^V(f)
  = \varphi_{n,g}^V(\bloop) \circ \psi_g^V(f)
 \]
 for every $\bloop \in \pi_{n,g}$, which means
 \[
  \psi_g^V(f) \in C_{\GL_{r^n}(\Z[\zeta])} \left( A_1,B_1,\ldots,A_n,B_n \right) 
  = (\Z[\zeta])^\times
 \]
 thanks to Equation~\eqref{E:AB_centralizer}.
\end{proof}

\begin{definition}\label{D:linear_homology}
 For every integer $n \geqs 1$, the \textit{$n$th modular Heisenberg homology group} of $\varSigma_{g,1}$ is the $\fld$-vector space
 \[
  \homol_{n,g}^V := H^\BM_n(X_{n,g},Y_{n,g};\phi_{n,g}^V),
 \]
 where $\varphi_{n,g}^V$ is defined in Equation~\eqref{E:varphi}. For $n = 0$, we set 
 \[
  \homol_{0,g}^V := V_g.
 \]
 The \textit{total modular Heisenberg homology group} of $\varSigma_{g,1}$ is the direct sum
 \[
  \homol_g^V := \bigoplus_{n \geqs 0} \homol_{n,g}^V.
 \]
\end{definition}

\begin{remark}\label{R:infinite_linear_basis}
 A basis of $\homol_{n,g}^V$ is given by
 \[
  \left\{ \basis(\bfa,\bfb) \otimes \bfv_{\bfc} \Bigm|
  \bfa, \bfb \in \N^{\times g}, \left| \bfa + \bfb \right| = n,
  \bfc \in (\Z/r\Z)^{\times g} \right\},
 \]
 where $\{ \bfv_{\bfc} \mid \bfc \in (\Z/r\Z)^{\times g} \}$ is the canonical basis of $V_g = \fld^{r^g}$ in which matrices of Lemma~\ref{L:local_system} were written, meaning we have
 \begin{align*}
  A_j \bfv_{\bfc} &= \zeta^{4 c_j} \bfv_{\bfc}, &
  B_j \bfv_{\bfc} &= \bfv_{\bfc+\bfe_j}
 \end{align*}
 for every integer $1 \leqs j \leqs g$, where we recall that $\bfe_j$ denotes the column vector of size $g$ whose $k$th entry is $\delta_{j,k}$.
\end{remark}

The next statement should be compared with \cite[Theorems~62 \& 70]{BPS21}, where an implicit version of the homomorphism $\bar{\psi}_g^V : \Mod(\varSigma_{g,1}) \to \PGL_{\fld}(V_g)$ is used for even roots of unity (and lifted to a linear representation of the stably universal central extension of $\Mod(\varSigma_{g,1})$).

\begin{theorem}\label{T:mcg_homological_projective_rep}
 There exists a projective action of $\Mod(\varSigma_{g,1})$ on the $n$th modular Heisenberg homology $\homol_{n,g}^V$ of Definition~\ref{D:linear_homology} determined by the commutative diagram 
 \begin{center}
  \begin{tikzpicture}[descr/.style={fill=white}] \everymath{\displaystyle}
   \node (P0) at (0,0) {$F_g$};
   \node (P1) at (3,0) {$\GL_{\fld}(\homol_{n,g}^V)$};
   \node (P2) at (0,-1) {$\Mod(\varSigma_{g,1})$};
   \node (P3) at (3,-1) {$\PGL_{\fld}(\homol_{n,g}^V)$};
   \draw
   (P0) edge[->] node[above] {\scriptsize $\rho_{n,g}^V$}(P1)
   (P0) edge[->>] node[left] {} (P2)
   (P1) edge[->>] node[right] {} (P3)
   (P2) edge[->] node[below] {\scriptsize $\bar{\rho}_{n,g}^V$} (P3);
  \end{tikzpicture}
 \end{center}
 where
 \[
  \rho_{n,g}^V(f)(\tilde{\chain} \otimes v) = (\tilde{f}^{\times n})_*(\tilde{\chain}) \otimes \psi_g^V(f)(v)
 \]
 for all diffeomorphisms $f \in F_g$, homology classes $\tilde{\chain} \in H^\BM_n(X_{n,g},Y_{n,g};\varphi_{n,g}^\pi)$, and coefficients $v \in V_g$. Furthermore, the direct sum, for $n \geqs 0$, of these representations defines a projective action
 \[
  \bar{\rho}_g^V : \Mod(\varSigma_{g,1}) \to \PGL_{U_\zeta}(\homol_g^V)
 \]
 of $\Mod(\varSigma_{g,1})$ on the total modular Heisenberg homology $\homol_g^V$ by invertible $U_\zeta$-module endomorphisms, considered up to non-zero scalars.
\end{theorem}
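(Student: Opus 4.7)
The plan follows the template established by Proposition~\ref{P:mcg_action_on_dHeis} for the discrete Heisenberg case, with the crucial new ingredient being the ``uncrossing'' identity of Proposition~\ref{P:MCG_action}, which absorbs the twisting of coefficients into the linear operator $\psi_g^V(f)$. First I would set, for each $f \in F_g$,
$$\rho_{n,g}^V(f)(\tilde{\chain} \otimes v) := (\tilde{f}^{\times n})_*(\tilde{\chain}) \otimes \psi_g^V(f)(v),$$
viewing $\homol_{n,g}^V$ as $H^\BM_n(\tilde{X}_{n,g},\tilde{p}^{-1}(Y_{n,g})) \otimes_{\Z[\pi_{n,g}]} V_g$, and check that this descends to the $\pi_{n,g}$-balanced tensor product. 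The required identity reduces, via the equivariance $(\tilde{f}^{\times n})_*(\tilde{\chain} \cdot \bloop) = (\tilde{f}^{\times n})_*(\tilde{\chain}) \cdot (f^{\times n})_*(\bloop)$ of the chosen lift, to Equation~\eqref{E:fundamental_identity}. Multiplicativity of $\rho_{n,g}^V$ then follows from multiplicativity of both $\psi_g^V$ and $f \mapsto (\tilde{f}^{\times n})_*$; this is essentially what Proposition~\ref{P:lifting_actions} of the appendix packages.

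Next I would descend $\rho_{n,g}^V$ projectively to $\Mod(\varSigma_{g,1})$ by showing that whenever $f \in F_g$ satisfies $[f] = [\id]$, the operator $\rho_{n,g}^V(f)$ is scalar. An isotopy from $f$ to $\id_{\varSigma_{g,1}}$ rel boundary produces an isotopy of $f^{\times n}$ to $\id_{X_{n,g}}$ fixing $\braid{\xi}{}$, which lifts canonically to an isotopy of $\tilde{f}^{\times n}$ to $\id_{\tilde{X}_{n,g}}$ keeping the constant path fixed; thus $(\tilde{f}^{\times n})_*$ acts as the identity on $H^\BM_n(X_{n,g},Y_{n,g};\varphi_{n,g}^\pi)$. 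By Corollary~\ref{C:bar_psi} (which itself rests on Equation~\eqref{E:AB_centralizer}), $\psi_g^V(f)$ is a scalar in $(\Z[\zeta])^\times$. So $\rho_{n,g}^V(f)$ is a scalar multiple of the identity and therefore trivial in $\PGL_\fld(\homol_{n,g}^V)$.

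For the final assertion, that the direct sum $\bar{\rho}_g^V$ takes values in $\PGL_{U_\zeta}(\homol_g^V)$, I would mirror the argument at the end of Proposition~\ref{P:mcg_action_on_dHeis}. The operators $\calE$, $\calF^{(k)}$, $\calK$ defined in Section~\ref{S:homological_quantum_sl2_operators} only involve the triple $(X_{n,g},Y_{n,g},Z_{n,g})$ together with the distinguished points $x_0$, $\tilde{x}_0$ and a collar of $\partial_-\varSigma_{g,1}$, all of which are pointwise fixed by any $f \in \Mod(\varSigma_{g,1})$. Hence $\rho_{n,g}^V(f)$ commutes with each $U_\zeta$-generator already at the linear level, and this commutation passes to $\PGL$.

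The main obstacle, in my view, is the passage in the second step from isotopy of $f$ rel boundary to triviality of $(\tilde{f}^{\times n})_*$ on \emph{twisted Borel--Moore} homology. For ordinary singular homology this invariance is classical, but the combination of twisting, non-compactness, and relative covers requires care: one must verify that the lifted isotopy is admissible for Borel--Moore chains with respect to both the cofinal system of compact complements defining $C^\BM_*$ and the $\pi_{n,g}$-action. I would appeal to the framework developed in Appendix~\ref{A:twisted_homological_action_of_mcg} to discharge this. Everything else I expect to be bookkeeping on top of Proposition~\ref{P:MCG_action} and Corollary~\ref{C:bar_psi}.
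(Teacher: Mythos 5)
Your proposal is correct and follows the paper's own route: the paper proves this by a direct appeal to Proposition~\ref{P:lifting_actions} applied to $\chi_{n,g}: F_g \to \Mod(X_{n,g})$, $f \mapsto f^{\times n}$, with Equation~\eqref{E:f_gamma} supplied by Proposition~\ref{P:MCG_action} and Equation~\eqref{E:centralizer} by Lemma~\ref{L:local_system}.$(iii)$, while the commutation with the $U_\zeta$-action is the same locality argument already used in Proposition~\ref{P:mcg_action_on_dHeis}. You have simply unpacked the appendix machinery rather than citing it as a black box; the Borel--Moore invariance issue you flag is likewise discharged by that same framework in the paper.
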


\begin{proof}
 The first claim follows directly from Proposition~\ref{P:lifting_actions}, by considering the homomorphism
 \begin{align*}
  \chi_{n,g} : F_g & \to \Mod(X_{n,g}). \\*
  f & \mapsto f^{\times n}
 \end{align*} 
 Notice that $\Mod(\varSigma_{g,1}) \cong F_g / \ker \chi_{n,g}$, and that Equations~\eqref{E:f_gamma} and \eqref{E:centralizer} are satisfied thanks to Proposition~\ref{P:MCG_action} and to Lemma~\ref{L:local_system}.$(iii)$ respectively. 
\end{proof}

\subsubsection{Finite-dimensional mapping class group representations}\label{S:fin-dim_homol_rep}

We say an embedded twisted cycle $(m,\bfsimp,\bfk,\thread)$ of dimension $n$ in $\varSigma_{g,1}$ is \textit{small} if the $m$-partition $\bfk = (k_1,\ldots,k_m)$ of $n$ satisfies $0 \leqs k_i \leqs r-1$ for all integers $1 \leqs i \leqs m$.

\begin{definition}\label{D:fin_dim_homol}
 The \textit{small Heisenberg homology} of $\varSigma_{g,1}$ is the $\fld$-vector space $\homol_g^{V(r)} \subset \homol_g^V$ spanned by homology classes of small embedded twisted cycles.
\end{definition}

\begin{theorem}\label{T:uzeta_homological_representation}
 $\homol_g^{V(r)}$ is a $\bar{U}_\zeta$-submodule of $\homol_g^V$ with basis
 \[
  \left\{ \basis(\bfa,\bfb) \otimes \bfv_{\bfc} \Bigm|
  \bfa, \bfb \in \{ 0,1,\ldots,r-1 \}^{\times g},
  \bfc \in (\Z/r\Z)^{\times g} \right\},
 \]
 and the projective representation $\bar{\rho}_g^V : \Mod(\varSigma_{g,1}) \to \PGL_{U_\zeta}(\homol_g^V)$ of Theorem~\ref{T:mcg_homological_projective_rep} restricts to a projective action
 \[
  \bar{\rho}_g^{V(r)} : \Mod(\varSigma_{g,1}) \to \PGL_{\bar{U}_\zeta}(\homol_g^{V(r)}),
 \]
 of $\Mod(\varSigma_{g,1})$ on the small Heisenberg homology $\homol_g^{V(r)}$ by invertible $\bar{U}_\zeta$-module endomorphisms, considered up to non-zero scalars.
\end{theorem}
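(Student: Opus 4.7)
The plan is to prove three claims in succession: the basis statement for $\homol_g^{V(r)}$; invariance of $\homol_g^{V(r)}$ under $\bar{U}_\zeta$; and the fact that the projective representation $\bar{\rho}_g^V$ restricts compatibly.

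To establish the basis, linear independence will be immediate from Remark~\ref{R:infinite_linear_basis}, since the proposed vectors form part of a basis of the ambient $\homol_g^V$. For the spanning claim, the key point is to show that every small embedded twisted cycle reduces, via Proposition~\ref{P:computation_rules}, to a linear combination of $\basis(\bfa,\bfb) \otimes \bfv_{\bfc}$ with $\bfa, \bfb \in \{0,1,\ldots,r-1\}^{\times g}$. Among the diagrammatic rules, only the fusion rule can combine two labels into a larger one; the cutting, orientation, permutation, and braid rules all preserve the maximum label appearing in a multisimplex. Furthermore, whenever two curves of labels $0 < k, \ell < r$ with $k+\ell \geqs r$ would be fused, the resulting coefficient $\sqbinom{k+\ell}{k}_\zeta$ vanishes at $\zeta$ by a direct Lucas-type computation based on $\sqbinom{r}{j}_\zeta = 0$ for $0 < j < r$. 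Hence the reduction of any small cycle to the standard basis produces only small standard basis elements.

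Next I would verify $\bar{U}_\zeta$-invariance generator by generator. The operator $\calK$ acts by a scalar on each $\homol_{n,g}^V$ (Definition~\ref{D:homological_K}), so it preserves $\homol_g^{V(r)}$ automatically. The operator $\calF^{(1)}$ adds a single label-$1$ arc in a boundary collar (Definition~\ref{D:homological_F^(k)}), so applied to a small multisimplex it still yields a small multisimplex; its decomposition in the standard basis is again controlled by the vanishing of fusion coefficients from the first step. The operator $\calE$, a boundary map composed with the isomorphism $\del_{\tilde{x}_0}$ (Definition~\ref{D:homological_E}), decreases the particle count by one and, on a small standard basis element, produces a linear combination of similar elements with one of the labels $a_j$ or $b_j$ decremented; this preserves smallness.

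For the mapping class group, it suffices to check that the Dehn twist generators $\twist{\alpha}{j}, \twist{\beta}{j}, \twist{\gamma}{k}$ preserve $\homol_g^{V(r)}$ under $\rho_{n,g}^V$. The topological action $(\tilde{f}^{\times n})_*$ isotopes the curves of a small multisimplex without altering the multiset of labels, hence preserves smallness; combined with $\psi_g^V(f)$, which is a $\fld$-linear automorphism of $V_g$, the operator $\rho_{n,g}^V(f)$ of Theorem~\ref{T:mcg_homological_projective_rep} sends $\homol_g^{V(r)}$ to itself. The induced projective representation $\bar{\rho}_g^V$ therefore restricts to $\homol_g^{V(r)}$, and since it commutes with the action of $U_\zeta \supset \bar{U}_\zeta$ on $\homol_g^V$, its restriction lies in $\PGL_{\bar{U}_\zeta}(\homol_g^{V(r)})$. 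The main obstacle is the smallness-preservation analysis in the first step, which couples a systematic tracking of the diagrammatic reduction to the root-of-unity vanishing of quantum binomials; the remaining verifications then reduce to direct consequences of the definitions.
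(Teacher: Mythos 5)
Your proposal is correct and follows essentially the same route as the paper's proof: freeness of the proposed set from Remark~\ref{R:infinite_linear_basis}, spanning via the diagrammatic reduction of Proposition~\ref{P:computation_rules} with the vanishing of the quantum binomial in the fusion rule killing any label $\geqs r$, invariance under $\calE$, $\calF^{(1)}$, $\calK$ by inspection of Definitions~\ref{D:homological_E}--\ref{D:homological_K}, and preservation of smallness by diffeomorphisms. The only difference is that you spell out the root-of-unity vanishing of $\sqbinom{k+\ell}{k}_\zeta$ and the label-preservation of the non-fusion rules slightly more explicitly than the paper does, which is harmless.
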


\begin{proof}
 First of all, notice that $\homol_g^{V(r)}$ is clearly invariant for the operators $\calE$, $\calF^{(1)}$, and $\calK$. Indeed, $\calE$ can only decrease labels of small multisimplices, while $\calF^{(1)}$ always adds a component of label $1$ to every small labeled multisimplex, and $\calK$ acts diagonally. In particular, all of them preserve the property of being small.

 For what concerns the basis, notice that the proposed set is clearly contained in $\homol_g^{V(r)}$, and it is free thanks to Remark~\ref{R:infinite_linear_basis}. To see that it generates $\homol_g^{V(r)}$, notice that the (image of the) multisimplex underlying all homology classes in this free set is a deformation retract of $\varSigma_{g,1}$. This means we can reduce every small embedded twisted cycle to a linear combination of homology classes of this form using the rules of Proposition~\ref{P:computation_rules}. In particular, thanks to the fusion rule, every label exceeding $r-1$ comes with a vanishing binomial coefficient in front.

 Finally, all diffeomorphisms clearly preserve the property of being small too.
\end{proof}

The $\fld$-vector space $\homol_g^{V(r)}$ is thus endowed with commuting actions of $\fraku_\zeta$ and of $\Mod(\varSigma_{g,1})$. The main goal of this paper is to identify these module structures respectively with the $g$th tensor power of the adjoint representation of $\fraku_\zeta$ and with a projective representation of $\Mod(\varSigma_{g,1})$ first introduced by Lyubashenko in \cite{L94}, which can be recovered from the non-semisimple TQFT constructed from $\fraku_{\zeta}$ in \cite{DGP17,DGGPR19}, as shown in \cite{DGGPR20}.

In a future work, we will investigate these intertwined actions at the higher level of $\homol_g^\dHeis$, since $U_q$-modules with $\Z[\dHeis_g]$-coefficients seem to be interesting and new in the literature.

\begin{remark}
 The reader should notice that the fact that the actions of the mapping class group and of the quantum group commute is a straightforward consequence of the purely homological definition of the operators $\calE$, $\calF^{(k)}$, and $\calK$. This gives a new simple and conceptual proof of this important property which, on the quantum side, follows from the transmutation procedure introduced by Majid \cite{M91}. On the other hand, we will see later that $E^r$ acts by $0$. This behavior is not apparent yet on the homological side, and should be established by computation, while it will be a direct consequence of the algebraic definition of the small quantum group $\fraku_{\zeta}$ of Section~\ref{S:small_quantum_sl2}, on the quantum side.
\end{remark}

In order to find a more intrinsic characterization of the small Heisenberg homology $\homol_g^{V(r)} \subset \homol_g^V$ of Definition~\ref{D:fin_dim_homol}, we might want to focus on the standard version of homology, rather than the Borel--Moore one. The main difficulty is that we cannot apply the proof of Proposition~\ref{P:homology_structure}, which provides bases that persist under all choices of twisted coefficients. Nevertheless, let us set
\begin{align*}
 \calH_{n,g}^{\dHeis,\dagger} &:= H_n(X_{n,g},Y_{n,g};\varphi_{n,g}^{\dHeis}), &
 \calH_g^{\dHeis,\dagger} &:= \bigoplus_{n \in \N} \calH_{n,g}^{\dHeis,\dagger},
\end{align*} 
and similarly
\begin{align*}
 \calH_{n,g}^{V,\dagger} &:= H_n(X_{n,g},Y_{n,g};\varphi_{n,g}^V), &
 \calH_g^{V,\dagger} &:= \bigoplus_{n \in \N} \calH_{n,g}^{V,\dagger},
\end{align*}
There exist natural maps
\begin{align*}
 \iota^{\dHeis}_g : \calH_g^{\dHeis,\dagger} &\to \calH^{\dHeis}_g, &
 \iota^V_g : \calH_g^{V,\dagger} &\to \calH^V_{g}
\end{align*}
induced by inclusions of chain complexes, since standard chains are also Borel--Moore chains. We conjecture the following characterization of $\calH_g^{V(r)}$.

\begin{conjecture}\label{C:r_characterization_conjecture}
 $\calH_g^{V(r)} = \iota^V_g \left( \calH_g^{V,\dagger} \right)$.
\end{conjecture}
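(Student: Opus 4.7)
My approach is to prove the two inclusions in Conjecture~\ref{C:r_characterization_conjecture} separately, combining the symmetrizer construction of Lawrence and Bigelow on the one hand with a long exact sequence relating singular and Borel--Moore homology on the other.

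For the inclusion $\calH_g^{V(r)} \subseteq \iota^V_g\bigl(\calH_g^{V,\dagger}\bigr)$, I would exhibit, for every basis element $\basis(\bfa,\bfb) \otimes \bfv_\bfc$ with $\bfa,\bfb \in \{0,\ldots,r-1\}^{\times g}$, an explicit compactly supported representative. The Borel--Moore class $\basis(\bfa,\bfb)$ is defined in Equation~\eqref{E:basis} via the embedding of products of open simplices $\Delta^{a_j}$ and $\Delta^{b_j}$ on the standard blue and green arcs; it fails to be singular only because of its boundary along the collision faces of these simplices, which lie in $\partial X_{n,g}\smallsetminus Y_{n,g}$. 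Adapting the symmetrizer technique already carried out in the disc case in \cite{M20}, I would replace each such open simplex on a single arc (with label $k \leq r-1$) by a closed hypercube $[0,1]^k$ symmetrized over $\frakS_k$ with coefficients involving powers of $\zeta^2$. Thanks to the permutation rule~\eqref{E:permutation} and the vanishing $[r]_\zeta = 0$, the collision boundaries telescope after this averaging, producing a compactly supported chain relative to $Y_{n,g}$ whose Borel--Moore image equals $\basis(\bfa,\bfb) \otimes \bfv_\bfc$ up to an invertible scalar in $\Z[\zeta,\frac{1}{r}]$.

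For the reverse inclusion $\iota^V_g\bigl(\calH_g^{V,\dagger}\bigr) \subseteq \calH_g^{V(r)}$, I would identify $\operatorname{Im}(\iota^V_g)$ with the kernel of the connecting morphism
\[
 \partial_\infty : H^\BM_n\bigl(X_{n,g},Y_{n,g};\varphi^V_{n,g}\bigr) \to \varprojlim_K H_{n-1}\bigl(X_{n,g}\smallsetminus K,\, Y_{n,g}\smallsetminus(Y_{n,g}\cap K);\varphi^V_{n,g}\bigr)
\]
arising in the long exact sequence of the pair $(X_{n,g}, X_{n,g}\smallsetminus K)$ taken in the limit over compact subsets $K$ exhausting $X_{n,g}$. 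On the basis of Remark~\ref{R:infinite_linear_basis}, I would compute $\partial_\infty\bigl(\basis(\bfa,\bfb) \otimes \bfv_\bfc\bigr)$ by restricting the standard representative to a tubular neighborhood of the collision locus of the configuration space; the monodromy of $\varphi^V_{n,g}$ around a simultaneous collision of $k$ particles along a single arc contributes a factor proportional to $[k]_\zeta!$, which is non-zero if and only if $k \leq r-1$. Hence $\partial_\infty$ should annihilate $\basis(\bfa,\bfb) \otimes \bfv_\bfc$ precisely when every entry of $\bfa$ and $\bfb$ is at most $r-1$, yielding the desired inclusion.

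The main obstacle I anticipate is this second direction. A rigorous computation of $\partial_\infty$ requires careful bookkeeping of the twisted coefficient system $\varphi^V_{n,g}$ along a stratified locus that mixes collision strata of $\Conf_n(\varSigma_{g,1})$ with the non-relative boundary component $\partial_+ \varSigma_{g,1}$; moreover, the non-abelian nature of $\dHeis_g$ may introduce cross terms between particles sitting on distinct arcs. To tame this, I would proceed by induction on the genus $g$, taking the disc case of \cite{M20} as the base step and building up each additional handle via a cut-and-paste decomposition along an appropriate $\braid{\beta}{j}$-type meridian, so that at every inductive step the computation of $\partial_\infty$ reduces to iterated applications of a single-arc local model in which the root-of-unity structure enters cleanly through the factor $[k]_\zeta!$.
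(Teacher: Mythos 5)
First, be aware that this statement is a \emph{conjecture} which the paper deliberately leaves open: the authors only prove the weaker, unnamed Proposition $\calH_g^{V(r)} = \iota^V_g\bigl(\calH_g^{V(r),\dagger}\bigr)$, where $\calH_g^{V(r),\dagger}$ is the span of the explicit closed-hypercube cycles $\tilde{\bfsimp}^{\dagger}(\bfa,\bfb)\otimes\bfv_{\bfm}$, and they state explicitly that the conjecture would follow if one could show these cycles span all of $\calH_g^{V,\dagger}$. Your first inclusion, $\calH_g^{V(r)} \subseteq \iota^V_g(\calH_g^{V,\dagger})$, is essentially this partial result: your symmetrized hypercubes are exactly the cycles $\tilde{\bfsimp}^{\dagger}(\bfa,\bfb)$, and the fusion rule~\eqref{E:fusion} gives $\iota^V_g(\tilde{\bfsimp}^{\dagger}(\bfa,\bfb)\otimes\bfv_{\bfm}) \propto \bigl(\prod_j [a_j]_\zeta![b_j]_\zeta!\bigr)\basis(\bfa,\bfb)\otimes\bfv_{\bfm}$, with an invertible scalar when all labels are at most $r-1$. (One small slip: the collision faces of the simplices are sent to the diagonal, i.e.\ to infinity in $X_{n,g}$, not to $\partial X_{n,g}\smallsetminus Y_{n,g}$.)

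The reverse inclusion is where the conjecture actually lives, and your argument for it has genuine gaps. You posit a long exact sequence in which $\operatorname{Im}(\iota^V_g)$ is the kernel of a connecting map $\partial_\infty$ valued in an inverse limit; but Borel--Moore homology is the homology of an inverse limit of complexes, homology does not commute with $\varprojlim$ (there are $\varprojlim^1$ terms), and the limit of the long exact sequences of the pairs $(X_{n,g},X_{n,g}\smallsetminus K)$ need not be exact at the spot you need. Establishing such a sequence for the non-abelian local system $\varphi^V_{n,g}$ is itself a substantial unproved step. Even granting it, your computation of $\partial_\infty$ assumes what is to be proved: the factor $[k]_\zeta!$ only shows that the \emph{specific} hypercube cycles map to zero when some label reaches $r$; it does not show that no \emph{other} compactly supported cycle hits $\basis(\bfa,\bfb)\otimes\bfv_{\bfc}$ for large labels, which is exactly the question of whether the $\tilde{\bfsimp}^{\dagger}$ span $\calH_g^{V,\dagger}$, or equivalently of a generic comparison between standard and Borel--Moore twisted homology --- known for punctured discs by \cite{K86} but open for surfaces, as the paper notes. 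Finally, the proposed induction on genus by cutting along a meridian does not go through as stated: configuration points distribute over both sides of any cut in all possible ways, so there is no naive Mayer--Vietoris or product decomposition of $\Conf_n(\varSigma_{g,1})$, and the Heisenberg coefficients do not restrict to a product system across the cut. As it stands, your proposal reproves the known inclusion and reformulates, rather than resolves, the open one.
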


Let us explain where this expectation comes from, by first introducing special embedded twisted cycles represented by
\[
 \iota^{\dHeis}_g \left( \tilde{\bfsimp}^\dagger(\bfa,\bfb) \right) := \pic{homology_basis_standard}
\]
for every $2g$-partition $(\bfa,\bfb)$ of $n$. Notice that the only difference between these diagrams and the ones appearing in Equation~\eqref{E:basis} is that, here, blue and green curves are solid, instead of being dashed-dotted. We adopt the convention that a solid curve labeled by an integer $k$ should represent $k$ parallel copies, in the surface $\varSigma_{g,1}$, of the solid curve. Therefore, these diagrams represent closed hypercubes embedded into $X_{n,g}$, with faces embedded into $Y_{n,g}$. In particular, they also define standard cycles, and we denote by $\tilde{\bfsimp}^{\dagger}(\bfa, \bfb) \otimes v$ the associated vectors in $\calH_{g,n}^{V,\dagger}$ for every vector $v \in V_g$.

\begin{remark}\label{R:automatic_truncation_at_roots}
 Whenever $a_j \geqs r$ or $b_j \geqs r$ for some $1 \leqs j \leqs g$, we have
 \[
  \iota^V_g \left( \tilde{\bfsimp}^{\dagger}(\bfa,\bfb) \otimes \bfv_{\bfm} \right) 
  = \iota^{\dHeis}_g \left( \tilde{\bfsimp}^{\dagger}(\bfa, \bfb) \right) \otimes \bfv_{\bfm} 
  \propto \left( \prod_{j=1}^g [a_j]_\zeta ! [b_j]_\zeta ! \right) \tilde{\bfsimp}(\bfa, \bfb) \otimes \bfv_{\bfm} = 0,
 \]
 as a straightforward consequence of the fusion rule in Proposition~\ref{P:computation_rules}. 
\end{remark}

In \cite{DM22}, it will be shown that there exists a bilinear form
\[
 \langle \_ , \_ \rangle_{\dHeis_g} : \calH_{n,g}^{\dHeis} \otimes \calH_{n,g}^{\dHeis,\dagger} \to \Z[\dHeis_g]
\]
which is left non-degenerate (meaning that the left radical is trivial), as a direct consequence of Proposition~\ref{P:homology_structure} and of the identity
\[
 \langle \tilde{\bfsimp}(\bfa, \bfb),\tilde{\bfsimp}^{\dagger}(\bfa', \bfb') \rangle_{\dHeis_g} = \delta_{(\bfa, \bfb),(\bfa', \bfb')},
\]
where $\delta$ is the Kronecker symbol for $2g$-partitions of $n$. Since the cycles $\tilde{\bfsimp}(\bfa, \bfb)$ form a $\Z[\dHeis_g]$-basis of $\calH_g^{\dHeis}$, the family
\[
 \lbrace \tilde{\bfsimp}^{\dagger}(\bfa, \bfb) \mid \bfa, \bfb \in \N^{\times g}, \lvert \bfa + \bfb \rvert = n \rbrace
\]
yields a free $\Z[\dHeis_g]$-submodule of $\calH_{n,g}^{\dHeis,\dagger}$. It is not clear that this should coincide with the whole $\calH_{n,g}^{\dHeis,\dagger}$. Nevertheless, if we set
\[
 \calH_g^{V(r),\dagger} := \langle \tilde{\bfsimp}^{\dagger}(\bfa, \bfb) \otimes \bfv_{\bfm} | \bfa, \bfb, \bfm \in \N^{\times g} \rangle_{\fld} \subset \calH_g^{\dHeis,\dagger},
\]
then Remark~\ref{R:automatic_truncation_at_roots} immediately yields to following result.

\begin{proposition}
 The small Heisenberg homology satisfies
 \[
  \calH_g^{V(r)} = \iota^{V_g} \left( \calH_g^{V(r),\dagger} \right).
 \]
\end{proposition}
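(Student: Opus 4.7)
The plan is to prove both inclusions directly using the basis for $\calH_g^{V(r)}$ given by Theorem~\ref{T:uzeta_homological_representation}, together with the explicit formula from Remark~\ref{R:automatic_truncation_at_roots}, which computes the image under $\iota_g^V$ of each generator of $\calH_g^{V(r),\dagger}$. The key arithmetic fact is that the quantum integer $[k]_\zeta$ is invertible in $\fld$ precisely when $0 < k < r$, so the scalar $\prod_{j=1}^g [a_j]_\zeta ! [b_j]_\zeta !$ vanishes if and only if $\max_j(a_j,b_j) \geqs r$.

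\medskip
\noindent\textbf{Step 1 (Inclusion $\supseteq$).} I would argue that $\iota_g^V \left( \calH_g^{V(r),\dagger} \right) \subseteq \calH_g^{V(r)}$. By definition, $\calH_g^{V(r),\dagger}$ is spanned over $\fld$ by the elements $\tilde{\bfsimp}^{\dagger}(\bfa,\bfb) \otimes \bfv_{\bfm}$ for $\bfa,\bfb,\bfm \in \N^{\times g}$. By Remark~\ref{R:automatic_truncation_at_roots}, the image $\iota_g^V(\tilde{\bfsimp}^{\dagger}(\bfa,\bfb) \otimes \bfv_{\bfm})$ vanishes as soon as some $a_j$ or $b_j$ is at least $r$, and otherwise equals a (non-zero) scalar multiple of $\tilde{\bfsimp}(\bfa,\bfb) \otimes \bfv_{\bfm}$. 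Since $\tilde{\bfsimp}(\bfa,\bfb)$ with $\bfa,\bfb \in \{0,\ldots,r-1\}^{\times g}$ is represented by a small embedded twisted cycle, this image lies in $\calH_g^{V(r)}$.

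\medskip
\noindent\textbf{Step 2 (Inclusion $\subseteq$).} Conversely, by Theorem~\ref{T:uzeta_homological_representation}, the space $\calH_g^{V(r)}$ admits the basis
\[
 \left\{ \tilde{\bfsimp}(\bfa,\bfb) \otimes \bfv_{\bfc} \Bigm| \bfa,\bfb \in \{0,\ldots,r-1\}^{\times g},\; \bfc \in (\Z/r\Z)^{\times g} \right\}.
\]
For any such triple $(\bfa,\bfb,\bfc)$, each quantum integer $[k]_\zeta$ with $0 \leqs k \leqs r-1$ is invertible in $\fld$, so the scalar $\lambda_{\bfa,\bfb} := \prod_{j=1}^g [a_j]_\zeta ! [b_j]_\zeta !$ is a unit. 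Applying Remark~\ref{R:automatic_truncation_at_roots} gives
\[
 \iota_g^V\bigl(\tilde{\bfsimp}^{\dagger}(\bfa,\bfb) \otimes \bfv_{\bfc}\bigr) = \mu_{\bfa,\bfb}\, \tilde{\bfsimp}(\bfa,\bfb) \otimes \bfv_{\bfc}
\]
for some non-zero $\mu_{\bfa,\bfb} \in \fld$, so every basis vector of $\calH_g^{V(r)}$ is $\mu_{\bfa,\bfb}^{-1}$ times the image of an element of $\calH_g^{V(r),\dagger}$. Taking $\fld$-linear combinations concludes the reverse inclusion.

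\medskip
\noindent\textbf{Difficulty assessment.} The statement is essentially a bookkeeping identification of two descriptions of the same truncated submodule: one intrinsic (via the small-label condition on representatives) and one extrinsic (via the image of standard cycles). Once the basis theorem and the fusion-rule computation of Remark~\ref{R:automatic_truncation_at_roots} are in hand, there is no serious obstacle. The only genuinely non-trivial input is the invertibility of $[k]_\zeta$ for $0 < k < r$ at an odd primitive root of unity of order $r$, which ensures that the proportionality constants $\mu_{\bfa,\bfb}$ in Step~2 are non-zero and the two spans therefore coincide term by term. No subtlety arises from the $\bfv_{\bfc}$ factor, since the action of $\dHeis_g$ on $V_g$ via Lemma~\ref{L:local_system} treats $\bfc$ as a mod-$r$ index, and the sums over $\bfm \in \N^{\times g}$ in the definition of $\calH_g^{V(r),\dagger}$ cover all classes $\bfc \in (\Z/r\Z)^{\times g}$ surjectively.
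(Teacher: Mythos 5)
Your argument is correct and follows the same route as the paper, which simply observes that the proposition is an immediate consequence of Remark~\ref{R:automatic_truncation_at_roots}: the fusion rule makes $\iota_g^V\bigl(\tilde{\bfsimp}^{\dagger}(\bfa,\bfb)\otimes\bfv_{\bfc}\bigr)$ proportional to $\bigl(\prod_j [a_j]_\zeta!\,[b_j]_\zeta!\bigr)\,\tilde{\bfsimp}(\bfa,\bfb)\otimes\bfv_{\bfc}$, which vanishes exactly when some label reaches $r$ and is a unit multiple of a basis vector of $\calH_g^{V(r)}$ otherwise. Your write-up just unpacks this into the two inclusions explicitly, including the one point worth making precise, namely that the proportionality constant is invertible for labels in $\{0,\dots,r-1\}$.
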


This provides the first part of the answer to Conjecture~\ref{C:r_characterization_conjecture}. This characterization of $\calH_g^{V(r)}$ (as the image of a subspace of the standard homology) is still artificial, but if one could show that $\calH_g^{V(r),\dagger}$ actually spans $\calH_{g}^{V,\dagger}$, Conjecture~\ref{C:r_characterization_conjecture} would find a positive answer. A possibility would be to show that twisted Borel--Moore and standard homologies are \textit{generically} isomorphic for configuration spaces of surfaces, with a notion of genericity that would have to be suitably characterized. For punctured discs, this is indeed the case, see \cite{K86}.

\section{Computation of homological actions}\label{S:homological_computations}

In this section, we compute the homological action of (some of) the generators of $U_q$ and of $\Mod(\varSigma_{g,1})$ on $\homol_{n,g}^\dHeis$ and on $\smash{\homol_g^V}$, respectively. Both these actions were defined in Section~\ref{S:homological_representations}, in Theorems~\ref{T:Uq_homological_representation} and \ref{T:mcg_homological_projective_rep}, respectively. We compute them in the homological bases provided by Corollary~\ref{C:Heisenberg_basis} and by Remark~\ref{R:infinite_linear_basis}, respectively. These are defined by diagrams representing embedded twisted cycles, that were introduced in Definition~\ref{D:embedded_twisted_cycle}. The formulas we obtain here will be later used to identify these homological representations with quantum ones, that will be recalled in Section~\ref{S:quantum_representations}, and explicitly computed in Section~\ref{S:quantum_computations}. We will focus on the action of $U_q$ in Section~\ref{S:homological_adjoint_computation}, and on the one of $\Mod(\varSigma_{g,1})$ in Section~\ref{S:homological_mcg_computation}. Many of these formulas involve \textit{quantum multinomials}, which are defined, for all positive integers $k \geqs 0$ and $\ell_1, \ldots, \ell_j \geqs 0$ satisfying $\ell_1 + \ldots + \ell_j \leqs k$, as
\[
 \sqbinom{k}{\ell_1,\ldots,\ell_j}_q := \frac{[k]_q!}{[\ell_1]_q! \cdots [\ell_j]_q![k-\ell_1-\ldots-\ell_j]_q!}.
\]

\subsection{Homological action of quantum \texorpdfstring{$\fsl_2$}{sl(2)}}\label{S:homological_adjoint_computation}

Let us consider the linear map
\[
 \tilde{\calF}^{(k)} : \homol_{n,1}^\dHeis \to \homol_{n+k,1}^\dHeis
\]
sending every basis vector $\basis(a,b)$ with $a+b=n$ to
\[
 \pic{F_right_regular_1}
\]

\begin{lemma}\label{L:F_right_regular}
 For all integers $a,b,k \geqs 0$ we have 
 \begin{align}\label{E:F_right_regular}
  \tilde{\calF}^{(k)}(\basis(a,b)) &= q^{-(k+3)k-(2a+b)k} \sum_{j=0}^k \sum_{i=0}^{k-j} (-1)^j \basis(a+i,b+k-i) \nonumber \\*
  &\hspace*{\parindent} \otimes \sqbinom{a+i}{a}_q \sqbinom{b+k-i}{b,j}_q q^{ij+(a+b)i+(2b+k+3)j} \beta^i \alpha^j.
 \end{align}
\end{lemma}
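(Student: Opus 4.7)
The proof is a direct computation in the diagrammatic calculus of Proposition~\ref{P:computation_rules}. The plan is to start from the picture defining $\tilde{\calF}^{(k)}(\basis(a,b))$, which features a new $k$-labeled arc encircling the handle and hence crossing the existing green $a$-curve and blue $b$-curve, and to reduce it to a linear combination of the basis vectors $\basis(a+i,b+k-i)$ given by Corollary~\ref{C:Heisenberg_basis}.

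First, I would apply the cutting rule \eqref{E:cutting} twice to the $k$-labeled arc, at the two points where it runs alongside the $a$-labeled and $b$-labeled arcs of $\basis(a,b)$. This resolves the $k$ parallel strands into three bundles of respective sizes $i$, $j$, and $k-i-j$, and yields the double summation $\sum_{j=0}^{k} \sum_{i=0}^{k-j}$ that indexes the statement. The three bundles are then routed in distinct ways along the surface: one bundle will ultimately be absorbed into the $a$-curve, another into the $b$-curve, and the third will close up into a loop homotopic to $\alpha$.

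Second, I would dispose of each bundle in turn. For the $i$-bundle, a sequence of applications of the permutation rule \eqref{E:permutation} to move past the $b$-curve produces the factor $\beta^i$ via the braid rule \eqref{E:braid} together with a power of $q$ of the form $q^{(a+b)i+ij}$, after which the fusion rule \eqref{E:fusion} merges it into the $a$-curve and produces the coefficient $\sqbinom{a+i}{a}_q$. For the $j$-bundle, the braid rule \eqref{E:braid} gives the monodromy factor $\alpha^j$, the orientation rule \eqref{E:orientation} accounts for the sign $(-1)^j$, and the remaining permutations contribute $q^{(2b+k+3)j}$. Finally, the fusion rule \eqref{E:fusion} combines the $j$- and $(k-i-j)$-bundles with the $b$-curve, producing the quantum trinomial $\sqbinom{b+k-i}{b,j}_q$.

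The last step is to collect every scalar factor: the global prefactor $q^{-(k+3)k-(2a+b)k}$ emerges by combining the intrinsic normalization coming from the definition of $\tilde{\calF}^{(k)}$ with all the $q$-powers produced by the fusion, permutation, and braid rules along the way. The main obstacle is purely bookkeeping: keeping track of every power of $q$, every sign, and the relative positions of the three bundles of strands is delicate, and one must check that, after all reductions, the underlying multisimplex and thread data of each resulting diagram agree with the conventional presentation of $\basis(a+i,b+k-i)$ fixed in Equation~\eqref{E:basis}, rather than merely with a cohomologous twisted cycle.
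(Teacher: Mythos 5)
Your proposal follows essentially the same route as the paper's proof: two applications of the cutting rule \eqref{E:cutting} produce the double sum over $j$ and $i$, the braid, hook, and permutation rules \eqref{E:braid}, \eqref{E:hook}, \eqref{E:permutation} collect the $\Z[\dHeis_1]$-coefficients and the powers of $q$, and the orientation and fusion rules \eqref{E:orientation}, \eqref{E:fusion} account for the sign $(-1)^j$ and for the quantum binomial and trinomial. Two inaccuracies are worth flagging, though neither breaks the argument since the concrete steps you list afterwards are the correct ones. First, your claim that the third bundle ``closes up into a loop homotopic to $\alpha$'' cannot be right as stated, and it contradicts your own later description: the classes involved are relative Borel--Moore classes represented by proper arcs with endpoints on $\partial_-\varSigma_{1,1}$, so no bundle can become a closed loop (the strand count would then fail to match the label $b+k-i$). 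The $j$-bundle contributes $\alpha^j$ only as a deck-transformation coefficient of the thread via the braid rule \eqref{E:braid}, while its underlying arc is still fused, together with the $(k-i-j)$-bundle and the original $b$-curve, into a single $(b+k-i)$-labeled arc --- which is exactly what the trinomial $\sqbinom{b+k-i}{b,j}_q$ records. Second, unlike $\calF^{(k)}$ in Definition~\ref{D:homological_F^(k)}, the map $\tilde{\calF}^{(k)}$ carries no normalizing power of $q$ in its definition, so the global prefactor $q^{-(k+3)k-(2a+b)k}$ must emerge entirely from the diagrammatic rules rather than partly from an ``intrinsic normalization.''
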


\begin{proof}
 Using Proposition~\ref{P:computation_rules} and Remark~\ref{R:computation_rules}, we obtain
 \begin{align*}
  &\tilde{\calF}^{(k)}(\basis(a,b)) 
  = \pic{F_right_regular_2} 
  \Ceq \sum_{j=0}^k \pic{F_right_regular_3} \\
  &\hspace*{\parindent}\Ceq \sum_{j=0}^k \sum_{i=0}^{k-j} \pic{F_right_regular_4} 
  = \sum_{j=0}^k \sum_{i=0}^{k-j} \pic{F_right_regular_5} \\
  &\hspace*{\parindent}\stackrel{\substack{\mathclap{\eqref{E:hook}} \\ \mathclap{\eqref{E:braid}}}}{=} \sum_{j=0}^k \sum_{i=0}^{k-j} \pic{F_right_regular_6} 
  \otimes 
  q^{-i(i-1)-(k-j-i)(k-j-i-1)} [\beta^{-1},\alpha^{-1}]^{k-j-i} (\alpha^{-1} \beta \alpha)^i \alpha^j \\
  &\hspace*{\parindent}\Peq q^{-k(k-1)} \sum_{j=0}^k \sum_{i=0}^{k-j} \pic{F_right_regular_7} \\*
  &\hspace*{2\parindent} \otimes 
  q^{-2i^2-(j+1)j-2ij+2(i+j)k-2(k-j-i)(a+b+i+j)-2a(i+j)-4(k-j-i)-4i} \beta^i \alpha^j \\
  &\hspace*{\parindent}= q^{-(k+3)k-2(a+b)k} \sum_{j=0}^k \sum_{i=0}^{k-j} (-1)^j \basis(a+i,b+k-i) \\*
  &\hspace*{2\parindent} \otimes \sqbinom{a+i}{a}_q \sqbinom{b+k-i}{b,j}_q q^{ai+bj+(b+j)(k-j-i)+(j+3)j+2ij+2b(i+j)} \beta^i \alpha^j. \qedhere
 \end{align*}
\end{proof}

For every vector $\bfa = (a_1,\ldots,a_g)$ with integer coordinates $a_1,\ldots,a_g \geqs 0$ and every integer $1 \leqs j \leqs g$ let us set
\begin{align*}
 \left| \bfa \right|
 &:= \sum_{k=1}^g a_k, &
 \left| \bfa \right|_{<j} 
 &:= \sum_{k=1}^{j-1} a_k, &
 \left| \bfa \right|_{>j} 
 &:= \sum_{k=j+1}^g a_k.
\end{align*}

\begin{lemma}\label{L:homological_adjoint}
 For all vectors $\bfa = (a_1,\ldots,a_g)$, $\bfb = (b_1,\ldots,b_g)$ with integer coordinates $a_1,\ldots,a_g, b_1,\ldots,b_g \geqs 0$ we have
 \begin{align*}
  \calE(\basis(\bfa,\bfb)) 
  &= \sum_{j=1}^g \basis(\bfa-\bfe_j,\bfb) 
  \otimes \left( q^{2b_j} - q^{-2(a_j-b_j-1)} \alpha_j \right) q^{2
  \left| \bfa+\bfb \right|_{>j}} \\*
  &\hspace*{\parindent} + \basis(\bfa,\bfb-\bfe_j) 
  \otimes \left( 1 - q^{2(b_j-1)} \beta_j \right) q^{2 \left| \bfa+\bfb \right|_{>j}}, \\*
  \calF^{(1)}(\basis(\bfa,\bfb))
  &= \sum_{j=1}^g \basis(\bfa+\bfe_j,\bfb) 
  \otimes [a_j+1]_q \left( - q^{a_j} + q^{-a_j-4} \beta_j \right) q^{-2 \left| \bfa+\bfb \right|_{<j}+2(g-2(j-1))} \\*
  &\hspace*{\parindent} + \basis(\bfa,\bfb+\bfe_j) 
  \otimes [b_j+1]_q \left( q^{-2a_j-b_j-4}-q^{-2a_j+b_j} \alpha_j \right) q^{-2 \left| \bfa+\bfb \right|_{<j}+2(g-2(j-1))}, \\
  \calK(\basis(\bfa,\bfb)) 
  &= \basis(\bfa,\bfb) \otimes q^{-2 \left| \bfa+\bfb \right|-2g}.
 \end{align*}
\end{lemma}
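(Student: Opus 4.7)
The plan is to compute each operator separately using the diagrammatic calculus of Proposition~\ref{P:computation_rules} together with the explicit definitions of the three operators in Section~\ref{S:homological_quantum_sl2_operators}. The action of $\calK$ on $\basis(\bfa,\bfb)$ is immediate: Definition~\ref{D:homological_K} prescribes multiplication by $q^{-2(n+g)}$ on $\homol_{n,g}^\dHeis$, and here $n = \lvert \bfa + \bfb \rvert$, yielding the third formula.

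For $\calF^{(1)}$, I would start by inserting a single red label-$1$ strand in a collar of $\partial_+ \varSigma_{g,1}$ as dictated by Definition~\ref{D:homological_F^(k)}, which carries a global prefactor $q^{2g}$ for $k=1$. To decompose the result in the basis, this strand must be pushed across the surface: the cutting rule~\eqref{E:cutting} localizes its endpoint at each of the $g$ handles in turn, producing the sum over $j$. At handle $j$, the local situation reduces to the genus-one analysis already performed in Lemma~\ref{L:F_right_regular} with $k=1$, which produces exactly the two summands (``around $\alpha_j$'' and ``around $\beta_j$'') with coefficients $[a_j+1]_q$ and $[b_j+1]_q$, and the expected $\alpha_j, \beta_j$-monomials. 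The remaining $q$-exponents come from two sources: the permutation rule~\eqref{E:permutation} applied when the new strand crosses in front of handles of index less than $j$ (contributing $q^{-2\lvert\bfa+\bfb\rvert_{<j}}$), and a handle-by-handle factor $q^{-4(j-1)}$ coming from the way the thread attaching the new strand to $\xi_n$ winds past the existing generators, which by the braid rule~\eqref{E:braid} contributes one $[\beta_k^{-1},\alpha_k^{-1}]$-monomial per handle $k<j$.

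For $\calE$, I would apply Definition~\ref{D:homological_E}. The connection homomorphism $\partial_*$ of the long exact sequence of the triple $(X_{n,g}, Y_{n,g}, Z_{n,g})$ selects the codimension-one faces of $\basis(\bfa,\bfb)$ that land in $Y_{n,g} \smallsetminus Z_{n,g}$, i.e.\ configurations where exactly one coordinate sits on $\partial_- \varSigma_{g,1}$. Each $\alpha_j$-arc (of label $a_j$) and each $\beta_j$-arc (of label $b_j$) has two such boundary faces, one at each endpoint on $\partial_-$. The map $\del_{\tilde{x}_0}$ then detaches this distinguished coordinate at $x_0$; to put the result back in the standard basis, one slides the remaining endpoint along to recover the shape of a smaller $\basis(\bfa-\bfe_j,\bfb)$ or $\basis(\bfa,\bfb-\bfe_j)$. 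The braid rule~\eqref{E:braid} records monomials in $\alpha_j$ or $\beta_j$ for the two ``inner'' faces and trivial monomials for the two ``outer'' faces, while the orientation rule~\eqref{E:orientation} combined with the $(-1)^{n-1}$ sign of Definition~\ref{D:homological_E} fixes the relative signs. Combining the four boundary contributions at handle $j$ gives the quantum combinations $q^{2b_j} - q^{-2(a_j-b_j-1)}\alpha_j$ and $1 - q^{2(b_j-1)}\beta_j$; the overall factor $q^{2\lvert\bfa+\bfb\rvert_{>j}}$ then encodes, via the permutation rule~\eqref{E:permutation}, the sliding of the detached coordinate past the $g-j$ handles lying to its right.

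The main obstacle is the careful bookkeeping of $q$-exponents and signs, since each of the four boundary contributions at each handle gathers factors from the permutation, braid, and orientation rules, together with the global prefactors $q^{2g}$ in the definition of $\calF^{(1)}$ and $(-1)^{n-1}$ in the definition of $\calE$. The cleanest organization is to perform the full computation first at the ``rightmost'' handle $j=g$, where the genus-one computation of Lemma~\ref{L:F_right_regular} applies verbatim for $\calF^{(1)}$ and an analogous explicit boundary calculation handles $\calE$, and then transport the answer to a general handle $j$ by a single application of the permutation rule, so that the combinatorial structure of the $q$-exponents becomes transparent. In this way the proof reduces to a genus-one verification decorated by global permutation factors.
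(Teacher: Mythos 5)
Your proposal follows essentially the same route as the paper's proof: $\calK$ is read off directly from Definition~\ref{D:homological_K}; $\calF^{(1)}$ is computed by transporting the new label-$1$ strand to handle $j$ via the braid and permutation rules (yielding the $\prod_{k<j}[\beta_k^{-1},\alpha_k^{-1}] = q^{-4(j-1)}$ and $q^{-2\lvert\bfa+\bfb\rvert_{<j}}$ factors), splitting it with the cutting rule, and invoking Lemma~\ref{L:F_right_regular}; and $\calE$ is obtained from the four boundary faces of the $\alpha_j$- and $\beta_j$-arcs, with the braid, permutation, and orientation rules producing exactly the stated monomials. The bookkeeping you describe matches the paper's computation, so the argument is correct and not materially different.
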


\begin{proof}
 First of all, using Proposition~\ref{P:computation_rules}, we obtain
 \begin{align*}
  &\pic{homological_adjoint_higher_genus_boundary_a_+_1} \\
  &\Peq \pic{homological_adjoint_higher_genus_boundary_a_+_2} 
  \otimes q^{2b_j+2 \left| \bfa+\bfb \right|_{>j}}
 \end{align*}
 Furthermore, we obtain
 \begin{align*}
  &\pic{homological_adjoint_higher_genus_boundary_a_-_1} \\
  &= \pic{homological_adjoint_higher_genus_boundary_a_-_2} \\
  &\Beq \pic{homological_adjoint_higher_genus_boundary_a_-_3} \\*
  &\hspace*{\parindent} \otimes \alpha_j \\
  &\Peq \pic{homological_adjoint_higher_genus_boundary_a_-_4} 
  \otimes q^{-2(a_j-b_j-1)+2 \left| \bfa+\bfb \right|_{>j}} \alpha_j
 \end{align*}
 Similarly, we obtain
 \begin{align*}
  &\pic{homological_adjoint_higher_genus_boundary_b_+_1} \\
  &\Peq \pic{homological_adjoint_higher_genus_boundary_b_+_2} 
  \otimes q^{2 \left| \bfa+\bfb \right|_{>j}}
 \end{align*}
 Finally, we obtain
 \begin{align*}
  &\pic{homological_adjoint_higher_genus_boundary_b_-_1} \\
  &= \pic{homological_adjoint_higher_genus_boundary_b_-_2} \\
  &\Beq \pic{homological_adjoint_higher_genus_boundary_b_-_3} \\*
  &\hspace*{\parindent} \otimes \beta_j \\
  &\Peq \pic{homological_adjoint_higher_genus_boundary_b_-_4} 
  \otimes q^{2(b_j-1)+2 \left| \bfa+\bfb \right|_{>j}} \beta_j
 \end{align*}
 Therefore, Definition~\ref{D:homological_E} gives
 \begin{align*}
  \calE(\basis(\bfa,\bfb)) 
  &= \sum_{j=1}^g \basis(\bfa-\bfe_j,\bfb) \otimes \left( q^{2b_j} - q^{-2(a_j-b_j-1)} \alpha_j \right) q^{2 \left| \bfa+\bfb \right|_{>j}} \\*
  &\hspace*{\parindent} + \basis(\bfa,\bfb-\bfe_j) \otimes \left( 1 - q^{2(b_j-1)} \beta_j \right) q^{2 \left| \bfa+\bfb \right|_{>j}}.
 \end{align*}
 Next, if we define $\calF^{(1)}_j(\basis(\bfa,\bfb))$ to be
 \[
  \pic{homological_adjoint_higher_genus_F_1}
 \]
 then, using Proposition~\ref{P:computation_rules} and  Lemma~\ref{L:F_right_regular}, we obtain
 \begin{align*}
  &\calF^{(1)}_j(\basis(\bfa,\bfb)) 
  \Beq \pic{homological_adjoint_higher_genus_F_2} 
  \otimes \prod_{k=1}^{j-1} [\beta_k^{-1},\alpha_k^{-1}] \\
  &\Peq \pic{homological_adjoint_higher_genus_F_3} 
  \otimes q^{-2 \left| \bfa+\bfb \right|_{<j}-4(j-1)} \\
  &\Ceq \pic{homological_adjoint_higher_genus_F_4} 
  \otimes q^{-2 \left| \bfa+\bfb \right|_{<j}-4(j-1)} \\*
  &+ \pic{homological_adjoint_higher_genus_F_5} 
  \otimes q^{-2 \left| \bfa+\bfb \right|_{<j}-4(j-1)} \\
  &\stackrel{\substack{\mathclap{\eqref{E:orientation}} \\ \mathclap{\eqref{E:fusion}} \\ \mathclap{\eqref{E:F_right_regular}}}}{=}
  {}- \basis(\bfa+\bfe_j,\bfb) \otimes [a_j+1]_q q^{a_j-2 \left| \bfa+\bfb \right|_{<j}-4(j-1)} + \basis(\bfa+\bfe_j,\bfb) \otimes [a_j+1]_q q^{-a_j-4-2 \left| \bfa+\bfb \right|_{<j}-4(j-1)} \beta_j \\*
  &\hspace*{\parindent} {}+ \basis(\bfa,\bfb+\bfe_j) \otimes [b_j+1]_q \left( q^{-2a_j-b_j-4}-q^{-2a_j+b_j} \alpha_j \right) q^{-2 \left| \bfa+\bfb \right|_{<j}-4(j-1)}.
 \end{align*}
 Therefore, Definition~\ref{D:homological_F^(k)} gives
 \begin{align*}
  \calF^{(1)}(\basis(\bfa,\bfb)) 
  &\Ceq \sum_{j=1}^g \basis(\bfa+\bfe_j,\bfb) 
  \otimes [a_j+1]_q \left( - q^{a_j} + q^{-a_j-4} \beta_j \right) q^{-2 \left| \bfa+\bfb \right|_{<j}+2(g-2(j-1))} \\*
  &\hspace*{\parindent} + \basis(\bfa,\bfb+\bfe_j) 
  \otimes [b_j+1]_q \left( q^{-2a_j-b_j-4}-q^{-2a_j+b_j} \alpha_j \right) q^{-2 \left| \bfa+\bfb \right|_{<j}+2(g-2(j-1))}.
 \end{align*}
 Finally, the last equality follows directly from Definition~\ref{D:homological_K}.
\end{proof}

\subsection{Homological action of mapping class groups}\label{S:homological_mcg_computation}

Let us compute now the projective action of Dehn twists on the total modular Heisenberg homology group $\calH_g^V$. Notice that the projective representation $\bar{\rho}_g^V : \Mod(\varSigma_{g,1}) \to \PGL_{U_\zeta}(\homol_g^V)$ of Theorem~\ref{T:mcg_homological_projective_rep} is highly local in nature. In other words, the fact that the support of $\tau_{\alpha_j}$ and $\tau_{\beta_j}$ is contained in the $j$th summand $\varSigma_{1,1}$ inside $\varSigma_{g,1} \cong \varSigma_{1,1}^{\bcs g}$ is mirrored from the fact that $\psi_g^V(\tau_{\alpha_j})$ and $\psi_g^V(\tau_{\beta_j})$ only act on the $j$th factor of $V_{n,g} = (\fld^r)^{\otimes g}$, and similarly for $\tau_{\gamma_k}$. In other words, it is enough to compute the projective action of $\tau_\alpha = \tau_{\alpha_1}$ and $\tau_\beta = \tau_{\beta_1}$ on $\calH_1^V$, together with the one of $\tau_\gamma = \tau_{\gamma_1}$ on $\calH_2^V$. Let us start from the genus $1$ surface.

\begin{lemma}\label{L:homological_alpha}
 For all integers $0 \leqs a,b,c \leqs r-1$ we have
 \begin{align*}
  \rho^V_1(\tau_\alpha) \left( \basis(a,b) \otimes \bfv_c \right) 
  &\propto \zeta^{2(c+1)c} \sum_{i=0}^b \basis(a+i,b-i) \otimes \sqbinom{a+i}{a}_\zeta \zeta^{ai} \bfv_{c+i}.
\end{align*}
\end{lemma}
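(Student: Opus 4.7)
The plan is to use the defining formula from Theorem~\ref{T:mcg_homological_projective_rep} with $n = a+b$, namely
\[
 \rho_1^V(\tau_\alpha)(\basis(a,b) \otimes \bfv_c) = (\tilde\tau_\alpha^{\times n})_*(\basis(a,b)) \otimes \psi_1^V(\tau_\alpha)(\bfv_c),
\]
and compute the two factors separately.

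For the scalar factor, I would use Equation~\eqref{E:psi} together with $A_1 \bfv_c = \zeta^{4c}\bfv_c$ from Remark~\ref{R:infinite_linear_basis} to obtain
\[
 \psi_1^V(\tau_\alpha)(\bfv_c) = \left(\sum_{\ell=0}^{r-1}\zeta^{-2\ell^2+2(1+2c)\ell}\right)\bfv_c.
\]
Since $r$ is odd, $2$ is invertible modulo $r$, so completing the square and shifting the summation index factors this as $\zeta^{(1+2c)^2/2}$ times the $c$-independent Gauss sum $\sum_{m=0}^{r-1}\zeta^{-2m^2}$. Expanding $(1+2c)^2/2 = 1/2 + 2c(c+1)$ and absorbing the $c$-independent constants into~$\propto$ yields $\psi_1^V(\tau_\alpha)(\bfv_c) \propto \zeta^{2(c+1)c}\bfv_c$, accounting for the prefactor in the claimed formula.

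For the cycle factor, I would first observe that $\tau_\alpha$ leaves the green $\alpha$-arc of $\basis(a,b)$ untouched (it lies outside the support of the twist) and sends the blue $b$-labeled $\beta$-arc to one that wraps once around $\alpha$ before continuing along $\beta$. Next, I would apply the cutting rule~\eqref{E:cutting} at the wrap point to split the twisted $b$-labeled arc into a sum over $i \in \{0,\ldots,b\}$ of diagrams with $i$ strands following the $\alpha$-direction and $b-i$ strands still following $\beta$. The $i$-labeled portion running parallel to the original $a$-labeled green arc can be merged with it using the fusion rule~\eqref{E:fusion}, producing an $(a+i)$-labeled arc with coefficient $\sqbinom{a+i}{a}_q q^{ai}$. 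The residual short leg of the twist left over from the cut is handled via the braid rule~\eqref{E:braid}, yielding a factor of $\beta_1^i$ as the $i$ rerouted strands cross the $\beta$-handle, while the permutation rule~\eqref{E:permutation} arranges the red threads into standard order. Using $\beta_1^i \bfv_c = B_1^i\bfv_c = \bfv_{c+i}$ to push $\beta_1^i$ onto $\bfv_c$, and absorbing all $(i,c)$-independent $q$-powers into $\propto$, produces the claimed formula.

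The hardest part will be meticulously tracking every $q$-power generated by the successive applications of the cutting, fusion, braid, and permutation rules, and verifying that after cancellation only the factor $q^{ai} = \zeta^{ai}$ in the $i$-dependent coefficient survives, all other powers being independent of $i$ and $c$ (and hence harmless under $\propto$). A secondary subtlety, analogous to the bookkeeping carried out in Lemma~\ref{L:F_right_regular}, is confirming that the contribution from the braid rule is precisely $\beta_1^i$ rather than a mixed power of $\alpha_1$ and $\beta_1$; this hinges on the orientation conventions chosen for $\tau_\alpha$ and for the direction of the wrap, as well as on the fact that an $\alpha_1$-factor would produce a $\bfv_c$-dependent phase incompatible with the observed $\zeta^{2(c+1)c}$ prefactor.
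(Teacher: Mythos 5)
Your proposal follows essentially the same route as the paper's proof: decompose the twisted image of the $b$-labeled arc via the cutting rule, extract the $B_1^i$ factor with the braid rule, merge with the $a$-labeled arc via fusion to get $\sqbinom{a+i}{a}_\zeta\zeta^{ai}$, and evaluate the resulting Gauss sum $\sum_{\ell}\zeta^{-2\ell(\ell-1)+4c\ell}\propto\zeta^{2(c+1)c}$. The only (inessential) difference is that you derive the Gauss sum identity by completing the square, where the paper simply cites \cite[Equation~(B.18)]{BD21}.
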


\begin{proof}
 Using Proposition~\ref{P:computation_rules} and Remark~\ref{R:computation_rules}, we obtain
 \begin{align*}
  &\rho^V_1(\tau_\alpha) \left( \basis(a,b) \otimes \bfv_c \right) 
  = \pic{tau_alpha_1} \otimes \psi_g^V(\twist{\alpha}{}) \bfv_c \\
  &\hspace*{\parindent}\tCeq \sum_{i=0}^b \pic{tau_alpha_2} \otimes \left( \sum_{\ell=0}^{r-1}\zeta^{-2\ell(\ell-1)} A^\ell \right) \bfv_c \\
  &\hspace*{\parindent}\Beq \sum_{i=0}^b \pic{tau_alpha_3} \otimes \left( \sum_{\ell=0}^{r-1}\zeta^{-2\ell(\ell-1) + 4c\ell} \right) B^i \bfv_c \\
  &\hspace*{\parindent}\Feq \fraki^{\frac{r-1}{2}} \sqrt{r} \zeta^{2(c+1)c+\frac{r+1}{2}} \sum_{i=0}^b \basis(a+i,b-i) \otimes \sqbinom{a+i}{a}_\zeta \zeta^{ai} \bfv_{c+i},
 \end{align*}
 where the last equality uses the identity
 \[
  \sum_{\ell=0}^{r-1}\zeta^{-2\ell(\ell-1) + 4c\ell} = \fraki^{\frac{r-1}{2}} \sqrt{r} \zeta^{2(c+1)c+\frac{r+1}{2}},
 \]
 see for instance \cite[Equation~(B.18)]{BD21}.
\end{proof}

\begin{lemma}\label{L:homological_beta}
 For all integers $0 \leqs a,b,c \leqs r-1$ we have
 \begin{align*}
  \rho^V_1(\tau_\beta) \left( \basis(a,b) \otimes \bfv_c \right) 
  &\propto \sum_{i=0}^a \sum_{j=0}^{r-1} (-1)^i \basis(a-i,b+i) \otimes \sqbinom{b+i}{b}_\zeta 
  \zeta^{(i+1)i-2(j+1)j-(2a-b-4c)i} \bfv_{c+j}.
 \end{align*}
\end{lemma}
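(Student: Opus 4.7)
The plan is to parallel the proof of Lemma~\ref{L:homological_alpha} with the roles of $\alpha$ and $\beta$ interchanged. Starting from the definition of the projective representation
\[
\rho^V_1(\tau_\beta)(\basis(a,b) \otimes \bfv_c) = (\tilde{\tau}_\beta)_*(\basis(a,b)) \otimes \psi_g^V(\tau_\beta)(\bfv_c),
\]
the vector factor unfolds immediately from Equation~\eqref{E:psi} and the identity $B\bfv_c = \bfv_{c+1}$ of Remark~\ref{R:infinite_linear_basis} as
\[
\psi_g^V(\tau_\beta)(\bfv_c) = \sum_{j=0}^{r-1} \zeta^{-2(j+1)j} \bfv_{c+j},
\]
which accounts directly for the sum $\sum_{j=0}^{r-1}$ and the factor $\zeta^{-2(j+1)j}$ in the stated formula.

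The homological factor $(\tilde{\tau}_\beta)_*(\basis(a,b))$ is then computed by diagrammatic calculus. I would draw the diagram in which $\tau_\beta$ has made the $a$-labeled $\alpha$-strand wind once around the $\beta$-loop, then apply the cutting rule (variant~\eqref{E:tilde_cutting}) to split that strand into $(a-i)+i$ parts, producing the sum $\sum_{i=0}^{a}$ with class $\basis(a-i,b+i)$. The braid rule~\eqref{E:braid} resolves the bigon formed between the detached $i$-strand and the original $\beta$-loop, producing a Heisenberg coefficient $\alpha^i$ together with a self-crossing contribution $\zeta^{(i+1)i}$, exactly as in the proof of Lemma~\ref{L:F_right_regular}. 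The orientation rule~\eqref{E:orientation} contributes the sign $(-1)^i$, and the fusion rule~\eqref{E:fusion} then merges the detached $i$-strand into the existing $b$-strand, producing the binomial $\sqbinom{b+i}{b}_\zeta$ together with a factor $\zeta^{bi}$. Further applications of the permutation rule~\eqref{E:permutation} between the $i$-strand and the remaining $(a-i)$-strand supply the factor $\zeta^{-2ai}$, completing the overall exponent $\zeta^{(i+1)i-(2a-b)i}$ on the homological side. Finally, the Heisenberg coefficient $\alpha^i$ acts on the vector side via $A^i\bfv_c = \zeta^{4ci}\bfv_c$, accounting for the $\zeta^{4ci}$ contribution once the $\zeta$-commutation $AB = \zeta^4 BA$ from Equation~\eqref{E:AB_zeta-commute} has been used to move $\alpha^i$ past the $B^j$ arising from $\psi_g^V(\tau_\beta)$.

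The main obstacle will be the careful bookkeeping of $\zeta$-phases coming from the Heisenberg commutations and permutation rules, which must recombine precisely to yield the compact exponent $\zeta^{(i+1)i-2(j+1)j-(2a-b-4c)i}$ in the statement. In contrast to Lemma~\ref{L:homological_alpha}, no Gauss sum collapse occurs here: the braid rule applied in the $\tau_\beta$-configuration produces a diagonal coefficient $\alpha^i$ (acting as $A^i$ on $\bfv_{c+j}$) rather than a shifting factor $\beta^i$, so the $\psi$-index $j$ remains free throughout the computation and the double sum indexed by $(i,j)$ survives in the final expression, exactly as the statement predicts.
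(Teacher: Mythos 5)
Your overall strategy --- expanding $\psi_1^V(\tau_\beta)\bfv_c$ into $\sum_{\ell}\zeta^{-2(\ell+1)\ell}\bfv_{c+\ell}$, cutting the $a$-labelled arc into $(a-i)+i$ pieces via the variant cutting rule, and resolving the result with the braid, permutation, orientation and fusion rules --- is exactly the paper's. The gap is the structural claim in your last paragraph: the braid rule in the $\tau_\beta$ configuration does \emph{not} produce the diagonal coefficient $\alpha^i$. The loop traced by the detached $i$-strand is $\beta^{-1}\alpha$ (up to conjugation), so the Heisenberg coefficient is $(\beta^{-1}\alpha)^i$, represented by $(B^{-1}A)^i=\zeta^{-2i(i-1)}B^{-i}A^i$. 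The factor $B^{-i}$ shifts the Schr\"odinger index, sending $\bfv_{c+\ell}$ to $\bfv_{c+\ell-i}$, and the paper's proof closes with the change of variables $\ell=i+j$ that this shift forces.

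This is not cosmetic. Acting with $A^i$ on $\bfv_{c+\ell}$ gives $\zeta^{4(c+\ell)i}$, hence a cross term $\zeta^{4i\ell}$; you get the same cross term if you instead commute $A^i$ past $B^{\ell}$ first, since $A^iB^{\ell}=\zeta^{4i\ell}B^{\ell}A^i$ by Equation~\eqref{E:AB_zeta-commute}, so your appeal to that commutation does not remove it. In the paper this $+4i\ell$ is cancelled by the $-4ij$ hidden in $\zeta^{-2(i+j+1)(i+j)}$ after substituting $\ell=i+j$, a substitution that only makes sense because of the index shift by $B^{-i}$. With your purely diagonal $\alpha^i$ the index stays at $c+\ell$, you must take $j=\ell$, and a factor $\zeta^{4ij}$ survives, contradicting the stated exponent $(i+1)i-2(j+1)j-(2a-b-4c)i$, which has no $ij$ term. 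Relatedly, the reason the double sum survives is not that the braid coefficient is diagonal, but that $\psi_1^V(\tau_\beta)$ is a combination of the shift operators $B^{\ell}$, so the sum over $\ell$ cannot collapse into a Gauss sum the way the sum of diagonal powers $A^{\ell}$ does for $\tau_\alpha$.
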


\begin{proof}
 Using Proposition~\ref{P:computation_rules} and Remark~\ref{R:computation_rules}, we obtain
 \begin{align*}
  &\rho^V_1(\tau_\beta) \left( \basis(a,b) \otimes \bfv_c \right) 
  = \pic{tau_beta_1} \otimes \psi_g^V(\twist{\beta}{}) \bfv_c \\
  &\hspace*{\parindent}\tCeq \sum_{i=0}^a \pic{tau_beta_2} \otimes \left( \sum_{\ell=0}^{r-1} \zeta^{-2(\ell+1)\ell} B^\ell \right) \bfv_c \\
  &\hspace*{\parindent}= \sum_{i=0}^a \sum_{\ell=0}^{r-1} \pic{tau_beta_3} \otimes \zeta^{-2(\ell+1)\ell} \bfv_{c+\ell} \\
  &\hspace*{\parindent}\stackrel{\substack{\mathclap{\eqref{E:hook}} \\ \mathclap{\eqref{E:braid}}}}{=} \sum_{i=0}^a \sum_{\ell=0}^{r-1} \pic{tau_beta_4} \otimes \zeta^{-2(\ell+1)\ell-i(i-1)}(B^{-1} A)^i \bfv_{c+\ell} \\
  &\hspace*{\parindent}\Peq \sum_{i=0}^a \sum_{\ell=0}^{r-1} \pic{tau_beta_5} \otimes \zeta^{-2(\ell+1)\ell-3i(i-1)-2i(a-i)} B^{-i} A^i \bfv_{c+\ell} \\
  &\hspace*{\parindent}\stackrel{\substack{\mathclap{\eqref{E:orientation}} \\ \mathclap{\eqref{E:fusion}}}}{=}
  \sum_{i=0}^a \sum_{\ell=0}^{r-1} (-1)^i \basis(a-i,b+i) \otimes \sqbinom{b+i}{b}_\zeta \zeta^{bi-2(\ell+1)\ell-i(i-3)-2ai+4i(c+\ell)} \bfv_{c-i+\ell} \\
  &\hspace*{\parindent}= \sum_{i=0}^a \sum_{j=0}^{r-1} (-1)^i \basis(a-i,b+i) \otimes \sqbinom{b+i}{b}_\zeta 
  \zeta^{bi-2(i+j+1)(i+j)-i(i-3)-2(a-2c)i+4i(i+j)} \bfv_{c+j},
 \end{align*}
 where the last equality follows from the change of variable $\ell=i+j$.
\end{proof}

Let us move on to the genus $2$ surface.

\begin{lemma}\label{L:homological_gamma}
 For all integers $0 \leqs a_1,b_1,c_1,a_2,b_2,c_2 \leqs r-1$ we have we have
 \begin{align*}
  &\rho^V_2(\tau_\gamma) \left( \basis(a_1,b_1;a_2,b_2) \otimes \bfv_{(c_1,c_2)} \right) \\
  &\hspace*{\parindent} \propto \zeta^{2(b_1+c_1+a_2-c_2+1)(b_1+c_1+a_2-c_2)} \sum_{k_2=0}^{a_2} \sum_{j_2=0}^{b_2} \sum_{i_2=k_2}^{a_2} \sum_{k_1=0}^{b_1} \sum_{\ell=-k_1}^{i_2+j_2} \sum_{j_1=0}^{k_1+\ell} \sum_{i_1=0}^{k_1-j_1+\ell} (-1)^{j_1+\ell+i_2+k_2} \\*
  &\hspace*{2\parindent} \basis(a_1+i_1,b_1-i_1+\ell;a_2-\ell+j_2,b_2-j_2) \otimes \sqbinom{a_1+i_1}{a_1}_\zeta \sqbinom{b_1-i_1+\ell}{b_1-k_1,j_1}_\zeta \sqbinom{a_2-\ell+j_2}{a_2-i_2}_\zeta \sqbinom{k_1+i_2+j_2}{k_1,j_2,k_2}_\zeta \\*
  &\hspace*{2\parindent} \zeta^{k_1(k_1-2)+(\ell+1)\ell+i_1j_1-i_1k_1-j_1k_1+j_1\ell-k_1\ell+k_1i_2+k_1j_2-\ell i_2-2\ell j_2+i_2k_2+2j_2k_2} \\*
  &\hspace*{2\parindent} \zeta^{(a_1+b_1)i_1+(2b_1+4c_1+3)j_1-(3b_1+4c_1)k_1+(b_1-a_2)\ell-(4b_1+4c_1+a_2+3)i_2-(4b_1+4c_1+a_2+4)j_2-(2a_2-4c_2-1)k_2} \bfv_{(c_1+i_1,c_2+j_2)}.
 \end{align*}
\end{lemma}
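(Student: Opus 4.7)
The strategy mirrors the one used for Lemmas~\ref{L:homological_alpha} and \ref{L:homological_beta}, but the additional complexity comes from the fact that $\tau_\gamma$ is supported on a collar of the separating curve $\gamma_1$ between the two handles, and therefore interacts with the $(a_1,b_1)$-arcs of the first handle \emph{and} the $(a_2,b_2)$-arcs of the second handle simultaneously. The plan is to first compute the image of the diagram $\basis(a_1,b_1;a_2,b_2)$ under the geometric action of $\tau_\gamma$ on $\Conf_n(\varSigma_{g,1})$, and only afterwards apply the uncrossing cocycle $\psi_2^V(\tau_\gamma) = \sum_{\ell=0}^{r-1} \zeta^{-2(\ell+1)\ell} A_1^{-\ell} A_2^\ell$ to the local system vector $\bfv_{(c_1,c_2)}$, using the intertwining identity from Proposition~\ref{P:MCG_action}.

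Concretely, I would first draw the twisted cycle obtained by pushing $\basis(a_1,b_1;a_2,b_2)$ along $\tau_\gamma$: the $a_1$-green arc is unaffected, while the $b_1$-blue arc of the first handle, together with the $a_2$-green arc of the second handle, both pick up a full twist along $\gamma_1$ and emerge tangled with themselves and with each other across the neck. The $b_2$-blue arc is untouched. Then I would apply the cutting rule~\eqref{E:cutting} successively to the two affected multiarcs, separating off the portion of each arc that sits inside the collar of $\gamma_1$: this introduces the summation indices $k_1$ (how much of the $b_1$-blue arc is cut) and $k_2$ (how much of the $a_2$-green arc is cut), together with the intermediate indices $j_2,i_2$ coming from a further cut needed to disentangle the $a_2$-piece from the $b_2$-piece around the second handle's basepoint region, exactly as in the proof of Lemma~\ref{L:homological_beta}.

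Once the arcs are cut, the next step is to pull the cut strands off the neck using the braid rule~\eqref{E:braid}, which produces the factor $\prod_j [\beta_j^{-1},\alpha_j^{-1}]$ of $\varphi_{n,2}^\dHeis$ applied according to how the strands wind around the handles; after specialization via $\varphi_{n,2}^V$, each such commutator contributes a power of $\zeta$. Then I would apply the permutation rule~\eqref{E:permutation} to reorder the strands into the standard order compatible with basis elements $\basis(a_1',b_1';a_2',b_2')$, producing further powers of $\zeta$ depending on the strand counts crossed. A second round of cutting and fusion~\eqref{E:fusion} on the $a_1$-green and $b_2$-blue arcs (introducing the indices $i_1,j_1$ and $\ell$) then recombines the cut pieces into the correct parallel families, each such fusion yielding a quantum multinomial coefficient of the expected shape. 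The orientation rule~\eqref{E:orientation} accounts for the signs $(-1)^{j_1+\ell+i_2+k_2}$ corresponding to the pieces whose orientation gets reversed during uncrossing.

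The main obstacle is bookkeeping: tracking seven summation indices, together with the overall power of $\zeta$, and verifying that after substituting $\psi_2^V(\tau_\gamma)$ using the identity $\sum_{\ell=0}^{r-1} \zeta^{-2(\ell+1)\ell+4(c_2-c_1+b_1+a_2)\ell} = \fraki^{(r-1)/2}\sqrt r\, \zeta^{2(b_1+c_1+a_2-c_2+1)(b_1+c_1+a_2-c_2)+(r+1)/2}$ (as in the last step of Lemma~\ref{L:homological_alpha}), one recovers the Gauss sum prefactor visible in the statement. Because the final formula is only claimed up to a nonzero scalar ($\propto$), I would absorb the Gauss sum and any purely numerical constants, and only track powers of $\zeta$ that depend on $(a_i,b_i,c_i,i_i,j_i,k_i,\ell)$. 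Collecting all contributions and renaming indices so that the surviving basis vectors read $\basis(a_1+i_1,b_1-i_1+\ell;a_2-\ell+j_2,b_2-j_2)\otimes \bfv_{(c_1+i_1,c_2+j_2)}$ yields the stated formula.
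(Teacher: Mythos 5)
Your outline correctly identifies the toolbox (the diagrammatic rules of Proposition~\ref{P:computation_rules}, applied to the twisted diagram while the uncrossing matrix $\psi_2^V(\tau_\gamma)$ acts on the coefficient vector) and the general shape of the argument, which is indeed the one the paper follows. But for this lemma the bookkeeping \emph{is} the proof: the statement is a closed formula with seven summation indices, four quantum multinomials, a sign, and two long exponents of $\zeta$, and your sketch derives none of them. Saying that successive applications of \eqref{E:cutting}, \eqref{E:braid}, \eqref{E:permutation}, \eqref{E:fusion}, and \eqref{E:orientation} will introduce the indices $k_1,k_2,i_1,i_2,j_1,j_2,\ell$ "with the expected shape" does not establish the formula; the paper's proof (Appendix~\ref{A:ugly_homological_computation}) is a page-long chain of sixteen explicit diagrammatic equalities, and the last two indices $j_1,i_1$ are produced not by an ad hoc "second round of cutting and fusion" but by invoking the precomputed operator $\tilde{\calF}^{(k)}$ of Lemma~\ref{L:F_right_regular}, which your sketch does not identify.

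The one concrete formula you do commit to is incorrect. In the actual computation the Gauss sum is evaluated early, right after the braid and permutation rules have converted $A_1^{-\ell}A_2^{\ell}\bfv_{(c_1,c_2)}$ into the scalar $\zeta^{4(c_2-c_1)\ell}$, so the relevant identity is $\sum_{\ell=0}^{r-1}\zeta^{-2(\ell+1)\ell+4(c_2-c_1)\ell} \propto \zeta^{2(c_1-c_2+1)(c_1-c_2)}$; the remaining $b_1$- and $a_2$-dependence of the quadratic prefactor accumulates afterwards from orientation and fusion steps. Your proposed identity with exponent $4(c_2-c_1+b_1+a_2)\ell$ is moreover internally inconsistent: by the quoted Gauss-sum formula its left-hand side evaluates to $\zeta^{2(c_2-c_1+b_1+a_2)(c_2-c_1+b_1+a_2-1)}$ (up to the usual $\fraki^{(r-1)/2}\sqrt{r}\,\zeta^{(r+1)/2}$), which differs from the claimed $\zeta^{2(b_1+c_1+a_2-c_2+1)(b_1+c_1+a_2-c_2)}$ whenever $(b_1+a_2)(2(c_2-c_1)-1)\neq 0$; the two expressions have opposite signs in front of $c_2-c_1$. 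This, together with assertions such as "the $b_2$-blue arc is untouched" (it is cut, producing the index $j_2$ and the shift $b_2\mapsto b_2-j_2$), indicates the formula was pattern-matched against the answer rather than derived, so the proposal cannot be accepted as a proof of the stated identity.
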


Due to the technical nature of this computation, we postpone a detailed proof to Appendix~\ref{A:ugly_homological_computation}.

\section{Quantum representations of mapping class groups}\label{S:quantum_representations}

A Hopf algebra over a field $\Bbbk$ is a $\Bbbk$-vector space $H$ equipped with a family of $\Bbbk$-linear maps composed of a unit $\eta : \Bbbk \to H$, a product $\mu : H \otimes H \to H$, a counit $\varepsilon : H \to \Bbbk$, a coproduct $\Delta : H \to H \otimes H$, and an antipode $S : H \to H$. These structure morphisms are subject to a well-known list of axioms, that the reader can find in \cite[Definitions~III.1.1, III.2.2, \& III.3.2]{K95}. For all elements $x,y \in H$, we will use the short notation $\mu(x \otimes y) = xy$ (for the product), $\eta(1) = 1$ (for the unit), and $\Delta(x) = x_{(1)} \otimes x_{(2)}$ (for the coproduct, which hides a sum).

In Section~\ref{S:Hopf_algebras}, we will recall the algebraic setup required for the definition of quantum representations of mapping class groups based on Hopf algebras. Then, in Section~\ref{S:quantum_representations_of_mcg}, we will recall how to represent diagrammatically connected cobordisms, and how to compute their image under a TQFT by means of a diagrammatic calculus that is very different from the one introduced in Section~\ref{S:twisted_homology_modules} (which is more homological, and less algebraic). In particular, we will provide, in Proposition~\ref{P:quantum_action_of_mcg_generators}, explicit formulas for the action of generators of mapping class groups in Hopf algebraic terms. Finally, in Section~\ref{S:small_quantum_sl2}, we will discuss the example of the small quantum group of $\fsl_2$, which is the one we will relate to homology later on.

\subsection{Factorizable ribbon Hopf algebras}\label{S:Hopf_algebras}

In this section, we recall the main pieces of structure and properties of Hopf algebras required for the construction of quantum representations of mapping class groups.

\subsubsection{Ribbon structures and factorizability}

A \textit{ribbon structure} on $H$ is given by an R-matrix $R = R'_i \otimes R''_i \in H \otimes H$ (which hides a sum) and by a ribbon element $v \in H$, see \cite[Definitions~VIII.2.2. \& XIV.6.1]{K95}. We denote with $u \in H$ the Drinfeld element and with $M \in H \otimes H$ the M-matrix associated with the R-matrix $R$, which are defined by $u = S(R''_i)R'_i$ and by $M = R''_j R'_i \otimes R'_j R''_i$ respectively (with sums hidden in both notations). 
We also denote with $g \in H$ the unique pivotal element compatible with $v$, which is given by $g = uv^{-1}$.

A left integral $\lambda \in H^*$ of $H$ is a linear form on $H$ satisfying $\lambda(x_{(2)}) x_{(1)} = \lambda(x) 1$ for every $x \in H$, and a left cointegral $\Lambda \in H$ of $H$ is an element of $H$ satisfying $x \Lambda = \varepsilon(x) \Lambda$ for every $x \in H$, see \cite[Definition~10.1.1 \& 10.1.2]{R12}. Recall that, if $H$ is finite-dimensional, then a left integral and a left cointegral exist, they are unique up to scalar, and we can lock together their normalizations by requiring
\[
 \lambda(\Lambda) = 1,
\]
as follows from \cite[Theorem~10.2.2]{R12}. Now, recall that a finite-dimensional Hopf algebra $H$ is \textit{unimodular} if $S(\Lambda) = \Lambda$, compare with \cite[Definition~10.2.3]{R12}. If $H$ is unimodular, \cite[Theorem~10.5.4.(e)]{R12} implies that the left integral $\lambda$ satisfies
\begin{equation}\label{E:quantum_character}
 \lambda(xy) = \lambda(yS^2(x))
\end{equation}
for all $x,y \in H$, see also \cite[Theorem~7.18.12]{EGNO15}.

The Drinfeld map $D : H^* \to H$ of a ribbon Hopf algebra $H$ is the linear map determined by $D(f) := (f \otimes \id_H)(M)$ for every $f \in H^*$, where $M$ is the M-matrix of $H$. By definition, $H$ is \textit{factorizable} if $D$ is a linear isomorphism. This happens if and only if $\lambda(R'_j R''_i)R''_j R'_i$ is a cointegral, see \cite[Theorem~5]{K96} and \cite[Proposition~7.1]{BD21}, and we fix the normalization of both $\lambda$ and $\Lambda$ by asking that
\[
 \lambda(R'_j R''_i)R''_j R'_i = \Lambda.
\]

\subsubsection{Adjoint representations}\label{S:Quantum_adjoint_rep}

Let us denote by $\mods{H}$ the category of finite-di\-men\-sion\-al left $H$-modules. If $H$ is finite-dimensional, then its \textit{adjoint representation} $\ad \in \mods{H}$ is given by the $\Bbbk$-vector space $H$ itself, equipped with the adjoint action
\[
 x \triangleright y = x_{(1)}yS(x_{(2)})
\]
for all $x \in H$ and $y \in \ad$. This $H$-module coincides with the end
\[
 \ad = \int_{X \in \mods{H}} X \otimes X^*,
\]
which is defined as the universal dinatural transformation with target
\begin{align*}
  (\_ \otimes \_^*) : \mods{H} \times (\mods{H})^\op & \to \mods{H} \\*
  (X,Y) & \mapsto X \otimes Y^*,
\end{align*}
see \cite[Section~IX.5]{M71} for a definition. Roughly speaking, $\ad$ can be equipped with a dinatural family of intertwiners
\begin{align*}
 j_X : \ad &\to X \otimes X^* \\* 
 x &\mapsto \sum_{a=1}^n (x \cdot v_a) \otimes f^a
\end{align*}
for every $H$-module $X$, where $\{ v_a \in X | 1 \leqs a \leqs n \}$ and $\{ f^a \in X^* | 1 \leqs a \leqs n \}$ are dual bases for $X$ and $X^*$ respectively. Dinaturality means that $(f \otimes \id_{X^*}) \circ j_X = (\id_Y \otimes f^*) \circ j_Y$ for every intertwiner $f : X \to Y$, and these structure morphisms make $\ad$ into the initial object of the category of dinatural transformations with target $(\_ \otimes \_^*)$, see for instance \cite[Proposition~A.2]{BD20}.

\subsection{Quantum representations of mapping class groups}\label{S:quantum_representations_of_mcg}

In this section, we recall the construction of quantum representations of mapping class groups from TQFTs.

\subsubsection{Connected cobordism category}\label{S:cobordisms}

For each connected surface $\varSigma_{g,1}$, we specify a Lagrangian subspace $\calL_g \subset H_1(\varSigma_{g,1};\R)$, defined as the subspace generated by $\{ \alpha_1,\ldots,\alpha_g,\partial \}$, where $\alpha_j$ denotes the homology class of the corresponding simple closed curve in $\varSigma_{g,1}$ for every integer $1 \leqs j \leqs g$, following the notation of Section~\ref{S:homological_representations_of_mcg}, and $\partial$ denotes the homology class of the boundary of $\varSigma_{g,1}$.

The \textit{category $\RCob$ of connected framed cobordisms between connected surfaces} is defined as follows:
\begin{itemize}
 \item Objects of $\RCob$ are connected surfaces $\varSigma_{g,1}$ with one boundary component.
 \item Morphisms of $\RCob$ from $\varSigma_{g,1}$ to $\varSigma_{g',1}$ are pairs $(M,\sig)$, where $M$ is the diffeomorphism class of a connected $3$-di\-men\-sion\-al cobordism $M$ from $\varSigma_{g,1}$ to $\varSigma_{g',1}$, and $\sig \in \Z$ is an integer, called the \textit{signature defect}.
 \item The composition 
  \[
   (M',\sig') \circ (M,\sig) \in \RCob(\varSigma_{g,1},\varSigma_{g'',1})
  \]
  of morphisms $(M,\sig) \in \RCob(\varSigma_{g,1},\varSigma_{g',1})$, $(M',\sig') \in \RCob(\varSigma_{g',1},\varSigma_{g'',1})$ is given by
  \[
   (M \cup_{\varSigma_{g',1}} M', \sig + \sig' - \mu(M_*(\calL_g),\calL_{g'},(M')^*(\calL_{g''}))),
  \]
  where $M_*(\calL_g)$ and $(M')^*(\calL_{g''})$ are Lagrangian subspaces of $H_1(\varSigma_{g',1};\R)$ obtained by pushing forward $\calL_g$ through $M$ and by pulling back $\calL_{g''}$ through $M'$, respectively, and where $\mu$ is the Maslov index, see \cite[Section~2.2]{BD21} for more details.
\end{itemize}

\subsubsection{Top tangles in handlebodies}\label{S:top_tangles}

Let us recall Habiro's graphical notation for connected framed cobordisms between connected surfaces, which we adapt from \cite[Section~14.4]{H05}. For every $g \in \N$, we consider a connected $3$-di\-men\-sion\-al handlebody $H_g \subset \R^3$ of genus $g$, obtained by attaching $g$ copies of the $3$-di\-men\-sion\-al $1$-han\-dle $D^1 \times D^2$ to the bottom face $[0,1]^{\times 2} \times \{ 0 \}$ of the cube $[0,1]^{\times 3}$. We represent graphically $H_g$ through the projection to $\R \times \{ 0 \} \times \R$ as
\begin{align*}
 \pic{cobordisms_projection}
\end{align*}
Notice that Habiro reads cobordisms from top to bottom, while we read them from bottom to top. For all $g,g' \in \N$, a \textit{top $(g,g')$-tangle} $T$, sometimes simply called a \textit{top tangle}, is an unoriented framed tangle inside the connected $3$-di\-men\-sion\-al handlebody $H_g$. Its set of boundary points is composed of $2g'$ points uniformly distributed on the top line $[0,1] \times \{ \frac 12 \} \times \{ 1 \} \subset H_g$. For every $1 \leqs j \leqs g$, a component of $T$ joins the $(2j)$th and the $(2j-1)$th boundary points. Here is an example of a top $(2,3)$-tangle, together with its projection:
\begin{align*}
 \pic{cobordisms_example}
\end{align*}
Every top $(g,g')$-tangle $T$ represents a connected framed cobordism $(M(T),\sig(T))$ from the surface $\varSigma_{g,1}$ to the surface $\varSigma_{g',1}$. The cobordism $M(T)$ is obtained from $H_g$ by carving out an open tubular neighborhood $N(\tilde{T})$ in $H_g$ of the subtangle $\tilde{T}$ of $T$ composed of all its arc components, and by performing $2$-surgery along the framed link $T \smallsetminus \tilde{T}$ composed of all its circle components, as prescribed by the framing. The signature defect $\sig(T)$ is the signature of the linking matrix of the framed link $T \smallsetminus \tilde{T} \subset H_g \subset \R^3$. Therefore, top tangles in handlebodies can be considered up to \textit{framed Kirby moves} of the following type:
\begin{gather*}
 \pic{rel_K1prime_a} \leftrightsquigarrow \pic{rel_K1prime_b} \tag{K$1'$}\label{E:rel_K1prime} \\*
 \pic{rel_K2_a} \leftrightsquigarrow \pic{rel_K2_b} \tag{K$2$}\label{E:rel_K2} \\*
 \pic{rel_K3_a} \leftrightsquigarrow \pic{rel_K3_b} \tag{K$3$}\label{E:rel_K3}
\end{gather*}
Move~\eqref{E:rel_K1prime} is performed inside a ball $D^3$ embedded into $H_g$, while the remaining ones are performed inside a solid torus $S^1 \times D^2$ embedded into $H_g$. Notice that move~\eqref{E:rel_K1prime} is a framed version of the first Kirby move, which we need to restrict to if we want to take signature defects into account (just like we need to restrict to the framed version of the first Reidemeister move, when we want to keep track of framings of tangles).

The composition of a top $(g,g')$-tangle $T$ with a top $(g',g'')$-tangle $T'$ is the top $(g,g'')$-tangle $T' \circ T$ obtained by considering an open tubular neighborhood $N(\tilde{T})$ in $H_g$ of the subtangle $\tilde{T}$ of $T$ composed of all its arc components, by gluing vertically the complement $H_g \smallsetminus N(\tilde{T})$ to $H_{g'}$, identifying the top base of $H_g \smallsetminus N(\tilde{T})$ with the bottom base of $H_{g'}$ as prescribed by the top tangle $\tilde{T}$, and then by shrinking the result into $H_g$. Here is an example of the composition of a top $(2,1)$-tangle with a top $(1,2)$-tangle:
\begin{align*}
 \pic{cobordisms_composition}
\end{align*}
Similarly, the tensor product of a top $(g,g'')$-tangle $T$ and a top $(g',g''')$-tangle $T'$ is the top $(g+g',g''+g''')$-tangle $T \otimes T'$ obtained by gluing horizontally $H_g$ to $H_{g'}$, identifying the right face of $H_g$ with the left face of $H_{g'}$ as prescribed by the identity map, and then by shrinking the result into $H_{g+g'}$. We can even define a braiding given by
\[
 c_{g,g'} := \pic{cobordisms_braiding}
\]
Thus, top tangles in handlebodies can be organized as the morphisms of a braided monoidal category that is equivalent to $\RCob$, see for instance \cite[Proposition~4.2]{BD21}. In particular, also $\RCob$ is a braided monoidal category, whose tensor product is induced by boundary connected sum. It is proved in \cite[Theorem~5.5.4]{BP11} that $\RCob$ is in fact the free braided monoidal category generated by a \textit{factorizable BPH algebra}\footnote{Short for \textit{Bobtcheva--Piergallini Hopf algebra}, in the terminology of \cite{BD21}.}, the surface $\varSigma_{1,1}$ (although we will not need this result, nor this notion, in the following).

\subsubsection{TQFTs and quantum representations}\label{S:TQFT_rep_of_mcg}

If $H$ admits a ribbon structure, then its adjoint representation can be given the structure of a braided Hopf algebra in $\mods{H}$. This dates back to the work of Majid \cite{M91}, who called the result the \textit{transmutation} of $H$. When $H$ is also factorizable, then its transmutation is a factorizable BPH algebra in $\mods{H}$, see \cite[Proposition~7.3]{BD21}. The following result is essentially a consequence of the construction of \cite{KL01}, compare with \cite[Theorem~7.4]{BD21}.

\begin{theoremark}[Kerler--Lyubashenko]
 If $H$ is a factorizable ribbon Hopf algebra, then there exists a braided monoidal functor
 \[
  J_H : \RCob \to \mods{H}
 \]
 sending every surface $\varSigma_{g,1}$ to the $H$-module $\ad^{\otimes g}$.
\end{theoremark}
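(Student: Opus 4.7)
The plan is to invoke the universal property of $\RCob$ cited just before the theorem: the result of Bobtcheva--Piergallini (\cite[Theorem~5.5.4]{BP11}) asserts that $\RCob$ is the \emph{free} braided monoidal category generated by a factorizable BPH algebra, with generating object $\varSigma_{1,1}$. Under such a universal property, constructing a braided monoidal functor $J_H : \RCob \to \mathcal{C}$ into an arbitrary braided monoidal category $\mathcal{C}$ is equivalent to exhibiting a factorizable BPH algebra object inside $\mathcal{C}$. In our situation, the target is $\mathcal{C} = \mods{H}$, which inherits a braided monoidal structure from the R-matrix of $H$. So the entire content of the theorem reduces to producing a factorizable BPH algebra object in $\mods{H}$ whose underlying $H$-module is $\ad$.

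This object is provided by Majid's transmutation procedure, as recalled at the start of Section~\ref{S:quantum_representations_of_mcg}. Concretely, I would equip $\ad$ with the following structure morphisms in $\mods{H}$: the unit and product are inherited unchanged from those of $H$ (the product descends to an intertwiner precisely because the multiplication of $H$ is equivariant for the adjoint action), while the coproduct, antipode, R-matrix, ribbon element, integral and cointegral are the ``braided'' versions of their counterparts in $H$, each obtained by suitable insertion of components of $R$ to twist the co-structures until they become morphisms in $\mods{H}$. The verification that this assembles into a factorizable BPH algebra in $\mods{H}$ is exactly the content of \cite[Proposition~7.3]{BD21}, which in turn invokes the unimodularity consequence~\eqref{E:quantum_character} and the factorizability normalization $\lambda(R'_j R''_i) R''_j R'_i = \Lambda$ fixed in Section~\ref{S:Hopf_algebras}.

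Granted this, the universal property directly hands us the desired braided monoidal functor $J_H : \RCob \to \mods{H}$. The generating object $\varSigma_{1,1}$ is sent to the underlying module of the transmutation, namely $\ad$, and by the monoidality of $J_H$ together with the factorization $\varSigma_{g,1} \cong \varSigma_{1,1}^{\bcs g}$ in $\RCob$, we conclude that
\[
 J_H(\varSigma_{g,1}) \;=\; J_H(\varSigma_{1,1})^{\otimes g} \;=\; \ad^{\otimes g},
\]
as claimed.

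The main obstacle is really the verification that the transmutation of $H$ satisfies \emph{all} the axioms of a factorizable BPH algebra in $\mods{H}$; the universal property step is then essentially automatic. This verification is a long but formal sequence of manipulations involving the R-matrix, integrals, and cointegrals, and the key non-trivial input is that factorizability of $H$ (in the sense of invertibility of the Drinfeld map) translates into factorizability of the transmuted algebra (in the BPH sense of invertibility of the braided Drinfeld morphism). Since this translation, and the BPH axioms in $\mods{H}$, are already established in \cite[Proposition~7.3]{BD21}, and since the deep universal property of $\RCob$ is taken as given from \cite[Theorem~5.5.4]{BP11}, the theorem follows by assembling these two ingredients.
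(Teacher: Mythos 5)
Your proposal is correct and follows exactly the route of the reference the paper defers to for this statement: the paper does not reprove the theorem but cites \cite[Theorem~7.4]{BD21}, whose argument is precisely the combination of the universal property of $\RCob$ from \cite[Theorem~5.5.4]{BP11} with the fact that the transmutation of $H$ is a factorizable BPH algebra in $\mods{H}$ (\cite[Proposition~7.3]{BD21}), followed by monoidality applied to $\varSigma_{g,1} \cong \varSigma_{1,1}^{\bcs g}$. The only point worth noting is that the paper itself remarks it will not rely on the freeness result of \cite{BP11} in the main text, preferring to view $J_H$ through the original construction of \cite{KL01} and its identification with the TQFT of \cite{DGGPR19} in Appendix~\ref{A:TQFT}; your argument is nonetheless a complete and standard justification of the statement.
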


As explained in Appendix~\ref{A:TQFT}, the functor $J_H : \RCob \to \mods{H}$ can also be seen as part of the non-semisimple TQFT constructed in \cite[Section~3]{DGP17} and in \cite[Section~4]{DGGPR19}, see \cite[Appendix~C]{DGGPR20} for a discussion of the equivalence between the two different approaches.

If $f \in \Mod(\varSigma_{g,1})$ is a positive diffeomorphism, then we denote with $\varSigma_{g,1} \times [0,1]_f$ its \textit{mapping cylinder}, which is the cobordism obtained from the cylinder $\varSigma_{g,1} \times [0,1]$ by specifying the identity of $\varSigma_{g,1}$ as the outgoing boundary identification, and $f$ as the incoming one. Mapping cylinders behave well under gluing, in the sense that
\[
 \varSigma_{g,1} \times [0,1]_{f'} \cup_{\varSigma_{g,1}} \varSigma_{g,1} \times [0,1]_f \cong \varSigma_{g,1} \times [0,1]_{f' \circ f}.
\]
Then, the \textit{quantum representation} associated with the factorizable ribbon Hopf algebra $H$ and the surface $\varSigma_{g,1}$ is the homomorphism
\begin{align}
 \bar{\rho}_g^H : \Mod(\varSigma_{g,1}) &\to \PGL_\Bbbk(\ad^{\otimes g}) \label{E:quantum_representation} \\*
 [f] &\mapsto [J_H(\varSigma_{g,1} \times [0,1]_f,0)]. \nonumber
\end{align}
The fact that this representation is only projective, instead of $\Bbbk$-linear, follows from the fact that the composition of two mapping cylinders with zero signature defect needs not have zero signature defect. However, it turns out that, for every cobordism $M$, we have
\[
 J_H(M,\sig) = \lambda(v)^{-\sig} J_H(M,0).
\]
Since the scalar $\lambda(v) \in \Bbbk$ is always invertible for a factorizable ribbon Hopf algebra $H$, forgetting about signature defects yields a homomorphism with target $\PGL_\Bbbk(\ad^{\otimes g})$. This can be lifted to a linear representation with target $\GL_\Bbbk(\ad^{\otimes g})$, but the source needs to be changed to a central extension of $\Mod(\varSigma_{g,1})$ that takes Maslov indices into account, see for instance \cite[Appendix~A]{DGGPR20}. Notice however that $\PGL_\Bbbk(\ad^{\otimes g})$ is a linear group, since its action by conjugation on $\GL_\Bbbk(\ad^{\otimes g})$ is faithful. Therefore, faithfulness of $\bar{\rho}_g^H$ would still directly imply linearity of $\Mod(\varSigma_{g,1})$.

\subsubsection{Diagrammatic calculus}\label{S:quantum_diagrammatic_calculus}

Let us recall a useful algorithm for the computation of the TQFT functor $J_H : \RCob \to \mods{H}$, that can also be found in \cite[Section~8.2]{BD21}. The procedure is based on \textit{singular diagrams} of framed tangles, which are obtained from regular diagrams by discarding framings, and forgetting the difference between overcrossings and undercrossings. On the set of singular diagrams, we consider the equivalence relation generated by all singular versions of the usual local moves corresponding to ambient isotopies of framed tangles, except for the first Reidemeister move. In particular, two equivalent singular diagrams represent homotopic tangles, but not all homotopies are allowed. In order to compute the intertwiner $J_H(M(T),\sig(T)) : \ad^{\otimes g} \to \ad^{\otimes g'}$ associated with a top $(g,g')$-tangle $T$, we start by considering a vector
\[
 x_{1} \otimes \ldots \otimes x_g \in \ad^{\otimes g}
\]
and a regular diagram of $T$. First of all, if the number of strands of $T$ running along the $j$th $1$-han\-dle of $H_g$ is $k_j$, we insert beads labeled by components of the $(k_j-1)$th iterated coproduct $(x_j)_{(1)} \otimes \ldots \otimes (x_j)_{(k_j)}$ for every integer $1 \leqs j \leqs g$, as shown:
\[
 \pic{bead_presentation_00}
\]
When $k_j = 0$, we add a multiplicative factor of $\varepsilon(x_j)$ in front of $T$. Next, we pass to the singular version of $T$, while also inserting beads labeled by components of the R-matrix as shown:
\begin{align*}
 \pic{bead_presentation_01} &\mapsto \pic{bead_presentation_02} &
 \pic{bead_presentation_03} &\mapsto \pic{bead_presentation_04}
\end{align*}
Then, we attach a complementary $2$-han\-dle canceling each $1$-han\-dle of $H_g$, as shown:
\begin{align*}
 \pic{bead_presentation_05} &\mapsto \pic{bead_presentation_06}
\end{align*}
Next, we need to collect all beads sitting on the same strand in one place, which has to be next to the leftmost endpoint, for components which are not closed. As we slide beads past maxima, minima, and crossings, we change their labels according to the rule
\begin{align*}
 \pic{bead_presentation_07} &= \pic{bead_presentation_08} &
 \pic{bead_presentation_09} &= \pic{bead_presentation_10} \\*[10pt]
 \pic{bead_presentation_11} &= \pic{bead_presentation_12} &
 \pic{bead_presentation_13} &= \pic{bead_presentation_14}
\end{align*}
Next, we pass from our singular diagram to an equivalent one whose singular crossings all belong to singular versions of twist morphisms, and we replace them with beads labeled by pivotal elements according to the rule
\begin{align*}
 \pic{bead_presentation_15} &= \pic{bead_presentation_16} &
 \pic{bead_presentation_17} &= \pic{bead_presentation_18}
\end{align*}
This is indeed possible, because we started from a top tangle. Finally, we collect all remaining beads, changing their labels along the way as before, and we multiply everything together according to the rule
\begin{align*}
 \pic{bead_presentation_19} &= \pic{bead_presentation_20}
\end{align*}
In the end, we are left with a planar tangle $B(T)$ carrying at most a single bead on each of its components. The intertwiner $J_H(M(T),\sig(T)) : \ad^{\otimes g} \to \ad^{\otimes g'}$ satisfies
\[
 J_H(M(T),\sig(T))(x_1 \otimes \ldots \otimes x_g) = 
 \left( \prod_{i=1}^g \lambda(y_i x_i) \right) 
 \left( \prod_{j=1}^k \lambda(z_j) \right)
 x'_1 \otimes \ldots \otimes x'_{g'}
\]
for every $x_{1} \otimes \ldots \otimes x_g \in \ad^{\otimes g}$, where
\begin{equation*}
 B(T) = \pic{bead_presentation_21}
\end{equation*} 
More details can be found in \cite[Section~8.2]{BD21}.

\subsubsection{Action of Dehn twists}\label{S:quantum_action_of_Dehn_twists}

Let $\gamma_\pm$ denote the knot $\gamma \times \left\{ \frac 12 \right\}$ inside $\varSigma_{g,1} \times [0,1]$ with framing $\pm 1$ relative to the surface $\varSigma_{g,1} \times \left\{ \frac 12 \right\}$. It is a classical remark, see for instance the proof of \cite[Theorem~2]{L62}, that there exists an isomorphism of cobordisms
\begin{equation*}
 \varSigma_{g,1} \times [0,1]_{\tau_\gamma^{\pm 1}} \cong (\varSigma_{g,1} \times [0,1])(\gamma_\mp),
\end{equation*}
where the cobordism $(\varSigma_{g,1} \times [0,1])(\gamma_\mp)$ is obtained from the cylinder $\varSigma_{g,1} \times [0,1]$ by performing surgery along the framed knot $\gamma_\pm$, with both incoming and outgoing boundary identifications induced by the identity of $\varSigma_{g,1}$.

\begin{proposition}\label{P:quantum_action_of_mcg_generators}
 The quantum representation $\bar{\rho}_g^H : \Mod(\varSigma_{g,1}) \to \PGL_\Bbbk(\ad^{\otimes g})$ of Equation~\eqref{E:quantum_representation} fits into the commutative diagram 
 \begin{center}
  \begin{tikzpicture}[descr/.style={fill=white}] \everymath{\displaystyle}
   \node (P0) at (0,0) {$F_g$};
   \node (P1) at (3,0) {$\GL_\Bbbk(\ad^{\otimes g})$};
   \node (P2) at (0,-1) {$\Mod(\varSigma_{g,1})$};
   \node (P3) at (3,-1) {$\PGL_\Bbbk(\ad^{\otimes g})$};
   \draw
   (P0) edge[->] node[above] {\scriptsize $\rho_g^H$}(P1)
   (P0) edge[->>] node[left] {} (P2)
   (P1) edge[->>] node[right] {} (P3)
   (P2) edge[->] node[below] {\scriptsize $\bar{\rho}_g^H$} (P3);
  \end{tikzpicture}
 \end{center}
 where $F_g$ is defined by Equation~\eqref{E:free_group}, and $\rho_g^H : F_g \to \GL_\Bbbk(\ad^{\otimes g})$ is defined by
 \begin{align*}
  \rho_g^H(\twist{\alpha}{j}) (x_1 \otimes \ldots \otimes x_g) &= x_1 \otimes \ldots \otimes x_{j-1} \otimes v^{-1} x_j \otimes x_{j+1} \otimes \ldots \otimes x_g, \\
  \rho_g^H(\twist{\beta}{j}) (x_1 \otimes \ldots \otimes x_g) &= \lambda(v_{(2)} x_j) \left( x_1 \otimes \ldots \otimes x_{j-1} \otimes S(v_{(1)}) \otimes x_{j+1} \otimes \ldots \otimes x_g \right), \\
  \rho_g^H(\twist{\gamma}{k}) (x_1 \otimes \ldots \otimes x_g) &= x_1 \otimes \ldots \otimes x_{k-1} \otimes x_k S(v^{-1}_{(1)}) \otimes v^{-1}_{(2)} x_{k+1} \otimes x_{k+2} \otimes \ldots \otimes x_g
 \end{align*}
 for all $x_1 \otimes \ldots \otimes x_g \in \ad^{\otimes g}$.
\end{proposition}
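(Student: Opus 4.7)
The plan is to compute $J_H$ on the mapping cylinders of the three standard Dehn twists by first replacing each mapping cylinder with a surgery presentation, then expressing the resulting cobordism as a top tangle in $H_g$, and finally running the diagrammatic beading algorithm of Section~\ref{S:quantum_diagrammatic_calculus}. The projective equality referenced by the target $\PGL_\Bbbk(\ad^{\otimes g})$ means we may freely discard global scalars arising from signature defects (that is, from powers of $\lambda(v)$), so it is enough to identify the intertwiner up to a non-zero scalar.

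First, I would use the classical identification $\varSigma_{g,1}\times[0,1]_{\tau_\gamma^{\pm 1}} \cong (\varSigma_{g,1}\times[0,1])(\gamma_\mp)$ recalled in Section~\ref{S:quantum_action_of_Dehn_twists} to rewrite each of the three mapping cylinders as a cylinder surgered along a framed curve. After trivially isotoping the identity cylinder into Habiro's standard handlebody picture (with $g$ vertical strands, one per handle of $H_g$, each capped by a canceling $2$-handle), the surgery curve becomes one additional framed circle in the diagram, whose position with respect to the vertical strands mirrors the intersection/linking pattern of $\alpha_j$, $\beta_j$, or $\gamma_k$ with the cocores of the handles of $\varSigma_{g,1}$. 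Concretely, I would draw: for $\tau_{\alpha_j}$ a small $(-1)$-framed meridian encircling the $j$-th vertical strand; for $\tau_{\beta_j}$ a $(-1)$-framed longitude parallel to the $j$-th vertical strand (i.e.\ a circle that runs alongside that strand inside $H_g$); for $\tau_{\gamma_k}$ a $(-1)$-framed circle linking both the $k$-th and $(k+1)$-th vertical strands once each, in the pattern dictated by $\gamma_k$.

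Next, I would apply the beading algorithm to each diagram. For $\tau_{\alpha_j}$, the meridian circle links only the $j$-th strand, so after inserting R-matrix beads and collapsing the closed component we obtain a single bead on the $j$-th strand; the $(-1)$-framing replaces the singular self-crossing of the meridian by the inverse ribbon element (via the pivotal-element rule), producing precisely the factor $v^{-1}$ multiplying $x_j$ in $\ad$, which gives the first formula. For $\tau_{\beta_j}$, the longitude is an arc/circle parallel to the $j$-th strand; running the algorithm, the surgery component closes up into a loop that, upon collecting beads, contributes $\lambda$ evaluated on the product of $x_j$ with components of the ribbon/R-matrix decoration. Using the coproduct of $v$ and the left integral axiom $\lambda(x_{(2)})x_{(1)} = \lambda(x)\,1$, this evaluation simplifies (up to a global scalar) to the promised $\lambda(v_{(2)}x_j)$, with the remaining bead $S(v_{(1)})$ left on the $j$-th strand. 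For $\tau_{\gamma_k}$, the surgery curve links two strands, so the beading gives simultaneous insertions on the $k$-th and $(k+1)$-th strands whose labels are the two coproduct components of $v^{-1}$; the antipode $S$ appears on one of them because the two links have opposite orientations relative to the strand orientations, yielding the stated formula.

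Finally, once the three formulas are verified, it remains to check that the three assignments $\tau_{\alpha_j}, \tau_{\beta_j}, \tau_{\gamma_k} \mapsto \rho_g^H(\tau_\bullet)$ do extend to a homomorphism from the free group $F_g$ of Equation~\eqref{E:free_group} (which is automatic, since $F_g$ is free on these symbols) and that the resulting map composed with the projection to $\PGL_\Bbbk(\ad^{\otimes g})$ agrees with $\bar{\rho}_g^H$ on the images of the generating Dehn twists in $\Mod(\varSigma_{g,1})$; this is exactly the content of the commutative square, and follows from the previous step because $\bar{\rho}_g^H$ was defined on mapping cylinders and $[f]\in\Mod(\varSigma_{g,1})$ factors through $F_g$ by the Lickorish generating set. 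The hardest step, and the one requiring the most care, is the bead bookkeeping for $\tau_{\beta_j}$ and $\tau_{\gamma_k}$: one must track precisely how the R-matrix factors introduced at each singular crossing cancel against coproducts of $v$ (using factorizability and the defining property $\lambda(R'_jR''_i)R''_jR'_i = \Lambda$), and one must make sure that all residual global scalars are indeed powers of $\lambda(v)$, so that they can be absorbed into the projectivization. Once this is done cleanly for each of the three diagrams, the three formulas follow directly.
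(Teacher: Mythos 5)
Your proposal follows essentially the same route as the paper: identify each mapping cylinder with a surgery presentation, draw it as a top tangle in Habiro's handlebody, simplify by the framed Kirby move~\eqref{E:rel_K2}, and run the beading algorithm of Section~\ref{S:quantum_diagrammatic_calculus}, discarding the residual $\lambda(v)$ factors in $\PGL_\Bbbk(\ad^{\otimes g})$. The only thing left implicit in your sketch is the explicit Hopf-algebraic bookkeeping that the paper carries out (e.g.\ $R''_iS^2(R'_i)g=u^{-1}g=v^{-1}$ for $\tau_{\alpha_j}$ and $S(R'_i)S^2(R''_i)g=S(v)=v$ for $\tau_{\beta_j}$), but the mechanism you describe for producing each bead is the correct one.
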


\begin{proof}
 First of all, the mapping cylinder $\varSigma_{g,1} \times [0,1]_{\tau_{\alpha_j}}$ is represented by
 \[
  \pic{mapping_cylinders_alpha_1}
 \]
 where the equality follows from move~\eqref{E:rel_K2}. Now, the algorithm for the computation of $J_H$ gives
 \[
  \pic{mapping_cylinders_alpha_3} = \pic{mapping_cylinders_alpha_4}
 \]
 because
 \[
  R''_iS^2(R'_i)g = u^{-1}g = v^{-1}.
 \]
 Notice that
 \[
  \pic{mapping_cylinders_stabilization} = \lambda(v)
 \]
 only contributes an invertible scalar, which can be ignored. Next, the mapping cylinder $\varSigma_{g,1} \times [0,1]_{\tau_{\beta_j}}$ is represented by
 \[
  \pic{mapping_cylinders_beta_1}
 \]
 where the equality follows from move~\eqref{E:rel_K2}. Now, the algorithm for the computation of $J_H$ gives
 \[
  \pic{mapping_cylinders_beta_3} = \pic{mapping_cylinders_beta_4} = \pic{mapping_cylinders_beta_5}
 \]
 because
 \[
  S(R'_i)S^2(R''_i)g = S(u)g = S(g^{-1}u) = S(v) = v.
 \]
 Finally, the mapping cylinder $\varSigma_{g,1} \times [0,1]_{\tau_{\gamma_k}}$ is represented by
 \[
  \pic{mapping_cylinders_gamma_1}
 \]
 where the equality follows from move~\eqref{E:rel_K2}. Then, the algorithm for the computation of $J_H$ gives
 \[
  \pic{mapping_cylinders_gamma_3} = \pic{mapping_cylinders_gamma_4} = \pic{mapping_cylinders_gamma_5}
 \]
 and we can discard again the scalar coefficient $\lambda(v)$.
\end{proof}

\subsection{Small quantum \texorpdfstring{$\fsl_2$}{sl(2)}}\label{S:small_quantum_sl2}

In this section, we recall the ribbon Hopf algebra structure on the small quantum group $\fraku_\zeta = \fraku_\zeta \fsl_2$ at a root of unity $\zeta$ of order $r \geqs 3$ odd. This example, and the quantum representations of mapping class groups it induces, will be given a homological model in Section~\ref{S:isomorphism}.

\subsubsection{Hopf algebra structure}

We start by recalling that, in Section~\ref{S:quantum_sl2_algebra}, we fixed an odd integer $3 \leqs r \in \Z$ and the primitive $r$th root of unity $\zeta = e^{\frac{2 \pi \fraki}{r}}$. The \textit{small quantum group} $\fraku_\zeta = \fraku_\zeta \fsl_2$, first defined by Lusztig in \cite{Lu90}, is the $\fld$-algebra with generators $\{ E,F^{(1)},K \}$ and relations
\begin{align*}
 E^r &= (F^{(1)})^r = 0, & K^r &= 1, &
 K E K^{-1} &= \zeta^2 E, & K F^{(1)} K^{-1} &= \zeta^{-2} F^{(1)}, & 
 [E,F^{(1)}] &= K - K^{-1}.
\end{align*}
Notice that $\fraku_\zeta$ can be identified with the quotient of the quantum group $\bar{U}_\zeta$ of Section~\ref{S:quantum_sl2_algebra} obtained by setting $E^r = 0$ and $K^r = 1$. Notice also that the presentation given here is equivalent to the one given in \cite{DGP17,DGGPR19,BD21}, which can be obtained by replacing the generator $F^{(1)}$ with the generator
\[
 F := \frac{F^{(1)}}{\{ 1 \}_\zeta}.
\]
Now $\fraku_\zeta$ admits a Hopf algebra structure obtained by setting
\begin{align*}
 \Delta(E) &= E \otimes K + 1 \otimes E, & \varepsilon(E) &= 0, & S(E) &= -E K^{-1}, \\*
 \Delta(F^{(1)}) &= K^{-1} \otimes F^{(1)} + F^{(1)} \otimes 1, & \varepsilon(F^{(1)}) &= 0, & S(F^{(1)}) &= - K F^{(1)}, \\*
 \Delta(K) &= K \otimes K, & \varepsilon(K) &= 1, & S(K) &= K^{-1}.
\end{align*}
Remark that Lusztig considers the opposite coproduct, while we are using the one of Kassel \cite[Section~VII.1]{K95}.

\subsubsection{Integral basis}\label{S:half-divided_basis}

For every integer $0 \leqs a \leqs r-1$, let us set
\begin{align*}
 T_a &:= \frac{1}{r} \sum_{b=0}^{r-1} \zeta^{2ab} K^b.
\end{align*}

\begin{lemma}\label{L:projectors}
 $\{ T_a \mid 0 \leqs a \leqs r-1 \}$ is an orthogonal family of projectors onto eigenspaces for the left regular action of $K$ on $\fraku_\zeta$, meaning that:
 \begin{enumerate}
  \item $T_a E = E T_{a+1}$, $T_a F^{(1)} = F^{(1)} T_{a-1}$, $T_a K = K T_a = \zeta^{-2a} T_a$; 
  \item $T_a T_b = \delta_{a,b} T_a$.
 \end{enumerate}
\end{lemma}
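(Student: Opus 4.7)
The proof is a direct computation that rests on two elementary ingredients: the commutation relations of $K$ with $E$, $F^{(1)}$, and the finite-group orthogonality identity $\sum_{b=0}^{r-1} \zeta^{2(a-a')b} = r \delta_{a,a'}$ (which holds because $r$ is odd, so $\zeta^2$ is still a primitive $r$th root of unity). No obstacle is expected; the only point of care is bookkeeping of the indices modulo $r$.

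First I would handle part (i). From $KEK^{-1} = \zeta^2 E$ one gets $K^b E = \zeta^{2b} E K^b$ by induction on $b$, and similarly $K^b F^{(1)} = \zeta^{-2b} F^{(1)} K^b$. Plugging these into the definition of $T_a$ gives
\[
 T_a E = \frac{1}{r} \sum_{b=0}^{r-1} \zeta^{2ab} K^b E = \frac{1}{r} \sum_{b=0}^{r-1} \zeta^{2(a+1)b} E K^b = E T_{a+1},
\]
and symmetrically $T_a F^{(1)} = F^{(1)} T_{a-1}$. For the last relation, $K T_a = \frac{1}{r}\sum_{b} \zeta^{2ab} K^{b+1}$, and reindexing $c = b+1 \pmod r$ produces the scalar factor $\zeta^{-2a}$, showing $K T_a = \zeta^{-2a} T_a$; the same manipulation shows $T_a K = \zeta^{-2a} T_a$.

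Next I would prove the orthogonality and idempotence of part (ii). Expanding
\[
 T_a T_{a'} = \frac{1}{r^2} \sum_{b,b'=0}^{r-1} \zeta^{2ab + 2a'b'} K^{b+b'},
\]
I would change variables to $e = b+b' \pmod r$ and $b$, so that the inner sum over $b$ becomes
\[
 \sum_{b=0}^{r-1} \zeta^{2(a-a')b} = r \delta_{a,a'}
\]
by the orthogonality relation mentioned above. This yields $T_a T_{a'} = \delta_{a,a'} \cdot \frac{1}{r}\sum_{e} \zeta^{2ae} K^e = \delta_{a,a'} T_a$. Together with $\sum_a T_a = 1$ (which follows from the same orthogonality identity applied to $\sum_a \zeta^{2ab} = r \delta_{b,0}$), this completes the claim that the $T_a$ are a complete family of mutually orthogonal idempotents. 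The identity $K T_a = \zeta^{-2a} T_a$ from part (i) then confirms that $T_a$ projects onto the $\zeta^{-2a}$-eigenspace of left multiplication by $K$.
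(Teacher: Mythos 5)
Your proof is correct and follows essentially the same route as the paper: part (i) by commuting $K^b$ past $E$ and $F^{(1)}$ and reindexing, and part (ii) by the finite Fourier orthogonality $\sum_{b}\zeta^{2(a-a')b}=r\delta_{a,a'}$ (the paper organizes the product $T_aT_b$ slightly differently, first applying $K^cT_b=\zeta^{-2bc}T_b$ from part (i) rather than expanding the double sum, but the underlying computation is identical). The completeness relation $\sum_a T_a=1$ you add is a correct bonus not stated in the lemma.
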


\begin{proof}
 Point $(i)$ is checked as follows:
 \begin{align*}
  T_a E 
  &= \frac{1}{r} \sum_{b=0}^{r-1} \zeta^{2ab} K^b E
  = \frac{1}{r} \sum_{b=0}^{r-1} \zeta^{2ab+2b} E K^b
  = E T_{a+1}, \\*
  T_a F^{(1)} 
  &= \frac{1}{r} \sum_{b=0}^{r-1} \zeta^{2ab} K^b F^{(1)} 
  = \frac{1}{r} \sum_{b=0}^{r-1} \zeta^{2ab-2b} F^{(1)} K^b
  = F^{(1)} T_{a-1}, \\*
  T_a K 
  &= \frac{1}{r} \sum_{b=0}^{r-1} \zeta^{2ab} K^{b+1}
  = \frac{1}{r} \sum_{c=0}^{r-1} \zeta^{2a(c-1)} K^c
  = \zeta^{-2a} T_a.
 \end{align*}
 Point $(ii)$ is checked as follows:
 \begin{align*}
  T_a T_b
  &= \frac{1}{r} \sum_{c=0}^{r-1} \zeta^{2ac} K^c T_b
  = \frac{1}{r} \sum_{c=0}^{r-1} \zeta^{2ac-2bc} T_b 
  = \delta_{a,b} T_b. \qedhere
 \end{align*}
\end{proof}

Notice that, for every integer $0 \leqs a \leqs r-1$, we have
\begin{equation}
 K^a = \sum_{b=0}^{r-1} \zeta^{-2ab} T_b. \label{E:from_T_to_K}
\end{equation}
If for every integer $0 \leqs a \leqs r-1$ we set
\[
 F^{(a)} := \frac{(F^{(1)})^a}{[ a ]_\zeta!},
\]
then the \textit{integral basis} of $\fraku_\zeta$, which is defined as
\[
 \{ E^\ell T_m F^{(n)} \mid 0 \leqs \ell,m,n \leqs r-1 \},
\]
is clearly a basis of $\fraku_\zeta$, thanks to \cite[Theorem~5.6]{Lu90}, and to Equation~\eqref{E:from_T_to_K}.

\begin{remark}\label{R:integral_quantum}
 The name \textit{integral basis} is motivated by Appendix~\ref{A:Identities_in_hdb}, where all computations performed in this basis turn out to have coefficients in $\Z[\zeta]$.
\end{remark}

\subsubsection{Ribbon structure and factorizability}

Next, $\fraku_\zeta$ supports the structure of a ribbon Hopf algebra. Indeed, an \textit{R-matrix} $R = R'_i \otimes R''_i \in \fraku_\zeta \otimes \fraku_\zeta$ is given by
\begin{align}
 R 
 &= \sum_{a,b=0}^{r-1} \zeta^{\frac{a(a-1)}{2}} K^{-b} E^a \otimes T_b F^{(a)} 
 = \sum_{a,b=0}^{r-1} \zeta^{\frac{a(a-1)}{2}} T_b E^a \otimes K^{-b} F^{(a)}, \label{E:R_Habiro}
\end{align}
compare with \cite[Example~3.4.3]{M95}. Furthermore, a \textit{pivotal element} $g \in \fraku_\zeta$ is given by $g := K$. Then, it is proved in \cite[Proposition~XIV.6.5]{K95} that $v := ug^{-1} \in \fraku_\zeta$ is a compatible \textit{ribbon element} with inverse $v^{-1} = u^{-1}g \in \fraku_\zeta$, where $u := S(R''_i)R'_i \in \fraku_\zeta$ is the \textit{Drinfeld element} associated with $R$, whose inverse is $u^{-1} = R''_iS^2(R'_i) \in \fraku_\zeta$. Explicit formulas for $v$ and $v^{-1}$ can be found in Lemma~\ref{L:ribbon}.

A non-zero left integral $\lambda$ of $\fraku_\zeta$ is given by
\begin{equation}
 \lambda(E^a F^{(b)} T_c) = \frac{\zeta^{-2c}}{\sqrt{r}} \delta_{a,r-1} \delta_{b,r-1}, \label{E:integral_Habiro}
\end{equation}
and a non-zero two-sided cointegral $\Lambda$ of $\fraku_\zeta$ satisfying $\lambda(\Lambda) = 1$ is given by
\begin{equation}
 \Lambda := \sqrt{r} E^{r-1} F^{(r-1)} T_0. \label{E:cointegral}
\end{equation}
The ribbon Hopf algebra $\fraku_\zeta$ is \textit{factorizable}, as first shown in \cite[Corollary~A.3.3]{L94}, see also \cite[Example~3.4.3]{M95}.

\section{Computation of quantum actions}\label{S:quantum_computations}

In this section, we compute explicitly the quantum action of generators of the small quantum group $\fraku_\zeta$ and of the mapping class group $\Mod(\varSigma_{g,1})$ on the $g$th tensor power $\ad^{\otimes g}$ of the adjoint representation of $\fraku_\zeta$, whose definition was recalled in Section~\ref{S:Quantum_adjoint_rep}.

\subsection{Tensor powers of the adjoint representation}\label{S:quantum_adjoint_computation}

Let us start by computing (tensor powers of) the left adjoint action of $\fraku_\zeta$ with respect to (tensor powers of) the integral basis of Section~\ref{S:half-divided_basis}.

\begin{lemma}\label{L:quantum_adjoint_torus}
 For all integers $0 \leqs \ell,m,n \leqs r-1$ we have
 \begin{align*}
  E \triangleright (E^\ell T_m F^{(n)}) 
  &= \zeta^{2(m-n)} E^{\ell+1} T_m F^{(n)} 
  - \zeta^{2(m-n+1)} E^{\ell+1} T_{m+1} F^{(n)} 
  - \{ 2m-n+1 \}_\zeta \zeta^{2(m-n+1)} E^\ell T_m F^{(n-1)}, \\*
  F^{(1)} \triangleright (E^\ell T_m F^{(n)})
  &= - [n+1]_\zeta \zeta^{-2(\ell-n)} E^\ell T_m F^{(n+1)}
  + [n+1]_\zeta E^\ell T_{m+1} F^{(n+1)} 
  - [\ell]_\zeta \{ \ell-2m-1 \}_\zeta E^{\ell-1} T_m F^{(n)}, \\*
  K \triangleright (E^\ell T_m F^{(n)})
  &= \zeta^{2(\ell-n)} E^\ell T_m F^{(n)}.
 \end{align*}
\end{lemma}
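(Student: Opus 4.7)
The strategy is a direct computation using the Hopf algebra structure of $\fraku_\zeta$ recalled in Section~\ref{S:small_quantum_sl2} together with Lemma~\ref{L:projectors}. Unfolding the adjoint action $x \triangleright y = x_{(1)} y S(x_{(2)})$ with the coproducts and antipodes of the generators gives
\[
 K \triangleright y = KyK^{-1}, \qquad
 E \triangleright y = [E,y]K^{-1}, \qquad
 F^{(1)} \triangleright y = F^{(1)} y - K^{-1} y K F^{(1)}.
\]
For $K$, the defining relations $KEK^{-1}=\zeta^2 E$, $KF^{(1)}K^{-1}=\zeta^{-2} F^{(1)}$ together with $T_m K = K T_m$ (Lemma~\ref{L:projectors}) iterate immediately to yield $K \triangleright (E^\ell T_m F^{(n)}) = \zeta^{2(\ell-n)} E^\ell T_m F^{(n)}$, so this case is settled at once.

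For the $E$-action, the plan is to compute $[E, E^\ell T_m F^{(n)}] = E^{\ell+1} T_m F^{(n)} - E^\ell T_m F^{(n)} E$ by moving the rightmost $E$ leftward. Lemma~\ref{L:projectors}.$(i)$ gives $T_m E = E T_{m+1}$ and $T_m F^{(n)} = F^{(n)} T_{m-n}$, while the commutation relation $[E, F^{(n)}] = F^{(n-1)}(q^{-(n-1)} K - q^{n-1} K^{-1})$ from Section~\ref{S:quantum_sl2_algebra} lets us swap $F^{(n)}$ past $E$. Using $T_m K = \zeta^{-2m} T_m$ and $T_m K^{-1}=\zeta^{2m}T_m$ to absorb the $K^{\pm 1}$ factors, one checks
\[
 (\zeta^{-(n-1)} K - \zeta^{n-1} K^{-1}) T_{m-n+1} = -\{2m-n+1\}_\zeta \, T_{m-n+1},
\]
which after repacking into the monomial ordering $E^a T_b F^{(c)}$ yields the claimed formula once one multiplies by $K^{-1}$ on the right, producing the overall $\zeta^{2(m-n)}$ and $\zeta^{2(m-n+1)}$ weights from the commutation of $K^{-1}$ past $T_?$ and $F^{(?)}$.

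The $F^{(1)}$-action proceeds by the same philosophy. The term $K^{-1}(E^\ell T_m F^{(n)}) K F^{(1)}$ contributes $\zeta^{-2(\ell-n)} E^\ell T_m F^{(n)} F^{(1)} = [n+1]_\zeta \zeta^{-2(\ell-n)} E^\ell T_m F^{(n+1)}$, where we use the divided-power fusion relation from Section~\ref{S:quantum_sl2_algebra}. The remaining term $F^{(1)} E^\ell T_m F^{(n)}$ requires the auxiliary identity
\[
 F^{(1)} E^\ell = E^\ell F^{(1)} - [\ell]_\zeta E^{\ell-1}(\zeta^{\ell-1} K - \zeta^{-(\ell-1)} K^{-1}),
\]
which we will prove by induction on $\ell$ starting from the base case $[E,F^{(1)}] = K - K^{-1}$ (which is the $k=0$ instance of the commutation relation). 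Then $F^{(1)} T_m = T_{m+1} F^{(1)}$ and $F^{(1)} F^{(n)} = [n+1]_\zeta F^{(n+1)}$ handle the first summand, while for the correction term one checks $(\zeta^{\ell-1} K - \zeta^{-(\ell-1)} K^{-1}) T_m = \{\ell - 2m - 1\}_\zeta T_m$ to recover precisely the last term of the claimed formula.

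There is no real obstacle here beyond careful bookkeeping of the exponents of $\zeta$ and the shifts of the $T_?$ indices; the hardest part is organizing the rewritings so that the surprisingly clean quantum-bracket packings $-\{2m-n+1\}_\zeta$ and $\{\ell-2m-1\}_\zeta$ emerge cleanly from the $K, K^{-1}$ contributions, and ensuring the signs work out. The computation involves only the defining relations of $\fraku_\zeta$ and the projector identities of Lemma~\ref{L:projectors}, and produces the stated formulas directly.
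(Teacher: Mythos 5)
Your proposal is correct and follows essentially the same route as the paper: unfold the adjoint action via the coproducts and antipodes of $E$, $F^{(1)}$, $K$, then reorder monomials using the defining relations and Lemma~\ref{L:projectors}. The only (cosmetic) difference is that the paper invokes the general commutator formulas of Lemma~\ref{L:commutators} where you re-derive the two special cases needed — commuting a single $E$ past $F^{(n)}$ and $F^{(1)}$ past $E^\ell$, the latter by induction on $\ell$ — and both your auxiliary identity and the quantum-bracket evaluations $(\zeta^{-(n-1)}K-\zeta^{n-1}K^{-1})T_{m-n+1}=-\{2m-n+1\}_\zeta T_{m-n+1}$ and $(\zeta^{\ell-1}K-\zeta^{-(\ell-1)}K^{-1})T_m=\{\ell-2m-1\}_\zeta T_m$ check out.
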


\begin{proof}
 Using the definition of $\fraku_\zeta$ from Section~\ref{S:small_quantum_sl2}, together with Lemma~\ref{L:commutators}, we obtain
 \begin{align*}
  &E \triangleright (E^\ell T_m F^{(n)}) 
  = E E^\ell T_m F^{(n)} K^{-1} - E^\ell T_m F^{(n)} E K^{-1} \\*
  &\hspace*{\parindent} \stackrel{\mathclap{\eqref{E:commutator_Habiro_T_left}}}{=} E^{\ell+1} T_m F^{(n)} K^{-1}
  - E^\ell T_m E F^{(n)} K^{-1} 
  - \{ 2m-n+1 \}_\zeta E^\ell T_m F^{(n-1)} K^{-1} \\*
  &\hspace*{\parindent} = \zeta^{2(m-n)} E^{\ell+1} T_m F^{(n)}
  - E^\ell E T_{m+1} F^{(n)} K^{-1} 
  - \{ 2m-n+1 \}_\zeta \zeta^{2(m-n+1)} E^\ell T_m F^{(n-1)} \\*
  &\hspace*{\parindent} = \zeta^{2(m-n)} E^{\ell+1} T_m F^{(n)}
  - \zeta^{2(m-n+1)} E^{\ell+1} T_{m+1} F^{(n)} 
  - \{ 2m-n+1 \}_\zeta \zeta^{2(m-n+1)} E^\ell T_m F^{(n-1)}, \\*
  &F^{(1)} \triangleright (E^\ell T_m F^{(n)})
  = - K^{-1} E^\ell T_m F^{(n)} K F^{(1)} + F^{(1)} E^\ell T_m F^{(n)} \\*
  &\hspace*{\parindent} \stackrel{\mathclap{\eqref{E:commutator_Habiro_T_right}}}{=} - \zeta^{-2(\ell-n)} E^\ell T_m F^{(n)} F^{(1)} 
  + E^\ell F^{(1)} T_m F^{(n)} - [\ell]_\zeta \{ \ell-2m-1 \}_\zeta E^{\ell-1} T_m F^{(n)} \\*
  &\hspace*{\parindent} = - [n+1]_\zeta \zeta^{-2(\ell-n)} E^\ell T_m F^{(n+1)} 
  + E^\ell T_{m+1} F^{(1)} F^{(n)} - [\ell]_\zeta \{ \ell-2m-1 \}_\zeta E^{\ell-1} T_m F^{(n)} \\*
  &\hspace*{\parindent} = - [n+1]_\zeta \zeta^{-2(\ell-n)} E^\ell T_m F^{(n+1)} 
  + [n+1]_\zeta E^\ell T_{m+1} F^{(n+1)} - [\ell]_\zeta \{ \ell-2m-1 \}_\zeta E^{\ell-1} T_m F^{(n)}, \\*
  &K \triangleright (E^\ell T_m F^{(n)})
  = K E^\ell T_m F^{(n)} K^{-1}
  = \zeta^{2(\ell-n)} E^\ell T_m F^{(n)}. \qedhere
 \end{align*}
\end{proof}

Recall that, in Section~\ref{S:homological_adjoint_computation}, for every vector $\bfa = (a_1,\ldots,a_g)$ with integer coordinates $a_1,\ldots,a_g \geqs 0$ and every integer $1 \leqs j \leqs g$, we set
\begin{align*}
 \left| \bfa \right|
 &= \sum_{k=1}^g a_k, &
 \left| \bfa \right|_{<j} 
 &= \sum_{k=1}^{j-1} a_k, &
 \left| \bfa \right|_{>j} 
 &= \sum_{k=j+1}^g a_k.
\end{align*}

\begin{lemma}\label{L:quantum_adjoint}
 For all vectors $\bfell = (\ell_1,\ldots,\ell_g)$, $\bfm = (m_1,\ldots,m_g)$, $\bfn = (n_1,\ldots,n_g)$ with integer coordinates $\ell_1,\ldots,\ell_g,m_1,\ldots,m_g,n_1,\ldots,n_g \geqs 0$ we have
 \begin{align*}
  E \triangleright (E^{\bfell} T_{\bfm} F^{(\bfn)}) 
  &= \sum_{j=1}^g \zeta^{2 \left| \bfell - \bfn \right|_{>j}} \left( \zeta^{2(m_j-n_j)} E^{\bfell + \bfe_j} T_{\bfm} F^{(\bfn)} \right. 
  - \zeta^{2(m_j-n_j+1)} E^{\bfell + \bfe_j} T_{\bfm + \bfe_j} F^{(\bfn)} \\*
  &\hspace*{\parindent}- \left. \{ 2m_j-n_j+1 \}_\zeta \zeta^{2(m_j-n_j+1)} E^{\bfell} T_{\bfm} F^{(\bfn - \bfe_j)} \right), \\*
  F^{(1)} \triangleright (E^{\bfell} T_{\bfm} F^{(\bfn)})  
  &= \sum_{j=1}^g \zeta^{-2 \left| \bfell-\bfn \right|_{<j}} \left( 
  - [n_j+1]_\zeta \zeta^{-2(\ell_j-n_j)} E^{\bfell} T_{\bfm} F^{(\bfn + \bfe_j)}
  + [n_j+1]_\zeta E^{\bfell} T_{\bfm + \bfe_j} F^{(\bfn + \bfe_j)} \right. \\*
  &\hspace*{\parindent}- \left. [\ell_j]_\zeta \{ \ell_j-2m_j-1 \}_\zeta E^{\bfell - \bfe_j} T_{\bfm} F^{(\bfn)} \right), \\*
  K \triangleright (E^{\bfell} T_{\bfm} F^{(\bfn)}) 
  &= \zeta^{2 \left| \bfell - \bfn \right|} E^{\bfell} T_{\bfm} F^{(\bfn)}.
 \end{align*}
\end{lemma}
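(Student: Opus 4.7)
The plan is to reduce Lemma~\ref{L:quantum_adjoint} to a direct computation that combines the genus~$1$ formulas of Lemma~\ref{L:quantum_adjoint_torus} with the Hopf-algebraic description of the $\fraku_\zeta$-module structure on $\ad^{\otimes g}$. Since $\ad^{\otimes g}$ is the $g$-fold tensor product of left $\fraku_\zeta$-modules, the action of any $x \in \fraku_\zeta$ on a simple tensor is given by the $(g-1)$-fold iterated coproduct:
\[
 x \triangleright (y_1 \otimes \ldots \otimes y_g) = (x_{(1)} \triangleright y_1) \otimes \ldots \otimes (x_{(g)} \triangleright y_g),
\]
where $\Delta^{g-1}(x) = x_{(1)} \otimes \ldots \otimes x_{(g)}$ (with the sum hidden). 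Thus the entire content of the lemma lies in computing these iterated coproducts and then applying Lemma~\ref{L:quantum_adjoint_torus} factor by factor.

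The first step is therefore to compute $\Delta^{g-1}$ on the three generators. Using the recalled formulas $\Delta(E) = E \otimes K + 1 \otimes E$, $\Delta(F^{(1)}) = K^{-1} \otimes F^{(1)} + F^{(1)} \otimes 1$, and $\Delta(K) = K \otimes K$, a straightforward induction on $g$ gives
\[
 \Delta^{g-1}(E) = \sum_{j=1}^g 1^{\otimes(j-1)} \otimes E \otimes K^{\otimes(g-j)}, \quad
 \Delta^{g-1}(F^{(1)}) = \sum_{j=1}^g (K^{-1})^{\otimes(j-1)} \otimes F^{(1)} \otimes 1^{\otimes(g-j)}, \quad
 \Delta^{g-1}(K) = K^{\otimes g}.
\]

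The second step is to plug these expressions into the tensor product action. For each $j$, only the $j$th tensor factor receives a non-trivial action, which is given by Lemma~\ref{L:quantum_adjoint_torus}, while the remaining factors are acted on by $K$, $K^{-1}$, or $1$. The last line of Lemma~\ref{L:quantum_adjoint_torus} records that $K$ acts on $E^{\ell_k} T_{m_k} F^{(n_k)}$ by the scalar $\zeta^{2(\ell_k - n_k)}$. Consequently, the $K$-tails in the formula for $E$ contribute the prefactor $\prod_{k>j} \zeta^{2(\ell_k - n_k)} = \zeta^{2|\bfell - \bfn|_{>j}}$, the $K^{-1}$-heads in the formula for $F^{(1)}$ contribute $\prod_{k<j} \zeta^{-2(\ell_k - n_k)} = \zeta^{-2|\bfell - \bfn|_{<j}}$, and for $K$ itself the global factor is $\prod_{k=1}^g \zeta^{2(\ell_k - n_k)} = \zeta^{2|\bfell - \bfn|}$, matching exactly the prefactors in the three displayed formulas.

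There is no genuine obstacle here: once the iterated coproducts are identified and the $g=1$ case is in hand, summing over $j$ and rewriting $E^{\ell_j} + 1$, $T_{m_j + 1}$, $F^{(n_j \pm 1)}$ in the $j$th slot as $E^{\bfell + \bfe_j}$, $T_{\bfm + \bfe_j}$, $F^{(\bfn \pm \bfe_j)}$ yields the stated formulas verbatim. The only mild bookkeeping step — which could be viewed as the main thing to get right — is to make sure that the eigenvalue of $K$ on a generic integral basis vector is indeed $\zeta^{2(\ell - n)}$ (and not an index-dependent variant), so that the exponents $|\bfell-\bfn|_{>j}$, $|\bfell-\bfn|_{<j}$, and $|\bfell-\bfn|$ appear with the correct signs; this is immediate from the third formula of Lemma~\ref{L:quantum_adjoint_torus}.
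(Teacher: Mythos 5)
Your proposal is correct and follows essentially the same route as the paper: compute the iterated coproducts of $E$, $F^{(1)}$, and $K$, let each summand act factorwise on the tensor product, and invoke Lemma~\ref{L:quantum_adjoint_torus} for the nontrivially-acted slot while the $K^{\pm 1}$-tails contribute the scalar prefactors $\zeta^{\pm 2|\bfell-\bfn|_{\gtrless j}}$. Nothing is missing.
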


\begin{proof}
 Since
 \begin{align*}
  E_{(1)} \otimes \ldots \otimes E_{(g)} 
  &= \sum_{j=1}^g 1^{\otimes j-1} \otimes E \otimes K^{\otimes g-j}, \\
  (F^{(1)})_{(1)} \otimes \ldots \otimes (F^{(1)})_{(g)} 
  &= \sum_{j=1}^g (K^{-1})^{\otimes j-1} \otimes F^{(1)} \otimes 1^{\otimes g-j}, \\
  K_{(1)} \otimes \ldots \otimes K_{(g)} 
  &= K^{\otimes g}, \\
 \end{align*}
 we obtain
 \begin{align*}
  E \triangleright (E^{\bfell} T_{\bfm} F^{(\bfn)}) 
  &= \sum_{j=1}^g \left( \bigotimes_{k=1}^{j-1} E^{\ell_k} T_{m_k} F^{(n_k)} \right) 
  \otimes \left( E \triangleright (E^{\ell_j} T_{m_j} F^{(n_j)}) \right) 
  \otimes \left( \bigotimes_{k=j+1}^g K \triangleright (E^{\ell_k} T_{m_k} F^{(n_k)}) \right), \\*
  F^{(1)} \triangleright (E^{\bfell} T_{\bfm} F^{(\bfn)}) 
  &=  \sum_{j=1}^g \left( \bigotimes_{k=1}^{j-1} K^{-1} \triangleright (E^{\ell_k} T_{m_k} F^{(n_k)}) \right) 
  \otimes \left( F^{(1)} \triangleright (E^{\ell_j} T_{m_j} F^{(n_j)}) \right) 
  \otimes \left( \bigotimes_{k=j+1}^g E^{\ell_k} T_{m_k} F^{(n_k)} \right), \\*
  K \triangleright (E^{\bfell} T_{\bfm} F^{(\bfn)}) 
  &= \bigotimes_{j=1}^g \left( K \triangleright (E^{\ell_j} T_{m_j} F^{(n_j)}) \right).
 \end{align*}
 Then the claim follows directly from Lemma~\ref{L:quantum_adjoint_torus}.
\end{proof}

\subsection{Quantum action of mapping class groups}\label{S:quantum_mcg_computation}

Let us compute now the projective action of Dehn twists on tensor powers of the adjoint representation $\ad^{\otimes g}$. Notice that, just like in the homological case, the projective representation $\bar{\rho}_g^{\fraku_\zeta} : \Mod(\varSigma_{g,1}) \to \PGL_{\fraku_\zeta}(\ad^{\otimes g})$ of Proposition~\ref{P:quantum_action_of_mcg_generators} is again highly local in nature. Therefore, it is enough to compute the projective action of $\tau_\alpha$ and $\tau_\beta$ on $\ad$, together with the one of $\tau_\gamma$ on $\ad^{\otimes 2}$. Let us start from the genus $1$ surface.

\begin{lemma}\label{L:quantum_alpha}
 For all integers $0 \leqs \ell,m,n \leqs r-1$ we have
 \begin{align*}
  \rho^{\fraku_\zeta}_1(\tau_\alpha) \left( E^\ell T_m F^{(n)} \right) 
  &= \zeta^{2(m+1)m} \sum_{i=0}^{r-1} \sqbinom{n+i}{n}_\zeta
  \zeta^{\frac{(i+3)i}{2}+2mi} E^{\ell+i} T_{m+i} F^{(n+i)}.
 \end{align*}
\end{lemma}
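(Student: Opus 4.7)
The strategy is to reduce the identity to a purely algebraic computation in $\fraku_\zeta$. By Proposition~\ref{P:quantum_action_of_mcg_generators} specialized to $g=1$, we have $\rho^{\fraku_\zeta}_1(\tau_\alpha)(x) = v^{-1} x$ on $\ad = \fraku_\zeta$, so the target identity becomes
\[
 v^{-1} \cdot (E^\ell T_m F^{(n)}) = \zeta^{2(m+1)m} \sum_{i=0}^{r-1} \sqbinom{n+i}{n}_\zeta \zeta^{\frac{(i+3)i}{2}+2mi} E^{\ell+i} T_{m+i} F^{(n+i)}.
\]
The first step is to expand $v^{-1} = K \cdot R''_i S^2(R'_i)$ (since $v^{-1} = g u^{-1}$ with pivot $g = K$) using the R-matrix~\eqref{E:R_Habiro}. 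Because $S^2$ acts by conjugation by $K$, and $R$ has the favorable form $\sum_{a,b} \zeta^{a(a-1)/2} T_b E^a \otimes K^{-b} F^{(a)}$, a short calculation expresses $v^{-1}$ as a sum
\[
 v^{-1} = \sum_{i,b} \gamma_{i,b}\, E^i T_b F^{(i)}
\]
with explicit coefficients $\gamma_{i,b} \in \Z[\zeta]$. The $T_b$ summand and the $E^i F^{(i)}$ summand are essentially decoupled: the former encodes the $K$-dependence of the twist and the latter the nilpotent part.

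The second step is to compute the left action of each summand $\gamma_{i,b} E^i T_b F^{(i)}$ on $E^\ell T_m F^{(n)}$. Three moves simplify this:
\textbf{(a)} $F^{(i)} E^\ell$ gets rewritten using the $[E,F]$-relation of Lemma~\ref{L:commutators}; because $v^{-1}$ is central, all lower-order terms arising from nontrivial commutators must combine across the sum over $i$ to leave only the terms $E^\ell F^{(i)}$ on the right, picking up a $\zeta$-phase that depends on $\ell$ and the $K$-weight $m$.
\textbf{(b)} $T_b$ slides past $E^\ell$ by Lemma~\ref{L:projectors}.$(i)$ into $E^\ell T_{b-\ell}$, and then the projector identity $T_{b-\ell} T_m = \delta_{b-\ell, m} T_m$ collapses the sum over $b$ to a single value, contributing another $\zeta$-phase; further sliding past $F^{(n)}$ produces a third phase factor.
\textbf{(c)} The divided-power fusion $F^{(i)} F^{(n)} = \sqbinom{n+i}{n}_\zeta F^{(n+i)}$ produces the announced $q$-binomial, and the surviving $T$-index shifts from $m$ to $m+i$ because of the $E^i$ that ends up on the left.

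The main obstacle is the phase bookkeeping: the target exponent $2(m+1)m + \tfrac{(i+3)i}{2} + 2mi$ is the sum of contributions from (i) the intrinsic phase of $\gamma_{i,b}$ in $v^{-1}$, (ii) the commutation of $F^{(i)}$ past $E^\ell$, (iii) the commutation of $T$'s past $E$'s and $F$'s, and (iv) the quadratic $\zeta$-weight $\zeta^{a(a-1)/2}$ in $R$. A clean way to organize the calculation is to first verify the formula in the special case $\ell = n = 0$, where the claim is purely about the $T$-component of $v^{-1}$ and reduces to a one-dimensional computation on each $K$-eigenspace; then centrality of $v^{-1}$ (equivalently, the fact that $v^{-1} E = E v^{-1}$ and $v^{-1} F^{(1)} = F^{(1)} v^{-1}$) forces the phase to shift by precisely the predicted amount as one passes from $(0,m,0)$ to $(\ell, m, n)$. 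This separation of variables makes the otherwise tedious $\zeta$-arithmetic tractable and matches the stated formula exactly.
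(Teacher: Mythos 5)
Your reduction of the statement to the algebraic identity $v^{-1}\,E^\ell T_m F^{(n)} = \cdots$ and your list of ingredients (explicit expansion of $v^{-1}$ in the integral basis, the projector identities of Lemma~\ref{L:projectors}, and the fusion $F^{(i)}F^{(n)} = \sqbinom{n+i}{n}_\zeta F^{(n+i)}$) match the paper's proof. However, your step \textbf{(a)} contains a genuine gap. Writing $v^{-1}=\sum_{i,b}\gamma_{i,b}E^iT_bF^{(i)}$ and multiplying it on the \emph{left} of $E^\ell T_m F^{(n)}$ forces you to commute $F^{(i)}$ past $E^\ell$ via Lemma~\ref{L:commutators}, and your claim that centrality makes the resulting lower-order terms ``combine across the sum over $i$ to leave only the terms $E^\ell F^{(i)}$, picking up a $\zeta$-phase that depends on $\ell$'' is asserted, not proved: centrality says $v^{-1}x=xv^{-1}$, which is a statement about the whole sum, not a term-by-term cancellation, and in fact the correct final phase $\zeta^{2(m+1)m+\frac{(i+3)i}{2}+2mi}$ carries \emph{no} $\ell$-dependence, so the phase you anticipate from this step is spurious. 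Likewise, your base case is misdescribed: $v^{-1}T_m$ is not ``a one-dimensional computation on each $K$-eigenspace'' --- left multiplication by $v^{-1}$ does not act diagonally on the $T_m$, and $v^{-1}T_m=\sum_a \zeta^{\frac{(a+3)a}{2}+2(a+m+1)m}E^aT_{m+a}F^{(a)}$ genuinely involves all the nilpotent terms.

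The repair is exactly the move the paper makes, and it renders all of your step \textbf{(a)} unnecessary: use centrality \emph{first} to write $v^{-1}E^\ell T_m F^{(n)} = E^\ell\, v^{-1}\, T_m F^{(n)}$, and then expand $v^{-1}$ in the normal ordering with the $E$'s on the left, namely $v^{-1}=\sum_{a,b}\zeta^{\frac{(a+3)a}{2}+2(a+b+1)b}E^aF^{(a)}T_b$ (Equation~\eqref{E:inverse_ribbon_Habiro_prime}, which Lemma~\ref{L:ribbon} derives from the $F^{(a)}E^aT_b$ ordering via $S(v^{-1})=v^{-1}$). Then $E^\ell E^a=E^{\ell+a}$, the orthogonality $T_bT_m=\delta_{b,m}T_m$ kills the sum over $b$, the slide $F^{(a)}T_m=T_{m+a}F^{(a)}$ from Lemma~\ref{L:projectors} produces the index shift $m\mapsto m+a$, and the divided-power fusion gives the $q$-binomial --- no $E$--$F$ commutation ever occurs. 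Your closing ``separation of variables'' paragraph is secretly this argument (since $v^{-1}(E^\ell T_mF^{(n)})=E^\ell(v^{-1}T_m)F^{(n)}$), but as written it predicts phase shifts in $\ell$ and $n$ that do not occur; the passage from $(0,m,0)$ to $(\ell,m,n)$ only shifts exponents and introduces the binomial coefficient.
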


\begin{proof}
 Using Proposition~\ref{P:quantum_action_of_mcg_generators} and Lemma~\ref{L:ribbon}, together with the fact that the inverse ribbon element $v^{-1} \in \fraku_\zeta$ is central, we obtain
 \begin{align*}
  &\rho^{\fraku_\zeta}_1(\tau_\alpha) \left( E^\ell T_m F^{(n)} \right)
  = v^{-1} E^\ell T_m F^{(n)} 
  = E^\ell v^{-1} T_m F^{(n)} 
  \stackrel{\mathclap{\eqref{E:inverse_ribbon_Habiro_prime}}}{=} \sum_{a,b=0}^{r-1}
  \zeta^{\frac{(a+3)a}{2}+2(a+b+1)b} E^\ell E^a F^{(a)} T_b T_m F^{(n)} \\
  &\hspace*{\parindent} = \sum_{a,b=0}^{r-1}
  \zeta^{\frac{(a+3)a}{2}+2(a+b+1)b} \delta_{b,m} E^{\ell+a} F^{(a)} T_m F^{(n)} 
  = \sum_{a=0}^{r-1}
  \zeta^{\frac{(a+3)a}{2}+2(a+m+1)m} E^{\ell+a} T_{m+a} F^{(a)} F^{(n)} \\
  &\hspace*{\parindent} = \zeta^{2(m+1)m} \sum_{a=0}^{r-1} \sqbinom{n+a}{n}_\zeta
  \zeta^{\frac{(a+3)a}{2}+2ma} E^{\ell+a} T_{m+a} F^{(n+a)}.
  \qedhere
 \end{align*}
\end{proof}

\begin{lemma}\label{L:quantum_beta}
 For all integers $0 \leqs \ell,m,n \leqs r-1$ we have
 \begin{align*}
  \rho^{\fraku_\zeta}_1(\tau_\beta) \left( E^\ell T_m F^{(n)} \right) 
  &\propto \sum_{i,j=0}^{r-1} 
  \sqbinom{\ell}{i}_\zeta \zeta^{-\frac{i(i-5)}{2}-2j(j-1)-2ij+(\ell+2m-n)i+2\ell j} 
  E^{\ell-i} T_{m+j} F^{(n-i)}.
 \end{align*}
\end{lemma}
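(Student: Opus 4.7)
By Proposition~\ref{P:quantum_action_of_mcg_generators} applied with $g=1$, we have
\[
 \rho^{\fraku_\zeta}_1(\tau_\beta)\bigl(E^\ell T_m F^{(n)}\bigr) = \lambda\bigl(v_{(2)}\cdot E^\ell T_m F^{(n)}\bigr)\, S(v_{(1)}),
\]
so the proof reduces to evaluating this expression in the integral basis. Unlike the case of $\tau_\alpha$, where $v^{-1}$ acts by central left multiplication, here the integral $\lambda$ destroys one tensor factor of $\Delta(v)$, so we cannot avoid an explicit coproduct computation.

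First, I would expand the ribbon element $v$ itself in the integral basis. A closed form dual to Eq.~\eqref{E:inverse_ribbon_Habiro_prime} is supplied by Lemma~\ref{L:ribbon} (or can be derived directly from $v = u^{-1}g = R''_i S^2(R'_i) g$ and the $R$-matrix of Eq.~\eqref{E:R_Habiro}, reordered via Lemma~\ref{L:commutators}), yielding an expression of the form $v = \sum_{a,b} \kappa_{a,b}\, E^a T_b F^{(a)}$ for explicit $\zeta$-phases $\kappa_{a,b}$. Second, I would compute $\Delta(v)$ using the standard $q$-binomial coproducts
\[
 \Delta(E^a) = \sum_{i} \sqbinom{a}{i}_\zeta \zeta^{i(a-i)} E^{a-i} K^i \otimes E^i, \quad
 \Delta(F^{(a)}) = \sum_{k} K^{-(a-k)} F^{(k)} \otimes F^{(a-k)}, \quad
 \Delta(T_b) = \tfrac{1}{r}\sum_c \zeta^{2bc} K^c \otimes K^c,
\]
rewriting each tensor factor in the integral basis via Eq.~\eqref{E:from_T_to_K} and the commutation rules of Lemma~\ref{L:commutators}. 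Third, I would substitute this into $\lambda(v_{(2)}\cdot E^\ell T_m F^{(n)})$ and invoke Eq.~\eqref{E:integral_Habiro}: the condition $\lambda(E^a F^{(b)} T_c) = \zeta^{-2c}/\sqrt{r}\cdot \delta_{a,r-1}\delta_{b,r-1}$ forces the total $E$- and total $F^{(-)}$-degree of $v_{(2)}\cdot E^\ell T_m F^{(n)}$ (after reordering) to equal $r-1$, selecting for each target vector $E^{\ell-i} T_{m+j} F^{(n-i)}$ a unique summand of $\Delta(v)$, with $i$ indexing the $E$-degree transferred to $v_{(2)}$ and $j$ indexing the $K$-summation from $\Delta(T_b)$.

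Finally, I would apply $S$ to the surviving $v_{(1)}$ using $S(E) = -EK^{-1}$, $S(F^{(1)}) = -KF^{(1)}$, $S(K) = K^{-1}$, pass the result into the integral basis, and collect all $\zeta$-exponents: the factor $\sqbinom{\ell}{i}_\zeta$ emerges from the $q$-binomial coproduct of $E^a$ once the $E$-degree of $v_{(2)}$ is fixed at $r-1-(\ell-i)$, while the combined phase $-\tfrac{i(i-5)}{2}-2j(j-1)-2ij+(\ell+2m-n)i+2\ell j$ is assembled from (a)~the coefficients $\kappa_{a,b}$, (b)~the $\zeta^{-2c}/\sqrt{r}$ contribution of $\lambda$, (c)~the commutation phases from sliding $K$'s past $E$'s and $F^{(-)}$'s, and (d)~the signs introduced by $S$ (absorbed into the overall projective factor, justifying the $\propto$). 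The main obstacle will be the bookkeeping of these phases: the computation does not reduce to a central/scalar manipulation as for $\tau_\alpha$, and each expansion step multiplies the number of terms before the $\lambda$-constraint collapses them. I expect the quantum-trace identity Eq.~\eqref{E:quantum_character}, $\lambda(xy)=\lambda(yS^2(x))$, to be useful for cyclically rearranging $v_{(2)}\cdot E^\ell T_m F^{(n)}$ before imposing the degree constraint, and the expected shape of the answer (mirroring Lemma~\ref{L:homological_beta} up to the isomorphism sought in Theorem~\ref{T:The_Theorem}) provides a check on the phase accumulation at each stage.
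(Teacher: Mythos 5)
Your proposal follows essentially the same route as the paper's proof: expand $v$ via Lemma~\ref{L:ribbon}, compute $S(v_{(1)})\otimes v_{(2)}$ from the coproduct and antipode formulas of Lemma~\ref{L:coproducts_antipodes}, use the cyclicity identity~\eqref{E:quantum_character} together with the explicit integral~\eqref{E:integral_Habiro} to collapse the sum to delta constraints, and then collect phases after a change of variables. The only detail worth flagging is that the binomial $\sqbinom{\ell}{i}_\zeta$ arises in the paper not directly from the coproduct but via the root-of-unity identity $\sqbinom{r-n-1}{k}_\zeta=(-1)^k\sqbinom{n+k}{n}_\zeta$ applied at the very end; this is bookkeeping, not a gap.
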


\begin{proof}
 First of all, using Lemmas~\ref{L:coproducts_antipodes} and \ref{L:ribbon}, we obtain
 \begin{align*}
  &S(v_{(1)}) \otimes v_{(2)}
  \stackrel{\substack{\mathclap{\eqref{E:ribbon_Habiro}} \\ \mathclap{\eqref{E:coproducts}}}}{=} \sum_{a,b,c=0}^{r-1} \sum_{i,j=0}^a (-1)^a 
  \sqbinom{a}{j}_\zeta \zeta^{-\frac{(a+3)a}{2}+2(a-b+1)b+(a+2b)i+aj-2c(i+j)-(i+j)^2} 
  S(F^{(a-i)} E^j T_{b-c}) \otimes F^{(i)} E^{a-j} T_c \\
  &\hspace*{\parindent} \stackrel{\mathclap{\eqref{E:antipodes}}}{=} \sum_{a,b,c=0}^{r-1} \sum_{i,j=0}^a (-1)^a 
  \sqbinom{a}{j}_\zeta \zeta^{-\frac{(a+3)a}{2}+2(a-b+1)b-(i+j)^2+(a+2(b-c))i+(a-2c)j} \\*
  &\hspace*{2\parindent} (-1)^{a+i+j} \zeta^{(a+2b-2c-i-j-1)(a-i-j)} 
  T_{-b+c} E^j F^{(a-i)} \otimes F^{(i)} E^{a-j} T_c \\
  &\hspace*{\parindent} = \sum_{a,b,c=0}^{r-1} \sum_{i,j=0}^a (-1)^{i+j} 
  \sqbinom{a}{j}_\zeta \zeta^{\frac{a(a-5)}{2}-2b(b-1)+4ab-2ac-(a-1)i-(a+2b-1)j} 
  T_{-b+c} E^j F^{(a-i)} \otimes F^{(i)} E^{a-j} T_c.
 \end{align*}
 Furthermore, we compute
 \begin{align*}
  &\lambda(F^{(i)} E^{a-j} T_c E^\ell T_m F^{(n)})
  \stackrel{\mathclap{\eqref{E:quantum_character}}}{=} \zeta^{-2i} \lambda(E^{a-j} E^\ell T_{\ell+c} T_m F^{(n)} F^{(i)}) 
  = \sqbinom{n+i}{n}_\zeta \zeta^{-2i} \delta_{c,-\ell+m} \lambda(E^{\ell+a-j} T_m F^{(n+i)}) \\*
  &\hspace*{\parindent} = \sqbinom{n+i}{n}_\zeta \zeta^{-2i} \delta_{c,-\ell+m} \lambda(E^{\ell+a-j} F^{(n+i)} T_{m-n-i}) 
  \stackrel{\mathclap{\eqref{E:integral_Habiro}}}{=} \sqbinom{n+i}{n}_\zeta \frac{\zeta^{-2m+2n}}{\sqrt{r}} \delta_{c,-\ell+m} \delta_{i,r-n-1} \delta_{j,\ell+a-r+1} \\*
  &\hspace*{\parindent} = \sqbinom{r-1}{n}_\zeta \frac{\zeta^{-2m+2n}}{\sqrt{r}} \delta_{c,-\ell+m} \delta_{i,r-n-1} \delta_{j,\ell+a-r+1} 
  = (-1)^n \frac{\zeta^{-2m+2n}}{\sqrt{r}} \delta_{c,-\ell+m} \delta_{i,r-n-1} \delta_{j,\ell+a-r+1},
 \end{align*}
 where the last equality uses the identity
 \[
  \frac{[r-1]_\zeta !}{[r-n-1]_\zeta !} = (-1)^n [n]_\zeta !.
 \]
 Then, using Proposition~\ref{P:quantum_action_of_mcg_generators}, we obtain
 \begin{align*}
  &\rho^{\fraku_\zeta}_1(\tau_\alpha) \left( E^\ell T_m F^{(n)} \right)
  = \lambda(v_{(2)} E^\ell T_m F^{(n)}) S(v_{(1)}) \\*
  &\hspace*{\parindent} = (-1)^n \frac{\zeta^{-2m+2n}}{\sqrt{r}} \sum_{a,b,c=0}^{r-1} \sum_{i,j=0}^a 
  (-1)^{i+j} \sqbinom{a}{j}_\zeta \zeta^{\frac{a(a-5)}{2}-2b(b-1)+4ab} \\* 
  &\hspace*{2\parindent} \zeta^{-2ac-(a-1)i-(a+2b-1)j} \delta_{c,-\ell+m} \delta_{i,r-n-1} \delta_{j,\ell+a-r+1} T_{-b+c} E^j F^{(a-i)} \\*
  &\hspace*{\parindent} = (-1)^n \frac{\zeta^{-2m+2n}}{\sqrt{r}} \sum_{a,b=0}^{r-1} 
  (-1)^{\ell+n+a} \sqbinom{a}{\ell+a-r+1}_\zeta \zeta^{\frac{a(a-5)}{2}-2b(b-1)+4ab} \\*
  &\hspace*{2\parindent} \zeta^{2a(\ell-m)+(a-1)(n+1)-(a+2b-1)(\ell+a+1)} T_{-\ell+m-b} E^{\ell+a-r+1} F^{(n+a-r+1)} \\* 
  &\hspace*{\parindent} = (-1)^\ell \frac{\zeta^{\ell-2m+n}}{\sqrt{r}} \sum_{a,b=0}^{r-1} 
  (-1)^a \sqbinom{a}{\ell+a-r+1}_\zeta \zeta^{-\frac{(a+3)a}{2}+2(a-b)b
  +(\ell-2m+n)a-2\ell b} E^{\ell+a-r+1} T_{m+a-b+1} F^{(n+a-r+1)}.
 \end{align*}
 If we change variables by setting $i=r-a-1$ and $j=a-b-r+1$, we obtain
 \begin{align*}
  &\rho^{\fraku_\zeta}_1(\tau_\alpha) \left( E^\ell T_m F^{(n)} \right) \\*
  &\hspace*{\parindent} = (-1)^\ell \frac{\zeta^{\ell-2m+n}}{\sqrt{r}} \sum_{i,j=0}^{r-1} 
  (-1)^i \sqbinom{r-i-1}{\ell-i}_\zeta \zeta^{-\frac{(i+1)(i-2)}{2}-2(i+j)(j-1)
  -(\ell-2m+n)(i+1)+2\ell(i+j)} E^{\ell-i} T_{m+j} F^{(n-i)}. 
 \end{align*}
 Now the identity
 \[
  \sqbinom{r-n-1}{k}_\zeta = (-1)^k \sqbinom{n+k}{n}_\zeta
 \]
 yields
 \begin{align*}
  &\rho^{\fraku_\zeta}_1(\tau_\alpha) \left( E^\ell T_m F^{(n)} \right) \\
  &\hspace*{\parindent} = (-1)^\ell \frac{\zeta^{\ell-2m+n}}{\sqrt{r}} \sum_{i,j=0}^{r-1} 
  (-1)^\ell \sqbinom{\ell}{i}_\zeta \zeta^{-\frac{(i+1)(i-2)}{2}-2(i+j)(j-1)
  -(\ell-2m+n)(i+1)+2\ell(i+j)} E^{\ell-i} T_{m+j} F^{(n-i)} \\*
  &\hspace*{\parindent} = \frac{\zeta}{\sqrt{r}} \sum_{i,j=0}^{r-1} 
  \sqbinom{\ell}{i}_\zeta \zeta^{-\frac{i(i-5)}{2}-2j(j-1)-2ij+(\ell+2m-n)i+2\ell j} 
  E^{\ell-i} T_{m+j} F^{(n-i)}. \qedhere
 \end{align*}
\end{proof}

Let us move on to the genus $2$ surface.

\begin{lemma}\label{L:quantum_gamma}
 For all integers $0 \leqs \ell_1,m_1,n_1,\ell_2,m_2,n_2 \leqs r-1$ we have
 \begin{align*}
  &\rho^{\fraku_\zeta}_2(\tau_\gamma) \left( E^{\ell_1} T_{m_1} F^{(n_1)} \otimes E^{\ell_2} T_{m_2} F^{(n_2)} \right) \\*
  &\hspace*{\parindent} \propto \zeta^{2(m_1-n_1+\ell_2-m_2+1)(m_1-n_1+\ell_2-m_2)} 
  \sum_{j_1=0}^{r-1} \sum_{i_1=0}^{r-j_1-1} \sum_{k_2=0}^{r-i_1-j_1-1} \sum_{b=-k_2}^{i_1+j_1} \sum_{i_2=0}^{b+k_2} (-1)^b \\*
  &\hspace*{2\parindent} \sqbinom{i_1+j_1+k_2}{i_1,j_1}_\zeta \sqbinom{\ell_2+k_2}{b-i_2+k_2}_\zeta \sqbinom{n_1-b+j_1}{n_1-i_1}_\zeta \sqbinom{n_2+i_2}{n_2}_\zeta \\*
  &\hspace*{2\parindent} \{ 2m_1-n_1+i_1+j_1;i_1 \}_\zeta \{ -\ell_2+2m_2+b;b-i_2+k_2 \}_\zeta \\*
  &\hspace*{2\parindent} \zeta^{\frac{(i_1+j_1+k_2+3)(i_1+j_1+k_2)}{2}-(i_1+j_1+k_2-1)b+2(m_1-n_1+2(\ell_2-m_2))(i_1+j_1)+2(\ell_2-m_2)k_2} \\*
  &\hspace*{2\parindent} E^{\ell_1+j_1} T_{m_1+j_1} F^{(n_1+j_1-b)} \otimes E^{\ell_2-b+i_2} T_{m_2+i_2} F^{(n_2+i_2)}.
 \end{align*}
\end{lemma}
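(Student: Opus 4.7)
The plan is to specialize Proposition~\ref{P:quantum_action_of_mcg_generators} to $g=2$ and $k=1$, which reduces the statement to the purely algebraic identity
\[
 \rho^{\fraku_\zeta}_2(\tau_\gamma)(x_1 \otimes x_2) = x_1 S(v^{-1}_{(1)}) \otimes v^{-1}_{(2)} x_2
\]
with $x_i = E^{\ell_i} T_{m_i} F^{(n_i)}$. The two main ingredients are the explicit expression for $v^{-1}$ in the integral basis furnished by Lemma~\ref{L:ribbon} (and already used in the proof of Lemma~\ref{L:quantum_alpha} via Equation~\eqref{E:inverse_ribbon_Habiro_prime}) together with the coproduct, counit, antipode formulas of Lemma~\ref{L:coproducts_antipodes} and the commutation identities of Lemma~\ref{L:commutators}.

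First I would compute $\Delta(v^{-1})$ explicitly in the integral basis of $\fraku_\zeta \otimes \fraku_\zeta$. Writing $v^{-1} = \sum_{a,b} c_{a,b} E^a F^{(a)} T_b$ from~\eqref{E:inverse_ribbon_Habiro_prime}, one applies the standard $q$-binomial coproducts of $E^a$ and $F^{(a)}$ together with the linear coproduct of $T_b$ (obtained from $T_b = \tfrac{1}{r}\sum_c \zeta^{2bc} K^c$ and the grouplikeness of $K$); this expresses $v^{-1}_{(1)} \otimes v^{-1}_{(2)}$ as a quadruple sum of monomials, and applying $S$ on the left tensorand via the antipode formulas produces $S(v^{-1}_{(1)}) \otimes v^{-1}_{(2)}$ in normal-ordered form. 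The second step is to compute the two products $E^{\ell_1} T_{m_1} F^{(n_1)} \cdot S(v^{-1}_{(1)})$ and $v^{-1}_{(2)} \cdot E^{\ell_2} T_{m_2} F^{(n_2)}$, re-expressing each in the canonical basis $E^\ell T_m F^{(n)}$ by repeated use of Lemma~\ref{L:commutators}. Each swap of an $E$ past an $F^{(n)}$ produces a correction involving $\{2m-n+1\}_\zeta \cdot F^{(n-1)}$ --- precisely the building blocks of the symbols $\{\,\cdot\, ; \,\cdot\,\}_\zeta$ appearing in the statement --- while the fusions of consecutive $E^{\cdot}$'s and $F^{(\cdot)}$'s produce the quantum multinomials.

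The main obstacle is combinatorial rather than conceptual: the raw computation yields a summation over seven or eight indices that must be reindexed to match the five summation variables $(j_1,i_1,k_2,b,i_2)$ of the stated formula. This requires absorbing the dummy indices of $\Delta(v^{-1})$ and of the normal-ordering procedure into the new variables, and carefully tracking the accumulation of $\zeta$-exponents to land on both the global prefactor $\zeta^{2(m_1-n_1+\ell_2-m_2+1)(m_1-n_1+\ell_2-m_2)}$ and the internal quadratic exponent $\tfrac{(i_1+j_1+k_2+3)(i_1+j_1+k_2)}{2} - (i_1+j_1+k_2-1)b + \cdots$.

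As sanity checks, one can verify that: at $r=\infty$ (or in a generic $q$-setting) the resulting $U_q$-formula is invariant under the substitutions that correspond to the symmetry $\gamma \mapsto \gamma^{-1}$ of the curve; and in the degenerate range $\ell_1 = n_2 = 0$, many terms collapse because of the binomials, and the formula should reduce to expressions directly comparable with Lemma~\ref{L:quantum_alpha} applied to each tensor factor. These checks pin down the numerous $\zeta$-exponents without forcing one to write out the full combinatorial derivation.
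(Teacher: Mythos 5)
Your proposal follows essentially the same route as the paper's proof in Appendix~D.2: specialize Proposition~\ref{P:quantum_action_of_mcg_generators} to $x_1 S(v^{-1}_{(1)}) \otimes v^{-1}_{(2)} x_2$, expand $S(v^{-1}_{(1)}) \otimes v^{-1}_{(2)}$ via Lemmas~\ref{L:ribbon} and~\ref{L:coproducts_antipodes}, normal-order with Lemma~\ref{L:commutators}, and reindex to the five summation variables. The only caveat is that your proposed ``sanity checks'' cannot substitute for actually carrying the $\zeta$-exponents through the change of variables, which is where the paper's proof spends all its effort; but the strategy itself is correct and identical.
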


Again, due to the technical nature of this computation, we postpone a detailed proof to Appendix~\ref{A:ugly_quantum_computation}.

\section{Identification between homological and quantum representations}\label{S:isomorphism}

In this section, we establish our main result, and discuss a few generalizations.

\subsection{Main result: isomorphism of representations}

Let us fix a positive integer $g \geqs 1$, for the genus of $\varSigma_{g,1}$, and an odd integer $r \geqs 3$, for the order of the primitive $r$th root of unity $\zeta = e^{\frac{2 \pi \fraki}{r}}$.

Recall that, in Definition~\ref{D:linear_homology}, we introduced a direct sum $\calH_g^V$ of twisted homology groups that we later endowed with commuting actions of the quantum group $U_\zeta$ and of the mapping class group $\Mod(\varSigma_{g,1})$, see Theorem~\ref{T:Uq_homological_representation}. These actions were computed in particular homological bases in Sections~\ref{S:homological_adjoint_computation} and \ref{S:homological_mcg_computation}, respectively. The following result provides an explicit isomorphism between a finite-dimensional linear subspace $\calH_g^{V(r)} \subset \calH_g^V$ and $\ad_\zeta^{\otimes g}$. What is remarkable about this linear isomorphism is that it intertwines the homological actions of $\bar{U}_\zeta$ (which is a subalgebra of $U_\zeta$) and of $\Mod(\varSigma_{g,1})$ with the quantum actions that, on the target space, arise from the non-semisimple TQFT associated with $\fraku_\zeta$ (which is a quotient of $\bar{U}_\zeta$). These quantum actions were computed in Sections~\ref{S:quantum_adjoint_computation} and \ref{S:quantum_mcg_computation}, respectively.

First, for all vectors $\bfa=(a_1 , \ldots, a_g)$, $\bfb = (b_1 , \ldots, b_g)$, and $\bfc = (c_1 , \ldots, c_r)$ with integer coordinates, we set
\begin{align*}
 N_j(a_j,b_j,c_j) &:= \zeta^{2(a_j+b_j)(j-1)+\frac{a_j(a_j-1)}{2}+2a_jb_j-2(b_j-1)c_j} \in \Z[\zeta], \\*
 N(\bfa,\bfb,\bfc) &:= \prod_{1 \leqs j < k \leqs g} \zeta^{2(a_j+b_j)(a_k+b_k)} \prod_{j=1}^g N_j(a_j,b_j,c_j) \in \Z[\zeta],
\end{align*}
where 
\begin{align*}
 \bar{\bfk} &= (k_g,\ldots,k_1), &
 \iota(\bfk) &= (r-k_1-1,\ldots,r-k_g-1)
\end{align*}
for every $\bfk = (k_1,\ldots,k_g)$.

\begin{theorem}\label{T:The_Theorem}
 The linear isomorphism
 \begin{align}
  \Phi_g^V : \calH_g^{V(r)} &\to \ad^{\otimes g} \label{E:isomorphism} \\*
  \basis(\bfa,\bfb) \otimes \bfv_{\bfc} &\mapsto 
 N(\bfa,\bfb,\bfc) E^{\iota(\bar{\bfb})} T_{\bar{\bfc}} F^{(\bar{\bfa})} \nonumber
 \end{align}
 is a $\fraku_\zeta$-module isomorphism, where the basis of $\calH_g^{V(r)}$ is given in Section~\ref{S:fin-dim_homol_rep}, while the one of $\ad^{\otimes g}$ is given in Section~\ref{S:half-divided_basis}. Furthermore, it defines an isomorphism between the homological representation
 \[
  \bar{\rho}_g^{V(r)} : \Mod(\varSigma_{g,1}) \to \PGL_{\fraku_\zeta}(\homol_g^{V(r)})
 \]
 of Theorem~\ref{T:uzeta_homological_representation} and the quantum representation
 \[
  \bar{\rho}_g^{\fraku_\zeta} : \Mod(\varSigma_{g,1}) \to \PGL_{\fraku_\zeta}(\ad^{\otimes g})
 \]
 of Proposition~\ref{P:quantum_action_of_mcg_generators}.
\end{theorem}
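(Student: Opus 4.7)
The plan is to prove Theorem~\ref{T:The_Theorem} by direct verification on the distinguished bases of both sides, exploiting the fact that $\Phi_g^V$ is diagonal: it sends each basis vector $\basis(\bfa,\bfb) \otimes \bfv_{\bfc}$ to a non-zero scalar multiple of the basis vector $E^{\iota(\bar{\bfb})} T_{\bar{\bfc}} F^{(\bar{\bfa})}$. Bijectivity is therefore automatic, since the scalars $N(\bfa,\bfb,\bfc)$ are all invertible in $\Z[\zeta]$ and the index correspondence $(\bfa,\bfb,\bfc) \mapsto (\iota(\bar{\bfb}),\bar{\bfc},\bar{\bfa})$ is a bijection between the two indexing sets. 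The reversal of the order via $\bar{(\cdot)}$ is forced by the fact that the $j$th homological tensor factor (carrying indices $a_j,b_j,c_j$) corresponds, under the standard TQFT conventions illustrated in the diagrammatic calculus of Section~\ref{S:quantum_diagrammatic_calculus}, to the $(g+1-j)$th quantum tensor factor, while the map $\iota$ accounts for the well-known duality $E^\ell \leftrightarrow F^{(r-1-\ell)}$ that appears when exchanging $\bar{U}_\zeta$-linear data through the integral form of $\lambda$.

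First I would verify the $\fraku_\zeta$-intertwining on generators. This amounts to a termwise comparison between the formulas of Lemma~\ref{L:homological_adjoint} (action of $\calE$, $\calF^{(1)}$, $\calK$ on $\basis(\bfa,\bfb)$) and those of Lemma~\ref{L:quantum_adjoint} (action of $E$, $F^{(1)}$, $K$ on $E^{\bfell}T_{\bfm}F^{(\bfn)}$) after substituting $(\bfell,\bfm,\bfn) = (\iota(\bar{\bfb}),\bar{\bfc},\bar{\bfa})$. The action of $\calK$ versus $K$ is the easy case: both act diagonally and the matching of the powers of $\zeta$ follows from the definition of $\iota$. For $\calE$ and $\calF^{(1)}$, each term in the homological formula picks out one tensor factor $j$, and after the index reversal this becomes the tensor factor $g+1-j$ on the quantum side; the role of the variables $\alpha_j,\beta_j \in \dHeis_g$ on the homological side is played by the evaluations $A_j\bfv_{\bfc} = \zeta^{4c_j}\bfv_{\bfc}$ and $B_j\bfv_{\bfc} = \bfv_{\bfc + \bfe_j}$, which shift the $T$-index on the quantum side. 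The products $\prod_{j<k} \zeta^{2(a_j+b_j)(a_k+b_k)}$ and $\prod_j N_j$ appearing in $N(\bfa,\bfb,\bfc)$ are precisely engineered to absorb the factors $q^{2|\bfa+\bfb|_{>j}}$ (resp.\ $q^{-2|\bfa+\bfb|_{<j}+2(g-2(j-1))}$) coming from the homological braid and permutation rules, and to align them with the powers of $\zeta$ arising on the quantum side from the coproduct expansion. This is a finite check for each generator.

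Second I would verify the intertwining for $\Mod(\varSigma_{g,1})$. Since both actions factor through the projection $F_g \twoheadrightarrow \Mod(\varSigma_{g,1})$, it suffices to verify the intertwining on the Dehn twist generators $\twist{\alpha}{j}, \twist{\beta}{j}, \twist{\gamma}{k}$. The mapping cylinder of each of these twists is supported in a subsurface of genus at most $2$ (one or two adjacent handles), and both the homological and the quantum representations are compatible with boundary connected sum in the sense that the action on the remaining handles is trivial. Consequently it is enough to compare Lemmas~\ref{L:homological_alpha} and~\ref{L:quantum_alpha} (for $\tau_\alpha$ on a genus $1$ subsurface), Lemmas~\ref{L:homological_beta} and~\ref{L:quantum_beta} (for $\tau_\beta$), and Lemmas~\ref{L:homological_gamma} and~\ref{L:quantum_gamma} (for $\tau_\gamma$ on a genus $2$ subsurface). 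In each case the verification is again a match of summands: on the homological side the summation indices encode cutting operations on blue/green arcs, while on the quantum side they encode coproduct expansions together with the evaluation of $\lambda$. The scalars $N_j$ absorb the difference between the two conventions after the substitution $(\bfa,\bfb,\bfc) \mapsto (\iota(\bar{\bfb}),\bar{\bfc},\bar{\bfa})$.

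The main obstacle will be the $\tau_\gamma$ case of Lemma~\ref{L:homological_gamma} versus Lemma~\ref{L:quantum_gamma}: both formulas are sevenfold sums involving multinomial quantum coefficients and unwieldy exponents in $\zeta$. The strategy I would use to tame this is to identify the summation variables geometrically first, i.e.\ use the fact that $i_1,j_1,k_2,\ldots$ on the homological side count strands crossing certain arcs in a $\gamma$-fingered region, which correspond to the analogous subtangle counts produced by the bead-calculus algorithm of Section~\ref{S:quantum_diagrammatic_calculus} on the quantum side. After aligning indices this way, the remaining identity is polynomial in $\zeta$ and can be checked factor-by-factor, at which point the scalar discrepancy is exactly $N(\bfa,\bfb,\bfc) / N(\bfa',\bfb',\bfc')$ for the appropriate pre- and post-twist multi-indices $(\bfa,\bfb,\bfc)$ and $(\bfa',\bfb',\bfc')$, completing the projective intertwining.
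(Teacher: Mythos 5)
Your proposal is correct and follows essentially the same route as the paper's proof: a termwise comparison of Lemma~\ref{L:homological_adjoint} with Lemma~\ref{L:quantum_adjoint} for the $\fraku_\zeta$-intertwining, and of Lemmas~\ref{L:homological_alpha}--\ref{L:homological_gamma} with Lemmas~\ref{L:quantum_alpha}--\ref{L:quantum_gamma} for the Dehn twist generators, with the ratios $N(\bfa,\bfb,\bfc)/N(\bfa',\bfb',\bfc')$ absorbing the discrepancies exactly as you describe. The only organizational detail you do not mention is that in the $\tau_\gamma$ case the paper first applies Lemma~\ref{L:Murakami} to expand the symbols $\{n;k\}_\zeta$ into sums so that the number of summation indices on the quantum side matches the sevenfold sum on the homological side before the factor-by-factor check.
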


\begin{proof} 
 First of all, in order to show that $\Phi_g^V$ intertwines the homological action of $\bar{U}_\zeta$ on $\calH_g^{V(r)}$ with the quantum one of its quotient $\fraku_\zeta$ on $\ad^{\otimes g}$, we simply need to compare the computations of Lemma~\ref{L:homological_adjoint} with those of Lemma~\ref{L:quantum_adjoint}. On the one hand, Lemma~\ref{L:homological_adjoint} gives
 \begin{align*}
  \calE \left( \basis(\bfa,\bfb) \otimes \bfv_{\bfc} \right) 
  &= \sum_{j=1}^g \zeta^{2 \left| \bfa+\bfb \right|_{>j}} 
  \left( \basis(\bfa-\bfe_j,\bfb) \otimes \{ a_j-2c_j-1 \}_\zeta \zeta^{-a_j+2b_j+2c_j+1} \bfv_{\bfc} \right. \\*
  &\hspace*{\parindent} \left. + \basis(\bfa,\bfb-\bfe_j) 
  \otimes \left( \bfv_{\bfc} - \zeta^{2b_j-2} \bfv_{\bfc+\bfe_j} \right) \right), \\
  \calF^{(1)} \left( \basis(\bfa,\bfb) \otimes \bfv_{\bfc} \right) 
  &= \sum_{j=1}^g \zeta^{-2 \left| \bfa+\bfb \right|_{<j}+2(g-2(j-1))} 
  \left( \basis(\bfa+\bfe_j,\bfb) 
  \otimes [a_j+1]_\zeta \left( - \zeta^{a_j} \bfv_{\bfc} + \zeta^{-a_j-4} \bfv_{\bfc+\bfe_j} \right) \right. \\*
  &\hspace*{\parindent} \left. - \basis(\bfa,\bfb+\bfe_j) \otimes [b_j+1]_\zeta \{ b_j+2c_j+2 \}_\zeta \zeta^{-2a_j+2c_j-2} \bfv_{\bfc} \right), \\
  \calK \left( \basis(\bfa,\bfb) \otimes \bfv_{\bfc} \right)
  &= \basis(\bfa,\bfb) \otimes q^{-2 \left| \bfa+\bfb \right|-2g} \bfv_{\bfc}.
 \end{align*}
 On the other hand, Lemma~\ref{L:quantum_adjoint} gives
 \begin{align*}
  E \triangleright (E^{\iota(\bar{\bfb})} T_{\bar{\bfc}} F^{(\bar{\bfa})}) 
  &= \sum_{j=1}^g \zeta^{-2 \left| \bfa + \bfb \right|_{<j} - 2(j-1)} \left( \zeta^{-2(a_j-c_j)} E^{\iota(\bar{\bfb} - \bar{\bfe}_j)} T_{\bar{\bfc}} F^{(\bar{\bfa})} \right. 
  - \zeta^{-2(a_j-c_j-1)} E^{\iota(\bar{\bfb} - \bar{\bfe}_j)} T_{\bar{\bfc} + \bar{\bfe}_j} F^{(\bar{\bfa})} \\*
  &\hspace*{\parindent}+ \left. \{ a_j-2c_j-1 \}_\zeta \zeta^{-2(a_j-c_j-1)} E^{\iota(\bar{\bfb})} T_{\bar{\bfc}} F^{(\bar{\bfa} - \bar{\bfe}_j)} \right), \\
  F^{(1)} \triangleright (E^{\iota(\bar{\bfb})} T_{\bar{\bfc}} F^{(\bar{\bfa})})  
  &= \sum_{j=1}^g \zeta^{2 \left| \bfa+\bfb \right|_{>j} + 2(g-j)} \left( 
  - [a_j+1]_\zeta \zeta^{2(a_j+b_j+1)} E^{\iota(\bar{\bfb})} T_{\bar{\bfc}} F^{(\bar{\bfa} + \bar{\bfe}_j)}
  + [a_j+1]_\zeta E^{\iota(\bar{\bfb})} T_{\bar{\bfc} + \bar{\bfe}_j} F^{(\bar{\bfa} + \bar{\bfe}_j)}  \right. \\*
  &\hspace*{\parindent}- \left. [b_j+1]_\zeta \{ b_j+2c_j+2 \}_\zeta E^{\iota(\bar{\bfb} + \bar{\bfe}_j)} T_{\bar{\bfc}} F^{(\bar{\bfa})} \right), \\
  K \triangleright (E^{\iota(\bar{\bfb})} T_{\bar{\bfc}} F^{(\bar{\bfa})})
  &= q^{-2 \left| \bfa+\bfb \right|-2g} E^{\iota(\bar{\bfb})} T_{\bar{\bfc}} F^{(\bar{\bfa})}.
 \end{align*}
 Now the claim follows from the computations
 \begin{align*}
  \frac{N(\bfa,\bfb,\bfc)}{N(\bfa-\bfe_j,\bfb,\bfc)}
  &= \zeta^{2 \left| \bfa+\bfb \right|_{<j}+2 \left| \bfa+\bfb \right|_{>j}+a_j+2b_j+2j-3}, \\*
  \frac{N(\bfa,\bfb,\bfc)}{N(\bfa,\bfb-\bfe_j,\bfc)}
  &= \zeta^{2 \left| \bfa+\bfb \right|_{<j}+2 \left| \bfa+\bfb \right|_{>j}+2a_j-2c_j+2j-2}, \\*
  \frac{N(\bfa,\bfb,\bfc)}{N(\bfa,\bfb-\bfe_j,\bfc+\bfe_j)}
  &= \zeta^{2 \left| \bfa+\bfb \right|_{<j}+2 \left| \bfa+\bfb \right|_{>j}+2a_j+2b_j-2c_j+2j-6}, \\
  \frac{N(\bfa,\bfb,\bfc)}{N(\bfa+\bfe_j,\bfb,\bfc)}
  &= \zeta^{-2 \left| \bfa+\bfb \right|_{<j}-2 \left| \bfa+\bfb \right|_{>j}-a_j-2b_j-2j+2}, \\*
  \frac{N(\bfa,\bfb,\bfc)}{N(\bfa+\bfe_j,\bfb,\bfc+\bfe_j)}
  &= \zeta^{-2 \left| \bfa+\bfb \right|_{<j}-2 \left| \bfa+\bfb \right|_{>j}-a_j-2j}, \\*
  \frac{N(\bfa,\bfb,\bfc)}{N(\bfa,\bfb+\bfe_j,\bfc)}
  &= \zeta^{-2 \left| \bfa+\bfb \right|_{<j}-2 \left| \bfa+\bfb \right|_{>j}-2a_j+2c_j-2j+2}.
 \end{align*}

 Next, in order to show that $\Phi_g^V$ simultaneously intertwines the homological and the quantum projective actions of $\Mod(\varSigma_{g,1})$ on $\calH_g^{V(r)}$ and on $\ad^{\otimes g}$, respectively, we need to compare the computations of Lemmas~\ref{L:homological_alpha}--\ref{L:homological_gamma} with those of Lemmas~\ref{L:quantum_alpha}--\ref{L:quantum_gamma}, one by one. First, Lemma~\ref{L:quantum_alpha} gives
 \begin{align*} 
  \rho^{\fraku_\zeta}_1(\tau_\alpha) \left( E^{r-b-1} T_c F^{(a)} \right) 
  &\propto \zeta^{2(c+1)c} \sum_{i=0}^{r-1} \sqbinom{a+i}{a}_\zeta
  \zeta^{\frac{(i+3)i}{2}+2ic} E^{r-(b-i)-1} T_{c+i} F^{(a+i)}.
 \end{align*}
 Now the claim follows from the computation
 \begin{align*}
  \frac{N(a,b,c)}{N(a+i,b-i,c+i)}
  &= \zeta^{-\frac{(i+3)i}{2}+(a-2c)i}.
 \end{align*}

 Next, Lemma~\ref{L:quantum_beta} gives
 \begin{align*}
  &\rho^{\fraku_\zeta}_1(\tau_\beta) \left( E^{r-b-1} T_c F^{(a)} \right) 
  \propto \sum_{i,j=0}^{r-1} 
  \sqbinom{r-b-1}{i}_\zeta \zeta^{-\frac{i(i-5)}{2}-2j(j-1)-2ij-(a+b-2c+1)i-2(b+1)j} 
  E^{r-(b+i)-1} T_{c+j} F^{(a-i)} \\*
  &\hspace*{\parindent} = \sum_{i,j=0}^{r-1} 
  (-1)^i \sqbinom{b+i}{b}_\zeta \zeta^{-\frac{i(i-3)}{2}-2j^2-2ij-(a+b-2c)i-2bj} 
  E^{r-(b+i)-1} T_{c+j} F^{(a-i)}.
 \end{align*}
 Now the claim follows from the computation
 \begin{align*}
  \frac{N(a,b,c)}{N(a-i,b+i,c+j)}
  &= \zeta^{\frac{(3i-1)i}{2}+2ij-(a-2(b+c))i+2(b-1)j}.
 \end{align*}

 Finally, Lemma~\ref{L:quantum_gamma}, together with Lemma~\ref{L:Murakami}, give
 \begin{align*}
  &\rho^{\fraku_\zeta}_2(\tau_\gamma) \left( E^{\iota(b_2)} T_{c_2} F^{(a_2)} \otimes E^{\iota(b_1)} T_{c_1} F^{(a_1)} \right) \\* 
  &\hspace*{\parindent} = \zeta^{2(b_1+c_1+a_2-c_2+1)(b_1+c_1+a_2-c_2)} 
  \sum_{j_2=0}^{r-1} \sum_{i_2=0}^{r-j_2-1} \sum_{k_1=0}^{r-i_2-j_2-1} \sum_{\ell=-k_1}^{i_2+j_2} \sum_{i_1=0}^{k_1+\ell} (-1)^\ell \\*
  &\hspace*{2\parindent} \sqbinom{k_1+i_2+j_2}{i_2,j_2}_\zeta \sqbinom{r-b_1+k_1-1}{k_1-i_1+\ell}_\zeta \sqbinom{a_2-\ell+j_2}{a_2-i_2}_\zeta \sqbinom{a_1+i_1}{a_1}_\zeta \\*
  &\hspace*{2\parindent} \{ -a_2+2c_2+i_2+j_2;i_2 \}_\zeta \{ b_1+2c_1+\ell+1;k_1-i_1+\ell \}_\zeta \\*
  &\hspace*{2\parindent} \zeta^{\frac{(k_1+i_2+j_2+3)(k_1+i_2+j_2)}{2}-(k_1+i_2+j_2-1)\ell-2(2(b_1+c_1)+a_2-c_2+2)(i_2+j_2)-2(b_1+c_1+1)k_1} \\*
  &\hspace*{2\parindent} E^{\iota(b_2-j_2)} T_{c_2+j_2} F^{(a_2-\ell+j_2)} \otimes E^{\iota(b_1-i_1+\ell)} T_{c_1+i_1} F^{(a_1+i_1)} \\
  &\hspace*{\parindent} \stackrel{\mathclap{\eqref{E:Murakami}}}{=} \zeta^{2(b_1+c_1+a_2-c_2+1)(b_1+c_1+a_2-c_2)} \\*
  &\hspace*{2\parindent} \sum_{j_2=0}^{r-1} \sum_{i_2=0}^{r-j_2-1} \sum_{k_1=0}^{r-i_2-j_2-1} \sum_{\ell=-k_1}^{i_2+j_2} \sum_{i_1=0}^{k_1+\ell} \sum_{k_2=0}^{i_2} \sum_{j_1=0}^{k_1-i_1+\ell} (-1)^{j_1+\ell+i_2+k_2} \\*
  &\hspace*{2\parindent} \sqbinom{k_1+i_2+j_2}{i_2,j_2}_\zeta \sqbinom{b_1-i_1+\ell}{k_1-i_1+\ell}_\zeta \sqbinom{a_2-\ell+j_2}{a_2-i_2}_\zeta \sqbinom{a_1+i_1}{a_1}_\zeta \\* 
  &\hspace*{2\parindent} \sqbinom{i_2}{k_2}_\zeta \zeta^{(a_2-2c_2-i_2-j_2)(i_2-2k_2)+\frac{i_2(i_2-1)}{2}-(i_2-1)k_2} \\* 
  &\hspace*{2\parindent} \sqbinom{k_1-i_1+\ell}{j_1}_\zeta \zeta^{-(b_1+2c_1+\ell+1)(\ell-i_1+k_1-2j_1)+\frac{(\ell-i_1+k_1)(\ell-i_1+k_1-1)}{2}-(\ell-i_1+k_1-1)j_1} \\*
  &\hspace*{2\parindent} \zeta^{\frac{(k_1+i_2+j_2+3)(k_1+i_2+j_2)}{2}-(k_1+i_2+j_2-1)\ell-2(2(b_1+c_1)+a_2-c_2+2)(i_2+j_2)-2(b_1+c_1+1)k_1} \\*
  &\hspace*{2\parindent} E^{\iota(b_2-j_2)} T_{c_2+j_2} F^{(a_2-\ell+j_2)} \otimes E^{\iota(b_1-i_1+\ell)} T_{c_1+i_1} F^{(a_1+i_1)} \\
  &\hspace*{\parindent} = \zeta^{2(b_1+c_1+a_2-c_2+1)(b_1+c_1+a_2-c_2)} \\*
  &\hspace*{2\parindent} \sum_{j_2=0}^{r-1} \sum_{i_2=0}^{r-j_2-1} \sum_{k_1=0}^{r-i_2-j_2-1} \sum_{\ell=-k_1}^{i_2+j_2} \sum_{i_1=0}^{k_1+\ell} \sum_{k_2=0}^{i_2} \sum_{j_1=0}^{k_1-i_1+\ell} (-1)^{j_1+\ell+i_2+k_2} \\*
  &\hspace*{2\parindent} \sqbinom{k_1+i_2+j_2}{k_1,j_2,k_2}_\zeta \sqbinom{b_1-i_1+\ell}{b_1-k_1,j_1}_\zeta \sqbinom{a_2-\ell+j_2}{a_2-i_2}_\zeta \sqbinom{a_1+i_1}{a_1}_\zeta \\* 
  &\hspace*{2\parindent} \zeta^{\frac{(i_1+3)i_1}{2}+k_1(k_1-2)+\frac{j_2(j_2-7)}{2}-\frac{(\ell+1)\ell}{2}-i_1j_1-(i_1+j_1-j_2+\ell)k_1+(j_1-j_2)\ell+(k_1+k_2-\ell)i_2+2j_2k_2} \\*
  &\hspace*{2\parindent} \zeta^{-(b_1+2c_1)(i_1-2j_1+\ell)+3j_1-(3b_1+4c_1)k_1-(4b_1+4c_1+a_2+3)i_2-(4b_1+4c_1+2a_2-2c_2-1)j_2-(2a_2-4c_2-1)k_2} \\*
  &\hspace*{2\parindent} E^{\iota(b_2-j_2)} T_{c_2+j_2} F^{(a_2-\ell+j_2)} \otimes E^{\iota(b_1-i_1+\ell)} T_{c_1+i_1} F^{(a_1+i_1)}.
 \end{align*}
 Now the claim follows from the computation
 \begin{align*}
  &\frac{N(a_1,b_1,c_1;a_2,b_2,c_2)}{N(a_1+i_1,b_1-i_1+\ell,c_1+i_1;a_2-\ell+j_2,b_2-j_2,c_2+j_2)} \\*
  &\hspace*{\parindent}= \zeta^{-\frac{(i_1+3)i_1}{2}-\frac{(j_2+3)j_2}{2}+3\frac{(\ell+1)\ell}{2}-j_2\ell+(a_1-2c_1)i_1+(a_2-2c_2)j_2+(2b_1+2c_1-a_2)\ell}. \qedhere
 \end{align*} 
\end{proof}

\subsection{Corollary: integrality of non-semisimple quantum representations}

Let us highlight a direct consequence of our results. First of all, let us endow the $\Z[\zeta]$-module $W_g := (\Z[\zeta])^{r^g}$ with the left $\Z[\pi_{n,g}]$-module structure determined by the $\Z[\zeta]$-linear representation $\varphi_{n,g}^W : \Z[\pi_{n,g}] \to \End_{\Z[\zeta]}(W_g)$ determined by Equation~\eqref{E:varphi}. We define for every integer $n \geqs 1$ the \textit{$n$th integral Heisenberg homology group} of $\varSigma_{g,1}$ as
\[
 \homol_{n,g}^W := H^\BM_n(X_{n,g},Y_{n,g};\phi_{n,g}^W).
\]
Next, recall that the \textit{Torelli subgroup} of $\Mod(\varSigma_{g,1})$, denoted by $\calI(\varSigma_{g,1})$, consists of those diffeomorphisms that act trivially on the (standard) homology $H_1(\varSigma_{g,1})$. It was proved in \cite[Proposition~28]{BPS21} that $\calI(\varSigma_{g,1})$ coincides with the subgroup of $\Mod(\varSigma_{g,1})$ that acts by inner homomorphisms on the quotient $\pi_{n,g}/K_{n,g}$, in the notation of Remark~\ref{R:quotient_group}. This has the following consequence.

\begin{lemma}\label{L:integral_Torelli}
 There exists a homomorphism $\bar{\psi}_g^W : \calI(\varSigma_{g,1}) \to \PGL_{\Z[\zeta]}(W_g)$ that fits into the commutative diagram
 \begin{center}
  \begin{tikzpicture}[descr/.style={fill=white}] \everymath{\displaystyle}
   \node (P0) at (0,0) {$\calI(\varSigma_{g,1})$};
   \node (P1) at (3,0) {$\PGL_{\Z[\zeta]}(W_g)$};
   \node (P2) at (0,-1) {$\Mod(\varSigma_{g,1})$};
   \node (P3) at (3,-1) {$\PGL_{\fld}(V_g)$};
   \draw
   (P0) edge[->] node[above] {\scriptsize $\bar{\psi}_g^W$}(P1)
   (P0) edge[right hook->] node[left] {} (P2)
   (P1) edge[right hook->] node[right] {} (P3)
   (P2) edge[->] node[below] {\scriptsize $\bar{\psi}_g^V$} (P3);
  \end{tikzpicture}
 \end{center}
\end{lemma}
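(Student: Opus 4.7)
The plan is to leverage the fundamental intertwining identity of Proposition~\ref{P:MCG_action} together with the centralizer property Lemma~\ref{L:local_system}.$(iii)$, which was the key to defining $\bar{\psi}_g^V$ in Corollary~\ref{C:bar_psi}. The starting point is the observation that $\varphi_{n,g}^V : \Z[\pi_{n,g}] \to \End_\fld(V_g)$ actually factors through the quotient $\Z[\pi_{n,g}/K_{n,g}]$: indeed, the matrices $A_j$ and $B_j$ satisfy relations~\ref{R:QC1}--\ref{R:QSC} thanks to Equations~\eqref{E:AB_zeta-commute} and \eqref{E:AB_order_r}, with image landing in $\GL_{\Z[\zeta]}(W_g) \subset \GL_\fld(V_g)$.

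First, I would lift an arbitrary element of the Torelli group to a word $\tilde{f} \in F_g$. By \cite[Proposition~28]{BPS21} recalled just before the statement, the class of $\tilde{f}$ in $\Mod(\varSigma_{g,1})$ lies in $\calI(\varSigma_{g,1})$ precisely when $(\tilde{f}^{\times n})_*$ acts on $\pi_{n,g}/K_{n,g}$ as conjugation by some element $\theta_{\tilde{f}} \in \pi_{n,g}/K_{n,g}$. Then, for every $\bloop \in \pi_{n,g}$, the fundamental identity~\eqref{E:fundamental_identity} rewrites as
\[
 \psi_g^V(\tilde{f}) \circ \varphi_{n,g}^V(\bloop) = \varphi_{n,g}^V(\theta_{\tilde{f}}) \circ \varphi_{n,g}^V(\bloop) \circ \varphi_{n,g}^V(\theta_{\tilde{f}})^{-1} \circ \psi_g^V(\tilde{f}),
\]
since $\varphi_{n,g}^V$ is defined on the quotient. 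Equivalently, the product $\varphi_{n,g}^V(\theta_{\tilde{f}})^{-1} \circ \psi_g^V(\tilde{f})$ commutes with the image of $\varphi_{n,g}^V$.

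Applying Equation~\eqref{E:AB_centralizer} from Lemma~\ref{L:local_system} to this commutant statement, I would conclude that $\varphi_{n,g}^V(\theta_{\tilde{f}})^{-1} \circ \psi_g^V(\tilde{f}) = \mu_{\tilde{f}} I_{r^g}$ for some scalar $\mu_{\tilde{f}} \in (\Z[\zeta])^\times \subset \fld^\times$. Hence
\[
 \psi_g^V(\tilde{f}) = \mu_{\tilde{f}} \, \varphi_{n,g}^V(\theta_{\tilde{f}}),
\]
and the right-hand side belongs to $\GL_{\Z[\zeta]}(W_g)$ since $\varphi_{n,g}^V$ sends $\pi_{n,g}/K_{n,g}$ into $\GL_{\Z[\zeta]}(W_g)$. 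Therefore the projective class $[\psi_g^V(\tilde{f})] \in \PGL_{\fld}(V_g)$ belongs to the subgroup $\PGL_{\Z[\zeta]}(W_g)$, yielding a well-defined factorization $\bar{\psi}_g^W : \calI(\varSigma_{g,1}) \to \PGL_{\Z[\zeta]}(W_g)$ of $\bar{\psi}_g^V$ restricted to the Torelli subgroup.

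The only subtlety — which I do not expect to be a real obstacle, but worth double-checking — is independence of $\theta_{\tilde{f}}$ (and of the lift $\tilde{f}$) in the above expression: any two choices differ by an element of the (abelian) centralizer of the image of $\varphi_{n,g}^V$, hence by a scalar, which disappears upon passing to $\PGL$. This makes $\bar{\psi}_g^W$ unambiguously defined, and the commutativity of the diagram is tautological by construction.
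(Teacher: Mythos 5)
Your argument is correct and follows essentially the same route as the paper: invoke \cite[Proposition~28]{BPS21} to write the Torelli action on $\pi_{n,g}/K_{n,g}$ as conjugation by some $\vartheta_n(f)$, combine with Equation~\eqref{E:fundamental_identity} to see that $\varphi_{n,g}^V(\vartheta_n(f))^{-1}\circ\psi_g^V(f)$ centralizes the image of $\varphi_{n,g}^V$, and conclude via Equation~\eqref{E:AB_centralizer} that $\psi_g^V(f)$ is a scalar multiple of the integral matrix $\varphi_{n,g}^V(\vartheta_n(f))$. Your closing remark on the independence of the choice of $\vartheta_n(f)$ is a small welcome addition that the paper leaves implicit.
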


\begin{proof}
 Thanks to \cite[Proposition~28]{BPS21}, for every $f \in \calI(\varSigma_{g,1})$, there exists some $\vartheta_n(f) \in \pi_{n,g}$ such that
 \begin{equation}\label{E:Torelli_inner}
  \varphi_{n,g}^V((f^{\times n})_*(\bloop)) = \varphi_{n,g}^V(\vartheta_n(f)) \circ \varphi_{n,g}^V(\bloop) \circ \varphi_{n,g}^V(\vartheta_n(f)^{-1}) 
 \end{equation}
 for every $\bloop \in \pi_{n,g}$. Then, for every $f \in F_g$ representing an element of $\calI(\varSigma_{g,1}) < \Mod(\varSigma_{g,1})$, Equation~\eqref{E:fundamental_identity} implies that
 \[
  \varphi_{n,g}^V(\vartheta_n(f)^{-1}) \circ \psi_g^V(f) \circ \varphi_{n,g}^V(\bloop) 
  = \varphi_{n,g}^V(\bloop) \circ \varphi_{n,g}^V(\vartheta_n(f)^{-1}) \circ \psi_g^V(f)
 \]
 for every $\bloop \in \pi_{n,g}$. Thanks to Equation~\eqref{E:AB_centralizer}, this means that
 \[
  \varphi_{n,g}^V(\vartheta_n(f)^{-1}) \circ \psi_g^V(f) \in 
  C_{\GL_{r^n}(\Z[\zeta])} \left( A_1,B_1,\ldots,A_n,B_n \right) 
  = (\Z[\zeta])^\times.
 \]
 Therefore, we can set
 \[
  \bar{\psi}_g^W(f) := [\varphi_{n,g}^V(\vartheta_n(f))] \in \PGL_{r^n}(\Z[\zeta])
 \]
 for every $f \in \calI(\varSigma_{g,1})$.
\end{proof}

Following Theorem~\ref{T:mcg_homological_projective_rep}, we obtain a projective representation
\[
 \bar{\rho}_{n,g}^W : \calI(\varSigma_{g,1}) \to \PGL_{\Z[\zeta]}(\homol_{n,g}^W)
\]
for all $n \in \N$. Then, Definition~\ref{D:fin_dim_homol} can be directly generalized to yield
\[
 \homol^{W(r)}_g \subset \homol_g^W := \bigoplus_{n \geqs 0} \homol_{n,g}^W.
\]
The proof of Theorem~\ref{T:uzeta_homological_representation} can be replicated word-by-word to show that $\homol^{W(r)}_{g}$ is closed under the projective action of $\calI(\varSigma_{g,1})$. Similarly, the $\Z[\zeta]$-linear map $\Phi_g^W : \calH_g^{W(r)} \to \ad^{\otimes g}$ determined by Equation~\eqref{E:isomorphism} defines an integral $\Z[\zeta]$-lattice $\Phi_g^W(\calH_g^{W(r)}) \subset \ad^{\otimes g}$.

\begin{corollary}\label{C:non-semisimple_TQFTs_are_integral}
 The projective action of the Torelli group $\calI(\varSigma_{g,1})$ on $\Phi_g^W(\calH_g^{W(r)}) \subset \ad^{\otimes g}$ is integral, in the sense that 
 \[
  \bar{\rho}_g^{\fraku_\zeta}(\calI(\varSigma_{g,1})) \subset \PGL_{r^g}(\Z[\zeta]) \cong \PGL_{\Z[\zeta]}(\Phi_g^W(\calH_g^{W(r)})) \subset \PGL_{\fld}(\ad^{\otimes g}).
 \]
\end{corollary}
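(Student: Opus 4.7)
The plan is to transport integrality from the homological side, where it is almost built into the construction, to the quantum side via the isomorphism $\Phi_g^V$ of Theorem~\ref{T:The_Theorem}. The crucial point is that the representation $\varphi_{n,g}^V$ of $\Z[\pi_{n,g}]$ is, by Equations~\eqref{E:A_1_1-B_1_1}--\eqref{E:A_n_n-B_n_n}, given by matrices whose entries lie in $\Z[\zeta]$, so it restricts to a $\Z[\pi_{n,g}]$-module structure on the $\Z[\zeta]$-lattice $W_g \subset V_g$. Hence $\calH_{n,g}^W$ is a natural $\Z[\zeta]$-integral sublattice of $\calH_{n,g}^V$. The same diagrammatic manipulations that prove Theorem~\ref{T:uzeta_homological_representation} show that the small-cycle submodule $\calH_g^{W(r)}$ is closed under the quantum-group operators $\calE,\calF^{(1)},\calK$, and that diffeomorphisms preserve the smallness property, since every rule in Proposition~\ref{P:computation_rules} has coefficients in $\Z[q,q^{-1}]$.

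The first step is to build an integral Torelli version of Theorem~\ref{T:mcg_homological_projective_rep}. The apparent obstacle is that Remark~\ref{R:integral_homological} shows the ``uncrossing'' homomorphism $\psi_g^V$ only lands in $\GL_{r^g}(\Z[\zeta,\tfrac{1}{r}])$, so a naive restriction of scalars would break integrality. The remedy is precisely Lemma~\ref{L:integral_Torelli}: on the Torelli subgroup, Equation~\eqref{E:Torelli_inner} implies that for every $f \in \calI(\varSigma_{g,1})$ one has $\varphi_{n,g}^V(\vartheta_n(f)^{-1}) \circ \psi_g^V(f) \in (\Z[\zeta])^\times$, so in the projective quotient the factor $\psi_g^V(f)$ may be replaced by $\varphi_{n,g}^V(\vartheta_n(f))$, which acts on $W_g$ by an honest matrix in $\GL_{r^n}(\Z[\zeta])$. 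Consequently the projective representation $\bar{\rho}_g^W$ induced by the assignment $f \mapsto (\tilde{\chain}\otimes w \mapsto (\tilde{f}^{\times n})_*(\tilde{\chain}) \otimes \varphi_{n,g}^W(\vartheta_n(f))(w))$ is well-defined on $\calI(\varSigma_{g,1})$, agrees with $\bar{\rho}_g^V$ after extension of scalars to $\fld$, and preserves $\calH_g^{W(r)}$ by the argument above, so it takes values in $\PGL_{\Z[\zeta]}(\calH_g^{W(r)})$.

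The second step is to push this through $\Phi_g^V$. Reading Equation~\eqref{E:isomorphism}, each scalar $N(\bfa,\bfb,\bfc)$ is a monomial in $\zeta$, hence lies in $\Z[\zeta]$. Therefore the restriction $\Phi_g^W$ of $\Phi_g^V$ to $\calH_g^{W(r)}$ identifies this integral homology module with a full-rank $\Z[\zeta]$-sublattice $\Phi_g^W(\calH_g^{W(r)})\subset \ad^{\otimes g}$. By Theorem~\ref{T:The_Theorem}, $\Phi_g^V$ intertwines $\bar{\rho}_g^{V(r)}$ with $\bar{\rho}_g^{\fraku_\zeta}$; restricting this intertwining relation to $\calI(\varSigma_{g,1})$ and to the sublattice $\calH_g^{W(r)}$ yields the commutative diagram
\begin{center}
 \begin{tikzpicture}[descr/.style={fill=white}]
  \node (P1) at (0,0) {$\calI(\varSigma_{g,1})$};
  \node (P2) at (4,0.75) {$\PGL_{\Z[\zeta]}(\calH_g^{W(r)})$};
  \node (P3) at (4,-0.75) {$\PGL_{\Z[\zeta]}(\Phi_g^W(\calH_g^{W(r)}))$};
  \draw
  (P1) edge[->] node[above] {\scriptsize $\bar{\rho}_g^{W(r)}$} (P2)
  (P2) edge[->] node[right] {\scriptsize $\Phi_g^W \circ \_ \circ (\Phi_g^W)^{-1}$} (P3)
  (P1) edge[->] node[below] {\scriptsize $\bar{\rho}_g^{\fraku_\zeta}|$} (P3);
 \end{tikzpicture}
\end{center}
in which the right-hand arrow is an isomorphism of integral projective linear groups. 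This exhibits $\bar{\rho}_g^{\fraku_\zeta}(\calI(\varSigma_{g,1}))$ inside $\PGL_{r^g}(\Z[\zeta])$, proving the claim.

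The genuinely hard part is not the argument itself, but making sure that the integrality of Lemma~\ref{L:integral_Torelli} is compatible with the direct-sum structure over $n\in\N$: one must verify that a single $\vartheta(f)$ can be chosen, up to projective scaling, that works uniformly across all $n$. This is automatic because the homological action in each grade $n$ is independently defined, and the projective class of $\varphi_{n,g}^W(\vartheta_n(f))$ is canonically determined by $f \in \calI(\varSigma_{g,1})$; the remaining verifications are matters of bookkeeping that require no new input beyond what has already been established.
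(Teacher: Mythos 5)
Your proposal is correct and follows essentially the same route as the paper: restrict the local system to the $\Z[\zeta]$-lattice $W_g$, use Lemma~\ref{L:integral_Torelli} to replace the non-integral uncrossing $\psi_g^V$ by the integral $\varphi_{n,g}^W(\vartheta_n(f))$ on the Torelli group, observe that the coefficients $N(\bfa,\bfb,\bfc)$ are units in $\Z[\zeta]$ so that $\Phi_g^W$ carries $\calH_g^{W(r)}$ onto a full-rank integral lattice, and transport the action through Theorem~\ref{T:The_Theorem}. The paper treats the corollary as an immediate consequence of exactly these ingredients, so nothing further is needed.
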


\begin{remark}
 In order to obtain integrality of the projective action of $\Mod(\varSigma_{g,1})$, we need to find a $\Z[\zeta]$-lattice $W'_g \subset V_g$ that is preserved by the image of $\psi_g$. Since $\psi_g$ factors through a finite group, which is only slightly bigger than a finite symplectic group, this should be a fairly reasonable task. For instance, when $g=1$ and $r=3$, we can consider the $\Z[\zeta]$-lattice $W'_1$ with basis 
 \begin{align*}
  w'_1 &:= (1-\zeta^2) v_1, & 
  w'_2 &:= (\zeta-\zeta^2) v_1 + (1-\zeta^2) v_2, & 
  w'_3 &:= \zeta^2 v_1 + \zeta v_2 + v_3.
 \end{align*}
 In this basis, we obtain $\varphi_{n,1}^{W'} : \Z[\pi_{n,1}] \to \End_{\Z[\zeta]}(W'_1)$ and $\bar{\psi}_1^{W'} : \Mod(\varSigma_{1,1}) \to \PGL_{\Z[\zeta]}(W'_1)$ satisfying
 \begin{align*}
  \varphi_{n,1}^{W'}(\braid{\alpha}{}) &=
  \begin{pmatrix}
   1 & 1-\zeta^2 & 0 \\
   0 & \zeta & -1 \\
   0 & 0 & \zeta^2
  \end{pmatrix}, &
  \varphi_{n,1}^{W'}(\braid{\beta}{}) &=
  \begin{pmatrix}
   \zeta^2 & 0 & 0 \\
   1 & 1 & 0 \\
   0 & 1-\zeta^2 & \zeta
  \end{pmatrix}, \\
  \bar{\psi}_1^{W'}(\tau_\alpha) &=
  \begin{pmatrix}
   1 & 1-\zeta^2 & \zeta \\
   0 & \zeta & \zeta^2 \\
   0 & 0 & 1
  \end{pmatrix}, &
  \bar{\psi}_1^{W'}(\tau_\beta) &=
  \begin{pmatrix}
   \zeta & 0 & 0 \\
   -\zeta & 1 & 0 \\
   -\zeta^2 & 0 & 1
  \end{pmatrix}.
 \end{align*}
 We leave the discussion of the general case, for arbitrary values of $g$ and $r$, to a future work.
\end{remark}

In some sense, the homological model for quantum representations highlights quite naturally the exact place where their integrality features emerge. These properties have been the subject of some deep investigation in the semisimple case, see \cite{G01,GM04,BCL10}.

\subsection{Generalizations: an overview}

\subsubsection{Recovering other non-semisimple quantum representations}\label{S:recovering_BCGP}

Here, we modify slightly the ring homomorphism $\varphi_{n,g}^V : \Z[\pi_{n,g}] \to \End_{\fld}(V_g)$ defined in Equation~\eqref{E:varphi}, that was used to linearize Heisenberg group coefficients. Namely, we consider a list $\formv = (s_1,t_1,\ldots,s_g,t_g)$ of $2g$ formal variables, and we denote by 
\begin{align*}
 \Z[\zeta,\formv^{\pm 1}] &:= \Z[\zeta,s_1^{\pm 1},t_1^{\pm 1},\ldots,s_g^{\pm 1},t_g^{\pm 1}], 
\end{align*}
the ring of Laurent polynomials in these variables. If, for all $1 \leqs j \leqs g$, we set
\begin{align*}
 A_j^\formv &:= s_j A_j, &
 B_j^\formv &:= t_j B_j,
\end{align*}
where the matrices $A_j$ and $B_j$ were recursively defined in Equations~\eqref{E:A_1_1-B_1_1}--\eqref{E:A_n_n-B_n_n}, then we can again endow the free $\Z[\zeta,\formv^{\pm 1}]$-module $W_g^\formv := (\Z[\zeta,\formv^{\pm 1}])^{r^g}$ with a left $\Z[\pi_{n,g}]$-module structure determined by the $\Z[\zeta,\formv^{\pm 1}]$-linear representation
\begin{align}
 \varphi_{n,g}^\formv : \Z[\pi_{n,g}] & \to \End_{\Z[\zeta,\formv^{\pm 1}]}(W_g^\formv) \label{E:varphi_omega} \\*
 \braid{\sigma}{i} & \mapsto -\zeta^{-2} I_{r^g} \nonumber \\*
 \braid{\alpha}{j} & \mapsto A_j^\formv \nonumber \\*
 \braid{\beta}{j} & \mapsto B_j^\formv \nonumber 
\end{align}
This is a well-defined homomorphism because the matrices 
\[
 A_1^\formv, B_1^\formv, \ldots, A_g^\formv, B_g^\formv \in M_{r^g \times r^g}(\Z[\zeta,\formv^{\pm 1}])
\]
still satisfy Equation~\eqref{E:AB_zeta-commute}. They also satisfy (the analogue of) Equation~\eqref{E:AB_centralizer}, but notice that they no longer satisfy Equation~\eqref{E:AB_order_r}. We define for every integer $n \geqs 1$ the \textit{$n$th modular Heisenberg homology group} of $(\varSigma_{g,1},\formv)$ as
\[
 \homol_{n,g}^\formv := H^\BM_n(X_{n,g},Y_{n,g};\phi_{n,g}^\formv).
\]
Notice that $\homol_{n,g}^\formv$ is again a free $\Z[\zeta,\formv^{\pm 1}]$-module with exactly the same basis of Remark~\ref{R:infinite_linear_basis}. Just like in Lemma~\ref{L:integral_Torelli}, we can use \cite[Proposition~28]{BPS21} to define a homomorphism 
\[
 \bar{\psi}_g^\formv : \calI(\varSigma_{g,1}) \to \PGL_{\Z[\zeta,\formv^{\pm 1}]}(W_g^\Z[\zeta,\formv^{\pm 1}]).
\]
Then, following Theorem~\ref{T:mcg_homological_projective_rep}, we obtain a projective representation
\begin{equation}\label{E:The_future_of_Torelli}
 \bar{\rho}_n^\formv : \calI(\varSigma_{g,1}) \to \PGL_{\Z[\zeta,\formv^{\pm 1}]}(\homol_{n,g}^\formv)
\end{equation}
for all $n \in \N$. Once again, Definition~\ref{D:fin_dim_homol} can be directly generalized to yield
\[
 \homol^{\formv(r)}_g \subset \homol_g^\formv := \bigoplus_{n \geqs 0} \homol_{n,g}^\formv.
\]
The proof of Theorem~\ref{T:uzeta_homological_representation} can be replicated word-by-word to show that $\homol^{\formv(r)}_{g}$ is closed under the projective action of $\calI(\varSigma_{g,1})$.

For every $\alpha \in \bbC$, let us set
\[
 \zeta^\alpha := e^{\frac{2 \alpha \pi i}{r}}.
\]
Then, every cohomology class $\cohom \in H^1(\varSigma_{g,1};\bbC) \cong \Hom_\Z(H_1(\varSigma_{g,1}),\bbC)$ determines a ring homomorphism
\begin{align*}
 \Z[\zeta,\formv^{\pm 1}] &\to \bbC \\*
 s_j &\mapsto \zeta^{\cohom(\alpha_j)} \\*
 t_j &\mapsto \zeta^{\cohom(\beta_j)}
\end{align*}
where $\alpha_j$ and $\beta_j$ denote the homology classes of the usual simple closed curves, meaning those considered in Section~\ref{S:homological_representations_of_mcg}, and $\zeta^\cohom = (\zeta^{\cohom(\alpha_1)},\zeta^{\cohom(\beta_1)},\ldots,\zeta^{\cohom(\alpha_g)},\zeta^{\cohom(\beta_g)}) \in \bbC^{2g}$. If we set
\[
 \homol^{\cohom(r)}_g := \homol^{\formv(r)}_g \otimes_{\Z[\zeta,\formv^{\pm 1}]} \bbC,
\]
then we obtain a projective representation 
\[
 \bar{\rho}_g^{\cohom(r)} : \calI(\varSigma_{g,1}) \to \PGL_{\bbC} \left( \homol^{\cohom(r)}_g \right).
\]
For the zero cohomology class $0 \in H^1(\varSigma_{g,1};\bbC)$, the projective representation $\bar{\rho}_g^{0(r)}$ coincides with the restriction of $\bar{\rho}_g^{V(r)}$ in Theorem~\ref{T:uzeta_homological_representation}, and can therefore be extended to the whole mapping class group $\Mod(\varSigma_{g,1})$. The interpretation is that diffeomorphisms also act on cohomology classes, and only those that fix $\cohom$ are assigned matrices. However, the cohomology class $0$ is fixed by all diffeomorphisms.

In general, $\bar{\rho}_g^{\cohom(r)}$ is a representation of the Torelli group $\calI(\varSigma_{g,1})$ that depends on the choice of a cohomology class $\cohom \in H^1(\varSigma_{g,1};\bbC)$, and we ask the following.

\begin{question}\label{Q:recovering_BCGP}
 Does the representation $\bar{\rho}_g^{\cohom(r)}$ coincide with a quantum representation arising from a non-semisimple TQFT?
\end{question}

As a candidate, we suggest a graded TQFT similar to the one constructed by Blanchet, Costantino, Geer, and Patureau in \cite{BCGP14}. Indeed, the \textit{unrolled quantum group} $U^H_\zeta = U^H_\zeta \fsl_2$ at the odd root of unity $\zeta$ induces a graded TQFT, as follows from \cite[Section~6.2]{D17} and \cite[Theorem~1.3]{DGP18}. We would need to extend this graded TQFT to the category $\adCob_{U^H_\zeta}$ of \textit{admissible} cobordisms decorated not only with cohomology classes (with coefficients in $\bbC/2\Z$), but also with \textit{bichrome graphs} (with labels in $\mods{U^H_\zeta}$) like those considered in Appendix~\ref{A:TQFT}. This corresponds to the approach of \cite{GHP20} to the invariants of \cite{CGP12}. Although this graded TQFT, that we will denote by $\bbV_{U^H_\zeta}$ here, has not appeared in the literature yet, all the ingredients for its construction are already available. Then, let us consider the closed surface $\varSigma_{g,1} \cup_{S^1} D^2$ of genus $g$ obtained from $\varSigma_{g,1}$ by gluing a disc $D^2$ along its boundary, and let $P$ denote the center of $D^2$. We also consider a finite-dimensional weight $U^H_\zeta$-module $\fraku_\zeta^H$ lifting the regular representation of $\fraku_\zeta$ (determined by left multiplication of $\fraku_\zeta$ onto itself). Then, we denote by $P_{(-,\fraku_\zeta^H)}$ the decoration of $D^2$ obtained by giving $P$ negative orientation and label $\fraku_\zeta^H$.

\begin{conjecture}\label{C:BCGP_conjecture}
 For every cohomology class $\cohom \in H^1(\varSigma_{g,1};\bbC)$, if $\bar{\cohom} \in H^1(\varSigma_{g,1} \cup_{S^1} (D^2 \smallsetminus P);\bbC/2\Z) \cong H^1(\varSigma_{g,1};\bbC/2\Z)$ denotes its image, then the projective representation
 \[
  \bar{\rho}_g^{\cohom(r)} : \calI(\varSigma_{g,1}) \to \PGL_{\Q[\zeta,\zeta^\cohom]} \left( \homol^{\cohom(r)}_g \right)
 \]
 is isomorphic to the projective representation
 \[
  \bar{\rho}_g^{U^H_\zeta} : \calI(\varSigma_{g,1}) \to \PGL_\bbC \left( \bbV_{U^H_\zeta}(\varSigma_{g,1} \cup_{S^1} D^2,P_{(-,\fraku_\zeta^H)},\bar{\cohom}) \right)
 \]
 induced from the non-semisimple graded TQFT $\bbV_{U^H_\zeta}$.
\end{conjecture}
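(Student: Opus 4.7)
The plan is to generalize the strategy of Theorem~\ref{T:The_Theorem} to the cohomologically twisted setting, treating the original theorem as the special case $\cohom = 0$ and the general case as a one-parameter deformation of it. The first step is to pin down the quantum side of the proposed isomorphism. Since the admissible graded TQFT $\bbV_{U^H_\zeta}$ for bichrome-graph–decorated cobordisms has not been fully written down, I would first need to establish its existence, its factorization property on state spaces, and an explicit diagrammatic calculus in the style of Section~\ref{S:quantum_diagrammatic_calculus}, combining \cite{D17,DGP18} with the bichrome approach of \cite{GHP20}. Once this is in place, the expected identification is
\[
\bbV_{U^H_\zeta}\!\left(\varSigma_{g,1} \cup_{S^1} D^2,\, P_{(-,\fraku_\zeta^H)},\, \bar{\cohom}\right) \;\cong\; \bigotimes_{j=1}^{g} \ad^{\cohom_j},
\]
where $\ad^{\cohom_j}$ is a twisted weight version of the adjoint representation of $U^H_\zeta$ whose Cartan weights are shifted by the pair $(\cohom(\alpha_j), \cohom(\beta_j))$. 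This should follow from a standard handlebody decomposition of $\varSigma_{g,1}$ into one-holed tori plus the duality between the negatively oriented puncture labeled by $\fraku_\zeta^H$ and the end of the adjoint-representation construction $\int^X X \otimes X^*$ recalled in Section~\ref{S:Quantum_adjoint_rep}.

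The second step is to construct an explicit candidate intertwiner $\Phi_g^\cohom$ from $\homol_g^{\cohom(r)}$ to this twisted tensor product. I would mimic formula~\eqref{E:isomorphism} diagonally on the basis of Remark~\ref{R:infinite_linear_basis}, but with the scalars $N(\bfa,\bfb,\bfc)$ upgraded to polynomials in $\zeta^{\cohom(\alpha_j)}$ and $\zeta^{\cohom(\beta_j)}$, reflecting the deformation $A_j \leadsto A_j^{\formv}$, $B_j \leadsto B_j^{\formv}$ of the local system of Section~\ref{S:recovering_BCGP}. The key observation is that the diagrammatic calculus of Proposition~\ref{P:computation_rules} is insensitive to the rescaling $\alpha_j \mapsto s_j \alpha_j$, $\beta_j \mapsto t_j \beta_j$ at the level of $\Z[\dHeis_g]$, so all of Section~\ref{S:homological_computations} carries over verbatim modulo tracking $s_j, t_j$; in particular, Lemma~\ref{L:homological_adjoint} gives a $\Z[\zeta,\formv^{\pm 1}]$-linear action that specializes to the one needed to match the unrolled adjoint action on each $\ad^{\cohom_j}$.

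The third step is to check the intertwining property for the Torelli group action. On the homological side, the existence of $\bar{\rho}_g^{\cohom(r)}$ as defined in Equation~\eqref{E:The_future_of_Torelli} already reduces to computing Lemmas~\ref{L:homological_alpha}--\ref{L:homological_gamma} with the perturbed uncrossing morphism $\psi_g^\formv$. The resulting Gauss-type sums $\sum_\ell \zeta^{-2\ell(\ell-1)}(s_j A_j)^\ell$ no longer give a genuine root of unity relation, but rather produce the shifted theta-function-like coefficients that, on the quantum side, arise from surgery along bichrome-graph components in $\bbV_{U^H_\zeta}$. To finish, one replays Proposition~\ref{P:quantum_action_of_mcg_generators} in the graded setting: mapping cylinders of Torelli elements (conjugates of $\twist{\gamma}{k}$ and bounding-pair maps) admit surgery presentations by bichrome links of total cohomology class $0$, which keeps them admissible for any $\bar{\cohom}$, and the resulting bead-calculus computation should agree term by term with the deformed homological formula after applying $\Phi_g^\cohom$.

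The main obstacle is precisely the first step: without a fully developed admissible graded TQFT $\bbV_{U^H_\zeta}$, the right-hand side of the isomorphism is still only conjectural as a functorial object, and in particular the identification of the state space with $\bigotimes_j \ad^{\cohom_j}$ requires a factorization theorem that is not in the literature. A secondary, more technical difficulty is that the homological proof of Corollary~\ref{C:bar_psi} used Equation~\eqref{E:AB_centralizer}, which holds for $A_j^\formv, B_j^\formv$ but only over the field of fractions of $\Z[\zeta,\formv^{\pm 1}]$; one must either restrict to a generic locus in $\cohom$ or verify that the projective ambiguity stays in $\Z[\zeta,\zeta^\cohom]^\times$. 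Assuming these foundational points are granted, the computations of Sections~\ref{S:homological_computations} and~\ref{S:quantum_computations} depend polynomially on the deformation parameters, so Theorem~\ref{T:The_Theorem} provides both the initial condition at $\cohom = 0$ and, via a Zariski density argument in $\Hom(H_1(\varSigma_{g,1}),\bbC)$, a strategy for promoting the $\cohom = 0$ identity to the generic $\cohom$ case and then to all of $H^1(\varSigma_{g,1};\bbC)$.
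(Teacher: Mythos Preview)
This statement is labeled a \emph{Conjecture} in the paper, and the paper does not prove it. Immediately after stating it, the authors write that proving it ``following the same strategy used for Theorem~\ref{T:The_Theorem} would require constructing the appropriate TQFT associated with $U^H_\zeta$, that does not exist in the literature yet,'' and that ``generators of the Torelli group are more complicated than single Dehn twists,'' so they ``postpone more detailed computations.'' There is therefore no proof in the paper to compare against.

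Your proposal is not a proof either, and you are candid about this: it is a strategy sketch whose main steps you yourself flag as conditional. What is worth noting is that your diagnosis of the obstacles matches the paper's exactly. You identify the missing foundational input (the graded TQFT $\bbV_{U^H_\zeta}$ with a factorization theorem for state spaces) as the principal gap, and you note the difficulty with Torelli generators. The paper singles out the same two issues. Your additional remarks---the polynomial dependence on the deformation parameters $s_j,t_j$, the idea of treating $\cohom=0$ as an initial condition and extending by Zariski density, and the subtlety that Equation~\eqref{E:AB_centralizer} only survives over the field of fractions---go somewhat beyond what the paper sketches and are reasonable heuristics, but none of them can be executed until the quantum side is actually built. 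In short: your plan is aligned with the authors' own stated expectations, and it correctly names the genuine missing ingredient; but as with the paper itself, nothing here constitutes a proof.
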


The homological action of the Torelli group still intertwines an action of $\bar{U}_\zeta$, like in Theorem~\ref{T:Uq_homological_representation}, while, on the quantum side, this property would only follow naturally from a graded version of the construction of Kerler and Lyubashenko. Indeed, the universal construction of \cite{BHMV95} required by the use of modified traces typically hides this kind of behavior.

Furthermore, for every cohomology class $\cohom \in H^1(\varSigma_{g,1};\bbC)$, a basis of $\homol^{\cohom}_g$ is given by Remark~\ref{R:infinite_linear_basis}. We can compute actions of elements of the Torelli group in this basis by applying the same techniques used in Section~\ref{S:homological_mcg_computation}. The result is a representation of $\calI(\varSigma_{g,1})$ of dimension $r^{3g}$, with polynomial dependence on the list of $2g$ parameters $(\cohom(\alpha_1),\cohom(\beta_1),\ldots,\cohom(\alpha_g),\cohom(\beta_g))$. One could obtain explicit matrices for images of generators under \eqref{E:The_future_of_Torelli}, or try to apply Bigelow's strategy to this representation, taking advantage of polynomial coefficients. This very promising for studying faithfulness of $\bar{\rho}_n^{\formv}$ (already for $n=2$), that would imply linearity of Torelli groups. Conjecture~\ref{C:BCGP_conjecture} would then provide explicit bases for state spaces of non-semisimple TQFTs associated with the unrolled quantum group $U^H_\zeta \fsl_2$ for all possible choices of cohomology classes (at least for the decorated surfaces appearing in the statement), thus yielding matrices depending on $2g$ parameters. Taking limits in these parameters would also be possible. This would raise the following natural question: can we to construct homologically a TQFT that associates to every surface $\varSigma_{g,1}$ the total modular Heisenberg homology $\homol_g^\formv$ (possibly only for the category of Lagrangian cobordisms of \cite[Section~1.1]{CHM07}, which is equivalent to the category of bottom tangles in homology handlebodies of \cite[Section~14.4]{H05})? As in Corollary~\ref{C:non-semisimple_TQFTs_are_integral}, such a construction would naturally be integral, and a positive answer to Conjecture~\ref{C:BCGP_conjecture} would transfer these integrality properties to the quantum representations arising from the construction of \cite{BCGP14}.

Proving Conjecture~\ref{C:BCGP_conjecture} following the same strategy used for Theorem~\ref{T:The_Theorem} would require constructing the appropriate TQFT associated with $U^H_\zeta$, that does not exist in the literature yet. Furthermore, generators of the Torelli group are more complicated than single Dehn twists. For these two reasons, we decided to postpone more detailed computations.

\subsubsection{Linear mapping class group representations}\label{S:unprojectivize}

In Theorem~\ref{T:mcg_homological_projective_rep} we defined homological representations of mapping class groups, which we identified with quantum representations arising from non-semisimple TQFTs in Theorem~\ref{T:The_Theorem}. It is a well-known fact that TQFTs associated with small quantum groups at roots of unity only yield projective representations of mapping class groups. Here, we explain a standard natural way for obtaining honest linear representations from these projective ones. Then, in the particular case of the homological representations of \ref{T:mcg_homological_projective_rep}, we also give a homological method for doing the same. Both methods increase the dimension of the representation, but we only pay attention to the fact that, if the projective representation is faithful, then so is the linear one.

We begin with an algebraic method that applies to any projective representation
\[
 \rho: G \to \PGL_n(\bbC),
\]
where $G$ is a group and $n \geqs 1$ is an integer. Indeed, for all $A \in \PGL_n(\bbC)$ and $B \in M_{n \times n}(\bbC)$, the adjoint action
\[
 \ad_A(B) := ABA^{-1}
\]
is well-defined, since rescaling $A$ does not affect the operation of conjugation. This induces a linear representation
\begin{align*}
 \ad : \PGL_n(\bbC) &\to \GL_\bbC(M_{n \times n}(\bbC)) \\*
 A &\mapsto \ad_A
\end{align*}
and since the center of $\PGL_n(\bbC)$ is trivial, $\ad$ is injective. Therefore, if $\rho$ is injective, so is $\ad \circ \rho$.

\begin{corollary}
 If a projective representation of a group is faithful, the group is linear.
\end{corollary}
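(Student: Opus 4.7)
The proof is essentially already outlined in the paragraph immediately preceding the corollary, so the plan is short and direct. The goal is to upgrade a faithful projective representation $\rho : G \to \PGL_n(\bbC)$ into a faithful linear representation of $G$ into some matrix group $\GL_N(\bbC)$.

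First, I would invoke the injective homomorphism
\[
 \ad : \PGL_n(\bbC) \to \GL_\bbC(M_{n \times n}(\bbC))
\]
defined just before the statement, which sends an equivalence class $[A]$ to the conjugation operator $\ad_A(B) = ABA^{-1}$. This is well-defined because rescaling $A$ by a non-zero scalar does not change $ABA^{-1}$, and it is a group homomorphism because $\ad_{AA'} = \ad_A \circ \ad_{A'}$. Its injectivity is equivalent to triviality of the center of $\PGL_n(\bbC)$, which is a standard fact: if $ABA^{-1} = B$ for every $B \in M_{n \times n}(\bbC)$, then $A$ commutes with all matrices, hence is a scalar, so $[A] = [I_n]$.

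Second, I would simply compose: since $\rho$ is injective by hypothesis and $\ad$ is injective by the observation above, the map
\[
 \ad \circ \rho : G \to \GL_\bbC(M_{n \times n}(\bbC)) \cong \GL_{n^2}(\bbC)
\]
is an injective group homomorphism. Choosing any $\bbC$-basis of $M_{n \times n}(\bbC)$ identifies the target with $\GL_{n^2}(\bbC)$, exhibiting $G$ as a subgroup of a linear group over $\bbC$, which is exactly the definition of linearity.

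There is no real obstacle here; the only substantive point is the triviality of the center of $\PGL_n(\bbC)$, which is classical. I would keep the writeup to two or three sentences, referring back to the injectivity of $\ad$ established in the preceding paragraph.
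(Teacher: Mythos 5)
Your proposal is correct and follows exactly the paper's argument: the corollary is deduced by composing the faithful projective representation with the adjoint (conjugation) action $\ad : \PGL_n(\bbC) \to \GL_\bbC(M_{n \times n}(\bbC))$, whose injectivity follows from the triviality of the center of $\PGL_n(\bbC)$. Nothing is missing.
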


In the special case of the homological representations of Theorem~\ref{T:mcg_homological_projective_rep}, there is a homological way of obtaining linear representations. For instance, in \cite{DM22} it is shown that a homological representation of a subgroup of $\Mod(\varSigma_{g,1})$ on the homology of $X_{n,g}$ with twisted coefficients in $\Z[G_{n,g}]$, for a given quotient $G_{n,g}$ of $\pi_{n,g}$, naturally extends to an action of the whole $\Mod(\varSigma_{g,1})$ on the homology of $X_{n,g}$ with twisted coefficients in $\Z[G_{n,g} \rtimes \Aut(G_{n,g})]$. Since all generators of $\pi_{n,g}$ are sent to finite order matrices by $\phi_{n,g}^V$, this means that $\phi_{n,g}^V$ factors through a finite quotient $G_{n,g}$ of $\pi_{n,g}$. Namely, there exists a commutative diagram
\begin{center}
 \begin{tikzpicture}[descr/.style={fill=white}]
  \node (P1) at (-1.5,1) {$\Z[\pi_{n,g}]$};
  \node (P2) at (1.5,1) {$\End_\bbC(V_g)$};
  \node (P3) at (0,0) {$\Z[G_{n,g}]$};
  \draw
  (P1) edge[->] node[above] {\scriptsize $\varphi_{n,g}^V$} (P2)
  (P1) edge[->>] (P3)
  (P3) edge[right hook->] (P2);
 \end{tikzpicture}
\end{center}
where $G_{n,g}$ is a finite group. It turns out that, at least for $n \geqs 2$, the group $G_{n,g}$ is isomorphic to the finite Heisenberg group $\bbH_g(\Z/r\Z)$, which is a linear group. In particular, $G_{n,g} \rtimes \Aut(G_{n,g})$ is linear too, by considering the induced representation. This yields a linear representation of the whole mapping class group $\Mod(\varSigma_{g,1})$. It is not the case for $\phi_{n,g}^\cohom$, when $\cohom$ is not the zero cohomology class. Computing by how much the dimension increases would tell us whether this homological method is less expensive than the algebraic one, or not.

\subsubsection{Other roots of unity}

In this paper, we have performed constructions and computations only for $\zeta = e^{\frac{2 \pi i}{r}}$, with $r \geqs 3$ odd. Both the homological and the quantum construction can be carried out for even roots of unity too, although computations and explicit formulas would change slightly. These variants are not discussed here, but Theorem~\ref{T:The_Theorem} clearly admits an analogue version for even roots of unity.

\subsubsection{Irreducibility of absolute representations}

The $U_q$-module structure on $\homol_g^{\dHeis}$ given by Theorem~\ref{T:Uq_homological_representation} is new, and has coefficients in $\Z[\dHeis_g]$. What is particularly interesting is that $\homol_g^{\dHeis}$ also carries an action of (a subgroup of) the mapping class group $\Mod(\varSigma_{g,1})$ which intertwines this action of $U_q$. Furthermore, this can be specialized and subsequently extended to a projective action of $\Mod(\varSigma_{g,1})$ that recovers the quantum representation arising from the non-semisimple TQFT associated with the small quantum group $\fraku_\zeta \fsl_2$, thanks to Theorem~\ref{T:The_Theorem}. We also conjecture that its deformations are related to other non-semisimple TQFTs associated with the unrolled quantum group $U^H_\zeta \fsl_2$, see Conjecture~\ref{C:BCGP_conjecture}. We find similarities between the structure of the $U_q$-module $\homol_g^{\dHeis}$ and that of a tensor product of universal Verma modules, see for instance \cite[Definition~5.8]{M20} (although coefficients here are extended to $\Z[\dHeis_g]$). Jackson and Kerler have studied the action of braid groups over tensor products of universal Verma modules, and they have decomposed these braid group representations into irreducible summands \cite{JK09}. Each irreducible summand is generated, over the braid group, by a highest weight vector for the action of $U_q$, namely by an eigenvector for the action of $K$ which lies in the kernel of the action of $E$. Since the homological action of $E$ is implemented by the connection homomorphism of a long exact sequence of a triple (see Definition~\ref{D:homological_E}), highest weight vectors come from absolute homology classes (or from homology classes with smaller relative part, see the long exact sequence just above Defnition~\ref{D:homological_E}). We hope that the techniques of Jackson and Kerler for decomposing tensor products of Verma modules into irreducible summands for the action of braid groups might shed light onto the action of $\Mod(\varSigma_{g,1})$ on $\homol_g^{\dHeis}$, and that irreducible subrepresentations might be described in terms of absolute homology groups (or homology groups with smaller relative part), as it was done in \cite[Corollary~7.1]{M20}.

\subsubsection{Other Lie algebras}

Theorem~\ref{T:The_Theorem} is an extension of results from \cite{M20}, where the braiding of $U_q \fsl_2$ on tensor products of universal Verma modules was recovered from the twisted homology of configuration spaces of punctured discs. Here, on the other hand, we recover homologically quantum representations of mapping class groups of positive genus surfaces extracted from a non-semisimple TQFT built from $\fraku_\zeta \fsl_2$. In a joint work in progress of the second author with S.~Bigelow, the results of \cite{M20} are generalized to the quantum group $U_q \frakg$ associated with a simple Lie algebra $\frakg$. This requires a decorated version of configuration spaces involving points labeled by simple roots of $\frakg$, and the result recovers quantum representations of braid groups associated with $U_q \frakg$. In the case of surfaces, we would need to adapt the homomorphism $\varphi_{n,g}^{\dHeis}$ to these new configuration spaces labeled by roots, in order to obtain an analogue of Theorem~\ref{T:The_Theorem} for every simple Lie algebra $\frakg$.

\appendix

\section{Homological preliminaries}

In this appendix, we recall the tools required for the definition of homological representations of mapping class groups. Throughout, $X$ will always denote a connected topological space, and $R$ will always denote a ring.

\subsection{Twisted homology with local coefficients}\label{A:twisted}

Let $X$ be semi-locally simply connected. Let us denote by $\tilde{p} : \tilde{X} \to X$ its universal cover, and by $\pi := \pi_1(X,\xi)$ its fundamental group with respect to a base point $\xi \in X$. By definition, every point $\tilde{x} \in \tilde{X}$ is the homotopy class of a path $\tilde{x} : [0,1] \to X$ with $\tilde{x}(0) = \xi$, and the left action of $\gamma \in \pi$ on $\tilde{x} \in \tilde{X}$ by deck transformation is the homotopy class $\gamma \cdot \tilde{x} \in \tilde{X}$ of the concatenation of paths $\gamma \ast \tilde{x} : [0,1] \to X$. This is a left action because, in our conventions, $\alpha \ast \beta$ stands for ``first $\alpha$, then $\beta$''. The complex $C_\ast(\tilde{X})$ (where the absence of coefficients stands for $\Z$-coefficients) is thus naturally endowed with a left $\Z[\pi]$-module structure, where $\Z[\pi]$ denotes the group ring of $\pi$. Let $M$ be a left $\Z[\pi]$-module, with action of $\Z[\pi]$ determined by a homomorphism
\[
 \varphi^M : \Z[\pi] \to \End(M).
\]
The \textit{homology of $X$ with local coefficients in $M$ twisted by $\varphi^M$}, or simply the \textit{twisted homology of $X$}, is defined as the homology $H_\ast(X;\varphi^M)$ of the complex
\[
 C_\ast(X;\varphi^M) := C_\ast(\tilde{X}) \otimes_{\Z[\pi]} M,
\]
where the right $\Z[\pi]$-module structure on $C_\ast(\tilde{X})$ is determined by
\[
 \tilde{\chain} \cdot \bloop := \bloop^{-1} \cdot \tilde{\chain}
\]
for all $\tilde{\chain} \in C_*(\tilde{X})$ and $\bloop \in \pi$. Similarly, if $Y \subset X$ is a subspace, then the \textit{twisted homology of $X$ relative to  $Y$} is defined as the homology $H_\ast(X,Y;\varphi^M)$ of the complex
\[
 C_*(X,Y;\varphi^M) := C_*(\tilde{X},\tilde{p}^{-1}(Y)) \otimes_{\Z[\pi]} M.
\]

\begin{remark}\label{R:why_we_conjugate}
 The reader should be careful when deriving the computation rules of Proposition~\ref{P:computation_rules}, since moving a $\Z[\pi]$-coefficient from the left tensor factor to the right one involves taking inverses, meaning that
 \[
  \left( \bloop \cdot \tilde{\chain} \right) \otimes m = 
  \tilde{\chain} \otimes \varphi^M(\bloop^{-1})(m)
 \]
 for all $\bloop \in \pi$, $\tilde{\chain} \in C_*(\tilde{X},\tilde{p}^{-1}(Y))$, and $m \in M$.
\end{remark}

We refer the reader to \cite[Section~5.1]{DK01} or \cite[Section~2.1]{Ma} for an introduction to twisted homology. Notice that, if $M$ is an $R$-module and $\varphi^M$ takes values in the group of $R$-module endomorphisms $\End_R(M)$ (which means $M$ is a $(\Z[\pi],R)$-bimodule), then $H_\ast(X,Y;\varphi^M)$ is naturally an $R$-module.

If $G$ is a group, and $\varphi^G : \Z[\pi] \to \Z[G]$ is a ring homomorphism, then we abuse notation, and denote by
\[
 \varphi^G : \Z[\pi] \to \End_{\Z[G]}(\Z[G])
\]
also the associated representation induced by left multiplication, meaning that
\[
 \varphi^G(\bloop)(g) = \varphi^G(\bloop)g
\]
for all $\bloop \in \pi$ and $g \in G$ (notice that $\varphi^G(\bloop)$ is $\Z[G]$-linear with respect to right multiplication). In particular, we use the notation $H_\ast(X,Y;\varphi^G)$ every time we have a ring homomorphism $\varphi^G : \Z[\pi] \to \Z[G]$. Then, if $K \triangleleft \pi$ is a normal subgroup, and if $\varphi^{\pi/K} : \Z[\pi] \to \Z[\pi/K]$ is induced by the projection to the quotient, we have
\[
 H_\ast(X,Y;\varphi^{\pi/K}) \cong H_\ast(\hat{X},\hat{p}^{-1}(Y)),
\]
where $\hat{p} : \hat{X} \to X$ denotes the regular cover associated with $K$, see \cite[Section~2.1, Example~1.(3)]{Ma}. Therefore, $H_\ast(X,Y;\varphi^{\pi/K})$ is endowed with a natural $\Z[\pi/K]$-module structure induced by the deck transformation group of $\hat{p} : \hat{X} \to X$, which coincides with $\pi/K$. We point this out because the twisted homology groups studied in this paper are very closely related (with only minor differences) to those considered in \cite{DM22}, where the theory of regular covers is extensively exploited in order to derive formulas that could be used to describe the mapping class group representations constructed here.

Let us finish this section with a remark that is used several times in the paper, and which can be found in \cite[Section~2.1, Lemme~6]{Ma}.

\begin{remark}\label{R:twisted_universal_coeff_free}
 The universal coefficient theorem for twisted homology usually involves a spectral sequence, rather than the short exact sequence appearing in the untwisted case. However, if $C_\ast$ is a complex of $R$-modules whose homology is free over $R$, and $M$ is an $R$-module, then we have a natural isomorphism
 \[
  H_\ast(C_\ast \otimes_R M) \cong H_\ast(C_\ast) \otimes_R M.
 \]
\end{remark}

\subsection{Borel--Moore homology}\label{A:Borel--Moore}

Let $X$ be locally compact. The \textit{Borel--Moore homology of $X$} is defined as the homology $H^\BM_\ast(X)$ of the complex
\[
 C^\BM_\ast(X) := \varprojlim_{K \in \calK(X)} C_\ast(X,X \smallsetminus K),
\]
where $\calK(X)$ denotes the set of compact subspaces of $X$, equipped with the partial order induced by inclusion. If $Y \subset X$ is a subspace, then the \textit{Borel--Moore homology of $X$ relative to $Y$} is defined as the homology $H^\BM_\ast(X,Y)$ of the complex
\[
 C^\BM_\ast(X,Y) := \varprojlim_{K \in \calK(X)} C_\ast(X,Y \cup (X \smallsetminus K)).
\]
For a homomorphism $\varphi^M : \Z[\pi] \to \End(M)$ inducing a $\Z[\pi]$-module structure on $M$, the \textit{twisted Borel--Moore homology of $X$ relative to $Y$} is defined as the homology $H^\BM_\ast(X,Y;\varphi^M)$ of the complex
\[
 C^\BM_\ast(X,Y;\varphi^M) := \varprojlim_{K \in \calK(X)} C_\ast(\tilde{X},\tilde{p}^{-1}(Y \cup (X \smallsetminus K))) \otimes_{\Z[\pi]} M.
\]

\begin{remark}\label{R:twisted_universal_coeff_free_BM}
 Notice that Remark~\ref{R:twisted_universal_coeff_free} implies that, if $\varphi^\pi : \Z[\pi] \to \Z[\pi]$ denotes the identity, and if the homology of every complex appearing in the inverse limit defining $C^\BM_\ast(X,Y;\varphi^\pi)$ is free over $\Z[\pi]$, then
 \[
  H^\BM_\ast(X,Y;\varphi^M) \cong H^\BM_\ast(X,Y;\varphi^\pi) \otimes_{\Z[\pi]} M.
 \]
\end{remark}

\subsection{Group actions on twisted homology}\label{A:twisted_homological_action_of_mcg}

Let $X$ be an oriented manifold. If $\Diff(X)$ denotes the group of orientation-preserving self-diffeomorphisms of $X$ fixing the boundary $\partial X$ point-wise, and if $\Diff_0(X)$ denotes the normal subgroup of those diffeomorphisms which are isotopic to the identity (trough an isotopy fixing $\partial X$ point-wise), then the \textit{mapping class group of $X$} is the group
\[
 \Mod(X) := \Diff(X) / \Diff_0(X).
\]

The mapping class group $\Mod(X)$ usually acts naturally on the homology of $X$. However, within the framework of twisted homology, this is no longer true, or at least the action is no longer completely natural. A topological interpretation of these issues is exposed in detail in \cite{DM22}. We explain here below how to define actions of diffeomorphisms on twisted homology that are adapted to our present needs.

We first recall that, if $\tilde{p} : \tilde{X} \to X$ denotes the regular cover, then every diffeomorphism $f$ of $X$ fixing the base point $\xi \in X$ can be uniquely lifted (by lifting $f \circ \tilde{p}$) to a diffeomorphism $\tilde{f}$ of the universal cover $\tilde{X}$ fixing the base point $\tilde{\xi} \in \tilde{X}$ given by the homotopy class of the constant path based at $\xi$. Let us assume from now on that $\partial X \neq \varnothing$, so that we can consider a base point $\xi \in \partial X$ and a subspace $Y \subset \partial X$. In particular, every element of $\Diff(X)$ fixes both $\xi$ and $Y$.

If $F$ is a group, every homomorphism $\chi : F \to \Mod(X)$ lifts to a homomorphism $\tilde{\chi} : F \to \Mod(\tilde{X})$, and we denote by
\begin{align*}
 \chi_* : F &\to \Aut(\Z[\pi]), & 
 \tilde{\chi}_* : F &\to \Aut(H^\BM_\ast(X,Y;\varphi^\pi))
\end{align*}
the associated homorphisms induced by the actions of $\Mod(X)$ and $\Mod(\tilde{X})$ on $\Z[\pi]$ and $H^\BM_\ast(X,Y;\varphi^\pi)$, respectively. We also define
\[
 \calM^M := \left\{ f \in F \mid \varphi^M \circ \chi_*(f) = \varphi^M \right\}.
\]

\begin{proposition}\label{P:specializing}
 Let $\chi : F \to \Mod(X)$ be a homomorphism and $M$ be an $R$-module equipped with a representation $\varphi^M : \Z[\pi] \to \End_R(M)$. If $H^\BM_\ast(X,Y;\varphi^\pi)$ is a free $\Z[\pi]$-module, then:
 \begin{enumerate}
  \item There exists an $R$-linear action of $\calM^M$ on $H^\BM_\ast(X,Y;\varphi^M)$ given by the homomorphism
 \[
  \rho^M : \calM^M \to \GL_R(H^\BM_\ast(X,Y;\varphi^M))
 \]
 defined by 
 \[
  \rho^M(f)(\tilde{\chain} \otimes m) = \tilde{\chi}_*(f)(\tilde{\chain}) \otimes m
 \]
 for all $f \in \calM^M$, $\tilde{\chain} \in H^\BM_\ast(X,Y;\varphi^\pi)$, and $m \in M$;
 \item If $M$ is also equipped with a representation $\psi : F \to \GL_R(M)$ satisfying
 \begin{equation}
  \psi(f) \circ \varphi^M(\bloop) = \varphi^M(\chi_*(f)(\bloop)) \circ \psi(f) \label{E:f_gamma}
 \end{equation}
 for all $f \in F$ and $\bloop \in \pi$, then the $R$-linear action of $\calM^M$ extends to an $R$-linear action of $F$ given by the homomorphism
 \[
  \rho^M : F \to \GL_R(H^\BM_\ast(X,Y;\varphi^M))
 \]
 defined by 
 \[
  \rho^M(f)(\tilde{\chain} \otimes m) = \tilde{\chi}_*(f)(\tilde{\chain}) \otimes \psi(f)(m)
 \]
 for all $f \in F$, $\tilde{\chain} \in H^\BM_\ast(X,Y;\varphi^\pi)$, and $m \in M$.
\end{enumerate}
\end{proposition}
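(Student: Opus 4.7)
The plan is to use Remark~\ref{R:twisted_universal_coeff_free_BM} as a starting point: under the freeness hypothesis on $H^\BM_\ast(X,Y;\varphi^\pi)$, it provides a natural identification
\[
H^\BM_\ast(X,Y;\varphi^M) \cong H^\BM_\ast(X,Y;\varphi^\pi) \otimes_{\Z[\pi]} M,
\]
which reduces both claims to defining compatible linear maps on the right-hand side. The essential ingredient to set up is the $\chi_*(f)$-semilinearity of $\tilde{\chi}_*(f)$: from the uniqueness of the lift $\tilde{f}$ fixing the canonical base point $\tilde{\xi} \in \tilde{X}$, and the fact that deck transformations commute in the appropriate sense with any lift of a self-diffeomorphism of $X$ fixing $\xi$, one obtains
\[
\tilde{\chi}_*(f)(\bloop \cdot \tilde{\chain}) = \chi_*(f)(\bloop) \cdot \tilde{\chi}_*(f)(\tilde{\chain})
\]
for every $\bloop \in \pi$ and every $\tilde{\chain} \in H^\BM_\ast(X,Y;\varphi^\pi)$.

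For~$(i)$, one checks that the $R$-linear endomorphism $\tilde{\chi}_*(f) \otimes \id_M$ of $H^\BM_\ast(X,Y;\varphi^\pi) \otimes_R M$ descends to the balanced tensor product over $\Z[\pi]$. Using the tensor relation $\bloop \cdot \tilde{\chain} \otimes m = \tilde{\chain} \otimes \varphi^M(\bloop^{-1})(m)$ coming from the convention recalled in Remark~\ref{R:why_we_conjugate}, the semilinearity transforms the left-hand side into $\tilde{\chi}_*(f)(\tilde{\chain}) \otimes \varphi^M(\chi_*(f)(\bloop)^{-1})(m)$, whereas the right-hand side is sent to $\tilde{\chi}_*(f)(\tilde{\chain}) \otimes \varphi^M(\bloop^{-1})(m)$. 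These agree for all $\bloop$ and $m$ precisely when $\varphi^M \circ \chi_*(f) = \varphi^M$, which is the defining condition of $\calM^M$. The multiplicativity $\rho^M(f_1 f_2) = \rho^M(f_1) \circ \rho^M(f_2)$ then follows from that of $\tilde{\chi}_*$, and invertibility of $\rho^M(f)$ from $\rho^M(f^{-1})$.

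For~$(ii)$, the failure of the $\Z[\pi]$-balance for arbitrary $f \in F$ is absorbed by composing with $\psi(f)$ on the $M$-factor. The same computation now yields
\[
\tilde{\chi}_*(f)(\tilde{\chain}) \otimes \bigl(\varphi^M(\chi_*(f)(\bloop)^{-1}) \circ \psi(f)\bigr)(m)
\]
on one side, which, after a single application of the compatibility~\eqref{E:f_gamma} (used as $\psi(f) \circ \varphi^M(\bloop^{-1}) = \varphi^M(\chi_*(f)(\bloop)^{-1}) \circ \psi(f)$), matches $\tilde{\chi}_*(f)(\tilde{\chain}) \otimes (\psi(f) \circ \varphi^M(\bloop^{-1}))(m)$, the expected image of $\tilde{\chain} \otimes \varphi^M(\bloop^{-1})(m)$. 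Well-definedness is thus settled. The homomorphism property of $\rho^M$ follows because both $\tilde{\chi}_*$ and $\psi$ are homomorphisms, while $R$-linearity of $\rho^M(f)$ is inherited from $\psi(f) \in \GL_R(M)$.

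The main obstacle is purely bookkeeping: one has to track carefully the conversion between the left and right $\Z[\pi]$-actions on $C_\ast(\tilde{X})$, and the inverse of $\bloop$ it produces when sliding a group element across the tensor product, and one has to verify that the formulas—which are transparent on the chain level, and even on the pro-system of chain complexes defining $C^\BM_\ast(X,Y;\varphi^M)$—pass to the inverse limit and to homology. No genuine topological subtlety is involved beyond the naturality of the construction of $\tilde{\chi}$ recalled in Appendix~\ref{A:twisted_homological_action_of_mcg}; the crux is entirely algebraic and is condensed into the identity~\eqref{E:f_gamma} for the substantive part~$(ii)$.
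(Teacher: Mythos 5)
Your argument is correct and follows essentially the same route as the paper's proof: identify $H^\BM_\ast(X,Y;\varphi^M)$ with $H^\BM_\ast(X,Y;\varphi^\pi)\otimes_{\Z[\pi]}M$ via freeness, use the $\chi_*(f)$-semilinearity of the lift $\tilde{\chi}_*(f)$ to check that the map descends to the balanced tensor product, with the defining condition of $\calM^M$ doing the work in part~$(i)$ and Equation~\eqref{E:f_gamma} in part~$(ii)$. The only difference from the paper is cosmetic: you phrase the balanced relation via the left $\pi$-action and the inverse from Remark~\ref{R:why_we_conjugate}, while the paper writes the same identity using the right action directly.
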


Both representations are denoted $\rho^M$, since one extends the other.

\begin{proof}
 First of all, notice that, if $f$ is a diffeomorphism of $X$, and if $\tilde{f}$ lifts $f \circ \tilde{p}$ to a diffeomorphism of $\tilde{X}$, then
 \[
  \tilde{f}(\tilde{x} \cdot \bloop) = \tilde{f}(\tilde{x}) \cdot f_*(\bloop)
 \]
for all $\tilde{x} \in \tilde{X}$ and $\bloop \in \pi$. Furthermore, thanks to Remak~\ref{R:twisted_universal_coeff_free}, we have
 \[
  H^\BM_\ast(X,Y;\varphi^M) \cong H^\BM_\ast(X,Y;\varphi^\pi) \otimes_{\Z[\pi]} M.
 \]

 Then, let us start from claim~$(i)$. For all $f \in \calM^M$, $\tilde{\chain} \in H^\BM_\ast(X,Y;\varphi^\pi)$, and $m \in M$, the action 
 \[
  \rho^M(f)(\tilde{\chain} \otimes m) = \tilde{\chi}_*(f)(\tilde{\chain}) \otimes m
 \]
 is clearly well-defined on the tensor product $H^\BM_\ast(X,Y;\varphi^\pi) \otimes_\Z M$, so it remains to check that it passes to the quotient $H^\BM_\ast(X,Y;\varphi^\pi) \otimes_{\Z[\pi]} M$. However, by definition, $\calM^M$ corresponds exactly to the subgroup whose action does pass to the quotient. Indeed, we have
 \begin{align*}
  &\rho^M(f) \left( (\tilde{\chain} \cdot \bloop) \otimes m \right) 
  = \left( \tilde{\chi}_*(f)(\tilde{\chain}) \cdot \chi_*(f)(\bloop) \right) \otimes m 
  = \tilde{\chi}_*(f)(\tilde{\chain}) \otimes \varphi^M(\chi_*(f)(\bloop))(m) 
  = \tilde{\chi}_*(f)(\tilde{\chain}) \otimes \varphi^M(\bloop)(m) \\
  &\hspace*{\parindent} = \rho^M(f) \left( \tilde{\chain} \otimes \varphi^M(\bloop)(m) \right)
 \end{align*}
 for all $f \in \calM^M$, $\tilde{\chain} \in H^\BM_\ast(X,Y;\varphi^\pi)$, $\bloop \in \pi$, and $m \in M$, where we used the definition of $\calM^M$ in order to obtain the second equality. This is precisely the identity we were looking for.

 Next, let us focus on claim~$(ii)$. Again, for all $f \in F$, $\tilde{\chain} \in H^\BM_\ast(X,Y;\varphi^\pi)$, and $m \in M$, the action 
 \[
  \rho^M(f)(\tilde{\chain} \otimes m) = \tilde{\chi}_*(f)(\tilde{\chain}) \otimes \psi(f)(m)
 \]
 is well-defined on the tensor product $H^\BM_\ast(X,Y;\varphi^\pi) \otimes_\Z M$, so we only need to check that it passes to the quotient $H^\BM_\ast(X,Y;\varphi^\pi) \otimes_{\Z[\pi]} M$. In this case, it is Equation~\eqref{E:f_gamma} that ensures this happens. Indeed, we have
 \begin{align*}
  &\rho^M(f) \left( (\tilde{\chain} \cdot \bloop) \otimes m \right) 
  = \left( \tilde{\chi}_*(f)(\tilde{\chain}) \cdot \chi_*(f)(\bloop) \right) \otimes \psi(f)(m) 
  = \tilde{\chi}_*(f)(\tilde{\chain}) \otimes \varphi^M(\chi_*(f)(\bloop))(\psi(f)(m)) \\
  &\hspace*{\parindent} = \tilde{\chi}_*(f)(\tilde{\chain}) \otimes \psi(f)(\varphi^M(\bloop)(m)) 
  = \rho^M(f) \left( \tilde{\chain} \otimes \varphi^M(\bloop)(m) \right)
 \end{align*}
 for all $f \in F$, $\tilde{\chain} \in H^\BM_\ast(X,Y;\varphi^\pi)$, $\bloop \in \pi$, and $m \in M$, where we used Equation~\eqref{E:f_gamma} in order to obtain the second equality.

 Notice that $\rho^M$ is indeed a representation (meaning a group homomorphism), because homology is functorial, and because $\psi$ is a homomorphism too.
\end{proof}

\begin{remark}\label{R:action_of_mapping_classes_on_intermediate_covers}
 Let $\chi : F \to \Mod(X)$, $\varphi^M : \Z[\pi] \to \End_R(M)$, and $\psi : F \to \GL_R(M)$ be homomorphisms satisfying the hypotheses of Proposition~\ref{P:specializing}.$(ii)$, let $K \triangleleft \pi$ be a normal subgroup, and suppose that $\varphi^M$ fits into the commutative diagram
 \begin{center}
  \begin{tikzpicture}[descr/.style={fill=white}]
   \node (P1) at (-1.5,1) {$\Z[\pi]$};
   \node (P2) at (1.5,1) {$\End_R(M)$};
   \node (P3) at (0,0) {$\Z[\pi/K]$};
   \draw
   (P1) edge[->] node[above] {\scriptsize $\varphi^M$} (P2)
   (P1) edge[->>] node[left,xshift=-7.5pt] {\scriptsize $\varphi^{\pi/K}$} (P3)
   (P3) edge[right hook->] (P2);
  \end{tikzpicture}
 \end{center}
 where $\varphi_{n,g}^{\pi/K}$ is induced by the projection $\pi \twoheadrightarrow \pi/K$. Then, if $\hat{p} : \hat{X} \to X$ denotes the regular cover associated with $K$, and if the homology $H^\BM_\ast(X,Y;\varphi^{\pi/K})$ is free over $\Z[\pi/K]$, we have
 \[
  H^\BM_\ast(X,Y;\varphi^M) \cong H^\BM_\ast(X,Y;\varphi^{\pi/K}) \otimes_{\Z[\pi/K]} M.
 \]
 Notice that Equation~\eqref{E:f_gamma} automatically implies that $\chi(f)_*$ stabilizes $\Z[K]$. Therefore, by the lifting property of regular covers, there exists a homomorphism $\hat{\chi} : F \to \hat{X}$ sending every $f \in F$ to the unique lift of $\chi(f) \circ \hat{p}$ fixing the (projection to $\hat{X}$ of the) constant path at the base point $\braid{\xi}{}$. Then, up to isomorphism, the homological action $\rho^M : F \to \GL_R(H^\BM_\ast(X,Y;\varphi^M))$ is given by 
 \[
  \rho^M(f)(\hat{\chain} \otimes m) = \hat{\chi}_*(f)(\hat{\chain}) \otimes \psi(f)(m)
 \]
 for all $f \in F$, $\hat{\chain} \in H^\BM_\ast(X,Y;\varphi^{\pi/K})$, and $m \in M$.
\end{remark}

In Proposition~\ref{P:specializing}, we defined an action of the group $F$ on the tensor product between $H^\BM_\ast(X,Y;\varphi^\pi)$ and the $\Z[\pi]$-module $M$, assuming that $H^\BM_\ast(X,Y;\varphi^\pi)$ is a free $\Z[\pi]$-module. When this is not the case, a similar construction can be carried out, but we have to define the action at the level of chain complexes instead.

Proposition~\ref{P:specializing} yields in particular a strategy for finding interesting linear representations of $F = \Mod(\varSigma_{g,1})$ in the case $X = \Conf_n(\varSigma_{g,1})$. However, since the procedure requires a representation $\psi$ of $\Mod(\varSigma_{g,1})$ into $\GL_R(M)$ to start with, it might be hard to apply it in general. Luckily, if the image of $\varphi^M$ is large enough, then the linear representation $\rho_M$ of 
$F$ descends to a projective representation $\bar{\rho}_M$ of $F/\ker \chi$, as it will be explained in Proposition~\ref{P:lifting_actions} below. Then, it is sufficient to apply the previous procedure to a group $F$ whose quotient $F/\ker \chi$ is isomorphic to $\Mod(\varSigma_{g,1})$. This time, the main example to keep in mind arises when $F$ is the free group over a system of generators for $\Mod(\varSigma_{g,1})$, in which case the homomorphism $\chi : F \to \Mod(X)$ defined by 
\[
 \chi(f)(\myuline{x}) = \{ f(x_1),\ldots,f(x_n) \}
\]
for every $\myuline{x} = \{ x_1,\ldots,x_n \} \in X = \Conf_n(\varSigma_{g,1})$ satisfies $F/\ker \chi \cong \Mod(\varSigma_{g,1})$. Notice that defining a linear representation of $F$ is considerably easier than defining a linear representation of $F/\ker \chi$, in this case.

\begin{proposition}\label{P:lifting_actions}
 Let $\chi : F \to \Mod(X)$ be a homomorphism and $M$ be an $R$-module equipped with representations $\varphi^M : \Z[\pi] \to \End_R(M)$ and $\psi : F \to \GL_R(M)$ satisfying Equation~\eqref{E:f_gamma}. If $H^\BM_\ast(X,Y;\varphi^\pi)$ is a free $\Z[\pi]$-module, and if furthermore 
 \begin{equation}
  C_{\End_R(M)}(\varphi^M(\Z[\pi])) = R^{\times}, \label{E:centralizer}
 \end{equation}
 meaning that the centralizer of $\varphi^M(\Z[\pi])$ in $\End_R(M)$ is the group of invertible scalar multiples of the identity $\id_M$, then the $R$-linear action of $F$ given by Proposition~\ref{P:specializing}.$(ii)$ descends to an $R$-projective action
 \[
  \bar{\rho}^M : F / \ker \chi \to \PGL_R(H^\BM_\ast(X,Y;\varphi^M))
 \]
 which fits into the commutative diagram 
 \begin{center}
  \begin{tikzpicture}[descr/.style={fill=white}] \everymath{\displaystyle}
   \node (P0) at (0,0) {$F$};
   \node (P1) at (3.5,0) {$\GL_R(H^\BM_\ast(X,Y;\varphi^M))$};
   \node (P2) at (0,-1) {$F/\ker \chi$};
   \node (P3) at (3.5,-1) {$\PGL_R(H^\BM_\ast(X,Y;\varphi^M))$};
   \draw
   (P0) edge[->] node[above] {\scriptsize $\rho^M$}(P1)
   (P0) edge[->>] node[left] {} (P2)
   (P1) edge[->>] node[right] {} (P3)
   (P2) edge[->] node[below] {\scriptsize $\bar{\rho}^M$} (P3);
  \end{tikzpicture}
 \end{center}
\end{proposition}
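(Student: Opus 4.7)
The plan is to show that the linear representation $\rho^M : F \to \GL_R(H^\BM_\ast(X,Y;\varphi^M))$ provided by Proposition~\ref{P:specializing}.$(ii)$ sends $\ker \chi$ into the subgroup $R^\times \cdot \id$ of scalar multiples of the identity. Once this is established, the quotient map $\GL_R(H^\BM_\ast(X,Y;\varphi^M)) \twoheadrightarrow \PGL_R(H^\BM_\ast(X,Y;\varphi^M))$ will automatically factor $\rho^M$ through $F/\ker \chi$, producing the desired $\bar{\rho}^M$ fitting into the stated commutative diagram.

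So, fix $f \in \ker \chi$, meaning that $\chi(f) = [\id_X]$ in $\Mod(X)$. First I would address the topological side of $\rho^M(f)$: since $\chi(f)$ is represented by a diffeomorphism isotopic to $\id_X$ through an isotopy fixing $\partial X \ni \xi$ pointwise, we can lift such an isotopy to $\tilde{X}$ starting from $\id_{\tilde{X}}$ (using the path-lifting property of $\tilde{p} : \tilde{X} \to X$ and the fact that the lift $\tilde{\chi}(f)$ is uniquely determined by the condition of fixing the base point $\tilde{\xi} \in \tilde{p}^{-1}(\xi)$). This lifted isotopy ends at $\tilde{\chi}(f)$ and shows that $\tilde{\chi}(f)$ is isotopic to $\id_{\tilde{X}}$, hence induces the identity on $H^\BM_\ast(X,Y;\varphi^\pi)$. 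Therefore, according to the formula of Proposition~\ref{P:specializing}.$(ii)$,
\[
 \rho^M(f)(\tilde{\chain} \otimes m) = \tilde{\chain} \otimes \psi(f)(m)
\]
for all $\tilde{\chain} \in H^\BM_\ast(X,Y;\varphi^\pi)$ and $m \in M$.

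Next I would exploit Equation~\eqref{E:f_gamma} to pin down $\psi(f)$. Since $\chi(f)$ is isotopic to $\id_X$ and isotopies induce the identity on the fundamental group, we have $\chi_*(f) = \id_{\Z[\pi]}$, so Equation~\eqref{E:f_gamma} reduces to
\[
 \psi(f) \circ \varphi^M(\bloop) = \varphi^M(\bloop) \circ \psi(f) \qquad \Forall \bloop \in \pi.
\]
This means $\psi(f) \in C_{\End_R(M)}(\varphi^M(\Z[\pi]))$, which, by hypothesis~\eqref{E:centralizer}, equals $R^\times$. Writing $\psi(f) = c_f \cdot \id_M$ for some $c_f \in R^\times$, we deduce
\[
 \rho^M(f)(\tilde{\chain} \otimes m) = \tilde{\chain} \otimes c_f m = c_f (\tilde{\chain} \otimes m),
\]
so $\rho^M(f) = c_f \cdot \id \in R^\times \cdot \id \subset \GL_R(H^\BM_\ast(X,Y;\varphi^M))$, which is precisely the kernel of $\GL_R \twoheadrightarrow \PGL_R$.

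The only delicate point is the topological step asserting that an isotopy of $\chi(f)$ to the identity lifts to an isotopy of $\tilde{\chi}(f)$ to the identity, because one must ensure that the lift respects the base point and thus coincides with the distinguished lift used to define $\tilde{\chi}$; but since all isotopies in $\Diff_0(X)$ fix $\xi$ (as $\xi \in \partial X$), this follows from the standard uniqueness of lifts with prescribed value at a point. Everything else is essentially formal manipulation of Equation~\eqref{E:f_gamma} and of the centralizer hypothesis~\eqref{E:centralizer}.
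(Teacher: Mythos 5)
Your argument is correct and follows essentially the same route as the paper's own proof: one observes that $\ker\chi\subset\ker\chi_*$, so Equation~\eqref{E:f_gamma} forces $\psi(f)$ into the centralizer of $\varphi^M(\Z[\pi])$, which by hypothesis~\eqref{E:centralizer} consists of invertible scalars, while the topological factor $\tilde\chi_*(f)$ acts trivially on $H^\BM_\ast(X,Y;\varphi^\pi)$. Your extra care about lifting the isotopy to $\tilde X$ compatibly with the base point is a reasonable elaboration of a step the paper leaves implicit, but it does not change the substance of the argument.
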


\begin{proof}
 We need to check that
 \[
  \rho^M(f) \propto \id_{H^\BM_\ast(X,Y;\varphi^M)}
 \]
 for every $f \in \ker \chi$. Notice however that $\ker \chi \subset \ker \chi_*$, where $\chi_* : F \to \Aut(\Z[\pi])$ is the homomorphism induced by the natural action of $\Mod(X)$ onto $\Z[\pi]$. Then, the claim follows immediately from the fact that
 \[
  \psi(f) \circ \varphi^M(\bloop) = \varphi^M(\chi_*(f)(\bloop)) \circ \psi(f) = \varphi^M(\bloop) \circ \psi(f),
 \]
 which implies that $\psi(f)$ belongs to the centralizer of $\varphi^M(\Z[\pi])$ in $\End_R(M)$.
\end{proof}

\section{Homological computation rules}\label{A:graphs_and_rules}

In Section~\ref{S:diagrammatic_notation}, we explained how to interpret homologically a particular class of diagrams representing \textit{embedded twisted cycles}. In Section~\ref{S:diagrammatic_rules}, we presented a set of rules for manipulating these diagrams without changing the homology classes they represent. We claim that all these rules can be straightforwardly deduced from \cite[Section~4.2]{M20}, but we give elements of proofs here below.

\begin{proof}[Sketch of proof of Proposition~\ref{P:computation_rules}]
 All these rules are established in \cite[Section~4]{M20}. Even though the reference focuses on twisted Borel--Moore homologies of configuration spaces of punctured discs, rather than arbitrary surfaces of positive genus with one boundary component, most of these diagrammatic rules involve modifications which only take place inside a disc embedded into the surface, so that the corresponding proofs are exactly the same. We sketch the arguments here, and give precise references rule by rule:

 \paragraph*{\textbf{Cutting rule}} This is a direct consequence of \cite[Proposition~4.4]{M20}, see the diagrammatic equality in \cite[Example~4.6]{M20}, which resembles the one considered here. In order to prove the claim, it is convenient to introduce slightly more general diagrams, where dashed-dotted curves representing multisimplices are allowed to intersect the boundary of $\varSigma_{g,1}$ also in their interior. Then, there exists an ambient isotopy of the form ${[0,1[} \times \varSigma_{g,1} \to \varSigma_{g,1}$ (which can be extended to $X_{k,g}$ coordinate-by-coordinate) yielding the equality
 \[
  \pic{computation_rules_cutting_proof_1} = \pic{computation_rules_cutting_proof_2}
 \]
 between standard homology classes (that is, untwisted ones, for which we have not chosen a red multithread yet, which would specify a lift to the universal cover).  If we denote by $\simp : [0,1] \to \varSigma_{g,1}$ the $1$-multisimplex represented by the dashed-dotted curves appearing on the right-hand side of the previous equality, then, without loss of generality, we can assume that 
  \[
   \simp \Big( {]0,1[} \Big) \cap \partial_- \varSigma_{g,1} = \simp \left( \frac{1}{2} \right).
  \]
  We can subdivide the $k$-simplex $\Delta^k$ into the union
  \[
   \Delta^k = \bigcup_{\ell=0}^k \Delta^{(\ell,k)},
  \]
  where every $\Delta^{(\ell,k)}$ on the right-hand side of the equality is the product of an $\ell$-simplex with a $(k-\ell)$-simplex, given by
  \[
   \Delta^{(\ell,k)} = \left\{ (t_1,\ldots,t_k) \in \R^k \Bigm\vert 0 < t_1 < \cdots < t_\ell \leqs \frac{1}{2} < t_{\ell+1} < \cdots < t_k < 1 \right\}.
  \]
  Up to reparametrization, and also up to another ambient isotopy, this time of the form ${]0,1]} \times \varSigma_{g,1} \to \varSigma_{g,1}$, the subdivision of $\Delta^k$ considered above induces the equality
 \[
  \pic{computation_rules_cutting_proof_3} = \sum_{\ell=0}^k \pic{computation_rules_cutting_proof_4}
 \]
 Again, this equality holds already before homology classes are lifted to the universal cover. Then, it is easy to check that the red multithreads appearing in the claim are exactly the ones ensuring that the two lifts match.

 \paragraph*{\textbf{Permutation rule}} In order to prove the claim, it is convenient to introduce even more general diagrams, where red multithreads are allowed to self-intersect. In these diagrams, red double points correspond to points in the surface where the multithread passes at different times, and we keep track of which strand passes after the other by making them the over-strand and the under-strand, respectively, in a crossing (this corresponds to considering the link diagram of the red multithread with respect to the standard projection $\varSigma_{g,1} \times [0,1] \to \varSigma_{g,1}$, as seen from above in the $[0,1]$-direction). Then, up to isotopy, we have
 \begin{align*}
  \pic{computation_rules_permutation_1} 
  &= \pic{computation_rules_permutation_proof_1}
 \end{align*}
 Notice that, if we denote by $\thread_{(k,\ell)}$ the multithread appearing on the right-hand side of the previous equality, and by $\perm_{\thread_{(k,\ell)}}$ the induced permutation in $\frakS_n$ (following the notation of Section~\ref{S:diagrammatic_notation}), then we have
 \[
  \sgn(\perm_{\thread_{(k,\ell)}}) = (-1)^{k\ell}.
 \]
 This means that, if we denote by $\braid{\sigma}{(k,\ell)} \in \pi_{n,g}$ the standard braid represented by
 \[
  \braid{\sigma}{(k,\ell)} = \pic{computation_rules_permutation_proof_2}
 \]
 around the components of the basepoint $\braid{\xi}{}$ that appear in the pictures above, then Remarks~\ref{R:thread_and_orientation} and \ref{R:why_we_conjugate} yield
 \begin{align*}
  \pic{computation_rules_permutation_proof_1}
  &= (-1)^{k\ell} \braid{\sigma}{(k,\ell)} \cdot \pic{computation_rules_permutation_2} 
  = \pic{computation_rules_permutation_2} \otimes (-1)^{k\ell} \varphi^\dHeis_{n,g}(\braid{\sigma}{(k,\ell)}^{-1}).
 \end{align*}
 Therefore, we simply need to notice that
 \[
  \varphi^\dHeis_{n,g}(\braid{\sigma}{(k,\ell)}) = (-q^{-2})^{k\ell}.
 \]

 \paragraph*{\textbf{Fusion rule}} The case $\ell=1$ is exactly the same as in \cite[Lemma~4.8]{M20}. The case $\ell>1$ is established in \cite[Corollary~4.11]{M20} as a straightforward consequence of \cite[Corollary~4.10]{M20}, which itself follows directly from the case $\ell=1$ by a combinatorial expansion. We will only prove the case $\ell=1$ here, since the case $\ell>1$ follows from the same combinatorial argument. Then, let us prove
 \begin{equation}\label{E:fusion_k_1}
  \pic{computation_rules_fusion_3} 
  = \pic{computation_rules_fusion_4} \otimes [k+1]_q q^k.  
 \end{equation}
 Notice that the band contained between the $k$-labeled and the $1$-labeled dashed-dotted curve is homeomorphic to the interior of a rectangle, whose horizontal sides are blue (and identified with the two curves), and whose vertical sides are magenta (and identified with a pair of arcs in $\partial_- \varSigma_{g,1}$). There exists a vertical deformation retraction of this rectangle onto its $k$-labeled dashed-dotted horizontal side, and we can extend this to a deformation retraction of the surface $\varSigma^0_{g,1} = \varSigma_{g,1}$ onto a subsurface $\varSigma^1_{g,1} \subset \varSigma_{g,1}$ containing the $k$-labeled dashed-dotted curve, as shown in the following picture:
 \[
  \pic{computation_rules_fusion_5}
 \]
 Furthermore, we can safely assume, without loss of generality, there exists a metric on the surface which makes this deformation into a contraction at all times. Let $d$ be the associated distance, and let us extend the deformation retraction to $X_{n,g}$ coordinate-by-coordinate. We also denote by $X_{n,g}^t$ and by $Y_{n,g}^t$ the images of $X_{n,g}$ and of $Y_{n,g}$, respectively, under this deformation retraction at the time $t \in [0,1]$. Then, for every $\epsilon > 0$, there exists a time $t \in {]0,1[}$ at which the deformation retraction maps the $\ell$-labeled blue dashed-dotted curve into an $\frac{\epsilon}{2}$-neighborhood of the $k$-labeled one with respect to the distance $d$. Since the deformation retraction is a contraction, we obtain an isomorphism
 \[
  H_n \left( X^0_{n,g},Y^0_{n,g} \cup (X^0_{n,g} \smallsetminus K_\epsilon(X^0_{n,g})) \right) 
  \cong H_n \left( X^t_{n,g},Y^t_{n,g} \cup (X^t_{n,g} \smallsetminus K_\epsilon(X^t_{n,g})) \right),
 \]
 where $K_\epsilon(Z_{n,g}) := \{ \{ z_1,\ldots,z_n \} \in Z_{n,g} \mid \Forall i \neq j \quad d(z_i,z_j) \geqs \epsilon \}$ for every subset $Z_{n,g} \subset X_{n,g}$. Then, in the rectangle, every configuration with at least a pair of coordinates that are vertically aligned lies in the relative part of the homology on the right-hand side. Thus, we can excise the set $E^t_{n,g}$ of all configurations of this form, and get the isomorphism
 \begin{align*}
  H_n \left( X^t_{n,g},Y^t_{n,g} \cup (X^t_{n,g} \smallsetminus K_\epsilon(X^t_{n,g})) \right) 
  \cong H_n \left( X^t_{n,g} \smallsetminus E^t_{n,g}, \left( Y^t_{n,g} \cup (X^t_{n,g} \smallsetminus K_\epsilon(X^t_{n,g})) \right) \smallsetminus E^t_{n,g} \right).
 \end{align*}
 Since in $X^t_{n,g} \smallsetminus E^t_{n,g}$ no configuration has vertically aligned coordinates inside the rectangle, the vertical projection yields an isomorphism
 \begin{equation*}
  H_n \left( X^0_{n,g},Y^0_{n,g} \cup (X_{n,g} \smallsetminus K_\epsilon(X^0_{n,g})) \right) 
  \cong H_n \left( X^1_{n,g},Y^1_{n,g} \cup (X^1_{n,g} \smallsetminus K_\epsilon(X^1_{n,g})) \right).
 \end{equation*}
 Thus, we obtain an isomorphism
 \[
  \iota_* : H_n^\BM \left( X^1_{n,g},Y^1_{n,g} \right) \to H_n^\BM \left( X^0_{n,g},Y^0_{n,g} \right)
 \]
 induced by the inclusion of pairs, since both families of complements of compact sets $X^0_{n,g} \smallsetminus K_\epsilon(X^0_{n,g})$ and $X^1_{n,g} \smallsetminus K_\epsilon(X^1_{n,g})$, indexed by $\varepsilon > 0$, are cofinal.  Then, for $n = k+1$, let us denote by 
 \begin{align*}
  \bfsimp_0(k,1) &\in H_n^\BM ( X^0_{n,g},Y^0_{n,g} ), &
  \bfsimp_1(k+1) &\in H_n^\BM ( X^1_{n,g},Y^1_{n,g} )
 \end{align*}
 the homology classes represented by the diagrams obtained from the left-hand side and from the right-hand side of Equation~\eqref{E:fusion_k_1}, respectively, by forgetting red multithreads. Notice that we can subdivide the interval $[0,1[$ into the union of $k+1$ intervals
 \[
   {[0,1[} = \bigcup_{i=1}^{k+1} \left[ \frac{i-1}{k+1}, \frac{i}{k+1} \right[
 \]
 and consider, for all integers $1 \leqs i \leqs k+1$, the permutation homeomorphism 
 \begin{align*}
  \perm_{(i)} : {]0,1[}^{\times (k+1)} &\to {]0,1[}^{\times (k+1)} \\*
  (t_1,\ldots,t_{k+1}) &\mapsto (t_2,\ldots,t_i,t_1,t_{i+1},\ldots,t_{k+1}).
 \end{align*}
 Therefore, up to decomposing and reparametrizing the first component of the hypercube, the homology class $\iota_*^{-1}(\bfsimp_0(k,1))$ is the sum of the $k+1$ homology classes $(\bfsimp_1(k+1))_{(i)}$ determined by the $k+1$ embeddings
 \begin{center}
  \begin{tikzpicture}
   \node (P1) at (0,0) {${]0,1[}^{\times (k+1)}$};
   \node (P2) at (3,0) {${]0,1[}^{\times (k+1)}$};
   \node (P3) at (6,0) {$\Delta^{k+1}$};
   \node (P4) at (9,0) {$X_{k+1,g}$,};
   \draw
   (P1) edge[->] node[above] {\scriptsize $\perm_{(i)}$} (P2)
   (P2) edge[->] node[above] {\scriptsize $\bfDelta^{k+1}$} (P3)
   (P3) edge[->] node[above] {\scriptsize $\bfsimp_1^{k+1}$} (P4);
  \end{tikzpicture}
 \end{center}
 where we are using the notation introduced in Section~\ref{S:diagrammatic_notation} for homeomorphisms between hypercubes and simplices. If we denote by 
 \[
  \tilde{\iota}_* : H_n^\BM \left( X^1_{n,g},Y^1_{n,g};\varphi_{n,g}^\pi \right) \to H_n^\BM \left( X^0_{n,g},Y^0_{n,g};\varphi_{n,g}^\pi \right) 
 \]
 the lift of the isomorphism $\iota_*$, where $H^\BM_\ast \left( X^t_{n,g},Y^t_{n,g};\varphi_{n,g}^\pi \right)$ denotes the homology of the complex
 \[
  C^\BM_\ast \left( X^t_{n,g},Y^t_{n,g};\varphi_{n,g}^\pi \right) := \varprojlim_{K \in \calK(X^t_{n,g})} C_\ast(\tilde{p}^{-1}(X^t_{n,g}),\tilde{p}^{-1}(Y^t_{n,g} \cup (X^t_{n,g} \smallsetminus K)))
 \]
 in the notation of Appendix~\ref{A:Borel--Moore}, then we have
 \begin{align*}
  \tilde{\iota}_*^{-1} \left( \pic{computation_rules_fusion_3} \right) 
  &= \sum_{i=1}^{k+1} \pic{computation_rules_fusion_6} 
  = \sum_{i=1}^{k+1} \pic{computation_rules_fusion_4} \otimes q^{2(i-1)},
 \end{align*}
 where the last equality follows from the proof of the permutation rule. Now the claim follows from
 \[
  \sum_{i=1}^{k+1} q^{2(i-1)} = [k+1]_q q^k.
 \]

 \paragraph*{\textbf{Orientation rule}} Changing the orientation of a dashed-dotted curve amounts to reparametrizing the corresponding $1$-multisimplex $\simp : [0,1] \to \varSigma_{g,1}$ by precomposition with 
 \begin{align*}
  {[0,1]} &\to {[0,1]} \\*
  t &\mapsto 1-t.
 \end{align*}

 \paragraph*{\textbf{Braid rule}} Recalling Remark~\ref{R:why_we_conjugate}, we have
 \begin{align*}
  \pic{computation_rules_braid_1} 
  &= (\thread_\alpha \ast \thread_{\beta}^{-1}) \cdot \pic{computation_rules_braid_2} 
  = \pic{computation_rules_braid_2} \otimes \varphi^\dHeis_{n,g}(\thread_\beta \ast \thread_{\alpha}^{-1})
 \end{align*}
 where
 \[
  \thread_\alpha \ast \thread_{\beta}^{-1} = \pic{computation_rules_braid_proof_3}
 \]
 Then, notice that this braid is of the form
 \[
  \thread_\alpha \ast \thread_{\beta}^{-1} = \braid{(\alpha \ast \beta^{-1})}{(i+k-1)} \ast \braid{(\alpha \ast \beta^{-1})}{(i+k-2)} \ast \ldots \ast \braid{(\alpha \ast \beta^{-1})}{(i)}.
 \]
 for some $1 \leqs i \leqs n-k+1$. Indeed, we can start by moving the $(i+k-1)$th component of the basepoint $\braid{\xi}{}$ along $\alpha \ast \beta^{-1}$, then proceed to move the $(i+k-2)$th component (after the previous one parked back at $\braid{\xi}{}$), and so on, up until the $i$th component. Notice that changing the component of the basepoint $\braid{\xi}{}$ that travels along $\alpha \ast \beta^{-1}$ does not change the braid itself (since this only amounts to conjugating by the central element $\sigma = -q^2$). Therefore, we obtain
 \[
  \varphi^\dHeis_{n,g}(\thread_\beta \ast \thread_{\alpha}^{-1}) = \varphi^\dHeis_{n,g}(\braid{(\beta \ast \alpha^{-1})}{(i)})^k. \qedhere
 \]
\end{proof}

The additional rules reported here below can be deduced from the previous ones, and are extensively used in computations.

\begin{remark}\label{R:computation_rules}
 Notice that the following additional relations are a direct consequence of the computation rules of Proposition~\ref{P:computation_rules}:
 \begin{description}
  \item[\normalfont Hook rule (plus variant).] For every integer $0 \leqs k \leqs n$ we have
   \begin{align*}
    \pic{computation_rules_permutation_5}
    &= \pic{computation_rules_permutation_3} \otimes q^{k(k-1)} \tag{H}\label{E:hook} \\
    \pic{computation_rules_permutation_4} 
    &= \pic{computation_rules_permutation_3} \otimes q^{-k(k-1)} \tag{$\tilde{\rmH}$}\label{E:tilde_hook}
   \end{align*}
  \item[\normalfont Cutting rule (variant).] For every integer $0 \leqs k \leqs n$ we have 
   \[
    \pic{computation_rules_cutting_3} 
    = \sum_{\ell = 0}^k \pic{computation_rules_cutting_4} \tag{$\tilde{\rmC}$}\label{E:tilde_cutting}
   \]
   where the pair of dotted red arcs on the right-hand side of the equality runs parallel in surface.
 \end{description} 
 Indeed, the hook rule (and its variatn) are a direct consequence of the permutation rule, and they can be shown by induction on the integer $k \geqs 1$. Similarly, the variant of the cutting rule follows directly from the hook rule.
\end{remark}

\section{Relations between TQFTs}\label{A:TQFT}

In Section~\ref{S:TQFT_rep_of_mcg}, we recalled the construction, due to Kerler and Lyubashenko, of a non-semisimple TQFT $J_H : \RCob \to \mods{H}$ out of a factorizable ribbon Hopf algebra $H$, following the approach of \cite{BD21}. In this appendix, we explain how this TQFT can be understood as part of the TQFT constructed in \cite{DGP17, DGGPR19}, thanks to a diagram of functors
\begin{equation}\label{E:commutative_diagram}
 \raisebox{-0.5\height+2.5pt}{
 \begin{tikzpicture}[descr/.style={fill=white}] \everymath{\displaystyle}
  \node (P0) at (0,0) {$\RCob$};
  \node (P1) at (3,0) {$\mods{H}$};
  \node (P2) at (0,-1.5) {$\adCob_H$};
  \node (P3) at (3,-1.5) {$\Vect_\Bbbk$};
  \draw
  (P0) edge[->] node[above] {\scriptsize $J_H$}(P1)
  (P0) edge[->] node[left] {\scriptsize $D^2_{(-,H)}$} (P2)
  (P1) edge[->] node[right] {\scriptsize $F$} (P3)
  (P2) edge[->] node[below] {\scriptsize $\rmV_H$} (P3);
 \end{tikzpicture}
 }
\end{equation}
Let us define the categories and functors appearing in this diagram. First of all, $F : \mods{H} \to \Vect_\Bbbk$ denotes the forgetful functor with target the category of $\Bbbk$-vector spaces, which simply forgets the action of $H$.

Next, the \textit{category $\adCob_H$ of admissible $H$-decorated cobordisms} is defined as follows:
\begin{itemize}
 \item Objects of $\adCob_H$ are triples $\bbSigma = (\Sigma,P,\calL)$ where $\Sigma$ is a closed surface, while $P \subset \Sigma$ is a finite set of oriented framed points labeled by $H$-modules, and $\calL \subset H_1(\Sigma;\R)$ is a Lagrangian subspace.
 \item Morphisms of $\adCob_H$ from $\bbSigma = (\Sigma,P,\calL)$ to $\bbSigma' = (\Sigma',P',\calL')$ are equivalence classes of admissible triples $\bbM = [M,T,n]$, where $M$ is a 3-dimensional cobordism from $\Sigma$ to $\Sigma'$, while $T \subset M$ is a \textit{bichrome graph}\footnote{A bichrome graph is a ribbon graph whose edges can be either red (and unlabeled) or blue (and labeled by $H$-modules). Similarly, coupons can be either bichrome (and unlabeled) or blue (and labeled by $H$-module intertwiners). A precise definition can be found in \cite[Section~3.1]{DGGPR19}.} from $P$ to $P'$, and $n \in \Z$ is a signature defect. A triple $(M,T,n)$ is \textit{admissible} if every connected component of $M$ disjoint from the outgoing boundary $\partial_+ M \cong \Sigma$ contains an \textit{admissible} subgraph of $T$ (meaning a bichrome graph whose blue edges feature a projective $H$-module among their labels). Two triples $(M,T,n)$ and $(M',T',n')$ are equivalent if $n = n'$, and if there exists an isomorphism of cobordisms $f : M \rightarrow M'$ satisfying $f(T) = T'$. 
 \item The composition 
  \[
   \bbM' \circ \bbM : \bbSigma \rightarrow \bbSigma''
  \]
  of morphisms $\bbM' \in \adCob_H(\bbSigma',\bbSigma'')$, $\bbM \in \adCob_H(\bbSigma,\bbSigma')$ is the equivalence class of the triple
  \[
   ( M \cup_{\Sigma'} M', T \cup_{P'} T', n + n' - \mu ( M_*(\calL),\calL',M'^* (\calL'') ) ).
  \]
\end{itemize}
Notice that the admissibility condition we are imposing here on morphisms is different from (but equivalent to) the one of \cite{DGP17,DGGPR19}, in the sense that the requirement only affects connected components of cobordisms disjoint from the outgoing boundary, instead of those disjoint from the incoming one. In other words, we are essentially considering the opposite category to the one appearing in the references. The category $\adCob_H$ is a symmetric monoidal category, whose tensor product is induced by disjoint union.

Then, the functor $D^2_{(-,H)} : \RCob \to \adCob_H$ sends every surface $\varSigma_{g,1}$ to the triple 
\[
 (\varSigma_{g,1} \cup_{S^1} D^2,P_{(-,H)},\iota_*(\calL_g)),
\]
where $\varSigma_{g,1} \cup_{S^1} D^2$ denotes the closed surface of genus $g$ obtained from $\varSigma_{g,1}$ by gluing a disc $D^2$ along its boundary, $P_{(-,H)}$ denotes the center of the disc with negative orientation and label given by the regular representation $H$ (determined by left multiplication of $H$ onto itself), and where $\iota_* : H_1(\varSigma_{g,1};\R) \to H_1(\varSigma_{g,1} \cup_{S^1} D^2;\R)$ is induced by inclusion. Similarly, it sends every morphism $(M,\sig)$ to the equivalence class of the triple
\[
 (M \cup_{S^1 \times [0,1]} (D^2 \times [0,1]),P_{(-,H)} \times [0,1],\sig).
\]

Finally, $\rmV_H : \adCob_3 \to \Vect_\Bbbk$ denotes the TQFT constructed in \cite[Section~3]{DGP17}, or equivalently in \cite[Section~4]{DGGPR19}, see \cite[Appendix~C]{DGGPR20} for an explanation of the equivalence of the two approaches. Notice that, since the definition of $\adCob_3$ provided above is naturally equivalent to the opposite of the category of admissible cobordisms considered in all these references, the TQFT $\rmV_H$ is naturally isomorphic to the contravariant TQFT of \cite{DGP17, DGGPR19}.

\begin{proposition}\label{P:identification_of_state_spaces}
 The diagram of functors~\eqref{E:commutative_diagram} is commutative.
\end{proposition}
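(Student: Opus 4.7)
The plan is to verify the commutativity of the diagram~\eqref{E:commutative_diagram} separately on objects and on morphisms, following the comparison between the two TQFT constructions established in \cite[Appendix~C]{DGGPR20}. The key point is that capping $\varSigma_{g,1}$ with a disc carrying a point labeled by the regular representation $H$ produces a closed decorated surface whose state space in the DGGPR TQFT matches $\ad^{\otimes g}$ as a vector space, and that the surgery presentation used by $\rmV_H$ on this kind of decorated cobordism reduces to Habiro's bead algorithm from Section~\ref{S:quantum_diagrammatic_calculus}.

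On objects, I would identify $\rmV_H(\varSigma_{g,1} \cup_{S^1} D^2, P_{(-,H)}, \iota_*(\calL_g))$ with $F(\ad^{\otimes g}) = H^{\otimes g}$. Via the universal construction, a vector in this state space is represented by a handlebody $H_g$ containing an admissible bichrome graph ending at the puncture labeled by the regular representation. Taking such a graph to consist of $g$ blue arcs, one through each $1$-handle, connecting the puncture to a single coupon with labels $(x_1,\ldots,x_g) \in H^{\otimes g}$, yields a spanning family, and factorizability of $H$ (via the Drinfeld map) ensures that the bilinear pairing induced by gluing handlebodies is non-degenerate on exactly $H^{\otimes g}$. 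This matches $F(J_H(\varSigma_{g,1})) = F(\ad^{\otimes g}) = H^{\otimes g}$.

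On morphisms, given a top $(g,g')$-tangle $T$ presenting a morphism $(M(T), \sig(T))$ in $\RCob$, the decorated cobordism $D^2_{(-,H)}(M(T), \sig(T))$ is represented by $T \subset H_g \times [0,1]$ together with the vertical coupon-free blue interval $P_{(-,H)} \times [0,1]$ inside the glued-in $D^2 \times [0,1]$. I would apply the DGGPR computational scheme: blue arcs of $T$ absorb the components of iterated coproducts of input labels, circle components of $T$ become red surgery components whose evaluation, in the factorizable case and via the modified trace on projective modules, reduces to the integral $\lambda$ paired with R-matrix beads, and the pivotal element $g$ appears to resolve remaining crossings into twists. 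This is exactly the bead computation recalled in Section~\ref{S:quantum_diagrammatic_calculus}, with the same signature defect $\sig(T)$ appearing through the Maslov correction in both categories. Hence $\rmV_H(D^2_{(-,H)}(M(T),\sig(T))) = F(J_H(M(T),\sig(T)))$.

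The main obstacle will be the careful matching of the normalizations: the modified trace appearing in $\rmV_H$ must be identified, when the puncture carries the regular representation and $H$ is factorizable and unimodular, with the integral $\lambda$ used in $J_H$. This identification relies on the explicit compatibility between the cointegral $\Lambda$, the M-matrix, and the modified trace on the projective cover of the regular representation, already worked out in the references; once this normalization is fixed, the remaining comparisons are diagrammatic and reduce to checking that the Kirby moves \eqref{E:rel_K1prime}--\eqref{E:rel_K3} on top tangles correspond, under $D^2_{(-,H)}$, to admissible handle-slide and stabilization moves on bichrome graphs.
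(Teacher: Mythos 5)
Your proposal is correct in outline and reaches the same conclusion, but the morphism-level check is organized differently from the paper. For objects, both arguments ultimately rest on the same facts: the paper identifies $\rmV_H(D^2_{(-,H)}(\varSigma_{g,1}))$ with $\Hom_H(H,\ad^{\otimes g})$ by translating the algebraic models of state spaces from \cite{DGGPR19} (swapping $\eta_{\one}$ for $\epsilon_{\one}$ in the pairing) and then uses that the regular representation is free of rank one, whereas you work directly with the skein model and the non-degeneracy of the gluing pairing; these are two descriptions of the same isomorphism, and the paper itself invokes the skein model immediately afterwards. The real divergence is on morphisms: you propose to match the $\rmV_H$-algorithm with the bead algorithm of Section~\ref{S:quantum_diagrammatic_calculus} directly on an \emph{arbitrary} top tangle $T$, which forces you to handle in full generality the conversion of red surgery components, the modified trace versus the integral $\lambda$, and invariance under the Kirby moves \eqref{E:rel_K1prime}--\eqref{E:rel_K3} — precisely the normalization issues you flag. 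The paper instead invokes the Bobtcheva--Piergallini presentation of $\RCob$ as the free braided category on a factorizable BPH algebra (\cite[Theorem~5.5.4]{BP11}), which reduces the whole comparison to a finite list of generating cobordisms (product, coproduct, antipode, etc.), each verified by a short skein equivalence against the transmutation structure maps computed in \cite[Section~7.2]{BD21}. Your route, if completed, is more uniform and does not rely on the presentation theorem, but it is substantially more work at the point you identify as the ``main obstacle''; the paper's route trades that for a structural input that makes the verification finite and essentially formal.
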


\begin{proof}[Sketch of proof]
 In order to show the claim, let us exchange the two algebraic models for covariant and contravariant state spaces given in \cite[Section~4.1]{DGGPR19}, and define
 \begin{align*}
  \calX_{g,V} &:= \Hom_H(\one,\ad^{\otimes g} \otimes V), &
  \calX'_{g,V} &:= \Hom_H(\coad^{\otimes g} \otimes V,P_{\one})
 \end{align*}
 for every integer $g \geqs 0$ and for every $H$-module $V$. Here, $P_{\one}$ denotes the projective cover of the trivial $H$-module, while $\coad$ denotes the \textit{coadjoint representation}, which is the $\Bbbk$-vector space $H^*$ equipped with the coadjoint action
 \[
  (x \triangleright f)(y) = f(S(x_{(1)})yx_{(2)})
 \]
 for all $x,y \in H$ and $f \in \coad$. This $H$-module coincides with the coend
 \[
  \coad = \int^{X \in \mods{H}} X^* \otimes X,
 \]
 which is defined as the universal dinatural transformation with source
 \begin{align*}
  (\_^* \otimes \_) : (\mods{H})^\op \times \mods{H} & \to \mods{H} \\*
  (X,Y) & \mapsto X^* \otimes Y,
 \end{align*}
 see \cite[Section~IX.5]{M71} for a definition. Roughly speaking, $\coad$ can be equipped with a dinatural family of intertwiners
 \begin{align*}
  i_X : X^* \otimes X &\to \coad \\* 
  f \otimes v &\mapsto f(\_ \cdot v)
 \end{align*}
 for every $H$-module $X$, where $f(\_ \cdot v)(x) = f(x \cdot v)$ for all $x \in H$. Dinaturality means that $i_X \circ (f^* \otimes \id_X) = i_Y \circ (\id_{Y^*} \otimes f)$ for every intertwiner $f : X \to Y$, and these structure morphisms make $\coad$ into the terminal object of the category of dinatural transformations with source $(\_^* \otimes \_)$, see for instance \cite[Proposition~7.1]{FGR17}.

 Exchanging the algebraic models for state spaces requires changing the pairing $\langle \_,\_ \rangle_{g,V} : \calX'_{g,V} \times \calX_{g,V} \to \Bbbk$ by appropriately using the projection $\epsilon_{\one} : P_{\one} \to \one$ instead of the injection $\eta_{\one} : \one \to P_{\one}$. Now the statement of \cite[Lemma~4.1]{DGGPR19}, and its proof, can be straightforwardly translated, and the same goes for \cite[Proposition~4.17]{DGGPR19}. This proves that
 \[
  \rmV_H(D^2_{(-,H)}(\varSigma_{g,1})) \cong \Hom_H(H,\ad^{\otimes g}).
 \]
 Then, it is easy to see that 
 \begin{align*}
  \Hom_H(H,\ad^{\otimes g}) &\to \ad^{\otimes g} \\*
  f &\mapsto f(1)
 \end{align*}
 is a linear isomorphism, because the regular representation $H$ is a free $H$-module of rank $1$. This establishes commutativity of the diagram at the level of objects.

 In order to prove commutativity at the level of morphisms, it is useful to recall the skein description of state spaces of \cite[Section~4.7]{DGGPR19}, which provides an isomorphism
 \begin{align*}
  \ad^{\otimes g} &\to \rmV_H(D^2_{(-,H)}(\varSigma_{g,1})) \\*
  x_1 \otimes \ldots \otimes x_g &\mapsto [H_g,T_{x_1 \otimes \ldots \otimes x_g}0]
 \end{align*}
 where, using the diagrammatic conventions of Section~\ref{S:top_tangles}, the bichrome graph $T_{x_1 \otimes \ldots \otimes x_g}$ is given by
 \[
  T_{x_1 \otimes \ldots \otimes x_g} = \pic{state_space_1}
 \]
 with the vector $x_1 \otimes \ldots \otimes x_g \in \ad^{\otimes g}$ also abusively denoting the corresponding intertwiner in $\Hom_H(H,\ad^{\otimes g})$. Then, we need to check that, for every top tangle $T$ representing a framed cobordism $(M(T),\sig(T))$ in $\RCob$, we have
 \[
  \rmV_H(D_{(-,H)}(M(T),\sig(T)))[H_g,T_{x_1 \otimes \ldots \otimes x_g},0] 
  = J_H(M(T),\sig(T))(x_1 \otimes \ldots \otimes x_g).
 \]
 Notice that, thanks to \cite[Theorem~5.5.4]{BP11}, it is sufficient to restrict our attention to the set of generating morphisms of $\RCob$ given in \cite[Section~6.3]{BD21}. Then, using the BPH algebra structure on the end given in \cite[Section~7.1]{BD21}, the claim can be established by suitable skein equivalences. For instance, the product cobordism yields
 \[
  \pic{state_space_2} \doteq \pic{state_space_3}
 \]
 while the coproduct cobordism yields
 \[
  \pic{state_space_4} \doteq \pic{state_space_5}
 \]
 and the antipode cobordism yields
 \[
  \pic{state_space_6} \doteq \pic{state_space_7}
 \]
 Then, by comparing with the explicit case of the transmutation discussed in \cite[Section~7.2]{BD21}, it is easy to check that the algorithm of Section~\ref{S:quantum_diagrammatic_calculus} provides the same computations. We leave details, as well as the rest of the generating morphisms, to the reader.
\end{proof}

In particular, thanks to \cite[Theorem~4.4]{DGGPR20}, the quantum representations of Section~\ref{S:quantum_representations_of_mcg} are equivalent to those defined by Lyubashenko in \cite{L94}.

\section{Quantum identities}\label{A:Quantum_identities}

In this section, we collect identities that are used extensively in all quantum computations.

\subsection{Computations in integral bases}\label{A:Identities_in_hdb}

First of all, we compute coproducts and antipodes in integral bases.

\begin{lemma}\label{L:coproducts_antipodes}
 For all integers $0 \leqs a,b,c \leqs r-1$ we have
 \begin{align}
  \Delta(F^{(a)} E^b T_c)
  &= \sum_{d=0}^{r-1} \sum_{i=0}^a \sum_{j=0}^b \sqbinom{b}{j}_\zeta \zeta^{(a+2c)i+bj-2d(i+j)-(i+j)^2} 
  F^{(a-i)} E^j T_{c-d} \otimes F^{(i)} E^{b-j} T_d, \label{E:coproducts} \\*  
  S(F^{(a)} E^b T_c) 
  &= (-1)^{a+b} \zeta^{(a-b+2c-1)(a-b)} T_{-c} E^b F^{(a)}. \label{E:antipodes} 
 \end{align}
\end{lemma}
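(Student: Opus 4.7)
Both identities are direct algebraic computations. The plan is to use that $\Delta$ is an algebra morphism and $S$ an algebra anti-morphism, together with the quantum binomial theorem (applied to pairs of $q$-commuting elements in $\fraku_\zeta^{\otimes 2}$) and the commutation relations of Lemma~\ref{L:projectors}.

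For the coproduct I would first compute each factor separately. In $\fraku_\zeta^{\otimes 2}$ the two summands of $\Delta(E) = E \otimes K + 1 \otimes E$ satisfy a $\zeta^{-2}$-commutation, and likewise for the two summands of $\Delta(F^{(1)}) = K^{-1} \otimes F^{(1)} + F^{(1)} \otimes 1$. The quantum binomial theorem then yields
\[
 \Delta(E^b) = \sum_{j=0}^b \zeta^{j(b-j)} \sqbinom{b}{j}_\zeta E^j \otimes E^{b-j} K^j, \qquad \Delta(F^{(a)}) = \sum_{i=0}^a \zeta^{i(a-i)} F^{(a-i)} K^{-i} \otimes F^{(i)},
\]
where for the second one passes to divided powers via $(F^{(1)})^k = [k]_\zeta! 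F^{(k)}$. Since $T_c$ is the character projector onto a $K$-eigenspace, the identity $\Delta(K^b) = K^b \otimes K^b$ immediately gives $\Delta(T_c) = \sum_{d=0}^{r-1} T_{c-d} \otimes T_d$. It then remains to multiply $\Delta(F^{(a)}) \Delta(E^b) \Delta(T_c)$ in this order: commute $K^{-i}$ past $E^j$ on the left factor (contributing $\zeta^{-2ij}$), and absorb the remaining $K$-powers into the projectors via $K^m T_d = \zeta^{-2md} T_d$ from Lemma~\ref{L:projectors}. The total $\zeta$-exponent $i(a-i) + j(b-j) - 2ij + 2(c-d)i - 2dj$ then collapses to $(a+2c)i + bj - 2d(i+j) - (i+j)^2$, matching the claim.

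For the antipode I would use $S(F^{(a)} E^b T_c) = S(T_c) S(E^b) S(F^{(a)})$. The factor $S(T_c) = T_{-c}$ is immediate from $S(K) = K^{-1}$ and the definition of $T_c$. Since $S(E^b) = S(E)^b = (-EK^{-1})^b$ and $S(F^{(1)})^a = (-KF^{(1)})^a$ are powers of single elements, iterating the commutations $KE = \zeta^2 EK$ and $KF^{(1)} = \zeta^{-2} F^{(1)} K$ produces $S(E^b) = (-1)^b \zeta^{-b(b-1)} E^b K^{-b}$ and $S(F^{(a)}) = (-1)^a \zeta^{a(a-1)} K^a F^{(a)}$. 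Combining the three factors, then moving the surviving $K^{a-b}$ past $E^b$ (picking up $\zeta^{-2b(a-b)}$) and absorbing it into $T_{-c}$ via $T_{-c} K = \zeta^{2c} T_{-c}$, gives a total exponent $a(a-1) - b(b-1) + 2(c-b)(a-b)$ that simplifies to $(a-b+2c-1)(a-b)$.

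The argument has no conceptual obstacle; the only real difficulty is careful bookkeeping of $\zeta$-powers, especially in the triple product for the coproduct, where three independent sources of exponents (the quantum binomials, the $K^{\pm 1}$ commutations with $E$ and $F$, and the $K$-eigenvalues on the projectors $T_d$) all contribute and must be added without error.
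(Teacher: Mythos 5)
Your proposal is correct and follows essentially the same route as the paper: both reduce to the coproduct/antipode of the monomials in $E$, $F^{(k)}$, $K$, compute $\Delta(T_c)=\sum_d T_{c-d}\otimes T_d$ and $S(T_c)=T_{-c}$ from the definition of the projectors, and then recombine while tracking $\zeta$-exponents via the commutation rules of Lemma~\ref{L:projectors}. The only difference is that you re-derive the expansions of $\Delta(E^b)$, $\Delta(F^{(a)})$, $S(E^b)$, $S(F^{(a)})$ from the quantum binomial theorem, whereas the paper simply cites them from Klimyk--Schm\"udgen; your exponent bookkeeping checks out in both identities.
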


\begin{proof}
 \cite[Chapter~3, Proposition~5]{KS97} gives
 \begin{align*}
  \Delta(F^\ell K^m E^n) 
  &= \sum_{i=0}^\ell \sum_{j=0}^n \sqbinom{\ell}{i}_\zeta \sqbinom{n}{j}_\zeta \zeta^{i(\ell-i)-j(n-j)} F^{\ell-i} K^{m-i} E^j \otimes F^i K^{m+j} E^{n-j}, \\*
  S(F^\ell K^m E^n) 
  &= (-1)^{\ell+n} \zeta^{\ell(\ell-1)-n(n-1)} E^n K^{\ell-m-n} F^\ell.
 \end{align*}
 Furthermore
 \begin{align*}
  \Delta(T_c) 
  &= \frac{1}{r} \sum_{b=0}^{r-1} \zeta^{2bc} \Delta(K^b) 
  = \frac{1}{r} \sum_{d=0}^{r-1} \zeta^{2bc} K^b \otimes K^b 
  = \frac{1}{r} \sum_{c,d=0}^{r-1} \zeta^{2b(c-d)} K^b \otimes T_d
  = \sum_{d=0}^{r-1} T_{c-d} \otimes T_d, \\*
  S(T_c) &= \frac{1}{r} \sum_{b=0}^{r-1} \zeta^{2bc} S(K^b)
  = \frac{1}{r} \sum_{b=0}^{r-1} \zeta^{2bc} K^{-b}
  = T_{-c}.
 \end{align*}
 Using this, it easy to check the claim.
\end{proof}

Next, we derive an explicit formula for the ribbon element and its inverse.

\begin{lemma}\label{L:ribbon}
 The ribbon element $v \in \fraku_\zeta$ and its inverse $v^{-1} \in \fraku_\zeta$ are given by
 \begin{align}
  v 
  &= \sum_{a,b=0}^{r-1} (-1)^a 
  \zeta^{-\frac{(a+3)a}{2}+2(a-b+1)b} F^{(a)} E^a T_b \label{E:ribbon_Habiro} \\*
  &= \sum_{a,b=0}^{r-1} (-1)^a 
  \zeta^{-\frac{(a+3)a}{2}-2(a+b+1)b} E^a F^{(a)} T_b, \label{E:ribbon_Habiro_prime} \\*
  v^{-1}
  &= \sum_{a,b=0}^{r-1}
  \zeta^{\frac{(a+3)a}{2}-2(a-b+1)b} F^{(a)} E^a T_b \label{E:inverse_ribbon_Habiro} \\*
  &= \sum_{a,b=0}^{r-1}
  \zeta^{\frac{(a+3)a}{2}+2(a+b+1)b} E^a F^{(a)} T_b. \label{E:inverse_ribbon_Habiro_prime}
 \end{align}
\end{lemma}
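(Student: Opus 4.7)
The plan is to compute the Drinfeld element $u = S(R''_i)R'_i \in \fraku_\zeta$ and its inverse $u^{-1} = R''_i S^2(R'_i) \in \fraku_\zeta$ directly from the two explicit forms of the R-matrix given in Equation~\eqref{E:R_Habiro}, and then to obtain $v = uK^{-1}$ and $v^{-1} = u^{-1}K$ by multiplying by the appropriate power of the pivotal element $g = K$. The two versions of each formula (Equations~\eqref{E:ribbon_Habiro} vs \eqref{E:ribbon_Habiro_prime}, and \eqref{E:inverse_ribbon_Habiro} vs \eqref{E:inverse_ribbon_Habiro_prime}) correspond to starting from the two different orderings of the R-matrix: the first form yields expressions in $F^{(a)}E^a T_b$, while the second yields expressions in $E^a F^{(a)} T_b$.

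First I would record the auxiliary identities $S(T_b) = T_{-b}$, $S(K) = K^{-1}$, and $S(F^{(a)}) = (-1)^a \zeta^{a(a-1)} K^a F^{(a)}$, the last obtained from $S(F^{(1)}) = -KF^{(1)}$ by induction using $KF^{(1)} = \zeta^{-2} F^{(1)} K$. Combined with the fact that $g = K$ is pivotal, this gives $S^2(x) = K x K^{-1}$, so that $S^2(E^a) = \zeta^{2a} E^a$ and $S^2$ fixes every $T_b$. Plugging the first form of $R$ into $u = S(R''_i) R'_i$ yields
\[
u = \sum_{a,b=0}^{r-1} \zeta^{\frac{a(a-1)}{2}} S(T_b F^{(a)}) K^{-b} E^a
= \sum_{a,b=0}^{r-1} (-1)^a \zeta^{\frac{a(a-1)}{2}+a(a-1)} K^a F^{(a)} T_{-b} K^{-b} E^a,
\]
and similarly for $u^{-1}$ using the second form together with $S^2(R'_i) = K R'_i K^{-1}$.

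Next I would reorder the resulting monomials into the target form $F^{(a)} E^a T_b$ (respectively $E^a F^{(a)} T_b$) by applying the commutation rules $T_b E^a = E^a T_{b+a}$, $K F^{(a)} = \zeta^{-2a} F^{(a)} K$, $K E^a = \zeta^{2a} E^a K$, and the expansion of $K^c$ in terms of the projectors $T_b$ from Equation~\eqref{E:from_T_to_K}. The $K^{\pm 1}$ factor contributed by $g^{\mp 1}$ gets absorbed into a shift of the index $b$, and the double sum over $b$ collapses via orthogonality $T_b T_c = \delta_{b,c} T_b$ from Lemma~\ref{L:projectors} to a single sum over $b$, producing the claimed formulas. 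Switching to the second R-matrix in \eqref{E:R_Habiro} repeats the same calculation but with $F^{(a)}$ and $E^a$ swapped in the ordering, giving Equations~\eqref{E:ribbon_Habiro_prime} and \eqref{E:inverse_ribbon_Habiro_prime} directly.

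The main obstacle is bookkeeping: keeping precise track of all powers of $\zeta$ arising from the repeated applications of the commutation relations, and ensuring that the sign $(-1)^a$ and the exponent $-\frac{(a+3)a}{2}$ in the final answer assemble correctly from the contributions of $\zeta^{\frac{a(a-1)}{2}}$ in the R-matrix, $\zeta^{a(a-1)}$ from $S(F^{(a)})$, and the various $\zeta^{\pm 2a \cdot c}$ arising when moving $K^c$ past $E^a$ and $F^{(a)}$. Once that accounting is carried out carefully, the four identities follow without further difficulty, and as a sanity check one may verify that $v \cdot v^{-1}$ reduces to $1$ using the orthogonality of the projectors $T_b$.
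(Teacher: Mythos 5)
Your plan for Equations~\eqref{E:ribbon_Habiro} and \eqref{E:inverse_ribbon_Habiro} is essentially the paper's proof: compute $u=S(R''_i)R'_i$ and $u^{-1}=R''_iS^2(R'_i)$ from Equation~\eqref{E:R_Habiro}, reorder using $S(F^{(a)})=(-1)^a\zeta^{a(a-1)}K^aF^{(a)}$, $S(T_b)=T_{-b}$, $T_cE^a=E^aT_{c+a}$, the expansion of $K^c$ into projectors, and orthogonality, then multiply by $K^{\mp1}$. (The paper happens to use the second expression of $R$ for $u$ and the first for $u^{-1}$, but either works, and your bookkeeping of the sign and the exponent $-\frac{(a+3)a}{2}$ is consistent with the paper's.)

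There is, however, a gap in your treatment of the primed formulas \eqref{E:ribbon_Habiro_prime} and \eqref{E:inverse_ribbon_Habiro_prime}. You claim that switching to the other expression of the R-matrix "repeats the same calculation but with $F^{(a)}$ and $E^a$ swapped in the ordering." This is not the case: in both expressions of $R$ in Equation~\eqref{E:R_Habiro}, the divided power $F^{(a)}$ sits in the second tensor factor and $E^a$ in the first, so $u=S(R''_i)R'_i$ always comes out with $F^{(a)}$ to the \emph{left} of $E^a$ (and likewise for $u^{-1}=R''_iS^2(R'_i)$). To reach the $E^aF^{(a)}T_b$ ordering you would have to commute $E^a$ past $F^{(a)}$ via Equation~\eqref{E:commutator_Habiro_T_right}, which generates lower-order correction terms whose resummation is not at all "direct." The paper sidesteps this entirely: it derives \eqref{E:ribbon_Habiro_prime} and \eqref{E:inverse_ribbon_Habiro_prime} from the identities $S(v)=v$ and $S(v^{-1})=v^{-1}$ combined with the closed antipode formula \eqref{E:antipodes}, which converts each monomial $F^{(a)}E^aT_b$ into one of the form $T_{-b}E^aF^{(a)}$ in a single step. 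You should replace your final step by this argument (or supply the commutator resummation explicitly). As a minor aside, the proposed sanity check that $vv^{-1}=1$ also requires commuting $E$'s past $F$'s, not just projector orthogonality.
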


\begin{proof}
 On one hand, Equation~\eqref{E:ribbon_Habiro} can be obtained by computing the Drinfeld element
 \begin{align*}
  u &= \sum_{a,b=0}^{r-1}
  \zeta^{\frac{a(a-1)}{2}} S(K^{-b} F^{(a)}) T_b E^a 
  = \sum_{a,b=0}^{r-1}
  (-1)^a \zeta^{-\frac{(a+3)a}{2}} F^{(a)} K^{a+b} T_b E^a 
  = \sum_{a,b=0}^{r-1}
  (-1)^a \zeta^{-\frac{(a+3)a}{2}-2(a+b)b} F^{(a)} T_b E^a \\*
  &= \sum_{a,b=0}^{r-1}
  (-1)^a \zeta^{-\frac{(a+3)a}{2}-2(a+b)b} F^{(a)} E^a T_{a+b},
 \end{align*}
 where the first equality follows from
 \[
  S(K^{-b} F^{(a)}) = (-1)^a \zeta^{-(a+1)a} F^{(a)} K^{a+b}.
 \]
 Since $v = uK^{-1}$, we obtain
 \begin{align*}
  v &= \sum_{a,b=0}^{r-1}
  (-1)^a \zeta^{-\frac{(a+3)a}{2}-2(a+b)b} F^{(a)} E^a T_{a+b} K^{-1} 
  = \sum_{a,b=0}^{r-1}
  (-1)^a \zeta^{-\frac{(a+3)a}{2}-2(a+b)(b-1)} F^{(a)} E^a T_{a+b} \\*
  &= \sum_{a,b=0}^{r-1}
  (-1)^a \zeta^{-\frac{(a+3)a}{2}+2(a-b+1)b} F^{(a)} E^a T_b.
 \end{align*}
 Equation~\eqref{E:ribbon_Habiro_prime} follows from the identity $S(v) = v$, together with Equation~\eqref{E:antipodes}.

 Equation~\eqref{E:inverse_ribbon_Habiro} can be obtained by computing the Drinfeld element
 \begin{align*}
  u^{-1} &= \sum_{a,b=0}^{r-1} 
  \zeta^{\frac{a(a-1)}{2}} T_b F^{(a)} S^2(K^{-b} E^a) 
  = \sum_{a,b=0}^{r-1}
  \zeta^{\frac{(a+3)a}{2}} T_b F^{(a)} K^{-b} E^a 
  = \sum_{a,b=0}^{r-1}
  \zeta^{\frac{(a+3)a}{2}-2(a-b)b} F^{(a)} T_{-a+b} E^a \\*
  &= \sum_{a,b=0}^{r-1}
  \zeta^{\frac{(a+3)a}{2}-2(a-b)b} F^{(a)} E^a T_b,
 \end{align*}
 where the first equality follows from
 \[
  S^2(K^{-b} E^a) = \zeta^{2a} K^{-b} E^a.
 \]
 Since $v^{-1} = u^{-1}K$, we obtain
 \begin{align*}
  v^{-1} &= \sum_{a,b=0}^{r-1}
  \zeta^{\frac{(a+3)a}{2}-2(a-b)b} F^{(a)} E^a T_b K 
  = \sum_{a,b=0}^{r-1}
  \zeta^{\frac{(a+3)a}{2}-2(a-b+1)b} F^{(a)} E^a T_b.
 \end{align*}
 Equation~\eqref{E:inverse_ribbon_Habiro_prime} follows from the identity $S(v^{-1}) = v^{-1}$, together with Equation~\eqref{E:antipodes}.
\end{proof}

Finally, we provide formulas for commutators.

\begin{lemma}\label{L:commutators}
 For all integers $0 \leqs a,b,m \leqs r-1$, we have
 \begin{align}
  F^{(a)} E^b T_c &= \sum_{k=0}^{\min \{ a,b \}} \sqbinom{b}{k}_\zeta \{ a-b+2c;k \}_\zeta E^{b-k} F^{(a-k)} T_c, \label{E:commutator_Habiro_T_right} \\*
  T_a F^{(b)} E^c &= \sum_{k=0}^{\min \{ b,c \}} \sqbinom{c}{k}_\zeta \{ 2a-b+c;k \}_\zeta T_a E^{c-k} F^{(b-k)}, \label{E:commutator_Habiro_T_left}
 \end{align}
 where $\{ n;k \}_\zeta := \prod_{j=0}^{k-1} \{ n-j \}_\zeta$ for all integers $0 \leqs k \leqs n$.
\end{lemma}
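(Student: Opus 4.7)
The plan is induction on $b$ (uniformly in $a$), with the base $b=0$ tautological, using as the engine the fundamental commutator $[E,F^{(a)}] = F^{(a-1)}(\zeta^{-(a-1)}K-\zeta^{a-1}K^{-1})$ from Section~\ref{S:quantum_sl2_algebra}. Setting $[m;K] := \zeta^{-m}K - \zeta^{m}K^{-1}$, this rewrites as $F^{(a)}E = EF^{(a)} - F^{(a-1)}[a-1;K]$. Two elementary moves handle all the bookkeeping: the commutation $KE = \zeta^{2}EK$ gives $[m;K]E^{b} = E^{b}[m-2b;K]$, while Lemma~\ref{L:projectors}.(i) collapses a bracket against an idempotent to a scalar via $[m;K]T_{c} = -\{m+2c\}_{\zeta}T_{c}$.

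For the inductive step, I would expand
\[
F^{(a)}E^{b+1}T_{c} = (F^{(a)}E)\,E^{b}T_{c} = EF^{(a)}E^{b}T_{c} + \{a-1-2b+2c\}_{\zeta}\,F^{(a-1)}E^{b}T_{c},
\]
apply the inductive hypothesis to both $F^{(a)}E^{b}T_{c}$ and $F^{(a-1)}E^{b}T_{c}$, and shift $k\mapsto k-1$ in the second sum. Matching the coefficient of $E^{b+1-k}F^{(a-k)}T_{c}$ and factoring $\{a-b-1+2c;k-1\}_{\zeta}$ from both sides (using $\{a-b+2c;k\}_{\zeta} = \{a-b+2c\}_{\zeta}\cdot\{a-b-1+2c;k-1\}_{\zeta}$ on the left, and the analogous step on the right) reduces the induction to the $\zeta$-combinatorial identity
\[
\sqbinom{b+1}{k}_{\zeta}\{N-k\}_{\zeta} = \sqbinom{b}{k}_{\zeta}\{N\}_{\zeta} + \sqbinom{b}{k-1}_{\zeta}\{N-b-1\}_{\zeta}, \qquad N := a-b+2c,
\]
which follows by expanding $\{m\}_{\zeta} = \zeta^{m}-\zeta^{-m}$ and applying the two $\zeta$-Pascal rules $\sqbinom{b+1}{k}_{\zeta} = \zeta^{\pm k}\sqbinom{b}{k}_{\zeta} + \zeta^{\mp(b+1-k)}\sqbinom{b}{k-1}_{\zeta}$ to the $\zeta^{N-k}$ and $\zeta^{-(N-k)}$ pieces respectively.

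The second identity follows by applying the antipode $S$ to the first. Using $S(T_{c}) = T_{-c}$, $S(E^{b}) = (-1)^{b}\zeta^{-b(b-1)}E^{b}K^{-b}$, and $S(F^{(a)}) = (-1)^{a}\zeta^{a(a-1)}K^{a}F^{(a)}$ (derived from Equation~\eqref{E:antipodes}), and then absorbing the resulting $K$-powers into $T_{-c}$ via Lemma~\ref{L:projectors}.(i), one transforms the first identity (after $a\leftrightarrow b$, $c\mapsto -c$) into the second; alternatively, one can rerun the induction symmetrically, commuting the rightmost $E$ inward in $T_{a}F^{(b)}E^{c+1}$ and using $T_{a}F^{(b)} = F^{(b)}T_{a-b}$. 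The only genuine obstacle is aligning the shift structure in $\{a-b+2c;k\}_{\zeta}$ with the $\zeta$-Pascal rule so that the common factor peels off cleanly; once that is arranged the Pascal-type identity is routine.
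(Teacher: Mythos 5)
Your proof is correct, but it takes a genuinely different route from the paper's. The paper simply imports the closed-form commutation formula for $F^aE^b$ from Klimyk--Schm\"udgen (transported through the symmetry $\omega(E)=F$, $\omega(F)=E$, $\omega(K)=K^{-1}$), divides by $[a]_\zeta!$ to pass to divided powers, and then evaluates the resulting product of $K^{\pm 1}$-brackets on the projector $T_c$ via $K^{\pm 1}T_c=\zeta^{\mp 2c}T_c$, which immediately produces the factor $\{a-b+2c;k\}_\zeta$; the second identity is then obtained exactly as in your "alternative" route, by sliding $T_a$ all the way to the right with Lemma~\ref{L:projectors}.(i), applying the first identity, and sliding back. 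Your argument instead reproves the underlying commutation formula from scratch by induction on $b$ from the defining relation $[E,F^{(a)}]=F^{(a-1)}(\zeta^{-(a-1)}K-\zeta^{a-1}K^{-1})$, with the two observations $[m;K]E^b=E^b[m-2b;K]$ and $[m;K]T_c=-\{m+2c\}_\zeta T_c$ doing the bookkeeping; the induction correctly reduces to the identity $\sqbinom{b+1}{k}_\zeta\{N-k\}_\zeta=\sqbinom{b}{k}_\zeta\{N\}_\zeta+\sqbinom{b}{k-1}_\zeta\{N-b-1\}_\zeta$, and your split of $\{N-k\}_\zeta$ into its two exponential pieces, each handled by one of the two $\zeta$-Pascal rules, verifies it (the boundary term $k=\min\{a,b+1\}$ is covered by the convention $\sqbinom{b}{k}_\zeta=0$ for $k>b$). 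What each approach buys: the paper's is a two-line reduction to a standard reference, while yours is self-contained and makes transparent that the scalar $\{a-b+2c;k\}_\zeta$ is accumulated one bracket at a time. For the second identity, I would recommend your second option (conjugating $T_a$ through) over the antipode route, which is workable but forces a delicate matching of the monomial prefactors $(-1)^{a+b}\zeta^{(a-b+2c-1)(a-b)}$ against the $k$-dependent coefficients.
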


\begin{proof}
 If we apply the algebra isomorphism $\omega : \fraku_\zeta \to \fraku_\zeta$ defined by
 \begin{align*}
  \omega(E) &= F, &
  \omega(F) &= E, &
  \omega(K) &= K^{-1},
 \end{align*}
 to \cite[Equation~(5), Section~3.1.1]{KS97}, we obtain 
 \begin{align*}
  F^a E^b 
  &= \sum_{k=0}^{\min \{ a,b \}} \sqbinom{a}{k}_\zeta \sqbinom{b}{k}_\zeta [k]_\zeta! E^{b-k} F^{a-k} \left( \prod_{j=0}^{k-1} \frac{\zeta^{a-b-j}K^{-1} - \zeta^{-a+b+j}K}{\{ 1 \}_\zeta} \right).
 \end{align*}
 which means that
 \begin{align*}
  &F^{(a)} E^b T_c
  = \sum_{k=0}^{\min \{ a,b \}} \sqbinom{b}{k}_\zeta E^{b-k} F^{(a-k)} \left( \prod_{j=0}^{k-1} \zeta^{a-b-j}K^{-1} - \zeta^{-a+b+j}K \right) T_c \\*
  &\hspace*{\parindent} = \sum_{k=0}^{\min \{ a,b \}} \sqbinom{b}{k}_\zeta \{ a-b+2c;k \}_\zeta E^{b-k} F^{(a-k)} T_c.
 \end{align*} 
 This implies
 \begin{align*}
  &T_a F^{(b)} E^c 
  = F^{(b)} E^c T_{a-b+c} 
  = \sum_{k=0}^{\min \{ b,c \}} \sqbinom{c}{k}_\zeta \{ 2a-b+c;k \}_\zeta E^{c-k} F^{(b-k)} T_{a-b+c} \\*
  &\hspace*{\parindent} = \sum_{k=0}^{\min \{ b,c \}} \sqbinom{c}{k}_\zeta \{ 2a-b+c;k \}_\zeta T_a E^{c-k} F^{(b-k)}. \qedhere
 \end{align*}
\end{proof}

\subsection{A formula by Murakami}

The following formula holds in $\Z[q,q^{-1}]$, where $q$ is a generic parameter, and is equivalent to \cite[Lemma~A]{Mu08}. We prove it using quantum calculus, but the reader might guess a homological diagrammatic proof using computation rules in punctured discs (see \cite[Example~4.6 \& Corollary~4.11]{M20}).

\begin{lemma}\label{L:Murakami}
 For all $n \in \Z$ and $k \in \N$ we have
 \begin{equation}
  \{ n;k \}_q = \sum_{\ell=0}^k (-1)^{k+\ell} \sqbinom{k}{\ell}_q q^{\frac{k(k-1)}{2}-n(k-2\ell)-(k-1)\ell}. \label{E:Murakami}
 \end{equation}
\end{lemma}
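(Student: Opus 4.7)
The plan is to expand the product defining $\{n;k\}_q = \prod_{j=0}^{k-1}(q^{n-j}-q^{-(n-j)})$ directly, group terms by how many ``positive'' factors were chosen, and recognize the resulting subset sums as $q$-binomial coefficients. Distributing the product indexes the $2^k$ terms by subsets $S \subset \{0,1,\ldots,k-1\}$, where $j \in S$ means we chose the monomial $q^{n-j}$ and $j \notin S$ means we chose $-q^{-(n-j)}$. Writing $\ell := |S|$ and $\sigma(S) := \sum_{j \in S} j$, a quick count of signs and exponents gives
\[
\{n;k\}_q = \sum_{\ell=0}^k (-1)^{k-\ell} q^{(2\ell-k)n + \binom{k}{2}} \sum_{\substack{S \subset \{0,\ldots,k-1\} \\ |S|=\ell}} q^{-2\sigma(S)},
\]
since the $\ell$ positive choices contribute $\sum_{j \in S}(n-j)$ to the exponent, the $k-\ell$ negative ones contribute $-\sum_{j \notin S}(n-j)$, and $\sum_{j \notin S} j = \binom{k}{2} - \sigma(S)$.

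It remains to identify the inner subset sum with $\sqbinom{k}{\ell}_q$ up to an explicit power of $q$. The classical identity $\sum_{|S|=\ell} x^{\sigma(S)} = x^{\binom{\ell}{2}} \binom{k}{\ell}_x^{\mathrm{Gauss}}$ (valid in $\Z[x,x^{-1}]$) applied with $x = q^{-2}$ together with the well-known reversal relation $\binom{k}{\ell}_{x^{-1}}^{\mathrm{Gauss}} = x^{-\ell(k-\ell)}\binom{k}{\ell}_{x}^{\mathrm{Gauss}}$ and the relation $\sqbinom{k}{\ell}_q = q^{-\ell(k-\ell)} \binom{k}{\ell}_{q^2}^{\mathrm{Gauss}}$ (which follows from $[n]_q = q^{-(n-1)}[n]_{q^2}^{\mathrm{Gauss}}$) together give
\[
\sum_{\substack{S \subset \{0,\ldots,k-1\} \\ |S|=\ell}} q^{-2\sigma(S)} = q^{-\ell(\ell-1)-\ell(k-\ell)} \sqbinom{k}{\ell}_q = q^{-(k-1)\ell} \sqbinom{k}{\ell}_q.
\]
Plugging this back and observing $(-1)^{k-\ell}=(-1)^{k+\ell}$ and $\binom{k}{2}=\frac{k(k-1)}{2}$ yields \eqref{E:Murakami} immediately.

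The only real bookkeeping is in the exponent collapse $\ell(\ell-1)+\ell(k-\ell)=\ell(k-1)$, which is trivial, so I expect no genuine obstacle; the main thing to be careful about is keeping track of the two different $q$-binomial conventions (Gaussian versus the balanced one $\sqbinom{\cdot}{\cdot}_q$ used throughout the paper) while converting between them.
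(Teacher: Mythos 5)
Your proposal is correct, and it takes a genuinely different route from the paper. The paper proves \eqref{E:Murakami} by induction on $k$: after peeling off the factor $\{n-k+1\}_q$, it regroups the two resulting sums with a shift of index and invokes the $q$-Pascal recursion $\sqbinom{k}{\ell}_q = \sqbinom{k-1}{\ell-1}_q q^{-k+\ell} + \sqbinom{k-1}{\ell}_q q^{\ell}$ (cited from Jantzen) to recombine them. You instead expand the product $\prod_{j=0}^{k-1}(q^{n-j}-q^{-(n-j)})$ over subsets and identify the inner sum $\sum_{|S|=\ell} q^{-2\sigma(S)}$ via the $q$-binomial theorem; I checked your sign and exponent bookkeeping, the conversion $\sqbinom{k}{\ell}_q = q^{-\ell(k-\ell)}\binom{k}{\ell}^{\mathrm{Gauss}}_{q^2}$, and the final collapse $\ell(\ell-1)+\ell(k-\ell)=(k-1)\ell$, and all of it is right (it also reproduces the $k=1,2$ cases). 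Your argument exhibits the identity in one shot as an instance of the $q$-binomial theorem in the balanced convention, which is more conceptual and makes the role of $n$ (an arbitrary integer shift of the exponent) transparent; the paper's induction is more self-contained, needing only the Pascal recursion and no translation between the Gaussian and balanced conventions, which is exactly the bookkeeping you correctly flag as the one delicate point of your approach.
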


\begin{proof}
 Equation~\eqref{E:Murakami} can be proven by induction on $k \in \N$. For $k = 0$ we have
 \[
  \{ n;0 \}_q = 1 = \sum_{\ell=0}^0 (-1)^\ell \sqbinom{0}{\ell}_q q^{2n\ell+\ell}.
 \]
 For $k > 0$ we have
 \begin{align*}
  &\{ n;k \}_q = \{ n;k-1 \}_q \{ n-k+1 \}_q 
  = \sum_{\ell=0}^{k-1} (-1)^{k+\ell+1} \sqbinom{k-1}{\ell}_q \{ n-k+1 \}_q q^{\frac{(k-1)(k-2)}{2}-n(k-2\ell-1)-(k-2)\ell} \\*
  &\hspace*{\parindent} = \sum_{\ell=0}^{k-1} (-1)^{k+\ell+1} \sqbinom{k-1}{\ell}_q (q^{n-k+1}-q^{-n+k-1}) q^{\frac{(k-1)(k-2)}{2}-n(k-2\ell-1)-(k-2)\ell} \\*
  &\hspace*{\parindent} = \sum_{\ell=0}^{k-1} (-1)^{k+\ell+1} \sqbinom{k-1}{\ell}_q q^{\frac{(k-1)(k-2)}{2}-n(k-2(\ell+1))-(k-2)(\ell+1)-1} 
  + \sum_{\ell=0}^{k-1} (-1)^{k+\ell} \sqbinom{k-1}{\ell}_q q^{\frac{k(k-1)}{2}-n(k-2\ell)-(k-2)\ell} \\*
  &\hspace*{\parindent} = \sum_{\ell=1}^k (-1)^{k+\ell} \sqbinom{k-1}{l-1}_q q^{\frac{(k-1)(k-2)}{2}-n(k-2\ell)-(k-2)\ell-1} 
  + \sum_{\ell=0}^{k-1} (-1)^{k+\ell} \sqbinom{k-1}{\ell}_q q^{\frac{k(k-1)}{2}-n(k-2\ell)-(k-2)\ell} \\*
  &\hspace*{\parindent} = (-1)^k \sqbinom{k-1}{0}_q q^{\frac{k(k-1)}{2}-nk} 
  + \sum_{\ell=1}^{k-1} (-1)^{k+\ell} \left( \sqbinom{k-1}{l-1}_q q^{-k+\ell} + \sqbinom{k-1}{\ell}_q q^{\ell} \right) q^{\frac{k(k-1)}{2}-n(k-2\ell)-(k-1)\ell} \\*
  &\hspace*{2\parindent} + \sqbinom{k-1}{k-1}_q q^{\frac{(k-1)(k-2)}{2}+nk-(k-2)k-1}.
 \end{align*}
 Now we can use the identity
 \[
  \sqbinom{k}{\ell}_q = \sqbinom{k-1}{l-1}_q q^{-k+\ell} + \sqbinom{k-1}{\ell}_q q^{\ell},
 \]
 compare with \cite[Section~0.2,~Equation~(1)]{J96}. This yields
 \begin{align*}
  &\{ n;k \}_q 
  = (-1)^k q^{\frac{k(k-1)}{2}-nk} + \left( \sum_{\ell=1}^{k-1} (-1)^{k+\ell} \sqbinom{k}{\ell}_q q^{\frac{k(k-1)}{2}-n(k-2\ell)-(k-1)\ell} \right) + q^{-\frac{k(k-1)}{2}+nk} \\*
  &\hspace*{\parindent} = \sum_{\ell=0}^k (-1)^{k+\ell} \sqbinom{k}{\ell}_q q^{\frac{k(k-1)}{2}-n(k-2\ell)-(k-1)\ell}. \qedhere
 \end{align*}
\end{proof}

\section{Computations for higher genus surfaces}

In this appendix, we provide detailed computations for the homological and quantum actions of the Dehn twist $\tau_\gamma = \tau_{\gamma_1} \in \Mod(\varSigma_{2,1})$ of Section~\ref{S:homological_representations_of_mcg}, whose results were announced in Sections~\ref{S:homological_mcg_computation} and \ref{S:quantum_mcg_computation}, respectively.

\subsection{Homological computation}\label{A:ugly_homological_computation}

We start with the homological action.

\begin{proof}[Proof of Lemma~\ref{L:homological_gamma}]
 Using Proposition~\ref{P:computation_rules} and Remark~\ref{R:computation_rules}, we obtain
 \begin{align*}
  &\rho^V_2(\tau_\gamma) \left( \basis(a_1,b_1;a_2,b_2) \otimes \bfv_{(c_1,c_2)} \right) \\
%  
% (1) to (2) 
%  
  &= \pic{tau_gamma_01} \otimes \psi_g^V(\twist{\gamma}{}) \bfv_{(c_1,c_2)} \\
%  
% (2) to (3) 
%  
  &\tCeq \sum_{k_2=0}^{a_2} \pic{tau_gamma_02} 
  \otimes \left( \sum_{\ell=0}^{r-1} \zeta^{-2(\ell+1)\ell} A_1^{-\ell} A_2^\ell \right) \bfv_{(c_1,c_2)} \\
%  
% (3) to (4) 
%  
  &\stackrel{\substack{\mathclap{\eqref{E:permutation}} \\ \mathclap{\eqref{E:braid}}}}{=} \sum_{k_2=0}^{a_2} \pic{tau_gamma_04} 
  \otimes \left( \sum_{\ell=0}^{r-1} \zeta^{-2(\ell+1)\ell -4c_1\ell + 4c_2\ell} \right) \\*
  &\hspace*{\parindent} \zeta^{2b_1k_2} (A_2 B_1^{-1} A_1 B_1 A_2^{-1})^{k_2} \bfv_{(c_1,c_2)} \\
%  
% (4) to (5) 
%  
  &\tCeq \fraki^{\frac{r-1}{2}} \sqrt{r} \zeta^{2(c_1-c_2+1)(c_1-c_2)+\frac{r+1}{2}} \sum_{k_2=0}^{a_2} \sum_{j_2=0}^{b_2} 
  \pic{tau_gamma_05} \\*
  &\hspace*{\parindent} \otimes \zeta^{2b_1k_2+4(c_1+1)k_2} \bfv_{(c_1,c_2)} \\
%  
% (5) to (6) 
%  
  &\stackrel{\substack{\mathclap{\eqref{E:permutation}} \\ \mathclap{\eqref{E:braid}}}}{\propto} \zeta^{2(c_1-c_2+1)(c_1-c_2)} \sum_{k_2=0}^{a_2} \sum_{j_2=0}^{b_2} 
  \pic{tau_gamma_07} \\*
  &\hspace*{\parindent} \otimes \zeta^{2(b_1+2c_1+2)k_2+2(b_1+a_2-k_2)j_2} B_2^{j_2} \bfv_{(c_1,c_2)} \\
%  
% (6) to (7) 
%  
  &\Feq \zeta^{2(c_1-c_2+1)(c_1-c_2)} \sum_{k_2=0}^{a_2} \sum_{j_2=0}^{b_2} 
  \pic{tau_gamma_08} \\*
  &\hspace*{\parindent} \otimes \sqbinom{j_2+k_2}{j_2}_\zeta \zeta^{j_2k_2-2j_2k_2+2(b_1+a_2)j_2+2(b_1+2c_1+2)k_2} \bfv_{(c_1,c_2+j_2)} \\
%  
% (7) to (8) 
%  
  &\Ceq \zeta^{2(c_1-c_2+1)(c_1-c_2)} \sum_{k_2=0}^{a_2} \sum_{j_2=0}^{b_2} \sum_{i_2=k_2}^{a_2} 
  \pic{tau_gamma_09} \\*
  &\hspace*{\parindent} \otimes \sqbinom{j_2+k_2}{j_2}_\zeta \zeta^{-j_2k_2+2(b_1+a_2)j_2+2(b_1+2c_1+2)k_2} \bfv_{(c_1,c_2+j_2)} \\
%  
% (8) to (9) 
%  
  &\stackrel{\substack{\mathclap{\eqref{E:permutation}} \\ \mathclap{\eqref{E:tilde_hook}} \\ \mathclap{\eqref{E:braid}}}}{=} \zeta^{2(c_1-c_2+1)(c_1-c_2)} \sum_{k_2=0}^{a_2} \sum_{j_2=0}^{b_2} \sum_{i_2=k_2}^{a_2} 
  \pic{tau_gamma_11} \\*
  &\hspace*{\parindent} \otimes \sqbinom{j_2+k_2}{j_2}_\zeta \zeta^{-j_2k_2+2(b_1+a_2)j_2+2(b_1+2c_1+2)k_2} \\*
  &\hspace*{\parindent} \zeta^{(i_2-k_2)(i_2-k_2-1)+2(a_2-i_2)(a_2-k_2-1)+2b_1(a_2-k_2)} (B_1^{-1} A_1 B_1 A_2^{-1})^{a_2-k_2} \bfv_{(c_1,c_2+j_2)} \\
%  
% (9) to (10) 
%  
  &\stackrel{\substack{\mathclap{\eqref{E:orientation}} \\ \mathclap{\eqref{E:fusion}}}}{=} \zeta^{2a_2(b_1+a_2-1)+2(c_1-c_2+1)(c_1-c_2)} \sum_{k_2=0}^{a_2} \sum_{j_2=0}^{b_2} \sum_{i_2=k_2}^{a_2} (-1)^{i_2+k_2} \\*
  &\hspace*{\parindent} \pic{tau_gamma_12} 
  \otimes \sqbinom{i_2+j_2}{j_2,k_2}_\zeta \zeta^{(i_2-k_2)(j_2+k_2)} \\*
  &\hspace*{\parindent} \zeta^{(i_2+1)i_2+(k_2+5)k_2-j_2k_2-2a_2i_2+2(b_1+a_2)j_2+2(2c_1-a_2)k_2
  +4(a_2-k_2)(c_1-c_2-j_2+1)} \bfv_{(c_1,c_2+j_2)} \\
%  
% (10) to (11) 
%  
  &\Ceq \zeta^{2a_2(b_1+a_2-1)+2(c_1-c_2+1)(c_1+2a_2-c_2)} \sum_{k_2=0}^{a_2} \sum_{j_2=0}^{b_2} \sum_{i_2=k_2}^{a_2} \sum_{k_1=0}^{b_1} (-1)^{i_2+k_2} \\*
  &\hspace*{\parindent} \pic{tau_gamma_13} 
  \otimes \sqbinom{i_2+j_2}{j_2,k_2}_\zeta \\*
  &\hspace*{\parindent} \zeta^{(i_2+1)i_2+i_2j_2+i_2k_2-2j_2k_2-2a_2i_2+2(b_1+a_2)j_2-(2a_2-4c_2-1)k_2} \bfv_{(c_1,c_2+j_2)} \\   
%  
% (11) to (12) 
%   
  &\stackrel{\substack{\mathclap{\eqref{E:permutation}} \\ \mathclap{\eqref{E:tilde_hook}} \\ \mathclap{\eqref{E:braid}}}}{=} \zeta^{2a_2(b_1+a_2-1)+2(c_1-c_2+1)(c_1+2a_2-c_2)} \sum_{k_2=0}^{a_2} \sum_{j_2=0}^{b_2} \sum_{i_2=k_2}^{a_2} \sum_{k_1=0}^{b_1} (-1)^{i_2+k_2} \\*
  &\hspace*{\parindent} \pic{tau_gamma_15} 
  \otimes \sqbinom{i_2+j_2}{j_2,k_2}_\zeta \\*
  &\hspace*{\parindent} \zeta^{(i_2+1)i_2+i_2j_2+i_2k_2-2j_2k_2-2a_2i_2+2(b_1+a_2)j_2-(2a_2-4c_2-1)k_2} \\*
  &\hspace*{\parindent} \zeta^{k_1(k_1-1)+2(b_1-k_1)(b_1-1)+2b_1(a_2-i_2)} (B_1^{-1} A_1 B_1 A_2^{-1})^{b_1} \bfv_{(c_1,c_2+j_2)} \\
%  
% (12) to (13) 
%  
  &\stackrel{\substack{\mathclap{\eqref{E:orientation}} \\ \mathclap{\eqref{E:fusion}}}}{=} \zeta^{2(b_1+a_2)(b_1+a_2-1)+2(c_1-c_2+1)(c_1+2a_2-c_2)} \sum_{k_2=0}^{a_2} \sum_{j_2=0}^{b_2} \sum_{i_2=k_2}^{a_2} \sum_{k_1=0}^{b_1} (-1)^{k_1+i_2+k_2} \\*
  &\hspace*{\parindent} \pic{tau_gamma_16} 
  \otimes \sqbinom{k_1+i_2+j_2}{k_1,j_2,k_2}_\zeta \zeta^{k_1(i_2+j_2)} \\*
  &\hspace*{\parindent} \zeta^{(k_1+1)k_1+(i_2+1)i_2+i_2j_2+i_2k_2+2j_2k_2
  -2b_1k_1-2(b_1+a_2)i_2+2(b_1-a_2)j_2-(2a_2-4c_2-1)k_2+4b_1(c_1-c_2-j_2+1)} \bfv_{(c_1,c_2+j_2)} \\
%  
% (13) to (14) 
%  
  &\Ceq \zeta^{2(b_1+c_1+a_2-c_2+1)(b_1+c_1+a_2-c_2)} \sum_{k_2=0}^{a_2} \sum_{j_2=0}^{b_2} \sum_{i_2=k_2}^{a_2} \sum_{k_1=0}^{b_1} \sum_{\ell=-k_1}^{i_2+j_2} (-1)^{k_1+i_2+k_2} \\*
  &\hspace*{\parindent} \pic{tau_gamma_17} 
  \otimes \sqbinom{k_1+i_2+j_2}{k_1,j_2,k_2}_\zeta \\*
  &\hspace*{\parindent} \zeta^{(k_1+1)k_1+(i_2+1)i_2+k_1i_2+k_1j_2+i_2j_2+i_2k_2+2j_2k_2
  -2b_1k_1-2(b_1+a_2)i_2-2(b_1+a_2)j_2-(2a_2-4c_2-1)k_2} \bfv_{(c_1,c_2+j_2)} \\
%  
% (14) to (15) 
%  
  &\stackrel{\substack{\mathclap{\eqref{E:braid}} \\ \mathclap{\eqref{E:permutation}}}}{=} \zeta^{2(b_1+c_1+a_2-c_2+1)(b_1+c_1+a_2-c_2)} \sum_{k_2=0}^{a_2} \sum_{j_2=0}^{b_2} \sum_{i_2=k_2}^{a_2} \sum_{k_1=0}^{b_1} \sum_{\ell=-k_1}^{i_2+j_2} (-1)^{k_1+i_2+k_2} \\*
  &\hspace*{\parindent} \pic{tau_gamma_19} 
  \otimes \sqbinom{k_1+i_2+j_2}{k_1,j_2,k_2}_\zeta \\*
  &\hspace*{\parindent} \zeta^{(k_1+1)k_1+(i_2+1)i_2+k_1i_2+k_1j_2+i_2j_2+i_2k_2+2j_2k_2-2b_1k_1-2(b_1+a_2)i_2-2(b_1+a_2)j_2-(2a_2-4c_2-1)k_2} \\*
  &\hspace*{\parindent} \zeta^{2(b_1+\ell)(\ell-i_2-j_2)} 
  (B_1^{-1} A_1^{-1} B_1)^{-\ell+i_2+j_2} \bfv_{(c_1,c_2+j_2)} \\
%  
% (15) to (16) 
%  
  &\stackrel{\substack{\mathclap{\eqref{E:permutation}} \\ \mathclap{\eqref{E:braid}} \\ \mathclap{\eqref{E:fusion}}}}{=}  \zeta^{2(b_1+c_1+a_2-c_2+1)(b_1+c_1+a_2-c_2)} \sum_{k_2=0}^{a_2} \sum_{j_2=0}^{b_2} \sum_{i_2=k_2}^{a_2} \sum_{k_1=0}^{b_1} \sum_{\ell=-k_1}^{i_2+j_2} (-1)^{k_1+i_2+k_2} \\*
  &\hspace*{\parindent} \pic{tau_gamma_20} 
  \otimes \sqbinom{k_1+i_2+j_2}{k_1,j_2,k_2}_\zeta \sqbinom{a_2-\ell+j_2}{a_2-i_2}_\zeta 
  \zeta^{-(a_2-i_2)(\ell-i_2-j_2)} \\*
  &\hspace*{\parindent} \zeta^{(k_1+1)k_1+2\ell^2+(i_2+1)i_2+k_1i_2+k_1j_2-2\ell i_2-2\ell j_2+i_2j_2+i_2k_2+2j_2k_2
  -2b_1k_1+2b_1\ell-2(2b_1+a_2)i_2-2(2b_1+a_2)j_2-(2a_2-4c_2-1)k_2} \\*
  &\hspace*{\parindent} \zeta^{2a_1(k_1+\ell)+4(c_1+1)(\ell-i_2-j_2)} A_1^{-k_1-\ell} \bfv_{(c_1,c_2+j_2)} \\
%  
% (16) to (17) 
%    
  &\stackrel{\substack{\mathclap{\eqref{E:orientation}} \\ \mathclap{\eqref{E:F_right_regular}}}}{=} \zeta^{2(b_1+c_1+a_2-c_2+1)(b_1+c_1+a_2-c_2)} \sum_{k_2=0}^{a_2} \sum_{j_2=0}^{b_2} \sum_{i_2=k_2}^{a_2} \sum_{k_1=0}^{b_1} \sum_{\ell=-k_1}^{i_2+j_2} \sum_{j_1=0}^{k_1+\ell} \sum_{i_1=0}^{k_1-j_1+\ell} (-1)^{j_1+\ell+i_2+k_2} \\*
  &\hspace*{\parindent} \basis(a_1+i_1,b_1-i_1+\ell;a_2-\ell+j_2,b_2-j_2) 
  \otimes \sqbinom{k_1+i_2+j_2}{k_1,j_2,k_2}_\zeta \sqbinom{a_2-\ell+j_2}{a_2-i_2}_\zeta \sqbinom{a_1+i_1}{a_1}_\zeta \sqbinom{b_1-i_1+\ell}{b_1-k_1,j_1}_\zeta \\*
  &\hspace*{\parindent} \zeta^{-(k_1+\ell+3)(k_1+\ell)-(2a_1+b_1-k_1)(k_1+\ell)+i_1j_1+(a_1+b_1-k_1)i_1+(2b_1-k_1+\ell+3)j_1} \\*
  &\hspace*{\parindent} \zeta^{(k_1+1)k_1+2(\ell+2)\ell+k_1i_2+k_1j_2-\ell i_2-2\ell j_2+i_2k_2+2j_2k_2} \\*
  &\hspace*{\parindent} \zeta^{2(a_1-b_1)k_1+(2a_1+2b_1+4c_1-a_2)\ell-(4b_1+4c_1+a_2+3)i_2-(4b_1+4c_1+a_2+4)j_2-(2a_2-4c_2-1)k_2} \\*
  &\hspace*{\parindent} \zeta^{-4c_1(k_1+\ell)} B_1^{i_1} A_1^{j_1} \bfv_{(c_1,c_2+j_2)} \\ 
%  
% (17) to (18) 
%   
  &= \zeta^{2(b_1+c_1+a_2-c_2+1)(b_1+c_1+a_2-c_2)} \sum_{k_2=0}^{a_2} \sum_{j_2=0}^{b_2} \sum_{i_2=k_2}^{a_2} \sum_{k_1=0}^{b_1} \sum_{\ell=-k_1}^{i_2+j_2} \sum_{j_1=0}^{k_1+\ell} \sum_{i_1=0}^{k_1-j_1+\ell} (-1)^{j_1+\ell+i_2+k_2} \\*
  &\hspace*{\parindent} \basis(a_1+i_1,b_1-i_1+\ell;a_2-\ell+j_2,b_2-j_2) 
  \otimes \sqbinom{k_1+i_2+j_2}{k_1,j_2,k_2}_\zeta \sqbinom{a_2-\ell+j_2}{a_2-i_2}_\zeta \sqbinom{a_1+i_1}{a_1}_\zeta \sqbinom{b_1-i_1+\ell}{b_1-k_1,j_1}_\zeta \\*
  &\hspace*{\parindent} \zeta^{k_1(k_1-2)+(\ell+1)l+i_1j_1-i_1k_1-j_1k_1+j_1\ell-k_1\ell+k_1i_2+k_1j_2-\ell i_2-2\ell j_2+i_2k_2+2j_2k_2} \\*
  &\hspace*{\parindent} \zeta^{(a_1+b_1)i_1+(2b_1+3)j_1-(3b_1+4c_1)k_1+(b_1-a_2)\ell-(4b_1+4c_1+a_2+3)i_2-(4b_1+4c_1+a_2+4)j_2-(2a_2-4c_2-1)k_2+4c_1j_1} \bfv_{(c_1+i_1,c_2+j_2)}. \qedhere
 \end{align*} 
\end{proof}

\subsection{Quantum computation}\label{A:ugly_quantum_computation}

We finish with the quantum action.

\begin{proof}[Proof of Lemma~\ref{L:quantum_gamma}]
 First of all, using Lemmas~\ref{L:coproducts_antipodes} and \ref{L:ribbon}, we obtain
 \begin{align*}
  &S(v^{-1}_{(1)}) \otimes v^{-1}_{(2)}
  \stackrel{\substack{\mathclap{\eqref{E:inverse_ribbon_Habiro}} \\ \mathclap{\eqref{E:coproducts}}}}{=} \sum_{a,b,c=0}^{r-1} \sum_{i,j=0}^a 
  \sqbinom{a}{j}_\zeta \zeta^{\frac{(a+3)a}{2}-2(a-b+1)b+(a+2b)i+aj-2c(i+j)-(i+j)^2} 
  S(F^{(a-i)} E^j T_{b-c}) \otimes F^{(i)} E^{a-j} T_c \\
  &\hspace*{\parindent} \stackrel{\mathclap{\eqref{E:antipodes}}}{=} \sum_{a,b,c=0}^{r-1} \sum_{i,j=0}^a 
  \sqbinom{a}{j}_\zeta \zeta^{\frac{(a+3)a}{2}-2(a-b+1)b+(a+2b)i+aj-2c(i+j)-(i+j)^2} \\*
  &\hspace*{2\parindent} (-1)^{a+i+j} \zeta^{(a+2b-2c-i-j-1)(a-i-j)} 
  T_{-b+c} E^j F^{(a-i)} \otimes F^{(i)} E^{a-j} T_c \\
  &\hspace*{\parindent} = \sum_{a,b,c=0}^{r-1} \sum_{i,j=0}^a (-1)^{a+i+j} 
  \sqbinom{a}{j}_\zeta \zeta^{\frac{(3a+1)a}{2}+2b(b-1)-2ac-(a-1)i-(a+2b-1)j} 
  T_{-b+c} E^j F^{(a-i)} \otimes F^{(i)} E^{a-j} T_c.
 \end{align*}
 Then, using Proposition~\ref{P:quantum_action_of_mcg_generators}, we obtain
 \begin{align*}
  &\rho^{\fraku_\zeta}_2(\tau_\gamma) \left( E^{\ell_1} T_{m_1} F^{(n_1)} \otimes E^{\ell_2} T_{m_2} F^{(n_2)} \right)
  = E^{\ell_1} T_{m_1} F^{(n_1)} S(v_{(1)}) \otimes v_{(2)} E^{\ell_2} T_{m_2} F^{(n_2)} \\*
  &\hspace*{\parindent} = \sum_{a,b,c=0}^{r-1} \sum_{i,j=0}^a (-1)^{a+i+j} 
  \sqbinom{a}{j}_\zeta \zeta^{\frac{(3a+1)a}{2}+2b(b-1)-2ac-(a-1)i-(a+2b-1)j} \\*
  &\hspace*{2\parindent} E^{\ell_1} T_{m_1} F^{(n_1)} T_{-b+c} E^j F^{(a-i)} \otimes F^{(i)} E^{a-j} T_c E^{\ell_2} T_{m_2} F^{(n_2)} \\
  &\hspace*{\parindent} = \sum_{a,b,c=0}^{r-1} \sum_{i,j=0}^a (-1)^{a+i+j} 
  \sqbinom{a}{j}_\zeta \zeta^{\frac{(3a+1)a}{2}+2b(b-1)-2ac-(a-1)i-(a+2b-1)j} \\*
  &\hspace*{2\parindent} E^{\ell_1} T_{m_1} T_{n_1-b+c} F^{(n_1)} E^j F^{(a-i)} \otimes F^{(i)} E^{\ell_2+a-j} T_{\ell_2+c} T_{m_2} F^{(n_2)} \\
  &\hspace*{\parindent} = \sum_{a,b,c=0}^{r-1} \sum_{i,j=0}^a (-1)^{a+i+j} 
  \sqbinom{a}{j}_\zeta \zeta^{\frac{(3a+1)a}{2}+2b(b-1)
  -2ac-(a-1)i-(a+2b-1)j} \delta_{b,-m_1+n_1-\ell_2+m_2} \delta_{c,-\ell_2+m_2} \\*
  &\hspace*{2\parindent} E^{\ell_1} T_{m_1} F^{(n_1)} E^j F^{(a-i)} \otimes F^{(i)} E^{\ell_2+a-j} T_{m_2} F^{(n_2)} \\
  &\hspace*{\parindent} = \sum_{a=0}^{r-1} \sum_{i,j=0}^a (-1)^{a+i+j} 
  \sqbinom{a}{j}_\zeta \zeta^{\frac{(3a+1)a}{2}+2(m_1-n_1+\ell_2-m_2+1)(m_1-n_1+\ell_2-m_2)
  +2(\ell_2-m_2)a-(a-1)i+(2m_1-2n_1+2\ell_2-2m_2-a+1)j} \\*
  &\hspace*{2\parindent} E^{\ell_1} T_{m_1} F^{(n_1)} E^j F^{(a-i)} \otimes F^{(i)} E^{\ell_2+a-j} T_{m_2} F^{(n_2)}.
 \end{align*}
 Using Lemma~\ref{L:commutators}, we obtain
 \begin{align*}
  &\rho^{\fraku_\zeta}_2(\tau_\gamma) \left( E^{\ell_1} T_{m_1} F^{(n_1)} \otimes E^{\ell_2} T_{m_2} F^{(n_2)} \right) \\
  &\hspace*{\parindent} \stackrel{\substack{\mathclap{\eqref{E:commutator_Habiro_T_right}} \\ \mathclap{\eqref{E:commutator_Habiro_T_left}}}}{=} \zeta^{2(m_1-n_1+\ell_2-m_2+1)(m_1-n_1+\ell_2-m_2)} 
  \sum_{a=0}^{r-1} \sum_{i,j=0}^a \sum_{k=0}^j \sum_{h=0}^i (-1)^{a+i+j} \\*
  &\hspace*{2\parindent} \sqbinom{a}{j}_\zeta \sqbinom{j}{k}_\zeta \sqbinom{\ell_2+a-j}{h}_\zeta 
  \{ 2m_1-n_1+j;k \}_\zeta \{ -\ell_2+2m_2-a+i+j;h \}_\zeta \\*
  &\hspace*{2\parindent} \zeta^{\frac{(3a+1)a}{2}+2(\ell_2-m_2)a-(a-1)i+(2m_1-2n_1+2\ell_2-2m_2-a+1)j} 
  E^{\ell_1} T_{m_1} E^{j-k} F^{(n_1-k)} F^{(a-i)} \otimes E^{\ell_2+a-j-h} F^{(i-h)} T_{m_2} F^{(n_2)} \\
  &\hspace*{\parindent} = \zeta^{2(m_1-n_1+\ell_2-m_2+1)(m_1-n_1+\ell_2-m_2)} 
  \sum_{a=0}^{r-1} \sum_{i,j=0}^a \sum_{k=0}^j\sum_{h=0}^i (-1)^{a+i+j} \\*
  &\hspace*{2\parindent} \sqbinom{a}{j-k,k}_\zeta \sqbinom{\ell_2+a-j}{h}_\zeta \sqbinom{n_1+a-i-k}{n_1-k}_\zeta 
  \{ 2m_1-n_1+j;k \}_\zeta \{ -\ell_2+2m_2-a+i+j;h \}_\zeta \\*
  &\hspace*{2\parindent} \zeta^{\frac{(3a+1)a}{2}+2(\ell_2-m_2)a-(a-1)i+(2m_1-2n_1+2\ell_2-2m_2-a+1)j} \\*
  &\hspace*{2\parindent} E^{\ell_1+j-k} T_{m_1+j-k} F^{(n_1+a-i-k)} \otimes E^{\ell_2+a-j-h} T_{m_2+i-h} F^{(i-h)} F^{(n_2)} \\
  &\hspace*{\parindent} = \zeta^{2(m_1-n_1+\ell_2-m_2+1)(m_1-n_1+\ell_2-m_2)} 
  \sum_{a=0}^{r-1} \sum_{i,j=0}^a \sum_{k=0}^j \sum_{h=0}^i (-1)^{a+i+j} \\*
  &\hspace*{2\parindent} \sqbinom{a}{j-k,k}_\zeta \sqbinom{\ell_2+a-j}{h}_\zeta \sqbinom{n_1+a-i-k}{n_1-k}_\zeta \sqbinom{n_2+i-h}{n_2}_\zeta 
  \{ 2m_1-n_1+j;k \}_\zeta \{ -\ell_2+2m_2-a+i+j;h \}_\zeta \\*
  &\hspace*{2\parindent} \zeta^{\frac{(3a+1)a}{2}+2(\ell_2-m_2)a-(a-1)i+(2m_1-2n_1+2\ell_2-2m_2-a+1)j} \\*
  &\hspace*{2\parindent} E^{\ell_1+j-k} T_{m_1+j-k} F^{(n_1+a-i-k)} \otimes E^{\ell_2+a-j-h} T_{m_2+i-h} F^{(n_2+i-h)}.
 \end{align*}
 If we change variables by setting $b=-a+i+j$, $i_1=k$, $j_1=j-k$, $i_2=i-h$, and $k_2=a-j$, we obtain
 \begin{align*}
  &\rho^{\fraku_\zeta}_2(\tau_\gamma) \left( E^{\ell_1} T_{m_1} F^{(n_1)} \otimes E^{\ell_2} T_{m_2} F^{(n_2)} \right) \\
  &\hspace*{\parindent} = \zeta^{2(m_1-n_1+\ell_2-m_2+1)(m_1-n_1+\ell_2-m_2)} 
  \sum_{j_1=0}^{r-1} \sum_{i_1=0}^{r-j_1-1} \sum_{k_2=0}^{r-i_1-j_1-1} \sum_{b=-k_2}^{i_1+j_1} \sum_{i_2=0}^{b+k_2} (-1)^b \\*
  &\hspace*{2\parindent} \sqbinom{i_1+j_1+k_2}{i_1,j_1}_\zeta \sqbinom{\ell_2+k_2}{b-i_2+k_2}_\zeta \sqbinom{n_1-b+j_1}{n_1-i_1}_\zeta \sqbinom{n_2+i_2}{n_2}_\zeta \\*
  &\hspace*{2\parindent} \{ 2m_1-n_1+i_1+j_1;i_1 \}_\zeta \{ -\ell_2+2m_2+b;b-i_2+k_2 \}_\zeta \\*
  &\hspace*{2\parindent} \zeta^{\frac{(3i_1+3j_1+3k_2+1)(i_1+j_1+k_2)}{2}+2(\ell_2-m_2)(i_1+j_1+k_2)-(i_1+j_1+k_2-1)(b+k_2)
  +(2m_1-2n_1+2\ell_2-2m_2-i_1-j_1-k_2+1)(i_1+j_1)} \\*
  &\hspace*{2\parindent} E^{\ell_1+j_1} T_{m_1+j_1} F^{(n_1+j_1-b)} \otimes E^{\ell_2-b+i_2} T_{m_2+i_2} F^{(n_2+i_2)} \\
  &\hspace*{\parindent} = \zeta^{2(m_1-n_1+\ell_2-m_2+1)(m_1-n_1+\ell_2-m_2)} 
  \sum_{j_1=0}^{r-1} \sum_{i_1=0}^{r-j_1-1} \sum_{k_2=0}^{r-i_1-j_1-1} \sum_{b=-k_2}^{i_1+j_1} \sum_{i_2=0}^{b+k_2} (-1)^b \\*
  &\hspace*{2\parindent} \sqbinom{i_1+j_1+k_2}{i_1,j_1}_\zeta \sqbinom{\ell_2+k_2}{b-i_2+k_2}_\zeta \sqbinom{n_1-b+j_1}{n_1-i_1}_\zeta \sqbinom{n_2+i_2}{n_2}_\zeta \\*
  &\hspace*{2\parindent} \{ 2m_1-n_1+i_1+j_1;i_1 \}_\zeta \{ -\ell_2+2m_2+b;b-i_2+k_2 \}_\zeta \\*
  &\hspace*{2\parindent} \zeta^{\frac{(i_1+j_1+k_2+3)(i_1+j_1+k_2)}{2}-(i_1+j_1+k_2-1)b+2(m_1-n_1+2(\ell_2-m_2))(i_1+j_1)+2(\ell_2-m_2)k_2} \\*
  &\hspace*{2\parindent} E^{\ell_1+j_1} T_{m_1+j_1} F^{(n_1+j_1-b)} \otimes E^{\ell_2-b+i_2} T_{m_2+i_2} F^{(n_2+i_2)}. \qedhere
 \end{align*}
\end{proof}

\end{document}